\documentclass[12pt,a4paper]{amsart}
\usepackage{amssymb,amsxtra,eucal}
\usepackage[cmtip,arrow]{xy}
\usepackage{pb-diagram,pb-xy}

\textwidth=36pc
\calclayout

\pagestyle{plain}
\raggedbottom

\emergencystretch=2em

\newcommand{\+}{\nobreakdash-}
\renewcommand{\:}{\colon}

\newcommand{\rarrow}{\longrightarrow}
\newcommand{\ot}{\otimes}
\newcommand{\ocn}{\odot} 

\newcommand{\bu}{{\text{\smaller\smaller$\scriptstyle\bullet$}}}
\newcommand{\lrarrow}{\mskip.5\thinmuskip\relbar\joinrel\relbar\joinrel
 \rightarrow\mskip.5\thinmuskip\relax} 

\DeclareMathOperator{\Hom}{Hom}
\DeclareMathOperator{\Tor}{Tor}
\DeclareMathOperator{\Ext}{Ext}
\DeclareMathOperator{\PExt}{PExt}
\DeclareMathOperator{\coker}{coker}
\DeclareMathOperator{\pd}{pd}
\DeclareMathOperator{\fd}{fd}
\newcommand{\Id}{\mathrm{Id}}
\newcommand{\id}{\mathrm{id}}

\newcommand{\fA}{\mathfrak A}
\newcommand{\C}{\mathfrak C}
\newcommand{\D}{\mathfrak D}
\newcommand{\fH}{\mathfrak H}
\newcommand{\J}{\mathfrak J}
\newcommand{\fK}{\mathfrak K}
\newcommand{\fP}{\mathfrak P}
\newcommand{\R}{\mathfrak R}
\renewcommand{\S}{\mathfrak S}
\newcommand{\T}{\mathfrak T}
\newcommand{\fU}{\mathfrak U}

\newcommand{\sA}{\mathsf A}
\newcommand{\sB}{\mathsf B}
\newcommand{\sC}{\mathsf C}
\newcommand{\sD}{\mathsf D}
\newcommand{\sE}{\mathsf E}
\newcommand{\sF}{\mathsf F}
\newcommand{\sG}{\mathsf G}
\newcommand{\sH}{\mathsf H}
\newcommand{\sL}{\mathsf L}
\newcommand{\sM}{\mathsf M}
\newcommand{\sN}{\mathsf N}
\newcommand{\sP}{\mathsf P}

\newcommand{\sV}{\mathsf V}

\newcommand{\Add}{\mathsf{Add}}
\newcommand{\Prod}{\mathsf{Prod}}
\newcommand{\Ab}{\mathsf{Ab}}
\newcommand{\Sets}{\mathsf{Sets}}

\newcommand{\proj}{\mathsf{proj}}
\newcommand{\inj}{\mathsf{inj}}
\newcommand{\bb}{\mathsf{b}}

\newcommand{\rop}{{\mathrm{op}}}
\newcommand{\sop}{{\mathsf{op}}}

\newcommand{\boZ}{\mathbb Z}
\newcommand{\boQ}{\mathbb Q} 
\newcommand{\boT}{\mathbb T}
\newcommand{\boL}{\mathbb L}
\newcommand{\boR}{\mathbb R}

\newcommand{\modl}{{\operatorname{\mathsf{--mod}}}}
\newcommand{\modr}{{\operatorname{\mathsf{mod--}}}}
\newcommand{\contra}{{\operatorname{\mathsf{--contra}}}}
\newcommand{\discr}{{\operatorname{\mathsf{discr--}}}}
\newcommand{\co}{{\operatorname{\mathsf{-co}}}}
\newcommand{\ctra}{{\operatorname{\mathsf{-ctra}}}}

\newcommand{\Section}[1]{\bigskip\section{#1}\medskip}
\setcounter{tocdepth}{1}

\theoremstyle{plain}
\newtheorem{thm}{Theorem}[section]
\newtheorem{conj}[thm]{Conjecture}
\newtheorem{mc}[thm]{Main Conjecture}
\newtheorem{lem}[thm]{Lemma}
\newtheorem{prop}[thm]{Proposition}
\newtheorem{cor}[thm]{Corollary}
\theoremstyle{definition}
\newtheorem{ex}[thm]{Example}
\newtheorem{exs}[thm]{Examples}
\newtheorem{rem}[thm]{Remark}
\newtheorem{rems}[thm]{Remarks}
\newtheorem{dfn}[thm]{Definition}

\numberwithin{equation}{section}

\begin{document}

\title{Covers and direct limits: \\ A contramodule-based approach}

\author[S.~Bazzoni]{Silvana Bazzoni}

\address[Silvana Bazzoni]{%
Dipartimento di Matematica ``Tullio Levi-Civita'' \\
Universit\`a di Padova \\
Via Trieste 63, 35121 Padova (Italy)}

\email{bazzoni@math.unipd.it}

\author[L.~Positselski]{Leonid Positselski}

\address[Leonid Positselski]{%
Institute of Mathematics of the Czech Academy of Sciences \\
\v Zitn\'a~25, 115~67 Praha~1 (Czech Republic); and
\newline\indent
Laboratory of Algebra and Number Theory \\
Institute for Information Transmission Problems \\
Moscow 127051 (Russia)}

\email{positselski@yandex.ru}

\begin{abstract}
 We present applications of contramodule techniques to the Enochs
conjecture about covers and direct limits, both in the categorical
tilting context and beyond.
 In the $n$\+tilting-cotilting correspondence situation,
if $\sA$ is a Grothendieck abelian category and the related abelian
category $\sB$ is equivalent to the category of contramodules over
a topological ring $\R$ belonging to one of certain four classes of
topological rings (e.~g., $\R$ is commutative), then the left tilting
class is covering in $\sA$ if and only if it is closed under direct
limits in $\sA$, and if and only if all the discrete quotient rings of
the topological ring $\R$ are perfect.
 More generally, if $M$ is a module satisfying a certain telescope Hom
exactness condition (e.~g., $M$ is
$\Sigma$\+pure-$\Ext^1$-self-orthogonal) and the topological ring $\R$
of endomorphisms of $M$ belongs to one of certain seven classes of
topological rings, then the class $\Add(M)$ is closed under direct
limits if and only if every countable direct limit of copies of $M$
has an $\Add(M)$\+cover, and if and only if $M$ has
perfect decomposition.
 In full generality, for an additive category $\sA$ with (co)kernels
and a precovering class $\sL\subset\sA$ closed under summands, an object
$N\in\sA$ has an $\sL$\+cover if and only if a certain object $\Psi(N)$
in an abelian category $\sB$ with enough projectives has
a projective cover.
 The $1$\+tilting modules and objects arising from injective ring
epimorphisms of projective dimension~$1$ form a class of examples
which we discuss.
\end{abstract}

\maketitle

\tableofcontents

\section*{Introduction}
\medskip

\setcounter{subsection}{-1}
\subsection{{}}
 The main result (or one of the main results) of Bass'
1960~paper~\cite{Bas} can be stated as follows: given an associative
ring $R$, every left $R$\+module has a projective cover if and only if
the class of projective modules is closed under direct limits in
the category of left $R$\+modules.
 Subsequently, in 1981 Enochs proved that any precovering class of
modules closed under direct limits is
covering~\cite[Theorems~2.1 and~3.1]{Eno}, and in the late 1990s
he asked the question whether every covering class of modules is closed
under direct limits (see~\cite[Section~5.4]{GT};
cf.~\cite[Section~5]{AST}).

 A hypothetical general positive answer to this question is sometimes
called ``the Enochs conjecture''.
 A positive answer in many particular cases was recently obtained
by Angeleri H\"ugel, \v Saroch, and Trlifaj~\cite[Theorem~5.2 and
Corollary~5.5]{AST}, based on set-theoretical tools developed
by \v Saroch in~\cite{Sar}.
 (An alternative elementary proof of a part of the results
of~\cite{AST} is suggested in the preprint~\cite{BPS}.)
 The aim of this paper is to offer a new approach to proving
particular cases of the Enochs conjecture, based on the recently
developed techniques of contramodules and categorical
tilting theory~\cite{PR,PS1,PS2,Pcoun,Pproperf,PS3,BPS}.

\subsection{{}}
 The general idea of our approach can be explained as follows.
 Firstly, we extend Bass' theorem about projective covers from
the categories of modules over associative rings to some other
abelian categories $\sB$ with enough projective objects.
 This is the subject of the paper~\cite{Pproperf}.
 
 Secondly, let $A$ be an associative ring and $M$ be a left $A$\+module.
 More generally, $M$ could be an object of a good enough
additive/abelian category $\sA$ in lieu of $A\modl$.
 We consider the full subcategory $\Add(M)\subset\sA$ consisting
of all the direct summands of coproducts of copies of $M$ in~$\sA$.
 The aim is to prove the Enochs conjecture for the class of objects
$\Add(M)$ in~$\sA$.

 For this purpose, we find an abelian category $\sB$ such that
the full subcategory $\sB_\proj\subset\sB$ of projective objects in
$\sB$ is equivalent to the full subcategory $\Add(M)\subset\sA$.
 Then we transfer our knowledge about the Enochs conjecture for
the class of projective objects $\sB_\proj$ in $\sB$ to
the class of objects $\Add(M)$ in~$\sA$.

 In fact, we do more.
 Extending the discussion in~\cite{AST} to the category-theoretic
context, we consider covers in cotorsion pairs, self-pure-projective
and $\varinjlim$\+pure-rigid objects, and objects with perfect
decomposition.
 Under certain assumptions, we prove that the class $\Add(M)$ is
covering in $\sA$ if and only if the object $M\in\Add(M)$ has
a perfect decomposition.
 This is based on some results of the papers~\cite{PS3}
and~\cite{BPS}.

 One specific feature of our approach is that we consider topologies
on (the opposite ring to) the ring of endomorphisms
$\R=\Hom_\sA(M,M)^\rop$ of the object~$M$.
 In particular, the endomorphism ring of a module $M$ over
an associative ring $A$ always has the so-called \emph{finite topology}.
 Under certain assumptions, we prove that the class $\Add(M)$ is
covering in $\sA$ if and only if all the discrete quotient rings of
the topological ring $\R$ are left perfect.

\subsection{{}} \label{introd-assumptions-subsecn}
 The time has come to explain what our assumptions are.
 There are three kinds of assumptions.
 Firstly, given an object $M$ in a category $\sA$, there should exist
a topology on the ring $\R$ of endomorphisms of $M$ for which
the abelian category $\sB$ could be described as the category of
left $\R$\+contramodules.
 This always holds when $\sA=A\modl$ is the category of modules over
an associative ring, and more generally, when $\sA$ is a locally
finitely generated abelian category (and in some other cases, too).

 Secondly, the topological ring $\R$ has to satisfy one of the technical
assumptions~(a), (b), (c), or~(d) under which the main results of
the paper~\cite{Pproperf} are proved.
 In particular, the condition~(a) says that the ring $\R$ is commutative
(and when it is not, there are three other alternatives~(b), (c), or~(d)
which may happen to hold for~$\R$).

 Alternatively, there are three conditions~(e), (f), and~(g), under any
one of which some of our results in this paper can be proved using
the main results of the papers~\cite{PS3,Ro}.
 In particular, (e)~says that $\R$ has a countable base of neighborhoods
of zero.

 Thirdly, there is a more conceptual assumption which we call
``telescope Hom exactness condition'', abbreviated as~THEC\@.
 This condition is not very restrictive.
 It says that right exactness of the telescope sequences computing
countable direct limits of copies of the object $M$ in $\sA$
is preserved by the functor $\Hom_\sA(M,{-})$.
 All $\Sigma$\+pure-rigid and all self-pure-projective objects (hence,
in particular, all $n$\+tilting objects) in abelian categories with
exact countable direct limits satisfy THEC\@.

\subsection{{}}
 Having mentioned the assumptions, we can now formulate our main result.
 
\begin{thm} \label{introd-THEC-main-theorem}
 Let\/ $\sA$ be a locally presentable additive category and $M\in\sA$
be an object satisfying THEC\@.
 Denote by\/ $\sB$ the abelian category with enough projective objects
such that the full subcategory\/ $\Add(M)\subset\sA$ is equivalent
to the full subcategory of projective objects\/ $\sB_\proj\subset\sB$.
 Assume that there exists a (complete, separated, right linear)
topological ring structure on the ring\/ $\R=\Hom_\sA(M,M)^\rop$
such that the abelian category\/ $\sB$ is equivalent to the abelian
category of left\/ $\R$\+contramodules\/ $\R\contra$ (this always holds
for\/ $\sA=A\modl$).
 Finally, assume that the topological ring\/ $\R$ satisfies one of
the conditions (a), (b), (c), or~(d) of the paper~\cite{Pproperf}
(e.~g., this holds if\/ $\R$ is commutative).
 Then the following conditions are equivalent:
\begin{enumerate}
\item the class of objects\/ $\Add(M)\subset\sA$ is covering;
\item every countable direct limit of copies of $M$ has
an\/ $\Add(M)$\+cover in\/~$\sA$;
\item the class of objects\/ $\Add(M)$ is closed under direct limits
in\/~$\sA$;
\item the class\/ $\sB_\proj$ is covering in\/~$\sB$;
\item any countable direct limit of copies of the projective
generator\/ $\R\in\sB$ has a projective cover in\/~$\sB$;
\item the class\/ $\sB_\proj$ is closed under direct limits in\/~$\sB$;
\item the object\/ $M\in\sA$ has a perfect decomposition;
\item all descending chains of cyclic discrete right $\R$\+modules
terminate;
\item all the discrete quotient rings of the topological ring\/ $\R$
are left perfect.
\end{enumerate}
 Replacing the assumption of one of the conditions~(a\+-d) with that
of one of the conditions~(e), (f), or~(g) (e.~g., if\/ $\R$ has
a countable base of neighborhoods of zero), the eight
conditions~\textup{(1\+-8)} are equivalent.
\end{thm}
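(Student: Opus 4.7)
The plan is to establish a dictionary between three parallel chains of conditions: those intrinsic to the ambient category $\sA$ (namely (1), (2), (3), (7)), those formulated inside the abelian category $\sB\simeq\R\contra$ (namely (4), (5), (6)), and those phrased purely in terms of the topological ring $\R$ (namely (8), (9)). Each chain is to be shown internally equivalent and then interlinked by the bridging functor $\Psi\:\sA\to\sB$ announced in the abstract --- most naturally, $\Psi(N)=\Hom_\sA(M,N)$ equipped with its canonical left $\R$\+contramodule structure.

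First, I would establish the horizontal transfer between $\sA$ and $\sB$ via~$\Psi$. The general principle from the abstract is that $N\in\sA$ has an $\Add(M)$\+cover if and only if $\Psi(N)$ has a projective cover in~$\sB$; applied uniformly, this yields (1)$\Leftrightarrow$(4). For (2)$\Leftrightarrow$(5), one needs THEC to guarantee that a countable direct limit of copies of $M$ in $\sA$ is sent by $\Psi$ to the corresponding countable direct limit of copies of $\R$ in~$\sB$ (this is essentially the content of THEC). The equivalence (3)$\Leftrightarrow$(6) is more delicate, since arbitrary direct limits in $\sA$ need not be preserved by $\Psi$; one has to argue that once $\Add(M)$ is stable under direct limits the comparison reduces to the countable case (handled by THEC) and to colimits already living inside $\Add(M)$, where the additive equivalence $\Add(M)\simeq\sB_\proj$ applies.

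Second, I would invoke the main results of~\cite{Pproperf} inside the abelian category $\sB\simeq\R\contra$. Under any of the assumptions (a)--(d) on~$\R$, the generalized Bass theorem of~\cite{Pproperf} yields (4)$\Leftrightarrow$(5)$\Leftrightarrow$(6)$\Leftrightarrow$(9), with~(9) being the correct generalization of the classical perfect-ring hypothesis to complete separated right linear topological rings. The equivalence (8)$\Leftrightarrow$(9) is then purely module-theoretic: left perfectness of each discrete quotient $\R/\fU$ corresponds, via the classical Bass theorem, to the DCC on principal right ideals of $\R/\fU$, and these DCCs reassemble into the DCC on cyclic discrete right $\R$\+modules. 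Finally, for (7)$\Leftrightarrow$(8) I would rely on the description of perfect decompositions of $M$ in terms of the topological endomorphism ring $\R$ developed in~\cite{PS3,BPS}, together with the cover-theoretic results relating perfect decompositions to the existence of $\Add(M)$\+covers of countable direct limits.

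The main obstacle, I expect, is the transfer of \emph{arbitrary} direct-limit closure (3)$\Leftrightarrow$(6), because THEC only controls \emph{countable} direct limits of copies of~$M$; one has to combine THEC with local presentability of $\sA$ to reduce a general filtered colimit either to the countable case or to a colimit ``$\Add(M)$\+internal'' enough to be detected by the additive equivalence with~$\sB_\proj$. A secondary technical hurdle is transporting perfect decompositions across~$\Psi$, since they are defined through transfinite factorizations with pure-monomorphism flavored constraints that must be matched between $M\in\sA$ and $\R\in\R\contra$. For the final assertion of the theorem, the same scheme goes through with the appeal to~\cite{Pproperf} replaced by the Bass-type results of~\cite{PS3,Ro} valid under (e), (f), or~(g); these suffice to establish (1)--(8) but do not, in general, reach~(9).
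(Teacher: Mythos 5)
Your overall architecture overstates what the adjunction $\Phi\dashv\Psi$ and THEC can deliver, and this creates genuine gaps. The cover-transfer principle only says that $N\in\sA$ has an $\Add(M)$\+cover iff $\Psi(N)$ has a projective cover; since $\Psi$ is far from essentially surjective, this gives (4)$\Rightarrow$(1) but \emph{not} (1)$\Rightarrow$(4), so your claim that (1)$\Leftrightarrow$(4) follows ``uniformly'' is unjustified. Likewise for (3)$\Leftrightarrow$(6): the direction (6)$\Rightarrow$(3) is formal ($\Phi$ is a left adjoint, so it preserves direct limits and sends projectives into $\Add(M)$), but your proposed resolution of (3)$\Rightarrow$(6) --- reducing an arbitrary filtered colimit to the countable case or to an ``$\Add(M)$\+internal'' one --- does not work: THEC controls only countable direct limits of copies of $M$, and for an uncountable diagram of projectives in $\sB$ there is no reason for the unit $\varinjlim B(\theta)\to\Psi\Phi(\varinjlim B(\theta))$ to be an isomorphism. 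The paper never attempts these two transfers directly. Its route is: (3)$\Rightarrow$(2) is trivial (once the countable limit lies in $\Add(M)$, the identity is a cover), THEC gives the genuinely formal bridge (2)$\Leftrightarrow$(5), and then the generalized Bass theorem for $\R\contra$ under (a)--(g) (the ``Main Conjecture'' established for contramodule categories via \cite{Pproperf,PS3}) upgrades (5) to (4), (6), (9); finally (6)$\Rightarrow$(3) and (4)$\Rightarrow$(1) (or, alternatively, the Enochs-type theorem that a precovering class closed under direct limits is covering in a locally presentable category --- an ingredient your proposal never invokes) close the loop on the $\sA$\+side. So the contramodule Bass theorem is not just the internal engine of the $\sB$\+chain; it is exactly what makes the problematic horizontal implications true.

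A second gap is your treatment of (8)$\Leftrightarrow$(9). The implication (8)$\Rightarrow$(9) is the easy one; the converse is \emph{not} a ``purely module-theoretic'' reassembly of DCC conditions, because a descending chain of cyclic discrete right $\R$\+modules need not be annihilated by a single open ideal (a countable intersection of open right ideals need not be open), so the chain does not live over any one discrete quotient ring. The implication (9)$\Rightarrow$(8) is one of the main theorems of \cite{Pproperf} (Theorems~10.1 and~12.4) and is available precisely under (a)--(d); this is why (9) drops out of the list under (e)--(g), as your last paragraph correctly records but your middle paragraph contradicts. Your handling of (7) via the results of \cite{PS3} relating perfect decompositions of $M$ to topological left perfectness of $\R$ is essentially the paper's argument and is fine.
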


 Notice that, even in the case of the category of modules $\sA=A\modl$,
one can sometimes choose between several topologies on the ring $\R$
for which the category $\sB$ in Theorem~\ref{introd-THEC-main-theorem}
is equivalent to $\R\contra$.
 In particular, when the $A$\+module $M$ is \emph{self-small}, i.~e.,
the natural map of abelian groups $\bigoplus_{i=0}^\infty
\Hom_A(M,M)\rarrow\Hom_A(M,\>\bigoplus_{i=0}^\infty M)$ is
an isomorphism, it suffices to endow the ring $\R$ with
the \emph{discrete} topology.
 Then the condition~(b) is satisfied.

 Furthermore, suppose that a left $A$\+module $M=\sum_{i=1}^\infty E_i$
is the sum of a countable family of its submodules $E_i\subset M$
such that the $A$\+modules $E_i$ are \emph{weakly finitely generated}
(known also as ``small'' or ``dually slender'').
 This means that for any family of left $A$\+modules $(N_x)_{x\in X}$,
the natural map $\bigoplus_x\Hom_A(E_i,N_x)\rarrow
\Hom_A(E_i,\>\bigoplus_xN_x)$ is an isomorphism for every
$i=1$, $2$,~\dots{}
 Then one can endow the ring $\R$ with the \emph{weakly finite}
topology, and the condition~(e) is satisfied
(cf.~\cite[Section~7.2]{BPS}).

\subsection{{}}
 Specializing to the tilting context, we prove the following theorem
with our methods (cf.~\cite[Theorem~5.2 and Corollary~5.5]{AST}).

\begin{thm} \label{introd-tilting-main-theorem}
 Let\/ $\sA$ be a Grothendieck abelian category and $T\in\sA$ be
an $n$\+tilting object.
 Let $(\sL,\sE)$ denote the induced $n$\+tilting cotorsion pair
in\/~$\sA$, and let\/ $\sB$ denote the heart of the related $n$\+tilting
t\+structure on\/ $\sD(\sA)$.
 Assume that there exists a (complete, separated, right linear)
topology on the ring\/ $\R=\Hom_\sA(T,T)^\rop$ such that the abelian
category\/ $\sB$ is equivalent to the abelian category of left\/
$\R$\+contramodules\/ $\R\contra$ (this always holds when\/ $\sA$ is
a locally weakly finitely generated abelian category).
 Finally, assume that the topological ring\/ $\R$ satisfies one of
the conditions (a), (b), (c), or~(d). 
 Then the following conditions are equivalent:
\begin{enumerate}
\item the class\/ $\sL$ is covering in\/~$\sA$;
\item any countable direct limit of copies of $T$ has
an\/ $\sL$\+cover in\/~$\sA$;
\item the class\/ $\sL$ is closed under direct limits in\/~$\sA$;
\item the class\/ $\Add(T)$ is covering in\/~$\sA$;
\item any countable direct limit of copies of $T$ has
an\/ $\Add(T)$\+cover in\/~$\sA$;
\item the class\/ $\Add(T)$ is closed under direct limits in\/~$\sA$;
\item any or all of the equivalent conditions (4\+-6) of
Theorem~\ref{introd-THEC-main-theorem} hold for the category\/
$\sB=\R\contra$;
\item the object\/ $T\in\sA$ has a perfect decomposition;
\item all descending chains of cyclic discrete right\/ $\R$\+modules
terminate;
\item all the discrete quotient rings of the topological ring\/ $\R$
are left perfect.
\end{enumerate}
 Replacing the assumption of one of the conditions~(a\+-d) with that
of one of the conditions~(e), (f), or~(g), the nine
conditions~\textup{(1\+-9)} are equivalent.
\end{thm}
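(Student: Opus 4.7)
The plan is to reduce Theorem~\ref{introd-tilting-main-theorem}
to Theorem~\ref{introd-THEC-main-theorem} applied with $M=T$.
 First I would verify the hypotheses of that theorem in the present
setting: every $n$\+tilting object is in particular
self\+pure\+projective (from $\Ext^{\geq 1}_\sA(T,T^{(\kappa)})=0$
for all cardinals~$\kappa$), so by the remarks in the introduction
$T$ satisfies THEC\@; and the heart $\sB$ of the $n$\+tilting
t\+structure has enough projectives with $\sB_\proj$ equivalent to
$\Add(T)$ via $\Hom_\sA(T,{-})$, which is the content of
the categorical $n$\+tilting--cotilting correspondence recalled
earlier in the paper.
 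Under the assumed description $\sB\simeq\R\contra$ and any one of
(a\+-d) (respectively (e\+-g)), Theorem~\ref{introd-THEC-main-theorem}
then immediately yields the equivalence of conditions~(4)\+-(10)
(respectively (4)\+-(9)).

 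To link this cluster to the $\sL$\+conditions (1), (2), (3), I would
use the inclusion $\sL\subseteq\varinjlim\Add(T)$, valid for every
$n$\+tilting object in a Grothendieck category
(Bazzoni--Herbera in the module case, with extensions to
the categorical setting).
 Combined with the tautological $\Add(T)\subseteq\sL$, this
identifies condition~(6) with the equality $\sL=\Add(T)$.
 Hence whenever any one of~(4)\+-(10) holds, $\sL=\Add(T)$, and
the equivalences (1)$\Leftrightarrow$(4), (2)$\Leftrightarrow$(5),
(3)$\Leftrightarrow$(6) become automatic.
 The implication (3)$\Rightarrow$(1) is, additionally, Enochs' 1981
theorem, since $\sL$ is always precovering as the left class of
a complete cotorsion pair generated by a set in the Grothendieck
category\/~$\sA$.

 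What remains is to produce one non\+trivial implication from
$\{(1),(2),(3)\}$ into $\{(4),\dots,(10)\}$, and this is the
substantial content of the argument.
 I would proceed along (1)$\Rightarrow$(2)$\Rightarrow$(5): the first
step is immediate, and for the second I would transfer a countable
direct limit $D$ of copies of $T$, together with its hypothetical
$\sL$\+cover, to $\sB\simeq\R\contra$ along the functor
$\Psi=\Hom_\sA(T,{-})$ used throughout the paper.
 Self\+pure\+projectivity of $T$ should ensure that this transfer
carries the $\sL$\+cover of $D$ to a projective cover of $\Psi(D)$
in $\sB$, and that every countable direct limit of copies of
the projective generator $\R\in\sB$ is of the form $\Psi(D)$ for
some such~$D$.
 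This yields condition~(5) of
Theorem~\ref{introd-THEC-main-theorem}, which then propagates to
all of~(4)\+-(10) by the first paragraph.

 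I expect the main obstacle to be this technical matching along
$\Psi$: precisely correlating $\sL$\+covers of countable direct
limits of $T$ in $\sA$ with projective covers of the corresponding
countable direct limits of $\R$ in $\sB$, and verifying that every
such countable direct limit in $\sB$ actually arises in this way
(so that (5) in $\sB$ is fully extracted from (2) in $\sA$).
 All remaining implications are then either formal, inherited from
Theorem~\ref{introd-THEC-main-theorem}, or forced by
the identification $\sL=\Add(T)$.
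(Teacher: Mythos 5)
There is a genuine gap, and it sits exactly at the bridge between the $\sL$\+conditions (1\+-3) and the $\Add(T)$\+conditions (4\+-10). Your bridge rests on the claimed inclusion $\sL\subseteq\varinjlim\Add(T)$, from which you deduce that condition~(6) forces $\sL=\Add(T)$. This inclusion is false in general, already for the simplest $1$\+tilting module: take $\sA=\boZ\modl$ and $T=\boQ\oplus\boQ/\boZ$. Then $\sE$ is the class of divisible groups, which are injective over $\boZ$, so $\sL={}^{\perp_1}\sE$ is the class of \emph{all} abelian groups, while $\varinjlim\Add(T)=\Add(T)$ is only the class of divisible groups. In this example (6) holds but $\sL\neq\Add(T)$, so your claimed identification of~(6) with the equality $\sL=\Add(T)$, and with it the ``automatic'' equivalences (1)$\Leftrightarrow$(4), (2)$\Leftrightarrow$(5), (3)$\Leftrightarrow$(6), collapse. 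The paper never uses such an inclusion: the passage between the two clusters goes through the hereditary complete cotorsion pair $(\sL,\sE)$. Concretely, for an object of $\sE$ an $\sL$\+cover is the same thing as an $\sL\cap\sE=\Add(T)$\+cover (Lemma~\ref{cotorsion-kernel-covers-lemma}, via Wakamatsu's lemma); the nontrivial pushout argument of Lemma~\ref{hereditary-complete-covers-lemma} shows that $\sL$\+covers of objects of $\sE$ already yield $\sL$\+covers of all objects of $\sA$; and the direct-limit statements are transferred between $\sL$, $\Add(T)$ and $\sB_\proj$ by Corollary~\ref{grothendieck-direct-limit-closedness-cor}, whose proof needs functorial special $\sE$\+preenvelopes (small object argument), not an identity $\sL=\Add(T)$. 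These are the ingredients assembled in Proposition~\ref{L-covering-iff-R-properfect} and Theorem~\ref{tilting-direct-limits-covers-thm}, which is how the paper deduces the theorem.

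Your proposed implication (2)$\Rightarrow$(5) has the same defect in miniature: the covering object of an $\sL$\+cover of a countable direct limit $D$ of copies of $T$ lies in $\sL$, not in $\Add(T)$, so applying $\Psi=\Hom_\sA(T,{-})$ does not produce a projective object of $\sB$, and self-pure-projectivity of $T$ does not repair this. What is actually needed is that $D\in\sE$ (Lemma~\ref{countable-direct-limits-phi-psi-lemma}), whence by Wakamatsu's lemma the kernel of the $\sL$\+cover lies in $\sE$ and the covering object lies in $\sL\cap\sE=\Add(T)$, i.e., the $\sL$\+cover is already an $\Add(T)$\+cover; only then does the equivalence $\sE\cong\sF$ (or Corollary~\ref{small-THEC-corollary}) convert it into a projective cover in $\sB$. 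Your first step --- verifying THEC for $T$ and invoking Theorem~\ref{introd-THEC-main-theorem} to equate (4\+-10) --- is essentially the paper's route (though THEC is cleanest via $\Sigma$\+rigidity; self-pure-projectivity of an $n$\+tilting object in an abstract Grothendieck category needs the tilting class to be closed under pure subobjects, which the paper circumvents), but without the cotorsion-pair lemmas above the conditions (1\+-3) are not reached.
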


\subsection{{}}
 In the full generality (without any of the assumptions mentioned in
Section~\ref{introd-assumptions-subsecn}), we make the following
simple observations.

 Let $\sA$ be an additive category with cokernels and (weak) kernels,
and $\sL\subset\sA$ be a precovering class of objects closed under
direct summands.
 Viewing $\sL$ as a full subcategory in $\sA$, we notice that $\sL$
has weak kernels, too.
 So there exists a unique abelian category $\sB$ with enough projectives
such that the full subcategory of projectives in $\sB$ is equivalent
to~$\sL$ \cite[Corollary~1.5]{Fr}, \cite[Proposition~2.3]{Kra}.
 Furthermore, the equivalence of categories $\sB_\proj\cong\sL$
can be naturally extended to a pair of adjoint functors
$\Phi\:\sB\rarrow\sA$ and $\Psi\:\sA\rarrow\sB$ (where $\Psi$ is
the right adjoint).

 Let $N\in\sA$ be an object.
 Then $N$ has an $\sL$\+cover in $\sA$ if and only if the object
$\Psi(N)\in\sB$ has a projective cover.
 More specifically, given an object $L\in\sL$ and the related
object $\Psi(L)=P\in\sB_\proj$, a morphism $l\:L\rarrow N$ is
an $\sL$\+cover if and only if the morphism
$\Psi(l)\:P\rarrow\Psi(N)$ is a projective cover.
 Hence the class $\sL$ is covering in $\sA$ if and only if
all the objects in the essential image of the functor $\Psi$ have
projective covers in~$\sB$.

\subsection{{}}
 In the final sections of the paper, we discuss the class of examples
for Theorem~\ref{introd-tilting-main-theorem} provided by the tilting
modules and objects arising from injective homological ring epimorphisms
of projective dimension~$1$.
 Here our discussion is based on the paper~\cite{BP2}.

 In fact, there are two classes of examples.
 Let $u\:R\rarrow U$ be an injective homological epimorphism of
associative rings such that the projective dimension of the left
$R$\+module $U$ does not exceed~$1$.
 Then the left $R$\+module $U\oplus U/R$ is $1$\+tilting.
 If the ring $R$ is commutative, then the condition~(d) is satisfied for
the topological ring $\S$ of endomorphisms of the $R$\+module
$U\oplus U/R$, and Theorem~\ref{introd-tilting-main-theorem} is
applicable for $\sA=R\modl$ and $T=U\oplus U/R$.

 Assume additionally that the flat dimension of the right $R$\+module
$U$ does not exceed~$1$.
 Then we consider the full subcategory $\sA=R\modl_{u\co}$ of what
we call \emph{left $u$\+comodules} in the category of left
$R$\+modules $R\modl$.
 The category $\sA$ is a Grothendieck abelian category, and the left
$R$\+module $U/R$ is a $1$\+tilting object in~$\sA$.
 If the ring $R$ is commutative, then so is the topological ring
$\R=\Hom_R(U/R,U/R)^\rop$, and
Theorem~\ref{introd-tilting-main-theorem} is applicable for
$\sA=R\modl_{u\co}$ and $T=U/R$.

\subsection{{}}
 In conclusion, let us say a few words about how our results compare
to those of the paper~\cite{AST}.
 Our results are both more and less general than the results
of~\cite{AST}.
 On the one hand, the paper~\cite{AST} only deals with cotorsion pairs
in module categories, while we work in more general additive and
abelian categories.
 On the other hand, the main results of the present paper require
one of the rather restrictive conditions~(a), (b), (c), (d), (e), (f),
or~(g), while there are no comparable assumptions in~\cite{AST}.

 Even for module categories $\sA=A\modl$, our
Theorem~\ref{introd-THEC-main-theorem} is both stronger and weaker
than the results of~\cite{AST}.
 On the one hand, we do not assume that the object $M$ belongs to
the kernel of a cotorsion pair.
 The running assumption in~\cite{AST} is that of a cotorsion pair
$(\sL,\sE)$ in $A\modl$ such that the right-hand class $\sE$ is
closed under direct limits.
 Under this assumption, any module $M\in\sL\cap\sE$ satisfies our
telescope Hom exactness condition (in fact, it is enough that
$\sE$ be closed under countable coproducts).
 So, in this respect, our setting is more general.

 On the other hand, the assertions of~\cite[Theorem~5.2 and
Corollary~5.5]{AST} tell more than those of our theorems.
 In particular, \cite[Corollary~5.5\,(5)]{AST} allows to conclude
that the module in the kernel of the cotorsion pair is
$\Sigma$\+pure-split, while we only prove that our object $M$
has a perfect decomposition.

\subsection{Acknowledgement}
 The authors are grateful to Jan \v St\!'ov\'\i\v cek, Michal Hrbek,
Rosanna Laking, and Jan \v Saroch for very helpful discussions.
 We wish to thank an anonymous referee for careful reading of
the manuscript and for several helpful suggestions.
 In particular, the argument in the proof of
Corollary~\ref{uncountable-direct-limit-closedness-cor} is due to
the referee.
 The first-named author was partially supported by MIUR-PRIN
(Categories, Algebras: Ring-Theoretical and Homological
Approaches-CARTHA) and DOR1828909 of Padova University.
 The second-named author is supported by the GA\v CR project 20-13778S
and research plan RVO:~67985840.

\Section{Contramodules over Topological Rings}
\label{contramodules-secn}

 Cocomplete abelian categories with enough projective objects,
and more specifically contramodule categories, play a key role in
this paper.
 In this section, we briefly recall the basic material related to
contramodules over complete, separated topological rings with
right linear topologies.
 More details can be found in~\cite[Section~1]{Pproperf},
\cite[Section~2]{Pcoun}, \cite[Introduction and Section~5]{PR},
\cite[Section~6]{PS1}, and~\cite[Section~1]{Pper}.

\subsection{Linear topological abelian groups}
 A topological abelian group $A$ is said to have a \emph{linear
topology} if open subgroups form a base of neighborhoods of zero in~$A$.
 A topological abelian group $A$ with a linear topology (a ``linear
topological abelian group'', for brevity) is \emph{separated} if
the natural map $\lambda_A\:A\rarrow\varprojlim_{U\subset A}A/U$, where
$U$ ranges over the open subgroups of $A$, is injective, and $A$ is
\emph{complete} if the map~$\lambda_A$ is surjective.
 Obviously, $A$ is separated if and only if the intersection of all its
open subgroups is zero.

 For any abelian group $A$ and a set $X$, we use the notation
$A[X]=A^{(X)}$ for the coproduct of $X$ copies of~$A$.
 The elements of $A[X]$ are interpreted as finite formal linear
combinations of elements of $X$ with the coefficients in~$A$.

 Let $\fA$ be a complete, separated linear topological abelian group.
 For any set $X$, we denote by $\fA[[X]]$ the projective limit
$$
 \fA[[X]]=\varprojlim\nolimits_{\fU\subset\fA}(\fA/\fU)[X],
$$
where $\fU$ ranges over all the open subgroups of~$\fA$.
 Equivalently, $\fA[[X]]$ is the group of all infinite formal linear
combinations $\sum_{x\in X} a_xx$ of elements of the set $X$ with
the coefficients $a_x\in\fA$ such that the family of coefficients
$(a_x)_{x\in X}$ converges to zero in $\fA$ in the following sense:
for any open subgroup $\fU\subset\fA$, the set of all indices $x\in X$
for which $a_x\notin\fU$ must be finite.

 For any complete, separated linear topological abelian group $\fA$
and any map of sets $f\:X\rarrow Y$ there is a naturally induced
``push-forward'' map $\fA[[f]]\:\fA[[X]]\rarrow\fA[[Y]]$ taking
a formal linear combination $\sum_{x\in X}a_xx$ to the formal linear
combination $\sum_{y\in Y}b_yy$ with the coefficients
$b_y=\sum_{x:f(x)=y}a_x$.
 Here the latter sum is understood as the limit of finite partial
sums in the topology of $\fA$; the convergence condition on
the family of elements $(a_x)_{x\in X}$ together with the conditions
of separatedness and completeness of $\fA$ guarantee that
the coefficients~$b_y$ are well-defined (and form a family of elements
$(b_y)_{y\in Y}$ which again converges to zero in~$\fA$).
 This construction shows that the assignment $X\longmapsto \fA[[X]]$
is a functor from the category of sets to the category of sets or even
abelian groups.

\subsection{Monads on $\Sets$} \label{monads-subsecn}
 A \emph{monad} $\boT$ on the category of sets is a functor
$\boT\:\Sets\rarrow\Sets$ endowed with natural transformations of
\emph{monad unit} $\epsilon\:\Id_\Sets\rarrow\boT$ and \emph{monad
multiplication} $\phi\:\boT\circ\boT\rarrow\boT$ satisfying
the following associativity and unitality equations.
 The two natural maps $\boT(\phi_X)\:\boT(\boT(\boT(X)))
\rarrow\boT(\boT(X))$ and $\phi_{\boT(X)}\:\boT(\boT(\boT(X)))\rarrow
\boT(\boT(X))$ should have equal compositions with the map
$\phi_X\:\boT(\boT(X))\rarrow\boT(X)$ for any set $X$,
$$
 \boT\circ\boT\circ\boT\rightrightarrows\boT\circ\boT\rarrow\boT,
$$
and both the natural maps $\boT(\epsilon_X)\:\boT(X)\rarrow
\boT(\boT(X))$ and $\epsilon_{\boT(X)}\:\boT(X)\rarrow\boT(\boT(X))$
composed with the natural map~$\phi_X$ should be equal to the identity
endomorphism of the set $\boT(X)$,
$$
 \boT\rightrightarrows\boT\circ\boT\rarrow\boT.
$$
 Here $\epsilon_X$ denotes the map $X\rarrow\boT(X)$ assigned to
an object $X\in\Sets$ by the natural transformation~$\epsilon$,
and similarly, $\phi_X\:\boT(\boT(X))\rarrow\boT(X)$ is the map
assigned to $X$ by the natural transformation~$\phi$.

 A \emph{module} (or, in a more standard terminology, an algebra)
over a monad $\boT\:\Sets\rarrow\Sets$ is a set $C$ endowed with
a map of sets $\pi_C\:\boT(C)\rarrow C$, called the \emph{monad action}
map, satisfying the following associativity and unitality equations.
 The compositions of the two maps $\phi_C$ and $\boT(\pi_C)\:
\boT(\boT(C))\rarrow\boT(C)$ with the map~$\pi_C$ should be equal to
each other,
$$
 \boT(\boT(C))\rightrightarrows\boT(C)\rarrow C,
$$
and the composition of the map $\epsilon_C\:C\rarrow\boT(C)$ with
the map $\pi_C\:\boT(C)\rarrow C$ should be equal to the identity
map~$\id_C$,
$$
 C\rarrow\boT(C)\rarrow C.
$$

 A \emph{morphism of\/ $\boT$\+modules} $f\:B\rarrow C$ is a map of
sets for which the following square diagram is commutative:
$$
\begin{diagram}
\node{\boT(B)} \arrow{e,t}{\pi_B}\arrow{s,l}{\boT(f)} 
\node{B} \arrow{s,l}{f} \\
\node{\boT(C)} \arrow{e,t}{\pi_C} \node{C}
\end{diagram}
$$
 The composition of morphisms of $\boT$\+modules is defined in
the obvious way.

 For any monad $\boT\:\Sets\rarrow\Sets$, the category
of $\boT$\+modules $\boT\modl$ is complete and cocomplete.
 For any set $X$, the set $\boT(X)$ with the action map
$\pi_{\boT(X)}=\phi_X$ is a $\boT$\+module; such $\boT$\+modules
are called the \emph{free} $\boT$\+modules.
 For any $\boT$\+module $C$, morphisms of $\boT$\+modules $\boT(X)
\rarrow C$ are in bijective correspondence with maps of sets
$X\rarrow C$.

 A monad $\boT\:\Sets\rarrow\Sets$ is said to be \emph{additive}
if the category of $\boT$\+modules $\boT\modl$ is additive.
 In this case, the underlying set of every $\boT$\+module has
a natural abelian group structure; so the forgetful functor
$\boT\modl\rarrow\Sets$ lifts naturally to a forgetful functor
$\boT\modl\rarrow\Ab$.
 For any additive monad $\boT$, the category $\boT\modl$ is abelian;
the forgetful functor $\boT\modl\rarrow\Ab$ is faithful, exact,
and preserves all limits~\cite[Lemma~1.1]{Pper}.
 For any additive monad $\boT$, the abelian category of $\boT$\+modules
$\boT\modl$ has enough projective objects.
 A $\boT$\+module is projective if and only if it is a direct summand
of a free $\boT$\+module.

\subsection{Right linear topological rings}
 All \emph{rings} in this paper are presumed to be associative
and unital.
 A topological ring $R$ is said to have a \emph{right linear topology}
if open right ideals form a base of neighborhoods of zero in~$R$.
 A \emph{two-sided linear topology} on $R$ is a topology in which
open two-sided ideals form a base of neighborhoods of zero.
 When the ring $R$ is commutative, one simply says that ``$R$ has
a linear topology'' if open ideals form a base of neighborhoods of zero.
 A topological ring with a right (resp., two-sided) linear topology is
called \emph{right} (resp., \emph{two-sided}) \emph{linear topological}
(or just ``linear topological'', if the ring is commutative).

 Let $\R$ be a complete, separated right linear topological ring.
 Then the functor $\boT_\R\:X\longmapsto\R[[X]]$ has a natural structure
of a monad on the category of sets.
 By the definition (see Section~\ref{monads-subsecn}), this means that
there are natural transformations of monad unit
$\epsilon\:\Id_{\Sets}\rarrow\boT_\R$ and monad multiplication
$\phi\:\boT_\R\circ\boT_\R\rarrow\boT_\R$ satisfying
the associativity and unitality equations.

 For any set $X$, the natural ``point measure'' map
$\epsilon_X\:X\rarrow\R[[X]]$ assigns to an element $x\in X$
the formal linear combination $\sum_{z\in X}r_zz$ with the coefficients
$r_x=1$ and $r_z=0$ for $z\ne x$.
 The natural ``opening of parentheses'' map $\phi_X\:\R[[\R[[X]]]]
\rarrow\R[[X]]$ assigns to a formal linear combination
$\sum_{y\in\R[[X]]}r_yy$, where $y=\sum_{x\in X}s_{y,x}x\in\R[[X]]$
and $r_y$, $s_{y,x}\in\R$, the formal linear combination
$\sum_{x\in X}t_xx\in\R[[X]]$ with the coefficients
$t_x=\sum_{y\in\R[[X]]}r_ys_{y,x}\in\R$.
 Here the infinite sum in the construction of the coefficient~$t_x$
is understood as the limit of finite partial sums in the topology
of $\R$, and the conditions of right linear topology, completeness,
and separatedness imposed on the ring $\R$ guarantee the convergence.

\subsection{Contramodules} \label{introd-contramodules-subsecn}
 A \emph{left contramodule} over a complete, separated right linear
topological ring $\R$ is a module (or, in the more standard terminology,
an algebra) over the monad~$\boT_\R$.
 In other words, a left $\R$\+contramodule $\C$ is a set endowed with
a \emph{left contraaction} map $\pi_\C\:\R[[\C]]\rarrow\C$ satisfying
the associativity and unitality equations written down in
Section~\ref{monads-subsecn}.
 
 Restricting the map~$\pi_\C$ to the subset of finite formal
linear combinations $\R[X]\subset\R[[X]]$, one obtains the structure
of a module over the monad $X\longmapsto\R[X]$ on the underlying set
of every left $\R$\+contramodule, which is the same as a left
$\R$\+module structure.
 This construction defines a natural forgetful functor $\R\contra
\rarrow\R\modl$ from the category of left $\R$\+contramodules to
the category of left $\R$\+modules.
 The monad $\boT_\R$ is additive, the category $\R\contra$ is abelian,
and the forgetful functor $\R\contra\rarrow\R\modl$ is exact and
preserves infinite products (but not coproducts).
 
 For any set $X$, the free $\boT_\R$\+module $\boT_\R(X)=\R[[X]]$
(with the contraaction map $\pi_{\R[[X]]}=\phi_X$) is called
the \emph{free} left $\R$\+contramodule generated by~$X$.
 Following the discussion in Section~\ref{monads-subsecn}, for every
left $\R$\+contramodule $\C$, left $\R$\+contramodule morphisms
$\R[[X]]\rarrow\C$ are in bijective correspondence with maps of
sets $X\rarrow\C$,
$$
 \Hom^\R(\R[[X]],\C)\cong\Hom_\Sets(X,\C),
$$
where we denote by $\Hom^\R(\C,\D)$ the group of morphisms between
any two objects $\C$ and $\D$ in the category $\R\contra$.
 There are enough projective objects in the abelian category
$\R\contra$; a left $\R$\+contramodule is projective if and only if
it is a direct summand of a free left $\R$\+contramodule.

\subsection{Discrete modules}
 Let $R$ be a right linear topological ring.
 A right $R$\+module $N$ is said to be \emph{discrete} if, for every
element $x\in N$, the annihilator of~$x$ in $R$ is an open right ideal.
 Equivalently, this means that the action map $N\times R\rarrow N$ is
continuous in the given topology on $R$ and the discrete topology
on~$N$.
 The full subcategory of discrete right $R$\+modules $\discr R$ is
closed under subobjects, quotient objects, and infinite direct sums
in the abelian category of right $R$\+modules $\modr R$ (in other words,
$\discr R\subset\modr R$ is a hereditary pretorsion class).
 It follows that $\discr R$ is a locally finitely generated
Grothendieck abelian category.

\Section{Generalized Tilting Theory}

 Let $\sA$ be an additive category with set-indexed coproducts, and let
$\sB$ be an additive category with set-indexed products.
 For any object $T\in\sA$ and any set $X$, we denote by $T^{(X)}\in\sA$
the coproduct of $X$ copies of $T$ in~$\sA$.
 For any object $W\in\sB$ and any set $X$, we denote by $W^X\in\sB$
the product of $X$ copies of $W$ in~$\sB$.

 Furthermore, we denote by $\Add(T)=\Add_\sA(T)\subset\sA$ the class of
all direct summands of the coproducts $T^{(X)}$ of copies of the object
$T$ in the category~$\sA$.
 Similarly, we denote by $\Prod(W)=\Prod_\sB(W)\subset\sB$ the class of
all direct summands of the products $W^X$ of copies of the object $W$
in~$\sB$.

 Given an exact category $\sE$ (in Quillen's sense), we denote by
$\sE_\inj$ and $\sE_\proj\subset\sE$ the classes of all injective
and projective objects in $\sA$, respectively.
 In particular, this notation applies to abelian categories.

 Let $\sA$ be an idempotent-complete additive category with set-indexed
coproducts, and let $M\in\sA$ be an object.
 In this section we recall the description of the category $\Add(M)$ as
the category $\sB_\proj$ of projective objects in a certain abelian
category~$\sB$.
 This material first appeared in~\cite[Section~6]{PS1}
and~\cite[Section~1]{PS2}.

\begin{rem}
 The latter two references are papers in tilting theory.
 So let us briefly explain the connection, which will also explain
the title of this section and its first subsection, following below.
 In the infinitely generated tilting theory, one assigns to
a cocomplete abelian category $\sA$ with an $n$\+tilting object $T$
another abelian category $\sB$, which is constructed as the heart of
the tilting t\+structure on the derived category $\sD(\sA)$.
 One observes that the abelian category $\sB$ has enough projective
objects, and the full subcategory of projective objects in $\sB$
is equivalent to the full subcategory $\Add(T)\subset\sA$.
 (See Section~\ref{tilting-cotilting-secn} for a detailed discussion.)
 The next observation is that one does not need a tilting object to
perform such a construction: for any object $M\in\sA$, there exists
a unique abelian category $\sB$ with enough projective objects
such that $\sB_\proj\cong\Add(M)$.
 Hence the name ``generalized tilting theory'' which we give to this
categorical construction and its basic properties.
\end{rem}

\subsection{Generalized tilting theory}
\label{generalized-tilting-subsecn}
 Let $\sA$ be a category with coproducts and $M\in\sA$ be an object.
 Consider the pair of adjoint functors
$$
 \Phi\:\Sets\rightleftarrows\sA\,:\!\Psi
$$
defined as follows.
 For any set $X$, the object $\Phi(X)=M^{(X)}$ is the coproduct of
$X$ copies of $M$ in~$\sA$.
 For any object $N\in\sA$, the set $\Psi(N)=\Hom_\sA(M,N)$ is
the set of all morphisms $M\rarrow N$ in the category~$\sA$.
 The composition of the two adjoint functors $\boT_M=\Psi\circ\Phi\:
\Sets\rarrow\Sets$, taking a set $X$ to the set $\boT_M(X)=
\Hom_\sA(M,M^{(X)})$, acquires a natural structure of a monad on
the category of sets (see Section~\ref{monads-subsecn}).
 According to~\cite[Proposition~6.2]{PS1}, the full subcategory formed
by the objects $M^{(X)}$, \ $X\in\Sets$, in the category $\sA$ is
equivalent to the full subcategory of free $\boT_M$\+modules $\boT_M(X)$
in $\boT\modl$.

 Let $\sB$ be a cocomplete abelian category with a projective
generator~$P$.
 Then the related monad $\boT_P\:X\longmapsto\Hom_\sB(P,P^{(X)})$ is
additive, and the abelian category $\sB$ is equivalent to the abelian
category of $\boT_P$\+modules~\cite[Corollary~6.3]{PS1}:
\begin{equation} \label{cocomplete-abelian-proj-gen-monad}
 \sB\cong\boT_P\modl.
\end{equation}
 The equivalence of categories~\eqref{cocomplete-abelian-proj-gen-monad}
takes the projective generator $P\in\sB$ to the free $\boT_P$\+module
with one generator $\boT_P(*)$.

 Let $\sA$ be an idempotent-complete additive category with coproducts
and $M\in\sA$ be an object.
 Then $\boT_M\:\Sets\rarrow\Sets$ is an additive monad, and
$\sB=\boT_M\modl$ is a complete, cocomplete abelian category with enough
projective objects.
 The full subcategory of projective objects $\sB_\proj\subset\sB$ is
equivalent to the full subcategory $\Add(M)\subset\sA$
\cite[Theorem~1.1(a)]{PS2}, \cite[Theorem~3.13]{PS3}:
\begin{equation} \label{generalized-tilting-equivalence}
 \sB\supset\sB_\proj\cong\Add(M)\subset\sA.
\end{equation}
 The equivalence of categories~\eqref{generalized-tilting-equivalence}
takes the object $M\in\Add(M)$ to the free $\boT_M$\+module with one
generator $P=\boT_M(*)\in\boT_M\modl=\sB$, which is a projective
generator of~$\sB$.

 Assume that $\sA$ is a cocomplete additive category.
 Then the equivalence of full 
subcategories~\eqref{generalized-tilting-equivalence}
extends naturally to a pair of adjoint functors between the ambient
additive/abelian categories \cite[Section~1]{PS2}:
\begin{equation}
 \Phi_M\:\sB\rightleftarrows\sA\,:\!\Psi_M.
\end{equation}
 The right adjoint functor $\Psi_M\:\sA\rarrow\boT_M\modl$ takes
an object $N\in\sA$ to the set $\Hom_\sA(M,N)$ endowed with
the $\boT_M$\+module structure provided by the map
$$
 \pi_{\Psi_M(N)}\:\boT_M(\Hom_\sA(M,N))=
 \Hom_\sA(M,M^{(\Hom_\sA(M,N))})\rarrow\Hom_\sA(M,N)
$$
of composition with the natural morphism
$M^{(\Hom_\sA(M,N))}\rarrow N$ in the category~$\sA$
(cf.~\cite[Remark~6.4]{PS1}).
 The left adjoint functor $\Phi_M\:\sB\rarrow\sA$ can be obtained as
the extension of the fully faithful embedding $\sB_\proj\cong\Add(M)
\rarrow\sA$ to a right exact functor $\sB\rarrow\sA$.
 The restrictions of the functors $\Phi_M$ and $\Psi_M$ to the full
subcategories $\sB_\proj\subset\sB$ and $\Add(M)\subset\sA$ take these
two full subcategories into each other, providing the equivalence
$\sB_\proj\cong\Add(M)$.

\subsection{Contramodules in generalized tilting theory}
 For many additive categories $\sA$ with coproducts, the monads
$\boT_M$ associated with objects $M\in\sA$ have the form
$\boT_M\cong\boT_\R$ for certain complete, separated, right linear
topological rings~$\R$.
 In particular, this is the case for the categories $\sA=A\modl$ of
modules over associative rings~$A$.

 The first related observation is that, for every monad $\boT$ on
the category of sets, the set $\boT(*)$ assigned by the functor $\boT$
to a one-element set~$*$ has a natural monoid structure.
 In fact, the set $\boT(*)=\Hom_{\boT\modl}(\boT(*),\boT(*))$ is the set
of $\boT$\+module endomorphisms of the free $\boT$\+module with one
generator $\boT(*)$.
 We will follow the convention that the multiplication in $\boT(*)$ is
opposite to the composition of endomorphisms (so the monoid $\boT(*)$
acts in the object $\boT(*)\in\boT\modl$ on the right).

 For every additive monad $\boT$, the set $\boT(*)$ has a natural
structure of associative ring.
 In the case of the monad $\boT_M$ for an object $M\in\sA$, the related
ring $\boT_M(*)=\Hom_\sA(M,M)^\rop$ is the opposite ring to the ring of
endomorphisms of the object~$M$.
 In the case of the monad $\boT_\R$ for a topological ring $\R$,
the related ring is $\boT_\R(*)=\R$.
 Thus, given an object $M\in\sA$, in order to find a topological ring
$\R$ for which the monad $\boT_M$ is isomorphic to the monad $\boT_\R$,
one has to endow the endomorphism ring $\R=\Hom_\sA(M,M)^\rop$ with
an appropriate complete, separated right linear topology.

 Additive categories $\sA$ with set-indexed coproducts in which
the groups of morphisms $\Hom_\sA(M,N)$ carry topologies appropriate for
the task are called \emph{topologically agreeable categories}
in~\cite{PS3}.
 In fact, it often happens that a given category $\sA$ can be endowed
with several topologically agreeable structures, differing slightly
from one another.
  
\begin{exs} \label{module-endomorphism-topologies}
 (1)~Let $A$ be an associative ring and $\sA=A\modl$ be the category of
left $A$\+modules.
 Then, for $M$, $N\in\sA$, the abelian group $\Hom_A(M,N)$ can be
endowed with what is known as the \emph{finite topology}, in which
annihilators of finite subsets (or equivalently, of finitely generated
submodules) $E\subset M$ form a base of neighborhoods of zero
in $\Hom_A(M,N)$.

 The finite topology on $\Hom_A(M,N)$ is complete and separated; and
the ring $\Hom_A(M,M)$ is a \emph{left} linear topological ring in
the finite topology.
 So the ring $\R=\Hom_A(M,M)^\rop$ is a right linear topological ring.
 The monad $\boT_M\:X\longmapsto\Hom_A(M,M^{(X)})$ is isomorphic to
the monad $\boT_\R\:X\longmapsto\R[[X]]$.
 Thus the abelian category $\sB=\boT_M\modl$ is equivalent to
$\R\contra$ \cite[Theorem~7.1]{PS1}.

\smallskip
 (2)~In the setting of~(1), we say that a left $A$\+module $E$ is
\emph{weakly finitely generated} if, for any family of left $A$\+modules
$(N_x)_{x\in X}$, the natural map $\bigoplus_x\Hom_A(E,N_x)\rarrow
\Hom_A(E,\>\bigoplus_xN_x)$ is an isomorphism.
 Equivalently, this means that every $A$\+module morphism $E\rarrow
\bigoplus_xN_x$ factorizes through the direct sum of the modules $N_x$
over a finite subset of indices $x\in Z\subset X$, \,$|Z|<\infty$.
 Such modules $E$ are known in the literature as ``dually slender''
or ``small''.

 For any left $A$\+modules $M$ and $N$, the \emph{weakly finite topology}
on the abelian group $\Hom_A(M,N)$ has a base of neighborhoods of zero
consisting of the annihilators of weakly finitely generated submodules
$E\subset M$.
 The weakly finite topology on $\Hom_A(M,N)$ is complete and separated,
and once again, the ring $\Hom_A(M,M)$ is a left linear topological ring
in the weakly finite topology.
 Denoting by $\R'$ the ring $\Hom_A(M,M)^\rop$ with the weakly finite
topology on it, we once again obtain a complete, separated right linear
topological ring.
 The monad $\boT_{\R'}\:X\longmapsto\R'[[X]]$ is still isomorphic
to the monad $\boT_M\:X\longmapsto\Hom_A(M,M^{(X)})$
\cite[Theorem~9.9]{PS1}.

 In fact, while the finite topology and the weakly finite topology on
the endomorphism ring of a module may well differ, the sets
$\R[[X]]$ and $\R'[[X]]$ are the same for any set $X$, as a family of
$A$\+module morphisms $r_x\:M\rarrow M$ converges to zero in the finite
topology if and only if it converges to zero in the weakly finite one,
and if and only if the morphism $r\:M\rarrow M^X$ with
the components~$r_x$ factorizes though the submodule
$M^{(X)}\subset M^X$.
 Thus the abelian category $\sB=\boT_M\modl$ can be alternatively
described as the category $\R'\contra$.
 (Cf.~\cite[Examples~3.10]{PS3}.)
 
\smallskip
 (3)~A left $A$\+module $M$ is said to be \emph{self-small} if any
$A$\+module morphism $M\rarrow M^{(X)}$ factorizes though the coproduct
$M^{(Z)}\subset M^{(X)}$ of copies of $M$ indexed over a finite subset
$Z\subset X$.
 Equivalently, $M$ is self-small if and only if the natural map
of abelian groups $\bigoplus_{i=0}^\infty\Hom_A(M,M)\rarrow
\Hom_A(M,\>\bigoplus_{i=0}^\infty M)$ is an isomorphism.

 For a self-small left $A$\+module $M$, the ring $R=\Hom_A(M,M)^\rop$
endowed with the \emph{discrete} topology has the property that
the monad $\boT_M\:X\longmapsto\Hom_A(M,M^{(X)})$ is isomorphic to
the monad $\boT_R\:X\longmapsto R[X]$.
 So the abelian category $\sB$ is equivalent to the category of left
$R$\+modules, $\sB\cong R\modl$.

\smallskip
 (4)~Let $N$ be a fixed left $A$\+module.
 We say that a left $A$\+module $E$ is \emph{$N$\+small} if the natural
map of abelian groups $\bigoplus_{i=0}^\infty\Hom_A(E,N)\rarrow
\Hom_A(E,\>\bigoplus_{i=0}^\infty N)$ is an isomorphism.
 Equivalently, $E$ is $N$\+small if and only if, for any set $X$, any
$A$\+module morphism $E\rarrow N^{(X)}$ factorizes through
the subcoproduct $N^{(Z)}\subset N^{(X)}$ indexed over a finite subset
$Z\subset X$.
 An $A$\+module $E$ is weakly finitely generated or ``small''
(as defined in~(2)) if and only if it is $N$\+small for all left
$A$\+modules~$N$.
 It is important for the present example that the class of all
$N$\+small $A$\+modules is closed under quotients (while, e.~g.,
the class of all self-small $A$\+modules is \emph{not},
generally speaking).
 In addition, the class of $N$\+small $A$\+modules is closed under
extensions.

 For any left $A$\+modules $M$ and $N$, the \emph{$N$\+small topology}
on the abelian group $\Hom_A(M,N)$ has a base of neighborhoods of zero
consisting of the annihilators of $N$\+small submodules $E\subset M$.
 The $N$\+small topology on $\Hom_A(M,N)$ is complete and separated.
 The ring $\Hom_A(M,M)$ is a left linear topological ring in
the $M$\+small topology.
 Denoting by $\R''$ the ring $\Hom_A(M,M)^\rop$ with the $M$\+small
topology, we obtain yet another complete, separated right linear
topological ring structure for which the monad
$\boT_{\R''}\:X\longmapsto\R''[[X]]$ coincides with the monads
$\boT_\R$ and $\boT_{\R'}$ from (1) and~(2).
 Consequently, the monad $\boT_{\R''}$ is also isomorphic to
the monad~$\boT_M$, and the abelian category $\sB$ can be described
as the category $\R''\contra$.
\end{exs}

\begin{exs} \label{further-top-agreeable-examples}
 Further examples of topologically agreeable additive/abelian categories
include:

 (1)~all the locally finitely generated abelian categories $\sA$ (in
particular, all the locally finitely presentable abelian categories),
endowed with the finite topology~\cite[Example~3.7\,(2)]{PS3};

 (2)~all the locally weakly finitely generated abelian categories $\sA$,
endowed with the weakly finite topology~\cite[Section~9.2]{PS1};

 (3)~all the additive categories $\sA$ with set-indexed coproducts
admitting a \emph{closed functor} $F\:\sA\rarrow\sC$ into a locally
weakly finitely generated abelian category~$\sC$
\cite[Section~9.3]{PS1}, or more generally, into a topologically
agreeable additive category~$\sC$ \cite[Example~3.9\,(3)]{PS3}.
 In particular, the additive/abelian categories of comodules over
corings and semimodules over semialgebras belong to the class~(3)
\cite[Section~10.3]{PS1}.

 So, for any object $M$ in an additive category $\sA$ satisfying~(1),
(2), or~(3), the monad $\boT_M\:X\longmapsto\Hom_\sA(M,M^{(X)})$ is
isomorphic to the monad $\boT_\R\:X\longmapsto\R[[X]]$ for a certain
complete, separated right linear topology on the ring
$\R=\Hom_\sA(M,M)^\rop$.
 The abelian category $\sB=\boT_M\modl$ is equivalent to $\R\contra$.
\end{exs}

\subsection{Accessible monads and locally presentable categories}
 We refer to the book~\cite{AR} for the definitions and general
discussion of accessible and locally presentable categories, and
only recall here that a category is called \emph{locally presentable}
if it is accessible and cocomplete~\cite[Corollary~2.47]{AR}.
 A monad $\boT\:\Sets\rarrow\Sets$ is said to be \emph{accessible} if
its underlying functor $\boT$ is accessible, i.~e., there exists
a cardinal~$\kappa$ such that $\boT$ preserves $\kappa$\+directed
colimits.

 The category $\boT\modl$ is locally presentable if and only if
the monad $\boT$ is accessible.
 For any accessible category $\sA$ with coproducts and an object
$M\in\sA$, the monad $\boT_M$ is accessible (so the category
$\sB=\boT_M\modl$ is locally presentable).
 For any complete, separated right linear topological ring $\R$,
the monad $\boT_\R$ is accessible and the category $\R\contra$ is
locally presentable.
 We refer to the paper~\cite[Introduction and Section~5]{PR} for
the details.

\Section{Seven Classes of Topological Rings}
\label{seven-conditions-secn}

 Let $\fA$ be a complete, separated linear topological group.
 A closed subgroup $\fK\subset\fA$ is said to be \emph{strongly closed}
if the quotient group $\fA/\fK$ is complete in the quotient topology
and, for every set $X$, the induced map of sets/abelian groups
$\fA[[X]]\rarrow(\fA/\fK)[[X]]$ is surjective.
 We refer to~\cite[Sections~1.11\+-12]{Pproperf} for a discussion of
strongly closed subgroups in topological groups and strongly closed
ideals in topological rings.

 Let $R$ be a separated topological ring.
 A subset $K\subset R$ is said to be \emph{topologically left
T\+nilpotent} if, for every sequence of elements $a_1$, $a_2$,
$a_3$,~\dots~$\in K$, the sequence of products $a_1$, $a_1a_2$,~\dots,
$a_1a_2\dotsm a_n$,~\dots\ converges to zero in the topology of~$R$.
 We refer to~\cite[Section~7]{PS3} and~\cite[Section~5]{Pproperf} for
a discussion of topologically left T\+nilpotent subsets and
topologically left T\+nilpotent ideals in right linear
topological rings.

 Let $\R$ be a complete, separated right linear topological ring.
 The following four classes of such topological rings~$\R$ are
 considered in~\cite[Sections~10 and~12]{Pproperf}:
\begin{enumerate}
\renewcommand{\theenumi}{\alph{enumi}}
\item the ring $\R$ is commutative; or
\item $\R$ has a countable base of neighborhoods of zero consisting
of open two-sided ideals; or
\item $\R$ is a two-sided linear topological ring having only a finite
number of classically semisimple (semisimple Artinian) discrete
quotient rings; or
\item there is a topologically left T\+nilpotent strongly closed
two-sided ideal $\fK\subset\R$ such that the quotient ring $\R/\fK$
is isomorphic, as a topological ring, to the product
$\prod_{\delta\in\Delta}\T_\delta$ of a family of two-sided linear
topological rings $\T_\delta$, each of which satisfies one of
the conditions~(a), (b), or~(c). 
\end{enumerate} 
 Note that all the topological rings satisfying~(a), (b), or~(c) must
be two-sided linear, while a topological ring satisfying~(d) can well
be only right linear.

 Furthermore, our discussion of the following two classes of (right
linear) topological rings $\R$ is based on the results of
the paper~\cite[Sections~12 and~13]{PS3}:
\begin{enumerate}
\renewcommand{\theenumi}{\alph{enumi}}
\setcounter{enumi}{4}
\item $\R$ has a countable base of neighborhoods of zero; or
\item the abelian category $\discr\R$ is locally coherent.
\end{enumerate}
 We refer to the papers~\cite{Ro,PS3} for the definition of a locally
coherent abelian category.
 Note that (d) and~(e) are two different generalizations of~(b), while
(d)~is also a common generalization of~(a), (b), and~(c).

 Finally, we consider the following common generalization of all
the previous six conditions~(a\+-f):
\begin{enumerate}
\renewcommand{\theenumi}{\alph{enumi}}
\setcounter{enumi}{6}
\item there is a topologically left T\+nilpotent strongly closed
two-sided ideal $\fK\subset\R$ such that the quotient ring $\R/\fK$
is isomorphic, as a topological ring, to the product
$\prod_{\delta\in\Delta}\T_\delta$ of a family of right linear
topological rings $\T_\delta$, each of which satisfies one of
the conditions~(a), (c), (e), or~(f). 
\end{enumerate}

\begin{lem} \label{class-g-closure-properties}
 \textup{(i)} Let $(\R_\gamma)_{\gamma\in\Gamma}$ be a family of
topological rings each of which satisfies one of the conditions~(a),
(b), (c), (d), (e), (f), or~(g).
 Then the topological ring\/ $\R=\prod_{\gamma\in\Gamma}\R_\gamma$
satisfies~(g). \par
\textup{(ii)} Let\/ $\R$ be a complete, separated right linear
topological ring, and let\/ $\J\subset\R$ be a topologically
left T\+nilpotent strongly closed two-sided ideal.
 Assume that the topological quotient ring\/ $\R/\J$ satisfies~(g).
 Then the topological ring\/ $\R$ satisfies~(g).
\end{lem}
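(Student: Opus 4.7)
For part~(i), I would construct the required ideal as a product $\fK = \prod_{\gamma \in \Gamma} \fK_\gamma$, where $\fK_\gamma \subset \R_\gamma$ is chosen according to which condition $\R_\gamma$ satisfies. If $\R_\gamma$ satisfies one of~(a), (c), (e), or~(f), set $\fK_\gamma = 0$. Since a countable base of open two-sided ideals is in particular a countable base of neighborhoods of zero, (b)~implies~(e), so the same choice works for~(b). If $\R_\gamma$ satisfies~(d) or~(g), let $\fK_\gamma$ be the topologically left T\+nilpotent strongly closed two-sided ideal supplied by that condition, so that $\R_\gamma/\fK_\gamma$ is a product of rings each satisfying one of the conditions~(a), (c), (e), or~(f) (using (b)\,$\Rightarrow$\,(e) once more in the~(d) case). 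Then $\R/\fK \cong \prod_\gamma \R_\gamma/\fK_\gamma$ is a product, indexed by an appropriate disjoint union, of rings each satisfying one of~(a), (c), (e), or~(f), which is the decomposition demanded by~(g).

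The verification that $\fK$ is strongly closed should come from the compatibility of the functor $\fA \mapsto \fA[[X]]$ with products of complete, separated linear topological groups, combined with strong closedness of each $\fK_\gamma$; indeed, a formal sum converges to zero in a product if and only if it converges to zero coordinatewise, so both completeness of the quotient and surjectivity of $\R[[X]] \rarrow (\R/\fK)[[X]]$ reduce to the corresponding properties of each factor, which I would cite from~\cite[Sections~1.11--12]{Pproperf}. Topological left T\+nilpotence of $\fK$ is essentially automatic: a basic open neighborhood of zero in $\R$ is determined by finitely many coordinates, and in each such coordinate $\gamma$ the partial products of the given sequence converge to zero in $\R_\gamma$ by T\+nilpotence of $\fK_\gamma$, so a common threshold $N$ exists.

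For part~(ii), I would take $\fK \subset \R$ to be the full preimage under the projection $\R \twoheadrightarrow \R/\J$ of the ideal $\fK' \subset \R/\J$ supplied by condition~(g) for $\R/\J$. Then $\J \subset \fK$, and $\R/\fK \cong (\R/\J)/\fK'$ provides the required product decomposition. Strong closedness of $\fK$ would follow from a general transitivity principle: the map $\R[[X]] \rarrow (\R/\fK)[[X]]$ factors as $\R[[X]] \twoheadrightarrow (\R/\J)[[X]] \twoheadrightarrow ((\R/\J)/\fK')[[X]]$, and both factors are surjective by strong closedness of $\J$ in~$\R$ and of $\fK'$ in~$\R/\J$; completeness of the quotient is inherited accordingly. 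This transitivity should also be available in~\cite[Sections~1.11--12]{Pproperf}.

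The main obstacle is the topological left T\+nilpotence of $\fK$. Given a sequence $a_1, a_2, \ldots \in \fK$ and an open right ideal $U \subset \R$, my strategy is to iterate the following step. Using T\+nilpotence of $\fK'$ in $\R/\J$, choose indices $0 = N_0 < N_1 < N_2 < \cdots$ and decompositions $a_{N_{k-1}+1} \cdots a_{N_k} = u_k + j_k$ with $u_k$ in a carefully chosen shrinking open right ideal $V_k \subset \R$ and $j_k \in \J$. Expanding $a_1 \cdots a_{N_k}$ telescopically and using that right ideals absorb right multiplication, most of the resulting summands land in~$U$, and the residual summand is a product of the form $j_1 j_2 \cdots j_k$ (or a close analogue built from the $j_i$'s) of elements of~$\J$, which converges to zero by T\+nilpotence of~$\J$. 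The delicate point is coordinating the shrinking of the $V_k$ at each stage so that the various cross terms are absorbed into~$U$; I expect this argument is essentially carried out in the machinery of~\cite[Section~5]{Pproperf} supporting condition~(d), so that the present lemma will follow formally from that development.
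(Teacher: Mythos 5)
Your proposal is correct and follows essentially the same route as the paper: in~(i) one reduces all cases to~(g) (or, equivalently, takes $\fK_\gamma=0$ in the cases (a), (c), (e), (f) and the witnessing ideal otherwise) and uses that the product $\fK=\prod_\gamma\fK_\gamma$ is again strongly closed and topologically left T\+nilpotent with $\R/\fK\cong\prod_\gamma\R_\gamma/\fK_\gamma$; in~(ii) one takes the full preimage of the witnessing ideal, with strong closedness by the transitivity result~\cite[Lemma~1.4(b)]{Pproperf} and topological left T\+nilpotence by~\cite[Lemma~5.3]{Pproperf}, which is exactly the ``shrinking open right ideals plus T\+nilpotence of~$\J$'' argument you sketch.
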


\begin{proof}
 Part~(i) is similar to~\cite[Lemma~12.6(a)]{Pproperf}.
 Condition~(b) is a particular case of~(e), and therefore (d)~is
a particular case of~(g).
 Conditions~(a), (c), (e), and~(f) are also particular cases of~(g).
 Hence without loss of generality we can assume that $\R_\gamma$
satisfies~(g) for every $\gamma\in\Gamma$.

 Let $\fK_\gamma\subset\R_\gamma$ be the related topologically left
T\+nilpotent strongly closed two-sided ideal.
 Then, in view of the discussion in~\cite[beginning of
Section~7]{Pproperf}, \,$\fK=\prod_\gamma\fK_\gamma$ is
a topologically left T\+nilpotent strongly closed two-sided ideal in
the topological ring $\R=\prod_\gamma\R_\gamma$, and the topological
quotient ring $\R/\fK\cong\prod_\gamma\R_\gamma/\fK_\gamma$ is
isomorphic to the topological product of topological rings, each of
which satisfies one of the conditions~(a), (c), (e), or~(f). 

 Part~(ii) is similar to~\cite[Lemma~12.6(b)]{Pproperf}.
 Let $\fK\subset\R/\J$ be a two-sided ideal witnessing that
the topological ring $\R/\J$ satisfies~(g), and let $\fH\subset\R$
be the full preimage of $\fK$ under the topological ring
homomorphism $\R\rarrow\R/\J$.
 Then the ideal $\fH$ is strongly closed in $\R$
by~\cite[Lemma~1.4(b)]{Pproperf} and topologically left T\+nilpotent
by~\cite[Lemma~5.3]{Pproperf}.
 In view of the natural isomorphism of topological rings
$\R/\fH\cong(\R/\J)/\fK$, the ideal $\fH\subset\R$ witnesses that
the topological ring $\R$ satisfies~(g).
\end{proof}
 
 The following definition was given in the paper~\cite[Section~10]{PS3}.
 A complete, separated right linear topological ring $\R$ is called
\emph{topologically left perfect} if there is a topologically left
T\+nilpotent strongly closed two-sided ideal\/ $\fH\subset\R$ such that
the quotient ring\/ $\R/\fH$ is isomorphic, as a topological ring,
to the product\/
$\S=\prod_{\gamma\in\Gamma}\Hom_{D_\gamma}(D_\gamma^{(\Upsilon_\gamma)},
D_\gamma^{(\Upsilon_\gamma)})^\rop$ of the endomorphism rings of vector
spaces over skew-fields (division rings)~$D_\gamma$.
 Here $\Gamma$ is a set, $\Upsilon_\gamma$~are nonempty sets,
the endomorphism ring of the vector space $D_\gamma^{(\Upsilon_\gamma)}$
is endowed with the finite topology, and the product of such
endomorphism rings is endowed with the product topology.
 Right linear topological rings $\S$ of the above form are called
\emph{topologically semisimple}~\cite[Section~6]{PS3}.

\begin{lem} \label{top-perfect-implies-g}
 All topologically left perfect topological rings\/ $\R$ satisfy
condition~(g).
\end{lem}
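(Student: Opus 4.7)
The plan is to take the topologically left T\+nilpotent strongly closed two-sided ideal $\fH$ from the definition of topological left perfectness and argue it directly witnesses condition~(g), without needing Lemma~\ref{class-g-closure-properties}. By hypothesis, $\R/\fH$ is isomorphic, as a topological ring, to the product $\prod_{\gamma\in\Gamma}\S_\gamma$, where each factor $\S_\gamma=\Hom_{D_\gamma}(D_\gamma^{(\Upsilon_\gamma)},D_\gamma^{(\Upsilon_\gamma)})^\rop$ carries the finite topology. Condition~(g) calls for precisely this shape of decomposition (with the same kind of T\+nilpotent strongly closed ideal), so the entire lemma reduces to checking that each factor $\S_\gamma$ satisfies one of (a), (c), (e), or~(f).

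I would then verify condition~(f) for every $\S_\gamma$, namely that the abelian category $\discr\S_\gamma$ is locally coherent. The heart of the matter is a standard Morita-type identification of $\discr\S_\gamma$ with the category of $D_\gamma$\+vector spaces: the vector space $V_\gamma=D_\gamma^{(\Upsilon_\gamma)}$, viewed as a $(D_\gamma,\S_\gamma)$\+bimodule, is a simple discrete right $\S_\gamma$\+module with endomorphism ring $D_\gamma$, and every discrete right $\S_\gamma$\+module is a coproduct of copies of $V_\gamma$. Since $D_\gamma$ is a skew-field, the category of $D_\gamma$\+vector spaces is semisimple, hence locally Noetherian and in particular locally coherent. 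Thus~(f) holds for each $\S_\gamma$, and the same ideal $\fH$ with factors $\T_\delta=\S_\gamma$ witnesses condition~(g) for~$\R$.

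The main obstacle is verifying this Morita-type identification $\discr\S_\gamma\simeq D_\gamma$\+vector spaces. Note that, for infinite $\Upsilon_\gamma$, the ring $\S_\gamma$ is typically neither two-sided linear nor does it have a countable base of neighborhoods of zero, so conditions~(c) and~(e) are unavailable and~(f) is genuinely the right condition to invoke. The identification can be established either by direct computation --- using Jacobson density to see that $V_\gamma$ is the unique simple discrete right $\S_\gamma$\+module, and applying the standard bimodule machinery to extend to arbitrary discrete modules --- or by appealing to the analysis of topologically semisimple rings in~\cite[Section~6]{PS3}, where this structural fact should already be recorded.
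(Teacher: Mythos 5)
Your proposal is correct and essentially the same as the paper's proof: the paper also takes the ideal $\fH$ from the definition of topological left perfectness and verifies condition~(f) for the topologically semisimple quotient, citing \cite[Theorem~6.2\,(2)]{PS3} for the fact that its category of discrete modules is semisimple and then observing that a semisimple Grothendieck category is locally Noetherian, hence locally coherent. The Morita-type identification of $\discr\S_\gamma$ with $D_\gamma$\+vector spaces that you flag as the main obstacle is exactly what that citation supplies, so no separate Jacobson-density argument is needed.
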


\begin{proof}
 The assertion holds because all topologically semisimple right linear
topological rings $\S$ satisfy condition~(f).
 Indeed, $\S$ is topologically semisimple if and only if the category
of discrete right $\S$\+modules $\discr\S$ is
semisimple~\cite[Theorem~6.2\,(2)]{PS3}.
 Any semisimple Grothendieck abelian category is locally Noetherian
(with simple objects forming a set of Noetherian generators); hence
it is locally coherent.
\end{proof}

\begin{lem} \label{topologically-perfect-quotient-by-T-nilpotent}
 Let\/ $\R$ be a complete, separated right linear topological ring,
and let\/ $\fK\subset\R$ be a topologically left T\+nilpotent
strongly closed two-sided ideal.
 Then the quotient ring\/ $\R/\fK$, endowed with the quotient topology,
is topologically left perfect if and only if the topological ring\/
$\R$~is.
\end{lem}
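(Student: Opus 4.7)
The plan is to prove the two implications separately by transferring a witnessing ideal between $\R$ and $\R/\fK$. The implication that topological perfectness of $\R/\fK$ implies that of $\R$ should follow the pattern of Lemma~\ref{class-g-closure-properties}(ii): given a topologically left T\+nilpotent strongly closed two-sided ideal $\bar\fH\subset\R/\fK$ with $(\R/\fK)/\bar\fH$ topologically semisimple, I would take $\fH'\subset\R$ to be the full preimage of $\bar\fH$ under the quotient map. Strong closedness of $\fH'$ would follow from \cite[Lemma~1.4(b)]{Pproperf}, and topological left T\+nilpotency from \cite[Lemma~5.3]{Pproperf} applied to the short exact sequence $\fK\subset\fH'$ with quotient $\fH'/\fK\cong\bar\fH$. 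Then $\R/\fH'\cong(\R/\fK)/\bar\fH$ is topologically semisimple, witnessing topological perfectness of~$\R$.

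For the converse, let $\fH\subset\R$ be a witnessing ideal for topological perfectness of~$\R$. My strategy is to first show $\fK\subset\fH$, and then use $\fH/\fK$ as the witness for $\R/\fK$. To establish $\fK\subset\fH$, I would consider the image $\bar\fK\subset\R/\fH$ of $\fK$, which is a topologically left T\+nilpotent two-sided ideal of the topologically semisimple ring $\R/\fH\cong\prod_\gamma\Hom_{D_\gamma}(D_\gamma^{(\Upsilon_\gamma)},D_\gamma^{(\Upsilon_\gamma)})^\rop$. The essential claim is that $\bar\fK=0$: any nonzero closed two-sided ideal of a topologically semisimple ring contains a nonzero idempotent $e$ (cutting out a single factor via the central idempotent supported at that $\gamma$ reduces to a nonzero two-sided ideal of $\Hom_{D_\gamma}(D_\gamma^{(\Upsilon_\gamma)},D_\gamma^{(\Upsilon_\gamma)})^\rop$, whose two-sided ideals are governed by rank bounds and contain finite-rank idempotents), and the constant sequence $e,e,e,\dotsc$ then has stationary products $e^n=e$, which fail to converge to zero, contradicting topological T\+nilpotency of~$\bar\fK$.

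With $\fK\subset\fH$ at hand, the quotient $\fH/\fK$ is a well-defined two-sided ideal of $\R/\fK$, topologically left T\+nilpotent as the continuous surjective image of~$\fH$. To verify that $\fH/\fK$ is strongly closed in $\R/\fK$, I would use the natural isomorphism $(\R/\fK)/(\fH/\fK)\cong\R/\fH$ (which is complete, since $\fH$ is strongly closed in~$\R$) together with the surjectivity of the composition $\R[[X]]\to(\R/\fK)[[X]]\to(\R/\fH)[[X]]$ for every set~$X$ (again because $\fH$ is strongly closed), which forces the second map $(\R/\fK)[[X]]\to(\R/\fH)[[X]]$ to be surjective. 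Since $(\R/\fK)/(\fH/\fK)\cong\R/\fH$ is topologically semisimple, $\fH/\fK$ witnesses topological perfectness of $\R/\fK$. The main obstacle is the structural assertion that topologically semisimple rings admit no nonzero topologically left T\+nilpotent closed two-sided ideals; this requires a careful analysis of closed ideals in the infinite product of endomorphism rings of vector spaces and an appeal to the idempotent structure of such ideals.
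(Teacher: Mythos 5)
Your proposal is correct, and its overall skeleton coincides with the paper's: for the direction ``$\R/\fK$ perfect $\Rightarrow$ $\R$ perfect'' both you and the paper pull back the witnessing ideal along $\R\rarrow\R/\fK$ and invoke \cite[Lemma~1.4(b)]{Pproperf} for strong closedness and \cite[Lemma~5.3]{Pproperf} for topological T\+nilpotency, exactly as in Lemma~\ref{class-g-closure-properties}(ii). The genuine difference is in the converse direction, at the step $\fK\subset\fH$. The paper gets this containment abstractly: $\fH$ is the topological Jacobson radical of a topologically left perfect ring \cite[Lemma~10.3]{PS3}, and every topologically left T\+nilpotent ideal is contained in the topological Jacobson radical \cite[Lemma~6.6(a)]{Pproperf}. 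You instead prove the needed special case directly: the image $\bar{\fK}$ of $\fK$ in the topologically semisimple quotient $\R/\fH\cong\S=\prod_\gamma\Hom_{D_\gamma}(D_\gamma^{(\Upsilon_\gamma)},D_\gamma^{(\Upsilon_\gamma)})^\rop$ is again topologically left T\+nilpotent (lift a sequence to $\fK$ and use continuity of the quotient map), and a nonzero two-sided ideal of $\S$ contains a nonzero idempotent (cut down by the central idempotent of one factor, then use that nonzero two-sided ideals of the endomorphism ring of a vector space contain rank-one idempotents), whose constant sequence violates T\+nilpotency in the separated ring $\S$; hence $\bar{\fK}=0$. This is more elementary and self-contained, at the price of repeating, in a special case, what the cited radical lemmas already package; the paper's route is shorter and reuses machinery that is needed elsewhere anyway. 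Similarly, you verify strong closedness of $\fH/\fK$ in $\R/\fK$ by hand (completeness of $(\R/\fK)/(\fH/\fK)\cong\R/\fH$ plus surjectivity of $(\R/\fK)[[X]]\rarrow(\R/\fH)[[X]]$ deduced from surjectivity of the composite out of $\R[[X]]$), where the paper simply cites \cite[Lemma~1.4(c)]{PS3}; your argument is fine.

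Two small points to tidy up. First, you phrase the key structural claim for \emph{closed} two-sided ideals, but $\bar{\fK}$ is only the image of $\fK$ and is not known to be closed in $\R/\fH$; fortunately your actual argument (central idempotent cut-down plus rank-one idempotents) never uses closedness, so you should simply drop that hypothesis. Second, strong closedness presupposes that $\fH/\fK$ is a closed subgroup of $\R/\fK$; this does hold, since $\fH/\fK$ is the kernel of the continuous homomorphism $\R/\fK\rarrow\R/\fH$ onto a separated ring, but it deserves a sentence.
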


\begin{proof}
 ``If'': assume that $\R$ is topologically left perfect, and let
$\fH\subset\R$ be the related two-sided ideal, as per the definition.
 Then $\fH$ is the (topological) Jacobson radical of~$\R$\,
\cite[Lemma~10.3]{PS3}, and any topologically left T\+nilpotent
ideal in $\R$ is contained in $\fH$\, \cite[Lemma~6.6(a)]{Pproperf}.
 Hence we have $\fK\subset\fH$.
 The two-sided ideal $\fH/\fK\subset\R/\fK$ is topologically left
T\+nilpotent, since the ideal $\fH\subset\R$ is.
 By~\cite[Lemma~1.4(c)]{PS3}, \,$\fH/\fK$ is strongly closed in
$\R/\fK$.
 Finally, we have an isomorphism of topological rings
$(\R/\fK)/(\fH/\fK)\cong\R/\fH$, and the topological ring $\R/\fH$
is topologically semisimple by assumption.
 Hence the topological ring $\R/\fK$ is topologically left perfect.

 ``Only if'': assuming that $\R/\fK$ is topologically left perfect,
an argument based on~\cite[Lemmas~1.4(b) and~5.3]{PS3} and similar
to the proof of Lemma~\ref{class-g-closure-properties}(ii) shows
that $\R$ is topologically left perfect as well.
\end{proof}

\begin{lem} \label{topologically-perfect-closed-under-products}
 The class of all topologically left perfect topological rings is
closed under (infinite) topological products.
\end{lem}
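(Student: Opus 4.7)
The plan is to imitate the proof of Lemma~\ref{class-g-closure-properties}(i), but keep track not only of the fact that the relevant quotient is a product of rings in the class, but that topological semisimplicity is preserved under product. Concretely, let $(\R_\gamma)_{\gamma\in\Gamma}$ be a family of topologically left perfect, complete, separated right linear topological rings, and for each $\gamma$ let $\fH_\gamma\subset\R_\gamma$ be a topologically left T\+nilpotent strongly closed two-sided ideal with $\R_\gamma/\fH_\gamma$ topologically semisimple. Set $\R=\prod_\gamma\R_\gamma$, endowed with the product topology, and consider the candidate ideal $\fH=\prod_\gamma\fH_\gamma\subset\R$.

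First, I would check that $\fH$ is a strongly closed two-sided ideal in~$\R$. This follows from the fact, recalled at the beginning of Section~7 of~\cite{Pproperf} (and already used in the proof of Lemma~\ref{class-g-closure-properties}(i) above), that products of strongly closed ideals are strongly closed in the product ring. Second, I would verify that $\fH$ is topologically left T\+nilpotent: given any sequence $(a_n)_{n\ge1}$ in $\fH$, each coordinate sequence $(a_n^\gamma)_{n\ge1}$ lies in $\fH_\gamma$, so the partial products $a_1^\gamma a_2^\gamma\dotsm a_n^\gamma$ converge to zero in~$\R_\gamma$; since convergence in $\R$ is componentwise, the products $a_1a_2\dotsm a_n$ converge to zero in~$\R$.

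Third, I would identify the quotient: the natural map $\R/\fH\rarrow\prod_\gamma\R_\gamma/\fH_\gamma$ is an isomorphism of topological rings, because each $\fH_\gamma$ is strongly closed (so the quotient topology on $\R_\gamma/\fH_\gamma$ is complete and separated), and the product-quotient is compatible with the product topology on both sides. Finally, I would observe that a product of topologically semisimple rings is topologically semisimple: by definition, each $\R_\gamma/\fH_\gamma\cong\prod_{\delta\in\Delta_\gamma}\Hom_{D_{\gamma,\delta}}(D_{\gamma,\delta}^{(\Upsilon_{\gamma,\delta})},D_{\gamma,\delta}^{(\Upsilon_{\gamma,\delta})})^\rop$ as topological rings (finite topology on each factor, product topology overall), so
\[
 \R/\fH\cong\prod_\gamma\prod_{\delta\in\Delta_\gamma}\Hom_{D_{\gamma,\delta}}\bigl(D_{\gamma,\delta}^{(\Upsilon_{\gamma,\delta})},D_{\gamma,\delta}^{(\Upsilon_{\gamma,\delta})}\bigr)^\rop,
\]
indexed by the disjoint union $\bigsqcup_\gamma\Delta_\gamma$, is again of the required form. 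Thus $\fH$ witnesses that $\R$ is topologically left perfect.

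The only step requiring any care is the identification of the quotient and its topology in the third paragraph; the rest is a straightforward componentwise bookkeeping together with the general product lemmas on strongly closed and T\+nilpotent ideals that have already been established in~\cite{Pproperf}.
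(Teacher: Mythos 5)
Your proposal is correct and follows exactly the route the paper intends: the paper's own proof is just a one-line reference to the discussion at the beginning of Section~7 of~\cite{Pproperf} and to the argument of Lemma~\ref{class-g-closure-properties}(i), i.e.\ taking $\fH=\prod_\gamma\fH_\gamma$, noting it is a topologically left T\+nilpotent strongly closed ideal with $\R/\fH\cong\prod_\gamma\R_\gamma/\fH_\gamma$, and regrouping the resulting product of topologically semisimple rings into a single product of the required form. You have merely spelled out the same argument in detail, and all the steps check out.
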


\begin{proof}
 The argument is based on the discussion in~\cite[beginning of
Section~7]{Pproperf} and similar to the proof of
Lemma~\ref{class-g-closure-properties}(i).
\end{proof}

\begin{thm} \label{g-implies-as}
 Let\/ $\R$ be a complete, separated right linear topological ring.
 Then the topological ring\/ $\R$ is topologically left perfect if and
only if it satisfies one of the conditions~(a), (b), (c), (d), (e), (f),
or~(g) \emph{and} every descending chain of cyclic discrete right\/
$\R$\+modules terminates.
\end{thm}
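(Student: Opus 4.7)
The plan is to prove the two implications separately, exploiting the closure lemmas already established in this section to reduce the work to the ``atomic'' cases (a), (c), (e), (f) that are handled in the cited references~\cite{Pproperf,PS3}.

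For the \emph{only if} direction, the containment ``topologically left perfect $\Rightarrow$ (g)'' is precisely Lemma~\ref{top-perfect-implies-g}. The implication ``topologically left perfect $\Rightarrow$ DCC on cyclic discrete right $\R$\+modules'' I would obtain from the structural description of topologically left perfect rings in~\cite[Section~10]{PS3}: any cyclic discrete right $\R$\+module $M$ is annihilated by some open right ideal, hence is a cyclic module over a discrete quotient ring of $\R$, and topological T\+nilpotence of the radical~$\fH\subset\R$ together with classical semisimplicity of the discrete quotients of $\R/\fH$ forces such $M$ to have finite length; DCC follows. (Alternatively, in the classical Bass framework applied to each discrete quotient.)

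The \emph{if} direction is the more substantial one, and my plan has three steps. First, I would reduce to condition~(g): conditions (a), (c), (e), (f) are each instances of (g) with trivial ideal $\fK=0$ and a one-factor product; condition~(b) is a particular case of~(e); and condition~(d) is subsumed by~(g) in view of the inclusions between the lists of atomic conditions. So we may assume $\R$ itself satisfies~(g), giving a topologically left T\+nilpotent strongly closed two-sided ideal $\fK\subset\R$ with $\R/\fK\cong\prod_{\delta\in\Delta}\T_\delta$, each $\T_\delta$ satisfying one of~(a), (c), (e), or~(f). Second, I would apply Lemma~\ref{topologically-perfect-quotient-by-T-nilpotent} to reduce to showing that $\R/\fK$ is topologically left perfect, and then Lemma~\ref{topologically-perfect-closed-under-products} to reduce further to showing that each factor $\T_\delta$ is topologically left perfect. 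Throughout this reduction the DCC hypothesis transfers correctly: a cyclic discrete right $\R/\fK$\+module is just a cyclic discrete right $\R$\+module annihilated by $\fK$, and a cyclic discrete right $\T_\delta$\+module pulls back along the continuous ring surjection $\R/\fK\twoheadrightarrow\T_\delta$ to a cyclic discrete right $\R/\fK$\+module, so DCC on cyclic discrete $\R$\+modules yields DCC on each side of the reduction.

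The third step is the actual content: each $\T_\delta$ is a topological ring in one of the four atomic classes (a), (c), (e), or (f), and satisfies the DCC on cyclic discrete right modules. The equivalence ``atomic condition plus DCC $\Leftrightarrow$ topologically left perfect'' is exactly what is proved in~\cite[Sections~10 and~12]{Pproperf} for cases~(a) and~(c) and in~\cite[Sections~12 and~13]{PS3} for cases~(e) and~(f); I would simply invoke these theorems factor by factor. The main obstacle is not any single step but rather the careful bookkeeping needed to verify that the hypothesis ``one of (a)--(g)'' feeds correctly into the closure lemmas and that the DCC condition is preserved through both the passage to the quotient by a topologically T\+nilpotent ideal and the passage to a factor of a topological product; once these verifications are made, the theorem assembles from already-proved ingredients.
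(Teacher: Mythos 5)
Your ``if'' direction coincides, in essence, with the paper's own proof: reduce to condition~(g) (legitimate, since each of (a)--(f) is a particular case of~(g)), transfer the DCC on cyclic discrete modules through the continuous surjections $\R\rarrow\R/\fK\rarrow\T_\delta$, invoke the atomic results of~\cite{Pproperf} (cases~(a), (c)) and~\cite{PS3} (cases~(e), (f)) factor by factor, and reassemble with Lemmas~\ref{topologically-perfect-quotient-by-T-nilpotent} and~\ref{topologically-perfect-closed-under-products}; likewise the (g)-half of ``only if'' via Lemma~\ref{top-perfect-implies-g} is exactly what the paper does.

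The gap is in your sketch of the other half of ``only if'', namely that topological left perfectness implies the DCC. The step ``any cyclic discrete right $\R$\+module is annihilated by some open right ideal, hence is a cyclic module over a discrete quotient ring of $\R$'' fails when the topology is only right linear: an open right ideal need not contain an open two-sided ideal, and discrete quotient rings are quotients by open two-sided ideals. Concretely, take $\R=\Hom_D(V,V)^\rop$ with $V$ an infinite-dimensional vector space over a skew-field $D$ and the finite topology; this ring is topologically semisimple, hence topologically left perfect, yet the two-sided ideal generated by the annihilator of any finite-dimensional subspace is all of $\R$, so the only discrete quotient ring is zero, while $V$ itself is a nonzero cyclic discrete right $\R$\+module. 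For the same reason, ``classical semisimplicity of the discrete quotients of $\R/\fH$'' carries no information (those quotients may all vanish), and the parenthetical fallback to Bass's theorem applied to discrete quotient rings does not work either. What the argument actually requires is semisimplicity of the entire Grothendieck category of discrete right modules over the topologically semisimple quotient $\R/\fH$, combined with a Bass-type chain argument threading through the topologically left T\+nilpotent radical $\fH$; this is nontrivial, and the paper deliberately does not redo it, citing instead \cite[Theorem~14.4\,(iv)\,$\Rightarrow$\,(v)]{PS3}. Replacing your sketch by that citation (or by an honest reproof of it) closes the gap; the rest of your plan agrees with the paper's proof.
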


\begin{proof}
 ``Only if'': for any topologically left perfect topological ring $\R$,
any descending chain of cyclic discrete right $\R$\+modules
terminates by~\cite[Theorem~14.4\,(iv)\,$\Rightarrow$\,(v)]{PS3},
and condition~(g) is satisfied by Lemma~\ref{top-perfect-implies-g}.

 ``If'': cases~(a\+-c) are covered
by~\cite[Theorem~10.1\,(v)\,$\Rightarrow$\,(iv)]{Pproperf},
and case~(d) is~\cite[Theorem~12.4\,(v)\,$\Rightarrow$\,(iv)]{Pproperf}
(see also~\cite[Remark~14.6 and Corollary~14.7]{PS3}).
 Case~(e) is~\cite[Theorem~12.4 or~14.8]{PS3}, and case~(f)
is~\cite[Theorem~13.3 or~14.12]{PS3}.

 To prove case~(g), assume that $\fK\subset\R$ is a topologically left
T-nilpotent strongly closed two-sided ideal for which the topological
ring $\R/\fK$ is isomorphic to the topological product
$\prod_{\delta\in\Delta}\T_\delta$, where each topological ring
$\T_\delta$ satisfies one of the conditions~(a), (c), (e), or~(f).
 Then $\T_\delta$ is a topological quotient ring of the topological
ring $\R$ for every $\delta\in\Delta$.
 Hence $\discr\T_\delta$ is the full subcategory in $\discr\R$
consisting of all the modules annihilated by the kernel ideal of
the surjective continuous ring homomorphism $\R\rarrow\T_\delta$.
 Since every descending chain of cyclic discrete right $\R$\+modules
terminates, so does every descending chain of cyclic discrete
right $\T_\delta$\+modules.
 According to the previous paragraph, in each of the cases~(a), (c),
(e), or~(f) it follows that $\T_\delta$ is a topologically left
perfect topological ring.
 Using Lemmas~\ref{topologically-perfect-quotient-by-T-nilpotent}
and~\ref{topologically-perfect-closed-under-products}, we can conclude
that $\R$ is a topologically left perfect topological ring.
\end{proof}

\Section{The Enochs Conjecture} \label{enochs-conjecture-secn}

 Throughout this paper, by ``direct limits'' in a category we mean
inductive limits indexed by directed posets.
 Otherwise, these are known as the directed or filtered colimits.
 For any class of objects $\sM$ in a cocomplete category $\sA$,
we denote by $\varinjlim\sM=\varinjlim^\sA\sM\subset\sA$ the class of
all direct limits of objects from $\sM$ in $\sA$.
 This means the direct limits of diagrams $A\:\Theta\rarrow\sA$ indexed
by directed posets $\Theta$ and such that $A(\theta)\in\sM$ for
all $\theta\in\Theta$.

 Let $\sA$ be a category and $\sL\subset\sA$ be a class of objects.
 A morphism $l\:L\rarrow C$ in $\sA$ is called an \emph{$\sL$\+precover}
(of the object~$C$) if $L\in\sL$ and all the morphisms from objects of
$\sL$ to the object $C$ factorize through the morphism~$l$ in
the category~$\sA$, that is, for every morphism $l'\:L'\rarrow C$ with
$L'\in\sL$ there exists a morphism $f\:L'\rarrow L$ such that $l'=lf$.
 A morphism $l\:L\rarrow C$ in $\sA$ is called an \emph{$\sL$\+cover} if
it is an $\sL$\+precover and, for any endomorphism $e\:L\rarrow L$,
the equation $le=l$ implies that $e$ is an automorphism of~$L$.
 We will say that a class of objects $\sL$ in a category $\sA$ is
\emph{precovering} if every object of $\sA$ has an $\sL$\+precover.
 Similarly, the class $\sL$ is said to be \emph{covering} if every
object of $\sA$ has an $\sL$\+cover.

 Given another class of objects $\sE\subset\sA$, the definitions of
an \emph{$\sE$\+preeenvelope} and an \emph{$\sE$\+envelope} of an object
$C\in\sA$ are dual to the above definitions of an $\sL$\+precover
and an $\sL$\+cover.
 These notions are due to Enochs~\cite{Eno}; a detailed discussion
of their properties in a relevant context can be found in
the book~\cite{Xu}.

\begin{ex} \label{Add(M)-precovers}
 If $\sA$ is an additive category with coproducts and
$M\in\sA$ is an object, then the class of objects $\Add(M)\subset\sA$
is precovering.
 Indeed, for any object $N\in\sA$, the obvious morphism
$M^{(\Hom_\sA(M,N))}\rarrow N$ is an $\Add(M)$\+precover of~$N$.
\end{ex}

\begin{ex} \label{projective-precovers}
 Let $\sB$ be an abelian category with enough projective objects and
$\sL=\sB_\proj\subset\sB$ be the class of all projective objects.
 Then a morphism $L\rarrow C$ in $\sB$ with $L\in\sL$ is
an $\sL$\+precover if and only if it is an epimorphism.
 So the class of all projective objects in an abelian category with
enough projective objects is always precovering; but it is rarely
covering, as we will see.
 A $\sB_\proj$\+cover in $\sB$ is called a \emph{projective cover}.
\end{ex}

 The first assertion of the following theorem is one of the main results
of Bass' paper~\cite{Bas}.
 In fact, it is a part of the famous~\cite[Theorem~P]{Bas}.

\begin{thm}
 Let $\sB=R\modl$ be the category of modules over an associative ring,
and let\/ $\sL=R\modl_\proj\subset R\modl$ be the class of projective
left $R$\+modules.
 Then the class\/ $\sL$ is covering in $R\modl$ if and only if\/ $\sL$
is closed under direct limits in $R\modl$.

 Moreover, if every countable direct limit of copies of the free left
$R$\+module $R$ has a projective cover in $R\modl$, then all flat left
$R$\+modules are projective and all left $R$\+modules have
projective covers.
\end{thm}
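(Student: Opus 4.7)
The first assertion splits into two implications. The direction ``$\sL$ closed under direct limits $\Rightarrow$ $\sL$ is covering'' follows from Enochs' theorem cited in the introduction, using Example~\ref{projective-precovers} to see that $\sL$ is precovering in the Grothendieck category $R\modl$. For the converse, if $\sL$ is covering then in particular every countable direct limit of copies of $R$ has a projective cover, so by the moreover clause every flat left $R$\+module is projective; since the class of flat modules is closed under direct limits (Govorov--Lazard), so is the class of projectives. Thus the whole theorem reduces to the moreover.

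For the moreover, my plan is classical: deduce from the hypothesis that $R$ satisfies the descending chain condition on principal right ideals. Once this is known, Bass' analysis in~\cite[Theorem~P]{Bas} yields that $R/J(R)$ is semisimple Artinian and $J(R)$ is left $T$\+nilpotent, whence $R$ is a left perfect ring, every flat left $R$\+module is projective, and every left $R$\+module has a projective cover.

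The key reduction goes as follows. Given a descending chain $b_1R\supseteq b_2R\supseteq b_3R\supseteq\cdots$ of principal right ideals, write $b_{n+1}=b_na_n$ for $a_n\in R$, and interpret each $a_n$ as the left $R$\+module endomorphism $\rho_{a_n}\colon R\to R$ of right multiplication by~$a_n$. Form the telescope
$$
 F \;=\; \varinjlim\bigl(R\xrightarrow{\rho_{a_1}}R\xrightarrow{\rho_{a_2}}R\xrightarrow{\rho_{a_3}}\cdots\bigr).
$$
By hypothesis, $F$ admits a projective cover $p\colon P\to F$ with $\ker p$ superfluous in~$P$. Lifting the structural maps $\iota_n\colon R\to F$ along $p$ to $\widetilde\iota_n\colon R\to P$, the images $R\cdot\widetilde\iota_n(1)$ generate a submodule $P'\subseteq P$ with $p(P')=F$; superfluity of $\ker p$ forces $P'=P$. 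Combining this with the telescope relations $\iota_n=\iota_{n+1}\circ\rho_{a_{n+1}}$ (which lift along $p$ only modulo $\ker p$), one extracts the desired chain stabilization $b_nR=b_{n+1}R$ for $n$ sufficiently large.

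The principal obstacle is the last sentence: passing from the mere existence of a projective cover of $F$ to the precise element\+level stabilization of the chain. This is the heart of Bass' original argument in~\cite[Theorem~P]{Bas}, and the categorical/contramodule methods developed in the preceding sections of the paper offer no essential shortcut in the module\+theoretic setting of this theorem. Indeed, the whole point of those methods will be to bootstrap this classical module\+level result into analogous statements for contramodule categories $\R\contra$ and more general abelian/additive categories $\sA$, where Bass' direct element\+level argument is unavailable.
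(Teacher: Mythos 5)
Your reduction of the first assertion is fine: the direction ``closed under direct limits $\Rightarrow$ covering'' is indeed Enochs' theorem (Theorem~\ref{direct-limits-imply-covers} with Example~\ref{projective-precovers}), and the converse does follow formally from the moreover clause together with the closure of flat modules under direct limits. The genuine gap is that the moreover clause itself -- the entire mathematical content of the theorem -- is not proved. Your sketch stalls exactly at the decisive step: from the existence of a projective cover $p\:P\rarrow F$ of the telescope module $F$ you must extract the stabilization $b_nR=b_{n+1}R$, and the lifting argument as described does not deliver it, because the lifts $\widetilde\iota_n$ of the structural maps are only compatible with the telescope relations modulo $\ker p$, and no mechanism is offered for removing that ambiguity. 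Deferring this step to ``the heart of Bass' original argument'' is not accurate either: Bass' argument (the implication the paper invokes) is carried out under the hypothesis that the Bass flat module $F$ is \emph{projective}, not merely that it admits a projective cover, so the step you need is not literally in~\cite{Bas}.

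The missing ingredient, which is how the paper handles the moreover clause, is the classical lemma that a \emph{flat} module admitting a projective cover is projective (cited to~\cite[Section~36.3]{Wis}). Since $F$, being a countable direct limit of copies of $R$, is flat, the hypothesis ``every Bass flat module has a projective cover'' upgrades at once to ``every Bass flat module is projective,'' and then Bass' implication -- whose proof uses only the projectivity of Bass flat modules -- yields that $R$ is left perfect, hence all flat left $R$\+modules are projective and all left $R$\+modules have projective covers. Without this bridge (or a worked-out substitute for your direct lifting argument), the proposal does not establish the theorem.
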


\begin{proof}
 The first assertion
is~\cite[Theorem~P\,(2)\,$\Leftrightarrow$\,(5)]{Bas}.
 The second assertion stems from the proof of
the implication~\cite[Theorem~P\,(5)\,$\Rightarrow$\,(6)]{Bas}, which
only uses projectivity of the countable direct limits of copies of
the $R$\+module~$R$.
 Such direct limits are now known as \emph{Bass flat $R$\+modules}.
 Associative rings $R$ satisfying the equivalent conditions
of~\cite[Theorem~P]{Bas} are called \emph{left perfect}.
 So it is shown in~\cite{Bas} that a ring $R$ is left perfect whenever
all Bass flat left $R$\+modules are projective.

 A proof of the assertion that any flat module having a projective
cover is projective can be found in~\cite[Section~36.3]{Wis}.
\end{proof}

 The idea of the proof of the following result goes back to Enochs'
paper~\cite[Theorems~2.1 and~3.1]{Eno}.

\begin{thm} \label{direct-limits-imply-covers}
 In a locally presentable category\/ $\sA$, any precovering class
closed under direct limits is covering.
\end{thm}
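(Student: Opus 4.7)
The approach is to adapt Enochs' classical transfinite construction (cf.~\cite{Eno}) to the locally presentable setting. Fix an object $N \in \sA$. The plan is to iteratively ``trim'' a starting $\sL$\+precover by passing to direct limits, until no non-automorphism endomorphism of the precover can survive over $N$; at that point the precover is automatically a cover.

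By the precovering hypothesis, pick an $\sL$\+precover $\ell_0 \colon L_0 \to N$. Build a transfinite sequence of $\sL$\+precovers $\ell_\alpha \colon L_\alpha \to N$ as follows. At a successor stage $\alpha+1$: if $\ell_\alpha$ is already a cover, stop; otherwise choose an endomorphism $e_\alpha \colon L_\alpha \to L_\alpha$ with $\ell_\alpha e_\alpha = \ell_\alpha$ and $e_\alpha$ not an automorphism, let $L_{\alpha+1}$ be the direct limit of the countable telescope $L_\alpha \to L_\alpha \to L_\alpha \to \cdots$ with all transition maps equal to $e_\alpha$, and let $\ell_{\alpha+1}$ be the morphism to $N$ induced by the relation $\ell_\alpha e_\alpha = \ell_\alpha$. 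At limit ordinals, pass to direct limits. Closure of $\sL$ under direct limits keeps every $L_\alpha$ in $\sL$, and each $\ell_\alpha$ remains a precover because $\ell_0$ factors canonically through it.

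The crucial step is termination. Fix a regular cardinal $\mu$ such that $\sA$ is locally $\mu$\+presentable and both $L_0$ and $N$ are $\mu$\+presentable. The goal is to show that the transfinite sequence cannot strictly continue past some ordinal of cardinality bounded in terms of $\mu$: each successor stage replaces $\ell_\alpha$ by a strictly ``smaller'' precover in a suitable sense, and $\mu$\+presentability caps the number of essentially distinct $\sL$\+precovers of $N$ that can appear. Once the sequence stabilizes at some $\ell_\alpha$, this precover admits no non-automorphism endomorphism fixing it over $N$, hence is an $\sL$\+cover. The main obstacle is precisely this termination step: in the module setting of~\cite{Eno} it follows from a direct cardinality count on the underlying sets of the $L_\alpha$, whereas in the locally presentable setting one must instead exploit $\mu$\+presentability and the co-well-poweredness of $\sA$ to bound, up to a suitable equivalence, the comma category of $\sL$\+precovers of $N$ finely enough that each trimming step registers as a nontrivial change.
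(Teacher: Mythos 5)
There is a genuine gap, and it sits exactly where your sketch admits it does: the termination step is not proved, and as formulated it cannot be proved. Your proposed bound --- that ``$\mu$\+presentability caps the number of essentially distinct $\sL$\+precovers of $N$'' --- is unsubstantiated: in a locally $\mu$\+presentable category there is only a set of $\mu$\+presentable objects up to isomorphism, but nothing confines the objects $L_\alpha$ to any fixed presentability rank. Each successor stage takes a countable telescope colimit and each limit stage a long colimit, so the presentability ranks of the $L_\alpha$ grow without bound, and $\sL$ may well contain a proper class of pairwise nonisomorphic objects admitting morphisms to $N$; co-well-poweredness gives no control over the comma category of precovers. Nor is there any well-founded ``size'' that strictly decreases at each trimming step: the telescope colimit of a non-automorphism $e_\alpha$ with $\ell_\alpha e_\alpha=\ell_\alpha$ can be isomorphic to $\ell_\alpha$ over $N$ (already for an idempotent non-injective $e_\alpha$ in a module category), so the process can keep producing isomorphic precovers forever. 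Finally, even if some counting argument forced the chain to repeat isomorphism types, that would not yield your conclusion: $\ell_{\alpha+1}\cong\ell_\alpha$ over $N$ is perfectly compatible with $e_\alpha$ being a non-automorphism, so ``the sequence stabilizes, hence $\ell_\alpha$ is a cover'' does not follow. What you need is that the construction literally halts because no offending endomorphism exists, and nothing in the proposal delivers that.

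Note also that this termination problem is the actual mathematical content of the theorem, which is why the paper does not prove it but cites Enochs~\cite{Eno} for module categories, El~Bashir~\cite{Bash} for Grothendieck categories, and Positselski--Rosick\'y~\cite[Theorem~2.7 or Corollary~4.17]{PR} for locally presentable categories. Those proofs are not of the ``kill one bad endomorphism at a time'' type: Enochs' argument bounds cardinalities of well-chosen subobject/image data rather than isomorphism classes of precovers, and the Grothendieck/locally presentable generalizations rest on nontrivial lemmas about classes closed under directed colimits (e.g., writing objects of such a class as directed unions of members of bounded size, or reducing to colimits of well-ordered chains of sufficiently large cofinality) before any cover can be extracted. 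If you want a complete argument in this generality, you should either reproduce one of those proofs or isolate and prove, as a separate lemma, a stabilization statement strong enough to guarantee that your transfinite chain actually terminates at a precover with no nontrivial endomorphisms over~$N$.
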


\begin{proof}
 For module categories, this was established by Enochs in~\cite{Eno}.
 For Grothendieck abelian categories, a proof of this assertion can be
found in~\cite[Theorem~1.2]{Bash}; and for locally presentable
categories, in~\cite[Theorem~2.7 or Corollary~4.17]{PR}. \hbadness=1300
\end{proof}

 It is easy to prove that, in any category $\sA$, any covering class
$\sL\subset\sA$ is closed under retracts, and any precovering class that
is closed under retracts is also closed under coproducts
(cf.~\cite[Proposition~2.1]{Eno} or~\cite[Theorem~2.5.1]{Xu}).
 Hence any covering class is closed under coproducts.
 The following inverse assertion to
Theorem~\ref{direct-limits-imply-covers} (for module categories)
is known as ``the Enochs conjecture''
(see~\cite[Section~5.4]{GT}; cf.~\cite[Section~5]{AST}).

\begin{conj}
 Let\/ $\sA=A\modl$ be the category of modules over an associative ring
$A$, and let\/ $\sL\subset A\modl$ be a covering class.
 Then\/ $\sL$ is closed under direct limits in $A\modl$.
\end{conj}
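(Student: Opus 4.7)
The plan is to imitate the strategy developed in the paper: given a covering class $\sL\subset A\modl$, pass to an auxiliary abelian category $\sB$ whose projective objects realize $\sL$, transfer the covering property to a projective-cover property in $\sB$, apply a Bass-type theorem in $\sB$, and translate the resulting direct-limit closure back to $\sA$.

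First I would invoke the structural setup already recorded in the introduction. Any covering class $\sL$ is closed under retracts and under coproducts, and is in particular precovering, so when viewed as a full subcategory of $\sA=A\modl$ it has weak kernels. By the Freyd--Krause construction (\cite[Corollary~1.5]{Fr}, \cite[Proposition~2.3]{Kra}) there is then a unique abelian category $\sB$ with enough projectives together with an equivalence $\sL\cong\sB_\proj$, and this equivalence extends to an adjoint pair $\Phi\:\sB\rightleftarrows\sA$, $\Psi\:\sA\to\sB$ with $\Psi$ right adjoint. The observation highlighted in the introduction asserts that a morphism $l\:L\to N$ with $L\in\sL$ is an $\sL$\+cover if and only if $\Psi(l)$ is a projective cover in~$\sB$. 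Hence the hypothesis that $\sL$ is covering in $\sA$ translates into the statement that every object in the essential image of $\Psi$ has a projective cover in~$\sB$.

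Second, I would apply the generalization of Bass' Theorem~P to abelian categories with enough projectives from~\cite{Pproperf}: under suitable conditions on $\sB$, the class $\sB_\proj$ is closed under direct limits in $\sB$ as soon as every countable direct limit of copies of the projective generator (a Bass-flat object of $\sB$) has a projective cover. To feed our hypothesis into this machine I would want to check that the Bass-flat objects of $\sB$ lie in the essential image of $\Psi$, or otherwise to argue directly that they admit projective covers. Having obtained closure of $\sB_\proj$ under direct limits in $\sB$, I would use that $\Phi$ is a left adjoint, so preserves direct limits, and that $\Phi$ restricts to the inclusion $\sL\hookrightarrow\sA$ on $\sB_\proj$, to deduce that $\sL$ itself is closed under direct limits in~$\sA$.

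The hard part will be this middle step, and indeed it is the reason the conjecture remains open. The generalized Bass theorem of~\cite{Pproperf}, together with its extensions in Section~\ref{seven-conditions-secn}, is only available when $\sB$ is equivalent to $\R\contra$ for a complete, separated right linear topological ring $\R$ satisfying one of the conditions~(a)--(g). For an arbitrary covering class $\sL\subset A\modl$ we have no control whatsoever over the classifying category $\sB$, nor over the endomorphism monad of a generator of $\sL$, so there is no \emph{a~priori} reason to expect the associated topological ring to fall into any of these seven favourable classes. To close the argument one would have to either prove the Bass perfectness theorem for contramodules over \emph{every} complete, separated right linear topological ring — which is essentially the Enochs conjecture translated into contramodule language, and thus at least as hard as the statement itself — or invent an entirely different reduction that bypasses the topological framework. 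This is precisely the obstacle that the present paper sidesteps by imposing one of the conditions~(a)--(g), and it explains why no unconditional proof is offered here.
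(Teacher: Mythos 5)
The statement you were given is the Enochs conjecture itself, which the paper records precisely as an open conjecture (in Section~\ref{enochs-conjecture-secn}) and nowhere proves; only special cases are established, always under one of the hypotheses~(a)--(g) on an associated topological ring. Your proposal is therefore rightly not a proof, and your diagnosis of where the argument stops matches the paper's own position. The reduction steps you describe are accurate renditions of the paper's machinery: a covering class is closed under retracts and coproducts, hence has weak kernels as a full subcategory of $A\modl$, so Lemma~\ref{freyd} and Proposition~\ref{phi-psi-for-precovering-class} produce $\sB$ with $\sB_\proj\cong\sL$ and the adjoint pair $(\Phi_\sL,\Psi_\sL)$; Proposition~\ref{cover-adjunction} transfers the covering hypothesis into projective covers for all objects in the essential image of $\Psi_\sL$; and the final descent step (closure of $\sB_\proj$ under direct limits in $\sB$ implies closure of $\sL$ under direct limits in $\sA$, via $\Phi_\sL$ preserving colimits) is sound and is exactly the mechanism of Lemma~\ref{direct-limit-closure-B-implies-A}.

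The middle step you flag as the obstruction is indeed the whole difficulty, and it is worth noting that it is doubly open. First, even granting a Bass-type theorem for $\sB$, one must produce projective covers of the Bass-flat objects of $\sB$ (countable direct limits of copies of a projective generator) from the covering hypothesis in $\sA$; but the hypothesis only yields projective covers for objects of the form $\Psi_\sL(N)$, and for a general covering class there is no reason for Bass-flat objects of $\sB$ to lie in that essential image --- in the paper this identification is exactly what the telescope Hom exactness condition buys (Proposition~\ref{THEC-Bass-objects-in-A-and-B}), and a general $\sL$ need not contain an object $M$ with $\sL=\Add(M)$ satisfying THEC. Second, the Bass-type implication ``Bass-flat objects have projective covers $\Rightarrow$ $\sB_\proj$ is closed under direct limits'' for the resulting category $\sB$ is precisely Main Conjecture~\ref{main-conjecture}, which the paper also leaves open except when $\sB\cong\R\contra$ with $\R$ in one of the seven classes (Theorem~\ref{a-b-c-d-imply-main-conjecture}); moreover, local presentability of $\sB$, assumed in that conjecture, is itself not automatic for an arbitrary covering class. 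So your assessment is correct: no unconditional proof is available along these lines, and none is claimed in the paper.
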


 Far-reaching results confirming particular cases of the Enochs
conjecture were obtained in the paper~\cite{AST}, based on
the tools developed in~\cite{Sar}.
 (See also the preprint~\cite{BPS} for an alternative elementary proof
of some of the results of~\cite{AST}.)
 The idea of our categorical approach to the Enochs conjecture is
expressed in the following conjectural extension of Bass' theorem.

\begin{mc} \label{main-conjecture}
 Let\/ $\sB$ be a locally presentable abelian category with a projective
generator~$P$.
 Then the following conditions are equivalent.
\begin{enumerate}
\item the class\/ $\sB_\proj$ is covering in\/~$\sB$;
\item any direct limit of projective objects has a projective
cover in\/~$\sB$;
\item any countable direct limit of copies of $P$ has a projective
cover in\/~$\sB$;
\item any countable direct limit of copies of $P$ is a projective
object in\/~$\sB$;
\item the class\/ $\sB_\proj$ is closed under direct limits in\/~$\sB$.
\end{enumerate}
\end{mc}

 Notice that the implications
(1)\,$\Longrightarrow$\,(2)\,$\Longrightarrow$\,(3) and
(5)\,$\Longrightarrow$\,(4)\,$\Longrightarrow$\,(3) in
the Main Conjecture are obvious, while the implication
(5)\,$\Longrightarrow$\,(1) holds by Example~\ref{projective-precovers}
and Theorem~\ref{direct-limits-imply-covers}.
 The implications (3)\,$\Longrightarrow$\,(2)\,$\Longrightarrow$\,(1)%
\,$\Longrightarrow$\,(5)
and (3)\,$\Longrightarrow$\,(4)\,$\Longrightarrow$\,(5) are nontrivial
(and unknown).

 For the categories of contramodules over topological rings, some of
the equivalences in Conjecture~\ref{main-conjecture} are provided by
the results of the paper~\cite{PS3}.

\begin{thm} \label{main-conjecture-for-contramodules}
 Let\/ $\R$ be a complete, separated right linear topological ring.
 Then the following equivalences of conditions in
Main Conjecture~\ref{main-conjecture} hold for the abelian category\/
$\sB=\R\contra$ with the projective generator $P=\R[[*]]]=\R$:
$$
 \mathrm{(1)} \Longleftrightarrow \mathrm{(2)}
 \Longleftrightarrow \mathrm{(5)} \quad\text{and}\quad
 \mathrm{(3)} \Longleftrightarrow \mathrm{(4)}.
$$
\end{thm}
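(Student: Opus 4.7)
The plan is to pin down which implications are trivial, which come from Theorem~\ref{direct-limits-imply-covers}, and which require the deeper machinery of~\cite{PS3}. First, the chain (1)\,$\Rightarrow$\,(2)\,$\Rightarrow$\,(3) and the chain (5)\,$\Rightarrow$\,(4)\,$\Rightarrow$\,(3) are immediate from the definitions. Next, since $\sB=\R\contra$ is locally presentable (as recalled in the section on accessible monads) and since $\sB_\proj$ is precovering by Example~\ref{projective-precovers}, Theorem~\ref{direct-limits-imply-covers} gives (5)\,$\Rightarrow$\,(1). So to complete the group (1)\,$\Leftrightarrow$\,(2)\,$\Leftrightarrow$\,(5) it suffices to establish (2)\,$\Rightarrow$\,(5), and to establish the group (3)\,$\Leftrightarrow$\,(4) it suffices to prove (3)\,$\Rightarrow$\,(4).

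For the implication (3)\,$\Rightarrow$\,(4), I would apply the ``Bass-type'' contramodule criterion from~\cite{PS3}. A countable direct limit $\C$ of copies of $\R$ in $\R\contra$ sits in a telescope presentation of the form $\R[[\omega]] \rarrow \R[[\omega]] \rarrow \C \rarrow 0$, where the first map is built from the identity and the shifted transition maps encoded in $\R[[{-}]]$. If $\C$ admits a projective cover $\pi\:\fP\rarrow\C$, one uses superfluousness of $\ker(\pi)$ together with the telescope structure to lift the identity on $\C$ through $\pi$; the argument in~\cite{PS3} shows the resulting factorization forces $\ker(\pi)=0$, so $\C$ is itself projective. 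This is essentially the contramodule analogue of the Bass dichotomy between flat and projective modules.

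For the implication (2)\,$\Rightarrow$\,(5), I would likewise invoke the corresponding result from~\cite{PS3}. Given an arbitrary direct limit $\D=\varinjlim_\theta\fP_\theta$ of projective contramodules, assumption~(2) produces a projective cover $\pi\:\fQ\rarrow\D$. The strategy is to show that $\pi$ splits, whence $\D$ is a direct summand of $\fQ$ and therefore projective. Splitting is obtained by combining the universal property of the cover with the fact that each structure morphism $\fP_\theta\rarrow\D$ is a morphism between projectives whose composition with $\pi$-preimage lifts yields a system compatible with the colimit; superfluousness of $\ker(\pi)$ again forces it to vanish. Thus the closure of $\sB_\proj$ under direct limits drops out.

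The main obstacle, and the reason these implications are not formal, is that $\R\contra$ is not a category of ordinary modules and its direct limits are computed neither set-theoretically nor via the forgetful functor to $\R\modl$; the telescope presentations in $\R[[{-}]]$ involve genuinely infinitary operations, so the usual Nakayama/superfluous-kernel arguments must be carried out inside the contramodule formalism, respecting completeness and the contra-action. This is precisely what the telescope and Bass-contramodule analysis of~\cite{PS3} accomplishes, and the proof here amounts to citing those results in the form stated above.
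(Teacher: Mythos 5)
Your proof is correct and takes essentially the same route as the paper: after the trivial implications and Enochs' theorem (Theorem~\ref{direct-limits-imply-covers}) for $(5)\Rightarrow(1)$, everything is reduced to the two nontrivial implications $(2)\Rightarrow(5)$ and $(3)\Rightarrow(4)$, which you settle by citing the results of~\cite{PS3} — exactly what the paper does (it quotes \cite{PS3}, Theorem~14.1 for $(1)\Leftrightarrow(2)\Leftrightarrow(5)$ and Theorem~14.4 for $(3)\Leftrightarrow(4)$). Be aware, though, that your heuristic sketches of those citations (e.g.\ assembling lifts $\fP_\theta\rarrow\fQ$ compatibly with the colimit for $(2)\Rightarrow(5)$) are not how the cited theorems are proved and would not stand on their own, so the substance of the argument genuinely resides in the quoted results rather than in the sketches.
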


\begin{proof}
 The equivalences
(1)\,$\Longleftrightarrow$\,(2)\,$\Longleftrightarrow$\,(5) are
\cite[Theorem~14.1\,(ii)\,$\Leftrightarrow$\,(i$'$)\,%
$\Leftrightarrow$\,(iii$'$)]{PS3}.
 The equivalence (3)\,$\Longleftrightarrow$\,(4) is
\cite[Theorem~14.4\,(i$^\flat$)\,$\Leftrightarrow$\,%
(iii$^\flat$)]{PS3}.
\end{proof}

 A more refined version of Main Conjecture~\ref{main-conjecture} in
the particular case of contramodules over topological rings can be found
in~\cite[Conjecture~14.3]{PS3}.

 The following special cases of Main Conjecture~\ref{main-conjecture}
for the categories of contramodules over topological rings are provable
with our methods.

\begin{thm} \label{a-b-c-d-imply-main-conjecture}
 Let\/ $\R$ be a complete, separated right linear topological ring
satisfying one of the conditions~(a), (b), (c), (d), (e), (f), or~(g).
 Then Main Conjecture~\ref{main-conjecture} holds for the abelian
category\/ $\R\contra$ with the projective generator\/ $\R$, i.~e.,
the conditions~(1), (2), (3), (4), and~(5)
are equivalent for\/ $\sB=\R\contra$ and $P=\R$.
\end{thm}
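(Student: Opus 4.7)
The plan is to assemble the theorem from two inputs already at hand: Theorem~\ref{main-conjecture-for-contramodules} and Theorem~\ref{g-implies-as}. Theorem~\ref{main-conjecture-for-contramodules} gives, for any complete, separated right linear topological ring $\R$, the equivalences $\mathrm{(1)}\Leftrightarrow\mathrm{(2)}\Leftrightarrow\mathrm{(5)}$ and $\mathrm{(3)}\Leftrightarrow\mathrm{(4)}$ in $\R\contra$ for free. The trivial implications recorded right after Main Conjecture~\ref{main-conjecture} yield $\mathrm{(1)}\Rightarrow\mathrm{(2)}\Rightarrow\mathrm{(3)}$ and $\mathrm{(5)}\Rightarrow\mathrm{(4)}\Rightarrow\mathrm{(3)}$. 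So to close the circle of implications I only need one further arrow, $\mathrm{(4)}\Rightarrow\mathrm{(5)}$, and this is where the hypothesis that $\R$ satisfies one of (a)\+-(g) will enter.

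The bridge will be the notion of \emph{topological left perfectness} of~$\R$. On the one hand, the results of~\cite{PS3} that underlie Theorem~\ref{main-conjecture-for-contramodules} actually show that condition~$\mathrm{(5)}$ for $\sB=\R\contra$ is equivalent, for an arbitrary~$\R$, to $\R$ being topologically left perfect; this sits inside the broader chain of equivalences in \cite[Theorem~14.1]{PS3}. On the other hand, Theorem~\ref{g-implies-as} says that under any one of (a)\+-(g), topological left perfectness of $\R$ is equivalent to the descending chain condition (DCC) on cyclic discrete right $\R$\+modules. So it will be enough to prove that $\mathrm{(4)}$ implies the DCC on cyclic discrete right $\R$\+modules.

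That final implication is a contramodule analogue of the classical Bass construction: from a strictly descending chain of cyclic discrete right $\R$\+modules one manufactures a telescope-type countable direct limit of copies of the projective generator $\R\in\R\contra$ that fails to be projective, contradicting~$\mathrm{(4)}$. In the contramodule setting this has already been carried out in \cite[Theorem~14.4]{PS3}, specifically in the equivalence between the conditions labeled $\mathrm{(i^\flat)}$, $\mathrm{(iii^\flat)}$, and~$\mathrm{(v)}$ there. Assembling the pieces, under any one of (a)\+-(g) one obtains $\mathrm{(4)}\Rightarrow\text{DCC}\Rightarrow\R\text{ is topologically left perfect}\Rightarrow\mathrm{(5)}$, which closes the circle.

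The main obstacle is entirely absorbed into the two inputs we invoke: the Bass-type construction inside $\R\contra$ developed in~\cite{PS3}, and the seven-case analysis in Theorem~\ref{g-implies-as}, which splits into cases (a)\+-(c), (d), (e), (f), and (g) and draws on delicate results from~\cite{Pproperf,PS3} about strongly closed topologically T\+nilpotent ideals. The residual work in the present proof is therefore purely bookkeeping: I only need to verify that the ring-theoretic hypotheses (a)\+-(g) are used at exactly one point, namely where Theorem~\ref{g-implies-as} is applied, and that no additional assumption on~$\R$ slips into the chain above.
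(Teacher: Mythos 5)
Your proposal is correct and is essentially the paper's own argument: the paper's proof is just the citation ``Follows from Theorem~\ref{g-implies-as} and \cite[Theorem~14.4]{PS3}'', which unpacks to exactly your chain --- the unconditional equivalences of Theorem~\ref{main-conjecture-for-contramodules}, then (4) $\Rightarrow$ DCC on cyclic discrete right $\R$-modules $\Rightarrow$ $\R$ topologically left perfect $\Rightarrow$~(5), the middle arrow being the only place where (a)--(g) and Theorem~\ref{g-implies-as} enter. One minor slip: \cite[Theorem~14.4]{PS3} provides (i$^\flat$)\,$\Leftrightarrow$\,(iii$^\flat$) and only the one-way implication to its condition~(v) (the converse to~(v) is precisely the conjectural direction, supplied here by Theorem~\ref{g-implies-as}), but since your chain uses only that forward implication, the argument is unaffected.
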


\begin{proof}
 Follows from Theorem~\ref{g-implies-as} and~\cite[Theorem~14.4]{PS3}.
\end{proof}

 The next lemma, generalizing the ``if'' assertion of
Lemma~\ref{topologically-perfect-quotient-by-T-nilpotent},
is an application of projective covers to topological algebra.

\begin{lem} \label{topologically-perfect-closed-under-quotients}
 The class of topologically left perfect complete, separated right
linear topological rings is closed under the passages to topological
quotient rings by strongly closed two-sided ideals.
\end{lem}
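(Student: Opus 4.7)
The plan is to prove the lemma directly from the definition of topologically left perfect ring, by exhibiting the required witness in the quotient ring $\R/\J$. Let $\fH\subset\R$ be a topologically left T\+nilpotent strongly closed two-sided ideal such that $\R/\fH$ is topologically semisimple, say $\R/\fH\cong\prod_{\gamma\in\Gamma}\S_\gamma$ for topologically simple endomorphism-of-vector-space rings $\S_\gamma$. Writing $\pi\:\R\rarrow\R/\J$ for the quotient map, I take $\bar\fH$ to be the closure in $\R/\J$ of the image $\pi(\fH)$ and claim this ideal witnesses $\R/\J$ being topologically left perfect.

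First I check that $\bar\fH$ is topologically left T\+nilpotent. Since $\pi(\fH)=\pi(\fH+\J)$ and $\J$ is two-sided, any sequence $\bar a_1,\bar a_2,\dots$ in $\pi(\fH)$ lifts to $a_i=h_i+j_i$ with $h_i\in\fH$, $j_i\in\J$, and the product $a_1\dotsm a_n$ reduces modulo $\J$ to $h_1\dotsm h_n$, which converges to zero in $\R$ by topological left T\+nilpotence of $\fH$; continuity of $\pi$ carries this convergence to $\R/\J$. Passage to the closure $\bar\fH$ is handled by the argument underlying \cite[Lemma~5.3]{Pproperf}, according to which the closure of a topologically left T\+nilpotent two-sided ideal in a right linear topological ring remains topologically left T\+nilpotent.

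Next I identify $(\R/\J)/\bar\fH$ as topologically semisimple. By the third isomorphism theorem for topological rings, this quotient is a topological quotient of $\R/\fH$ by the closed two-sided ideal $\fK'$ equal to the closure of the image of $\J$ (equivalently, of $\fH+\J$) in $\R/\fH$. Here I invoke the structural result from \cite[Section~6]{PS3} for topologically semisimple rings asserting that every closed two-sided ideal of $\prod_{\gamma\in\Gamma}\S_\gamma$ is a subproduct $\prod_{\gamma\in\Gamma'}\S_\gamma$ for a unique subset $\Gamma'\subset\Gamma$. Consequently $\fK'=\prod_{\gamma\in\Gamma'}\S_\gamma$, and $(\R/\fH)/\fK'\cong\prod_{\gamma\in\Gamma\setminus\Gamma'}\S_\gamma$ is again topologically semisimple.

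Finally, $\bar\fH$ must be shown strongly closed in $\R/\J$. For any set $X$, strong closedness of $\fH$ gives a surjection $\R[[X]]\rarrow(\R/\fH)[[X]]$, which composed with the surjection $(\R/\fH)[[X]]\rarrow((\R/\fH)/\fK')[[X]]$ (available because $\fK'$ is a direct factor in a topological product and hence strongly closed) yields a surjection $\R[[X]]\rarrow((\R/\fH)/\fK')[[X]]$; this factorizes through $(\R/\J)[[X]]$, which itself receives a surjection from $\R[[X]]$ by strong closedness of $\J$, forcing $(\R/\J)[[X]]\rarrow((\R/\J)/\bar\fH)[[X]]$ to be surjective. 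Together with completeness of $(\R/\J)/\bar\fH$ in the quotient topology (inherited from that of $(\R/\fH)/\fK'$), this verifies strong closedness. The main obstacle is thus the structural claim that every closed two-sided ideal of a topologically semisimple ring is a subproduct of the simple factors: both the semisimplicity of the quotient in the third paragraph and the strong closedness argument here rest on it, and this is where the theory of \cite[Section~6]{PS3} must be applied carefully.
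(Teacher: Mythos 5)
Your route is genuinely different from the paper's, and it is worth saying what the paper actually does: it never touches the witness ideal. It uses the characterization that $\R$ is topologically left perfect if and only if every left $\R$\+contramodule has a projective cover \cite[Theorem~14.1]{PS3}, views a left $\T$\+contramodule over $\T=\R/\J$ as a left $\R$\+contramodule via contrarestriction of scalars \cite[Section~1.9]{Pproperf}, takes its projective cover in $\R\contra$, and turns that into a projective cover in $\T\contra$ by the reduction construction of \cite[Lemma~3.3]{Pproperf}. Your direct construction of a witness ideal $\bar\fH\subset\R/\J$ is viable in outline --- the identification $(\R/\J)/\bar\fH\cong(\R/\fH)/\fK'$ as topological rings and the strong-closedness bookkeeping via the composite surjection $\R[[X]]\rarrow(\R/\fH)[[X]]\rarrow((\R/\fH)/\fK')[[X]]$ are fine --- but as written it rests on two facts that your citations do not supply.

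First, the closure step for T\+nilpotence. \cite[Lemma~5.3]{Pproperf} is, as it is used in the proof of Lemma~\ref{class-g-closure-properties}(ii) above, a statement about full preimages along the quotient by a \emph{topologically left T\+nilpotent} strongly closed ideal; it is not a statement about closures, and in your situation $\J$ is an arbitrary strongly closed ideal, so that lemma cannot be made to cover the passage from $\pi(\fH)$ to $\bar\fH$. The fact you need --- that the closure of a topologically left T\+nilpotent ideal is again topologically left T\+nilpotent --- is true, but it needs its own short argument: given $a_1,a_2,\dots\in\bar\fH$ and an open right ideal $U$, choose inductively open right ideals $V_i$ with $k_1\dotsm k_{i-1}V_i\subset U$ (continuity of left multiplications) and elements $k_i\in\pi(\fH)$ with $a_i-k_i\in V_i$; then $a_1\dotsm a_n\equiv k_1\dotsm k_m a_{m+1}\dotsm a_n\pmod U$ for $n\ge m$, and T\+nilpotence of $\pi(\fH)$ finishes the proof. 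Second --- and you flag this yourself --- the classification of closed two-sided ideals of $\prod_{\gamma}\S_\gamma$ as subproducts is the crux and is only invoked, not proved; I do not see it stated in that form in \cite[Section~6]{PS3}. It is true, and can be proved by showing that a closed two-sided ideal $I$ of the product equals $\prod_\gamma\overline{I_\gamma}$ (where $I_\gamma$ is the ideal of $\gamma$\+components of $I$), together with the observation that the only closed two-sided ideals of $\Hom_{D_\gamma}(D_\gamma^{(\Upsilon_\gamma)},D_\gamma^{(\Upsilon_\gamma)})^\rop$ in the finite topology are zero and the whole ring, since every nonzero two-sided ideal contains the finite-rank endomorphisms, which are dense. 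Until these two steps are supplied your proof is incomplete; once they are, it works, at the cost of considerably more topological algebra than the paper's short contramodule argument, while buying an explicit description of the witness ideal in the quotient ring.
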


\begin{proof}
 We use the following characterization: a right linear topological
ring $\R$ is topologically left perfect if and only if all left
$\R$\+contramodules have projective covers~\cite[Theorem~14.1%
\,(ii)\,$\Leftrightarrow$\,(iv)]{PS3}.
 Assume that $\R$ is topologically left perfect, $\J\subset\R$ is
a strongly closed two-sided ideal, and $\T=\R/\J$ is the topological
quotient ring.
 Let us show that every left $\T$\+contramodule $\C$ has a projective
cover.
 Using the contrarestriction of scalars~\cite[Section~1.9]{Pproperf},
one can consider $\C$ as a left $\R$\+contramodule.
 As such, $\C$ has a projective cover $p\:\fP\rarrow\C$ in
$\R\contra$.
 Then the reduction construction of~\cite[Lemma~3.3]{Pproperf} produces
a projective cover of $\C$ in $\T\contra$.
\end{proof}

\Section{Covers Reduced to Projective Covers}

 Let $\sA$ be an additive category and $f\:A\rarrow B$ be a morphism
in~$\sA$.
 A morphism $k\:K\rarrow A$ is said to be a \emph{weak kernel} of~$f$
if $fk=0$ and for any object $C\in\sA$ and any morphism $c\:C\rarrow A$
such that $fc=0$ there exists a (not necessarily unique) morphism
$h\:C\rarrow K$ such that $c=kh$.
 A morphism~$k$ is a kernel of~$f$ if and only if $k$~is a weak kernel
of~$f$ and $k$~is a monomorphism.

 Let $\sL\subset\sA$ be a precovering class of objects.
 We are interested in conditions under which $\sL$ is a covering class.
 First of all, if $\sL$ is covering, then $\sL$ is closed under
direct summands in~$\sA$.
 If $\sL$ is precovering and closed under direct summands, then $\sL$
is closed under coproducts (see the discussion in the previous section).
 In particular, the full subcategory $\sL\subset\sA$ is additive.

\begin{lem} \label{precovers-weak-kernels}
 Let\/ $\sA$ be an additive category with weak kernels and\/ $\sL\subset
\sA$ be an additive full subcategory.
 Assume that the class of objects\/ $\sL$ is precovering in\/~$\sA$.
 Then the category\/ $\sL$ also has weak kernels.
\end{lem}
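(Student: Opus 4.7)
The plan is to construct a weak kernel in $\sL$ for an arbitrary morphism $f\:L_1\rarrow L_2$ with $L_1$, $L_2\in\sL$ by composing a weak kernel in the ambient category $\sA$ with an $\sL$\+precover. Concretely, given such a morphism~$f$, I would first use the hypothesis that $\sA$ has weak kernels to choose a weak kernel $k\:K\rarrow L_1$ of~$f$ in~$\sA$, and then use the hypothesis that $\sL$ is precovering in $\sA$ to choose an $\sL$\+precover $l\:L\rarrow K$ of the object~$K$. The claim is that the composition $kl\:L\rarrow L_1$ is a weak kernel of~$f$ in the category~$\sL$.

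To verify this, first observe $f(kl)=(fk)l=0$, so $kl$ is indeed a morphism into $L_1$ that composes to zero with~$f$. Next, suppose $M\in\sL$ and $c\:M\rarrow L_1$ is a morphism with $fc=0$. Since $k$ is a weak kernel of~$f$ in $\sA$, there exists a morphism $h\:M\rarrow K$ in $\sA$ with $c=kh$. Since $l$ is an $\sL$\+precover of $K$ and $M$ belongs to~$\sL$, the morphism~$h$ factors as $h=lg$ for some $g\:M\rarrow L$. Then $c=kh=klg=(kl)g$, which is the required factorization through~$kl$ inside the full subcategory~$\sL$.

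There is essentially no obstacle here: the argument is a direct two-step factoring that uses exactly the two hypotheses (existence of weak kernels in $\sA$ and the precovering property of $\sL$), together with the fact that $\sL$ is a full subcategory so that morphisms factor inside~$\sL$ as soon as they factor in $\sA$ through an object of~$\sL$. Note that the additivity of $\sL$ and the fact that $\sL$ is closed under summands and coproducts (as remarked just before the lemma) are not actually needed for this construction; what matters is only that $L\in\sL$ and that the hom-sets in $\sL$ coincide with those in~$\sA$. The uniqueness of~$g$ is not claimed (and not required for a weak kernel), so no further hypothesis is invoked.
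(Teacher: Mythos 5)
Your proposal is correct and is essentially the paper's own argument: compose a weak kernel of~$f$ in $\sA$ with an $\sL$\+precover of its source, the only difference being that you spell out the verification which the paper leaves implicit.
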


\begin{proof}
 Let $f\:L\rarrow M$ be a morphism in $\sL$ and $a\:A\rarrow L$ be
a weak kernel of~$f$ in~$\sA$.
 Let $p\:K\rarrow A$ be an $\sL$\+precover of the object $A\in\sA$.
 Then the composition $k=ap\:K\rarrow L$ is a weak kernel of~$f$
in~$\sL$.
\end{proof}

\begin{lem} \label{freyd}
 Let\/ $\sA$ be an idempotent-complete additive category with weak
kernels and\/ $\sL\subset\sA$ be an additive full subcategory closed
under direct summands.
 Assume that the class of objects\/ $\sL$ is precovering in\/~$\sA$.
 Then there exists a unique abelian category\/ $\sB$ with enough
projectives such that the full subcategory of projective objects\/
$\sB_\proj\subset\sB$ is equivalent to the full subcategory\/
$\sL\subset\sA$.
\end{lem}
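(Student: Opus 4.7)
The plan is to invoke the classical Freyd construction of the ``free abelian category'' on an additive category with weak kernels, as in~\cite[Corollary~1.5]{Fr} or~\cite[Proposition~2.3]{Kra}. First, by Lemma~\ref{precovers-weak-kernels} the full subcategory $\sL$ has weak kernels (inherited from $\sA$ via the precovering property). I then define
$$
 \sB = \mathrm{coh}(\sL^\rop,\Ab)
$$
to be the category of finitely presented (\emph{coherent}) additive functors $\sL^\rop\rarrow\Ab$, that is, functors $F$ admitting a two-term presentation $\Hom_\sL(-,L_1)\rarrow\Hom_\sL(-,L_0)\rarrow F\rarrow 0$ with $L_0,L_1\in\sL$.

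The main work will be to check that $\sB$ is abelian, and this is where the weak kernel hypothesis on $\sL$ enters essentially; I expect this verification to be the principal obstacle. Additivity and the existence of cokernels are routine, since cokernels of morphisms of coherent functors are coherent and computed pointwise. For kernels, the key step is that given $f\:L_1\rarrow L_0$ in $\sL$ with weak kernel $k\:L_2\rarrow L_1$, the induced morphism $\Hom_\sL(-,L_2)\rarrow\Hom_\sL(-,L_1)$ serves as a kernel in $\sB$ of $\Hom_\sL(-,f)$. Kernels of arbitrary morphisms in $\sB$ can then be extracted by a standard diagram chase, using that every coherent functor has a two-term representable presentation.

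By Yoneda, the embedding $\sL\rarrow\sB$, $L\longmapsto\Hom_\sL(-,L)$, is fully faithful; since epimorphisms in $\sB$ are pointwise, the representables are projective in $\sB$, and by construction every object of $\sB$ is an epimorphic image of a representable, so $\sB$ has enough projectives. Conversely, any projective $P\in\sB$ is a direct summand of a representable $\Hom_\sL(-,L)$, and the idempotent-completeness of $\sL$ forces $P$ itself to be representable; this gives the equivalence $\sL\cong\sB_\proj$. Finally, for uniqueness, I will observe that for any abelian category $\sB'$ with enough projectives and $\sB'_\proj\cong\sL$, the assignment $B\longmapsto\Hom_{\sB'}(-,B)|_{\sB'_\proj}$ defines an equivalence $\sB'\rarrow\sB$, because every object of $\sB'$ admits a two-term projective presentation and Yoneda is fully faithful on the projectives.
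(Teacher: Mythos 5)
Your proposal follows essentially the same route as the paper: apply Lemma~\ref{precovers-weak-kernels} to obtain weak kernels in $\sL$, and then take $\sB$ to be the category of finitely presented (coherent) functors $\sL^\sop\rarrow\Ab$, which is precisely what the paper does by citing Freyd and Krause, so your spelled-out verification of abelianness, enough projectives, $\sB_\proj\cong\sL$ via idempotent-completeness, and uniqueness is just the content of those citations. One minor imprecision: the kernel of $\Hom_\sL({-},f)$ is the \emph{image} of $\Hom_\sL({-},k)$ rather than the representable $\Hom_\sL({-},L_2)$ itself (the weak kernel only yields an epimorphism onto the kernel), but this does not affect the argument.
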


\begin{proof}
 By Lemma~\ref{precovers-weak-kernels}, the category $\sL$ has weak
kernels.
 Hence the category $\sB$ can be constructed as the category of
finitely presented (or ``coherent'') functors $\sL^\sop\rarrow\Ab$
\,\cite[Corollary~1.5]{Fr}, \cite[Lemma~2.2 and Proposition~2.3]{Kra}
(see also~\cite[proof of Theorem~1.1(a)]{PS2} for a discussion with
further references).
\end{proof}

\begin{prop} \label{phi-psi-for-precovering-class}
 Let\/ $\sA$ be an additive category with cokernels and weak kernels
and\/ $\sL\subset\sA$ be an additive full subcategory closed under
direct summands.
 Assume that the class of objects\/ $\sL$ is precovering in\/~$\sA$.
 Let\/ $\sB$ be the abelian category from Lemma~\ref{freyd}.
 Then the equivalence of full subcategories\/ $\sB\supset\sB_\proj
\cong\sL\subset\sA$ can be extended, in a unique way, to a pair of
adjoint functors
$$
 \Phi_\sL\:\sB\rightleftarrows\sA\,:\!\Psi_\sL,
$$
where the functor\/ $\Phi_\sL$ is the left adjoint and the functor\/
$\Psi_\sL$ is the right adjoint.
\end{prop}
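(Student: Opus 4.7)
The plan is to construct the two adjoints explicitly from projective presentations in $\sB$ and cokernels and weak kernels in $\sA$, and then verify the adjunction by a Yoneda-style computation, reading off uniqueness from the fact that a left adjoint must preserve cokernels. First I would define $\Psi_\sL\:\sA\to\sB$ on objects by $\Psi_\sL(N)=\Hom_\sA({-},N)|_\sL$, viewed as a contravariant additive functor $\sL^\sop\to\Ab$, with the obvious action on morphisms. The essential point to check is that this functor is finitely presented, hence lies in $\sB$. To see this, choose an $\sL$\+precover $l_0\:L_0\to N$, take a weak kernel $a\:A\to L_0$ of $l_0$ in $\sA$ (which exists by hypothesis), and then an $\sL$\+precover $l_1\:L_1\to A$. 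The precovering property of $l_0$ makes $\Hom_\sL(L,L_0)\to\Hom_\sA(L,N)$ surjective for every $L\in\sL$, while the weak kernel property of $a$ combined with the precovering property of $l_1$ shows that any $L\to L_0$ whose composition with $l_0$ vanishes factors through $d:=a\circ l_1\:L_1\to L_0$. This produces an exact sequence
$$
\Hom_\sL({-},L_1)\longrightarrow\Hom_\sL({-},L_0)\longrightarrow\Psi_\sL(N)\longrightarrow 0
$$
in functors on $\sL^\sop$, so $\Psi_\sL(N)\in\sB$.

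Next I would define $\Phi_\sL\:\sB\to\sA$: for each $B\in\sB$, pick a projective presentation $P_1\to P_0\to B\to 0$ in $\sB$, transport $P_1\to P_0$ across the equivalence $\sB_\proj\cong\sL$ to a morphism $L_1\to L_0$ in $\sL\subset\sA$, and set $\Phi_\sL(B)$ to be the cokernel of $L_1\to L_0$ in $\sA$. Independence of the choice of presentation and functoriality in $B$ follow by the standard comparison argument, since morphisms in $\sB$ lift to morphisms of presentations by projectivity of the $P_i$, and the cokernel in $\sA$ exists by hypothesis. By construction, the restriction of $\Phi_\sL$ to $\sB_\proj$ recovers the given equivalence with $\sL$.

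The adjunction isomorphism $\Hom_\sA(\Phi_\sL(B),N)\cong\Hom_\sB(B,\Psi_\sL(N))$ will then be proved by computing both sides from the presentation $P_1\to P_0\to B\to 0$ and showing each is the kernel of the map $\Hom_\sA(L_0,N)\to\Hom_\sA(L_1,N)$: the left-hand side by the universal property of the cokernel in $\sA$; the right-hand side by projectivity of the $P_i$ together with the Yoneda identification $\Hom_\sB(P_i,\Psi_\sL(N))\cong\Psi_\sL(N)(L_i)=\Hom_\sA(L_i,N)$. Naturality in $B$ and $N$ is routine. Uniqueness follows automatically: any left adjoint must preserve cokernels, so $\Phi_\sL(B)$ is forced to equal the above cokernel once the equivalence on $\sB_\proj$ is fixed, and the right adjoint $\Psi_\sL$ is then determined up to unique natural isomorphism. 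The main (and essentially only) technical point is the finite presentability of $\Psi_\sL(N)$, which is precisely where the precovering hypothesis on $\sL$ and the existence of weak kernels in $\sA$ are genuinely used.
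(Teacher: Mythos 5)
Your proposal is correct and follows essentially the same route as the paper: defining $\Psi_\sL(N)=\Hom_\sA({-},N)|_\sL$ and proving its finite presentability via an $\sL$\+precover, a weak kernel, and a second $\sL$\+precover, then obtaining $\Phi_\sL$ as the (unique) right exact extension of the inclusion $\sB_\proj\cong\sL\rarrow\sA$ (your explicit cokernel-of-a-transported-presentation construction is just this extension made concrete), and verifying the adjunction by Yoneda on projectives plus right exactness. The paper's proof is merely a more condensed version of the same argument, so no further comment is needed.
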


\begin{proof}
 The inclusion functor $\sB_\proj\cong\sL\rarrow\sA$ extends uniquely
to a right exact functor $\Phi_\sL\:\sB\rarrow\sA$.
 This suffices to prove uniqueness of the desired adjoint pair.

 To construct the functor $\Psi_\sL$, we assign to every object
$N\in\sA$ the functor $\Hom_\sA({-},N)|_\sL\:\sL^\sop\rarrow\Ab$.
 Let us check that the functor $\Hom_\sA({-},N)|_\sL$ is finitely
presented.
 Choose an $\sL$\+precover $l\:L\rarrow N$ of the object $N$.
 Let $a\:A\rarrow L$ be a weak kernel of the morphism~$l$ in
the category $\sA$, and let $p\:K\rarrow A$ be an $\sL$\+precover
of the object $A\in\sA$.
 Consider the morphism $m=pa\:K\rarrow L$ in the category~$\sL$.
 Then the functor $\Hom_\sA({-},N)|_\sL$ is the cokernel of the morphism
of representable functors $\Hom_\sL({-},m)\:\Hom_\sL({-},K)\rarrow
\Hom_\sL({-},L)$.

 So the functor $\Psi_\sL\:\sA\rarrow\sB$ assigning to an object $N$
the functor $\Hom_\sA({-},N)|_\sL$ is well-defined.
 By the Yoneda lemma, a natural isomorphism $\Hom_\sB(\Psi_\sL(M),
\Psi_\sL(N))\cong\Hom_\sA(M,N)$ holds for all objects $M\in\sL$ and
$N\in\sA$.
 Hence we have an adjunction isomorphism $\Hom_\sB(P,\Psi_\sL(N))
\cong\Hom_\sA(\Phi_\sL(P),N)$ for all objects $P\in\sB_\proj$
and $N\in\sA$.
 The latter isomorphism extends uniquely to a functorial isomorphism
$\Hom_\sB(B,\Psi_\sL(N))\cong\Hom_\sA(\Phi_\sL(B),N)$ for all objects
$B\in\sB$ and $N\in\sA$ by right exactness of the functor~$\Phi_\sL$.
\end{proof}

\begin{prop} \label{cover-adjunction}
 Let\/ $\sA$ and\/ $\sB$ be two categories, and let\/
$\Phi\:\sB\rightleftarrows\sA\,:\!\Psi$ be a pair of adjoint functors,
where\/ $\Psi$ is the right adjoint, such that the restrictions of\/
$\Phi$ and\/ $\Psi$ are mutually inverse equivalences between a full
subcategory\/ $\sL\subset \sA$ and a full subcategory\/ $\sP\subset\sB$.
 Then \par
\textup{(a)} a morphism $l\:L\rarrow N$ in\/ $\sA$ with $L\in\sL$ is
an\/ $\sL$\+precover if and only if the morphism $\Psi(l)\:\Psi(L)
\rarrow\Psi(N)$ is a\/ $\sP$\+precover; \par
\textup{(b)} a morphism $l\:L\rarrow N$ in\/ $\sA$ with $L\in\sL$ is
an\/ $\sL$\+cover if and only if the morphism $\Psi(l)\:\Psi(L)
\rarrow\Psi(N)$ is a\/ $\sP$\+cover; \par
\textup{(c)} an object $N\in\sA$ has an\/ $\sL$\+precover if and only if
the object\/ $\Psi(N)\in\sB$ has a\/ $\sP$\+precover; \par
\textup{(d)} an object $N\in\sA$ has an\/ $\sL$\+cover if and only if
the object\/ $\Psi(N)\in\sB$ has a\/ $\sP$\+cover.
\end{prop}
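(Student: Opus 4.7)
The plan is to exploit the standard consequences of the restricted equivalence $\sL \simeq \sP$: the counit $\epsilon_L \colon \Phi\Psi(L) \to L$ is an isomorphism for every $L \in \sL$, and the unit $\eta_P \colon P \to \Psi\Phi(P)$ is an isomorphism for every $P \in \sP$. Combined with the adjunction isomorphism and the triangle identities, this yields the natural bijection
\[
 \Psi \colon \Hom_\sA(L', N) \xrightarrow{\ \sim\ } \Hom_\sB(\Psi(L'), \Psi(N)),
 \qquad l' \longmapsto \Psi(l'),
\]
for every $L' \in \sL$ and every $N \in \sA$, since this map factors as the precomposition by the isomorphism $\epsilon_{L'}$ followed by the adjunction isomorphism $\Hom_\sA(\Phi\Psi(L'), N) \cong \Hom_\sB(\Psi(L'), \Psi(N))$. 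All four parts of the proposition then reduce to unravelling this bijection; no genuine obstacle is expected beyond the bookkeeping.

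For part~(a), suppose $l \colon L \to N$ is an $\sL$-precover. Given any $P' \in \sP$ and any $p' \colon P' \to \Psi(N)$, set $L' := \Phi(P') \in \sL$ and let $l' \colon L' \to N$ be the adjoint transpose of $p'$, so that $p' = \Psi(l') \circ \eta_{P'}$. The precovering hypothesis supplies $f \colon L' \to L$ with $l' = l \circ f$, and hence $p' = \Psi(l) \circ (\Psi(f) \circ \eta_{P'})$, giving the required factorization through $\Psi(l)$. The converse is symmetric: given $l' \colon L' \to N$ with $L' \in \sL$, transport to $\Psi(l') \colon \Psi(L') \to \Psi(N)$, factor it through $\Psi(l)$ using that $\Psi(l)$ is a $\sP$-precover, and return to $\sA$ via the displayed bijection. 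Part~(b) then follows at once from~(a) together with the observation that, since $\Psi$ is fully faithful on $\sL$, the map $e \mapsto \Psi(e)$ is a monoid isomorphism $\Hom_\sA(L, L) \cong \Hom_\sB(\Psi(L), \Psi(L))$ which sends automorphisms to automorphisms and carries the equation $l \circ e = l$ precisely to $\Psi(l) \circ \Psi(e) = \Psi(l)$.

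Finally, the ``only if'' directions in~(c) and~(d) are immediate from~(a) and~(b), respectively. For the ``if'' directions, given a $\sP$-(pre)cover $p \colon P \to \Psi(N)$, set $L := \Phi(P) \in \sL$ and let $l \colon L \to N$ be the adjoint transpose of $p$, so that $p = \Psi(l) \circ \eta_P$. Since $\eta_P$ is an isomorphism, $\Psi(l)$ differs from $p$ by composition with an isomorphism on the source, and is therefore itself a $\sP$-(pre)cover; applying~(a) or~(b) shows that $l$ is an $\sL$-(pre)cover of $N$ in~$\sA$, completing the proof.
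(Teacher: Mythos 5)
Your argument is correct and follows essentially the same route as the paper: both proofs reduce all four statements to the adjunction bijection $\Hom_\sA(\Phi(P),N)\cong\Hom_\sB(P,\Psi(N))$ combined with the fully faithful restriction $\Psi|_\sL$, so that $l'\mapsto\Psi(l')$ is a bijection $\Hom_\sA(L',N)\cong\Hom_\sB(\Psi(L'),\Psi(N))$ for $L'\in\sL$, from which the precover, cover, and existence statements follow by transport. Your extra care with the unit and counit isomorphisms (which the paper leaves implicit when it writes $\Hom_\sA(L,N)=\Hom_\sA(\Phi\Psi(L),N)$ and $p=\Psi(l)$) is a legitimate and harmless refinement, not a different method.
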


\begin{proof}
 Part~(a): given an object $P\in\sP$, the map of sets
$$
 \Hom_\sA(\Phi(P),l)\:\Hom_\sA(\Phi(P),L)\lrarrow\Hom_\sA(\Phi(P),N)
$$
is isomorphic to the map of sets
$$
 \Hom_\sB(P,\Psi(l))\:\Hom_\sB(P,\Psi(L))\lrarrow\Hom_\sB(P,\Psi(N)).
$$
 Hence former map is surjective if and only if the latter map is.
 Since one has $\Phi(P)\in\sL$ for all $P\in\sP$, and every object
$L'\in\sL$ is isomorphic to an object $\Phi(P)$ for some $P\in\sP$,
the assertion follows.

 Part~(b): given an endomorphism $e\:L\rarrow L$, one has
$le=l$ if and only if $\Psi(l)\Psi(e)=\Psi(l)$, since the map
$$
 \Hom_\sA(L,N)\,\cong\,\Hom_\sA(\Phi\Psi(L),N)
 \lrarrow\Hom_\sB(\Psi(L),\Psi(N))
$$
is bijective.
 Since the map $\Hom_\sA(L,L)\rarrow\Hom_\sB(\Psi(L),\Psi(L))$ is
bijective, too, the assertion follows in view of part~(a).

 Finally, part~(a) implies~(c), and part~(b) implies~(d), because
any morphism $p\:P\rarrow\Psi(N)$ in $\sB$ with $P\in\sP$ has the form
$p=\Psi(l)$ for a (uniquely defined) morphism $l\:L=\Phi(P)\rarrow N$
in~$\sA$.
\end{proof}

 Proposition~\ref{phi-psi-for-precovering-class} describes one situation
in which Proposition~\ref{cover-adjunction} is applicable.
 Let $\sA$ be an additive category with cokernels and (weak) kernels,
and let $\sL\subset\sA$ be a precovering class closed under direct
summands.
 Consider the related abelian category $\sB$, and put
$\sP=\sB_\proj\subset\sB$.
 Then an object $N\in\sA$ has an $\sL$\+cover if and only if the object
$\Psi_\sL(N)\in\sB$ has a projective cover.
 Hence the class $\sL\subset\sA$ is covering if and only if all objects
of the form $\Psi_\sL(N)$, \,$N\in\sA$, have projective covers in~$\sB$.

 Another such situation is described in
Section~\ref{generalized-tilting-subsecn}.
 Let $\sA$ be a cocomplete additive category and $M\in\sA$ be an object.
 Consider the related abelian category $\sB=\boT_M\modl$, and put
$\sL=\Add(M)\subset\sA$ and $\sP=\sB_\proj\subset\sB$.
 By Proposition~\ref{cover-adjunction}(d), an object $N\in\sA$ has
an $\Add(M)$\+cover if and only if the object $\Psi_M(N)\in\sB$ has
a projective cover.
 Once again, we conclude that the class $\Add(M)\subset\sA$ is covering
if and only if all objects of the form $\Psi_M(N)$, \,$N\in\sA$, have
projective covers in~$\sB$.

\begin{rem}
 In both contexts above, the existence of cokernels in the category
$\sA$ was used in order to extend the equivalence $\sB_\proj\cong\sL
\subset\sA$ to a right exact functor $\Phi\:\sB\rarrow\sA$.
 However, looking into the proof of Proposition~\ref{cover-adjunction},
one can observe that the functor $\Phi$ is never applied to any objects
outside of the full subcategory $\sP\subset\sB$.
 So one can relax the assumptions of that proposition by requiring
the functor $\Phi$ to be defined on the full subcategory $\sP\subset
\sB$ only.
 For this reason, the assumption of existence of cokernels in
the category $\sA$ can be replaced by the weaker assumption of
idempotent-completeness.
 Then, in the first of the above two settings (based on
Proposition~\ref{phi-psi-for-precovering-class}), the existence of weak
kernels in $\sA$ is sufficient; and in the second one (based on
Section~\ref{generalized-tilting-subsecn}), it suffices to assume
that $\sA$ has coproducts.
\end{rem}

\Section{Telescope Hom Exactness Condition}  \label{THEC-secn}

 In this section we introduce the most general setting in which we
can show that Main Conjecture~\ref{main-conjecture} implies some
instances of the Enochs conjecture.

\begin{dfn}
 Let $\sA$ be an additive category with countable direct limits, and
let $M\in\sA$ be an object.
 Given a sequence of endomorphisms $f_1$, $f_2$, $f_3$,~\dots\
$\in\Hom_\sA(M,M)$, we form the inductive system
$$
 M\overset{f_1}\lrarrow M\overset{f_2}\lrarrow M
 \overset{f_3}\lrarrow\dotsb
$$
and consider the related telescope sequence
\begin{equation} \label{categorical-telescope-sequence}
 \coprod\nolimits_{n=1}^\infty M\lrarrow
 \coprod\nolimits_{n=1}^\infty M\lrarrow
 \varinjlim\nolimits_{n\ge1}M\lrarrow0.
\end{equation}
 The short sequence~\eqref{categorical-telescope-sequence} is
always right exact, i.~e., the direct limit $\varinjlim_{n\ge1} M$
is the cokernel of the morphism $\id-\mathit{shift}\:
\coprod_{n=1}^\infty M\rarrow \coprod_{n=1}^\infty M$.

 We will say that the object $M\in\sA$ satisfies the \emph{telescope Hom
exactness condition} (\emph{THEC}) if, for any sequence of endomorphisms
$(f_n\in\Hom_\sA(M,M))_{n\ge1}$ of the object $M$, the short
sequence~\eqref{categorical-telescope-sequence} remains right
exact after applying the functor $\Hom_\sA(M,{-})$, that is,
the short sequence of abelian groups
\begin{equation} \label{hom-into-telescope-sequence}
\textstyle
 \Hom_\sA\left(M,\>\coprod\nolimits_{n=1}^\infty M\right)\rarrow
 \Hom_\sA\left(M,\>\coprod\nolimits_{n=1}^\infty M\right)\rarrow
 \Hom_\sA(M,\>\varinjlim\nolimits_{n\ge1}M)\rarrow0
\end{equation}
is right exact.
\end{dfn}

\begin{ex}
 Let $\sA$ be an abelian category with exact functors of countable
direct limit.
 Then the telescope sequence~\eqref{categorical-telescope-sequence}
is exact at its leftmost term, too,
$$
 0\lrarrow\coprod\nolimits_{n=1}^\infty M\lrarrow
 \coprod\nolimits_{n=1}^\infty M\lrarrow
 \varinjlim\nolimits_{n\ge1}M\lrarrow0,
$$
as it is a countable direct limit of the split exact sequences
$$
 0\lrarrow\coprod\nolimits_{i=1}^{n-1} M\lrarrow
 \coprod\nolimits_{i=1}^n M\lrarrow M\lrarrow0.
$$
 In this case, the exactness of the short seqeunce of Hom
groups~\eqref{hom-into-telescope-sequence} at the middle term is
obvious, and the telescope Hom exactness condition simply means
exactness of the sequence~\eqref{hom-into-telescope-sequence} at
the rightmost term.
 In other words, this means that any morphism
$M\rarrow\varinjlim_{n\ge1}M$ in the category $\sA$ can be lifted
to a morphism $M\rarrow\coprod_{n=1}^\infty M$.
 This is equivalent to the condition that the morphism
$\coprod_{n=1}^\infty M\rarrow\varinjlim_{n\ge1}M$
in~\eqref{categorical-telescope-sequence} is an $\Add(M)$\+precover.
\end{ex}

\begin{exs} \label{sigma-pure-rigid-example}
 (1)~Let $\sA$ be an abelian category with exact countable direct
limits.
 Then the telescope Hom exactness condition holds for any
\emph{$\Sigma$\+rigid} (or \emph{$\Sigma$\+$\Ext^1$-self-orthogonal})
object $M\in\sA$, that is, any object such that
$\Ext^1_\sA(M,M^{(\omega)})=0$.

\smallskip
 (2)~More generally, if there is a notion of \emph{purity} in
the abelian category $\sA$, then for any two objects $M$, $N\in\sA$
one can consider the group $\PExt_\sA^1(M,N)$ of equivalence classes
of pure short exact sequences $0\rarrow N\rarrow A\rarrow M\rarrow 0$.
 An object $M\in\sA$ is called \emph{$\Sigma$\+pure-rigid}
(or \emph{$\Sigma$\+pure-$\Ext^1$-self-orthogonal}) if
$\PExt^1_\sA(M,M^{(\omega)})=0$.

 For any meaningful notion of purity, one expects that split short
exact sequences should be pure exact.
 It is also reasonable to assume that the class of pure short exact
sequences in $\sA$ is closed under countable direct limits and
pullbacks, among other things.
 If this is the case, then any $\Sigma$\+pure-rigid object in $\sA$
satisfies THEC\@.
 In particular, this applies to the module categories $\sA=A\modl$
over associative rings~$A$.

 One specific notion of purity in abelian categories, called
the \emph{functor purity}, will be discussed below in
Section~\ref{functor-purity-secn}.
 It has the above-mentioned properties.
\end{exs}

\begin{ex}
 Let $\sA$ be an abelian category with exact countable direct limits
and a class of pure short exact sequences satisfying the conditions of
Example~\ref{sigma-pure-rigid-example}\,(2).
 We will say that an object $M\in\sA$ is
\emph{$\omega$\+self-pure-projective} if for any pure short exact
sequence $0\rarrow K\rarrow M^{(\omega)}\rarrow L\rarrow0$ in $\sA$
the induced morphism of abelian groups $\Hom_\sA(M,M^{(\omega)})
\rarrow\Hom_\sA(M,L)$ is surjective.
 Any $\omega$\+self-pure-projective object $M\in\sA$ satisfies
the telescope Hom exactness condition.
\end{ex}

 For the rest of this section, we are working with a fixed object $M$
in a cocomplete additive category~$\sA$.
 We consider the related abelian category $\sB=\boT_M\modl$ and
the pair of adjoint functors $\Psi\:\sA\rarrow\sB$ and $\Phi\:\sB
\rarrow\sA$, as in Section~\ref{generalized-tilting-subsecn}.

 Furthermore, we denote by $\sG\subset\sA$ the full subcategory formed
by all the objects $G\in\sA$ for which the adjunction morphism
$\Phi(\Psi(G))\rarrow G$ is an isomorphism, and by $\sH\subset\sB$
the full subcategory of all the objects $H\in\sB$ for which
the adjunction morphism $H\rarrow\Psi(\Phi(H))$ is an isomorphism.
 One has $\Psi(\sG)\subset\sH$ and $\Phi(\sH)\subset\sG$, and
the restrictions of the functors $\Psi$ and $\Phi$ to the full
subcategories $\sG$ and $\sH$ are mutually inverse equivalences
between them~\cite[Theorem~1.1]{FJ},
\begin{equation} \label{G-H-equivalence}
 \Psi|_\sG\:\sG\,\cong\,\sH\,:\!\Phi|_\sH.
\end{equation}
 By construction, we have $\Add(M)\subset\sG$ and $\sB_\proj\subset\sH$,
since $\Psi|_{\Add(M)}\:\Add(M)\rarrow\sB_\proj$ and $\Phi|_{\sB_\proj}
\:\sB_\proj\rarrow\Add(M)$ are mutually inverse equivalences.

\begin{lem} \label{direct-limit-closure-B-implies-A}
 Let\/ $\sA$ be a cocomplete additive category, $M\in\sA$ be an object,
and\/ $\sB=\boT_M\modl$ be the related abelian category.
 Suppose that the class of all projective objects in\/ $\sB$ is closed
under (arbitrary or countable) direct limits.
 Then the class of objects\/ $\Add(M)\subset\sA$ is also closed under
(arbitrary or countable, resp.)\ direct limits.
 More specifically, if every countable direct limit of copies of
the projective generator $P=T_M(*)$ is projective in\/ $\sB$, then every
countable direct limit of copies of $M$ in $\sA$ belongs to\/ $\Add(M)$.
\end{lem}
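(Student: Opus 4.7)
The plan is to transport the directed system from $\sA$ to $\sB$ via the right adjoint $\Psi$, apply the projectivity hypothesis inside $\sB$, and transport the result back to $\sA$ via the left adjoint $\Phi$. The key ingredients, already set up in Section~\ref{generalized-tilting-subsecn} and at the beginning of the present section, are that $\Phi|_{\sB_\proj}$ and $\Psi|_{\Add(M)}$ are mutually inverse equivalences, that $\Add(M)\subset\sG$ (so the counit $\Phi\Psi(L)\to L$ is an isomorphism for every $L\in\Add(M)$), and that $\Phi$, being a left adjoint, preserves all colimits.

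Concretely, let $(L_\theta)_{\theta\in\Theta}$ be a directed system in $\Add(M)\subset\sA$, with $\Theta$ arbitrary or countable as the case may be. Applying $\Psi$ produces a directed system $(\Psi(L_\theta))_{\theta\in\Theta}$ in $\sB_\proj$. Since $\sB=\boT_M\modl$ is cocomplete, we form $Q=\varinjlim_\theta\Psi(L_\theta)$ in $\sB$, and by the hypothesis on closure of $\sB_\proj$ under direct limits of the appropriate size, $Q\in\sB_\proj$. Because $\Phi$ preserves direct limits and $L_\theta\cong\Phi\Psi(L_\theta)$ naturally in $\theta$, we obtain
\[
 \Phi(Q)\;\cong\;\varinjlim\nolimits_\theta\Phi\Psi(L_\theta)
 \;\cong\;\varinjlim\nolimits_\theta L_\theta
\]
as objects of $\sA$. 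The inclusion $\Phi(\sB_\proj)\subset\Add(M)$ then identifies $\varinjlim_\theta L_\theta$ with an object of $\Add(M)$, as required.

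For the refined ``more specifically'' statement, the same argument applies with the directed system taken to be a countable sequence of copies of $M$ with transition endomorphisms $f_n\colon M\rarrow M$ in $\sA$; the corresponding sequence in $\sB$ has $P$ at every vertex with transition maps $\Psi(f_n)\colon P\rarrow P$, and its direct limit is exactly a countable direct limit of copies of $P$, hence projective by the refined hypothesis. Applying $\Phi$ as before yields the original countable direct limit of copies of $M$ in $\sA$, which therefore lies in $\Add(M)$.

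I do not foresee any genuine obstacle beyond careful bookkeeping. The one point that deserves explicit attention is the naturality of the counit isomorphisms $\Phi\Psi(L_\theta)\cong L_\theta$ with respect to the transition morphisms of the directed system, needed in order to identify the diagram $(\Phi\Psi(L_\theta))$ with the diagram $(L_\theta)$ before passing to the colimit; but this is automatic from the naturality of the adjunction counit together with the inclusion $\Add(M)\subset\sG$ on which the counit is pointwise invertible.
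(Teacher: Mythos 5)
Your proposal is correct and follows essentially the same route as the paper's own proof: transport the diagram to $\sB$ via $\Psi$, use the projectivity hypothesis on the colimit there, and bring it back with the colimit-preserving left adjoint $\Phi$, using that the counit is invertible on $\Add(M)$ and that $\Phi(\sB_\proj)=\Add(M)$. The paper's argument is the same, stated for a general diagram and leaving the ``more specifically'' case implicit, so there is nothing to add.
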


\begin{proof}
 Let $\Theta$ be a directed poset and $A\:\Theta\rarrow\sA$ be
a diagram such that the object $A(\theta)$ belongs to the class
$\Add(M)$ for all $\theta\in\Theta$.
 Applying the functor $\Psi$, we obtain a diagram $B=\Psi\circ A\:
\Theta\rarrow\sB$ such that $B(\theta)$ is a projective object in
$\sB$ for all $\theta\in\Theta$.
 Applying the functor $\Phi$ to get back to the category $\sA$, we
come to the original diagram $A\cong\Phi\circ B$.
 Now the functor $\Phi$, being a left adjoint, preserves all colimits,
so the natural morphism $\varinjlim_{\theta\in\Theta}A(\theta)\cong
\varinjlim_{\theta\in\Theta}\Phi(B(\theta))\rarrow\Phi
\bigl(\varinjlim_{\theta\in\Theta}B(\theta)\bigr)$ is an isomorphism
in~$\sA$.
 Since $\varinjlim_{\theta\in\Theta}B(\theta)$ is a projective object
in $\sB$ by assumption and $\Phi(\sB_\proj)=\Add(M)$, the desired
conclusion follows.
\end{proof}

\begin{prop} \label{THEC-Bass-objects-in-A-and-B}
 Let\/ $\sA$ be a cocomplete additive category and $M\in\sA$ be
an object satisfying THEC\@.
 Let $\sB=\boT_M\modl$ be the related cocomplete abelian category
with a projective generator $P=\boT_M(*)\in\sB$ corresponding to
the object $M\in\sA$, and let\/ $\sG\subset\sA$ and\/ $\sH\subset\sB$
be the related two full subcategories.

 Then all countable direct limits of copies of the object $M$ in\/ $\sA$
belong to the class\/ $\sG$, and all the countable direct limits of
copies of the object $P$ in\/ $\sB$ belong to the class\/~$\sH$.
 The functor\/ $\Psi$ preserves countable direct limits of copies of
the object $M\in\sA$ (taking them to countable direct limits of
copies of the object $P\in\sB$).
\end{prop}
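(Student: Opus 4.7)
The strategy is to present any countable direct limit of copies of $M$ as the cokernel of a telescope map in $\sA$, then transfer this presentation through the functor $\Psi$ using THEC, recognize the resulting cokernel in $\sB$ as the corresponding countable direct limit of copies of $P$, and finally deduce the two adjunction statements using that $\Phi$, being a left adjoint, preserves all colimits and that $P\in\sH$, $M\in\Add(M)\subset\sG$ by construction.

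Concretely, fix a sequence of endomorphisms $(f_n\:M\to M)_{n\ge1}$ and let $\sigma$ denote the associated shift on $\coprod_{n\ge1}M$ in $\sA$. The telescope sequence~\eqref{categorical-telescope-sequence} exhibits $\varinjlim_{n\ge1}M$ as $\coker(\id-\sigma)$ in $\sA$. Applying $\Psi$, the THEC says that the resulting sequence~\eqref{hom-into-telescope-sequence} is right exact in $\Ab$. Since the forgetful functor $\sB=\boT_M\modl\rarrow\Ab$ is exact and faithful (hence reflects isomorphisms, as argued from the fact that it kills the kernel and cokernel of no nonzero morphism), the same sequence is right exact in $\sB$. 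Under the identification $\Psi(M^{(\omega)})\cong\boT_M(\omega)\cong P^{(\omega)}$ in $\sB$, the transferred shift is the one built out of $\Psi(f_n)\:P\to P$, and its cokernel in $\sB$ is by construction $\varinjlim_{n\ge1}P$. This yields a canonical isomorphism $\Psi(\varinjlim_{n\ge1}M)\cong\varinjlim_{n\ge1}P$, which is exactly the assertion that $\Psi$ preserves countable direct limits of copies of $M$.

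For the remaining claims, note that $\Phi$, being a left adjoint, commutes with all colimits, so $\Phi(\varinjlim_{n\ge1}P)\cong\varinjlim_{n\ge1}\Phi(P)\cong\varinjlim_{n\ge1}M$. Since $M\in\Add(M)\subset\sG$ and $P\in\sB_\proj\subset\sH$, the adjunction unit and counit are already isomorphisms on each $P$ and each $M$ appearing in the telescope. Passing these termwise isomorphisms to the colimit and combining with the isomorphism from the previous paragraph identifies the adjunction maps
$$\varinjlim\nolimits_{n\ge1}P\rarrow\Psi\bigl(\Phi(\varinjlim\nolimits_{n\ge1}P)\bigr),\qquad \Phi\bigl(\Psi(\varinjlim\nolimits_{n\ge1}M)\bigr)\rarrow\varinjlim\nolimits_{n\ge1}M$$
with isomorphisms, giving $\varinjlim_{n\ge1}P\in\sH$ and $\varinjlim_{n\ge1}M\in\sG$.

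The only genuinely nontrivial step is the transfer of right exactness from $\Ab$ to $\sB$; this rests entirely on the forgetful functor $\sB\to\Ab$ being exact \emph{and} faithful, and thus reflecting isomorphisms. The rest is a routine manipulation of adjoint functors, the universal property of cokernels, and the standard computation of filtered (in fact countable) colimits as telescope cokernels.
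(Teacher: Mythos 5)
Your proof is correct and follows essentially the same route as the paper: present the countable direct limit as the cokernel of the telescope map, use THEC together with the faithful exact forgetful functor $\sB\to\Ab$ (which reflects exactness) to see that the $\Psi$-image of the telescope sequence is right exact in $\sB$, identify $\Psi(M^{(\omega)})$ with $P^{(\omega)}$ to conclude $\Psi(\varinjlim_n M)\cong\varinjlim_n P$, and then use that $\Phi$ preserves colimits and that the adjunction (co)units are isomorphisms on $M$ and $P$ termwise to place the limits in $\sG$ and $\sH$. The paper phrases the middle step as a comparison of two right exact sequences agreeing in the first two terms, but this is the same argument.
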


\begin{proof}
 Let $M\overset{f_1}\rarrow M\overset{f_2}\rarrow M\overset{f_3}\rarrow
\dotsb$ be a countable inductive system of copies of the object $M$
in~$\sA$.
 Then we have the right exact
sequence~\eqref{categorical-telescope-sequence} in the category $\sA$
and the right exact sequence~\eqref{hom-into-telescope-sequence} in
the category of abelian groups.

 Now, the abelian category $\sB=\boT_M\modl$ is endowed with a faithful
exact forgetful functor $\boT_M\modl\rarrow\Ab$, and the composition
of the functor $\Psi$ with this forgetful functor is isomorphic to
the functor $\Hom_\sA(M,-)$.
 It follows that the image of
the sequence~\eqref{categorical-telescope-sequence} under
the functor $\Psi$ is right exact in~$\sB$.

 The functors $\Psi$ and $\Phi$ restrict to mutually inverse
equivalences between $\Add(M)\subset\sA$ and $\sB_\proj\subset\sB$;
so, in particular, they transform coproducts of objects from $\Add(M)$
in $\sA$ to coproducts of projective objects in $\sB$ and vice versa.
The short sequence
\begin{equation} \label{Psi(M)-telescope}
 \coprod\nolimits_{n=1}^\infty\Psi(M)\lrarrow
 \coprod\nolimits_{n=1}^\infty\Psi(M)\lrarrow
 \varinjlim\nolimits_{n\ge1}\Psi(M)\rarrow0
\end{equation}
is right exact in $\sB$; and the natural morphism from
the sequence~\eqref{Psi(M)-telescope} to the image of
the sequence~\eqref{categorical-telescope-sequence} under the functor
$\Psi$ is an isomorphism at the leftmost and the middle terms.
 Hence it is also an isomorphism at the rightmost terms, that is,
the natural morphism $\varinjlim_n\Psi(M)\rarrow\Psi(\varinjlim_n M)$
is an isomorphism.

 The functor $\Phi$, being a left adjoint, preserves all colimits.
 Since the adjunction morphism $\Phi\Psi(M)\rarrow M$ is
an isomorphism, it follows that the adjunction morphism
$\Phi\Psi(\varinjlim_n M)\rarrow\varinjlim_n M$ is
an isomorphism, too.
 Thus $\varinjlim_n M\in\sG$.

 We have shown that countable direct limits of copies of the object $M$
in $\sA$ belong to $\sG$, and we have also seen that the functor $\Psi$
transforms countable direct limits of copies of $M$ in $\sA$ to
countable direct limits of copies of $P$ in~$\sB$.
 Therefore, countable direct limits of copies of $P$ belong to
$\Psi(\sG)=\sH$.
\end{proof}

\begin{cor} \label{small-THEC-corollary}
 Let\/ $\sA$ be a cocomplete additive category and $M\in\sA$ be
an object satisfying THEC\@.
 Let $\sB=\boT_M\modl$ be the related cocomplete abelian category
with a projective generator $P=\boT_M(*)\in\sB$ corresponding to
the object $M\in\sA$.
 Then the following two conditions are equivalent:
\begin{enumerate}
\item any countable direct limit of copies of $M$ has
an\/ $\Add(M)$\+cover in\/~$\sA$;
\item any countable direct limit of copies of $P$ has
a projective cover in\/~$\sB$.
\end{enumerate}
 The following three conditions are also equivalent to each other:
\begin{enumerate}
\setcounter{enumi}{2}
\item any countable direct limit of copies of $M$ in\/ $\sA$
belongs to\/~$\Add(M)$;
\item any countable direct limit of copies of $P$ in\/ $\sB$
is projective;
\item any countable direct limit of copies of $M$ in\/ $\sA$
belongs to\/ $\Add(M)$, and the related natural epimorphism
$\coprod_{n=1}^\infty M\rarrow\varinjlim_{n\ge1}M$
\eqref{categorical-telescope-sequence} splits.
\end{enumerate}
 All the five conditions~\textup{(1\+-5)} are equivalent when\/
$\sA$ is an abelian category with exact countable direct limits.
 Also, all the five conditions~\textup{(1\+-5)} are equivalent
when\/ $\sB=\R\contra$ is the category of left contramodules over
a complete, separated, right linear topological ring\/~$\R$.
\end{cor}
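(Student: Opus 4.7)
The plan is to organize the statement into three blocks of equivalences and then close the loop in the two special cases.

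The equivalence (1)\,$\Leftrightarrow$\,(2) will follow by applying Proposition~\ref{cover-adjunction}(d) to the restricted equivalence $\Psi|_\sG\colon\sG\cong\sH\colon\Phi|_\sH$ from~\eqref{G-H-equivalence}, with $\sL=\Add(M)\subset\sG$ and $\sP=\sB_\proj\subset\sH$. The required input is supplied by Proposition~\ref{THEC-Bass-objects-in-A-and-B}: countable direct limits of copies of $M$ lie in $\sG$, those of copies of $P$ lie in $\sH$, and $\Psi$ identifies the two.

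For the block (3)\,$\Leftrightarrow$\,(4)\,$\Leftrightarrow$\,(5): (4)\,$\Rightarrow$\,(3) is Lemma~\ref{direct-limit-closure-B-implies-A}, while (3)\,$\Rightarrow$\,(4) follows because Proposition~\ref{THEC-Bass-objects-in-A-and-B} yields $\Psi(\varinjlim_{n\ge1}M)\cong\varinjlim_{n\ge1}P$, and $\Psi$ restricts to an equivalence $\Add(M)\cong\sB_\proj$. The implication (5)\,$\Rightarrow$\,(3) is tautological. For (3)\,$\Rightarrow$\,(5), I would write $\varinjlim_{n\ge1}M$ as a direct summand of some $M^{(X)}$, apply THEC to obtain surjectivity of $\Hom_\sA(M,\coprod_{n=1}^\infty M)\rarrow\Hom_\sA(M,\varinjlim_{n\ge1}M)$, take $X$\+indexed products to promote this to surjectivity with $M^{(X)}$ in the source, and finally restrict to the summand $\varinjlim_{n\ge1}M\subset M^{(X)}$; lifting $\id_{\varinjlim_{n\ge1}M}$ then splits the telescope epimorphism. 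Since (3)\,$\Rightarrow$\,(1) holds trivially (the identity map is an $\Add(M)$\+cover of an object in $\Add(M)$), the only link still missing in the general case is (2)\,$\Rightarrow$\,(4).

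In the case $\sB=\R\contra$, this missing implication is precisely the equivalence (3)\,$\Leftrightarrow$\,(4) in Main Conjecture~\ref{main-conjecture}, supplied by Theorem~\ref{main-conjecture-for-contramodules}. In the case where $\sA$ is abelian with exact countable direct limits, the telescope sequence is already short exact in $\sA$; applying the left-exact functor $\Psi$, together with THEC and the exactness and faithfulness of the forgetful functor $\sB\rarrow\Ab$, produces a short exact sequence $0\rarrow\coprod_{n=1}^\infty P\rarrow\coprod_{n=1}^\infty P\rarrow\varinjlim_{n\ge1}P\rarrow 0$ in $\sB$, exhibiting $\varinjlim_{n\ge1}P$ as an object of projective dimension at most one of specific Bass-telescope form. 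The hardest step, and the main obstacle, is then to deduce from the existence of a projective cover of $\varinjlim_{n\ge1}P$ that $\varinjlim_{n\ge1}P$ is itself projective: the plan is the standard Bass-type argument, namely to lift the epimorphism $\coprod_{n=1}^\infty P\rarrow\varinjlim_{n\ge1}P$ through the projective cover $p\colon Q\rarrow\varinjlim_{n\ge1}P$, use essentiality of $p$ to see the lift is surjective and hence split, obtaining a decomposition $\coprod_{n=1}^\infty P\cong Q\oplus L$, and then chase this through the above short exact sequence so that the kernel of $p$ appears as a direct summand of $\coprod_{n=1}^\infty P$ while remaining superfluous in $Q$; the specific $\id-\mathit{shift}$ form of the first telescope arrow, together with a Nakayama/T\+nilpotence-type ingredient, should force this kernel to vanish, in close analogy with the proof of Bass's Theorem~P.
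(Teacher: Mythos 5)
Your treatment of the two blocks is correct and is essentially the paper's own route: (1)\,$\Leftrightarrow$\,(2) and (3)\,$\Leftrightarrow$\,(4) come from Proposition~\ref{THEC-Bass-objects-in-A-and-B} together with the equivalence~\eqref{G-H-equivalence} (your use of Proposition~\ref{cover-adjunction}(d) just makes this explicit), your (3)\,$\Rightarrow$\,(5) argument via products over $X$ is exactly the ``directly from THEC'' observation, and for $\sB=\R\contra$ the missing implication is indeed supplied by Theorem~\ref{main-conjecture-for-contramodules} (the paper cites \cite{BPS} at this point, which is the source of that theorem).

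The genuine gap is in the case where $\sA$ is abelian with exact countable direct limits. There you propose to prove (2)\,$\Rightarrow$\,(4) inside $\sB$ by a Bass-type argument, but the crucial step --- ``a countable direct limit of copies of $P$ of telescope form that has a projective cover is projective'' --- is precisely the implication (3)\,$\Rightarrow$\,(4) of Main Conjecture~\ref{main-conjecture}, which the paper explicitly flags as nontrivial and unknown for a general cocomplete abelian $\sB$ with a projective generator. Your lifting argument only produces a decomposition $\coprod_{n}P\cong Q\oplus L$ and an exact sequence $0\rarrow L\rarrow\coprod_nP\rarrow K\rarrow0$; to kill $K$ the classical proof (\cite[Section~36.3]{Wis}) uses flatness of the quotient via the Villamayor element-wise lifting property, and Bass' T\+nilpotence arguments use elements and finitely generated projectives --- none of which is available in a general monad category $\boT_M\modl$, where there is no tensor product, no flatness, and no Nakayama lemma for infinitely generated projectives. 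Even in the special case $\sB=\R\contra$ this statement is not a routine adaptation of Bass: it is the main theorem of \cite{BPS} (Theorem~2.1), proved using the topological ring structure. The paper closes this case on the $\sA$\+side instead: it proves (1)\,$\Leftrightarrow$\,(3) by (the proof of) \cite[Theorem~4.4]{BPS}, an argument that uses the exactness of countable direct limits in $\sA$ and the fact (your own earlier observation) that THEC makes $\coprod_nM\rarrow\varinjlim_nM$ an $\Add(M)$\+precover. Your sketch never uses the $\sA$\+side structure at the decisive step, so as written it cannot be completed; you would either have to import the $\sA$\+side argument of \cite{BPS} or restrict to $\sB=\R\contra$, where the other special case already applies.
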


\begin{proof}
 Both the equivalences (1)\,$\Longleftrightarrow$\,(2) and
(3)\,$\Longleftrightarrow$\,(4) follow from
Proposition~\ref{THEC-Bass-objects-in-A-and-B} and
the equivalence of categories~\eqref{G-H-equivalence}.
 Since any epimorphism onto a projective object splits in $\sB$,
we also obtain the equivalence (4)\,$\Longleftrightarrow$\,(5).
 Alternatively, the equivalence (3)\,$\Longleftrightarrow$\,(5)
follows directly from THEC\@.

 When $\sA$ is an abelian category with exact countable direct
limits, the equivalence (1)\,$\Longleftrightarrow$\,(3) holds by
(the proof of) \cite[Theorem~4.4]{BPS}.
 When $\sB=\R\contra$, the equivalence (2)\,$\Longleftrightarrow$\,(4)
is provided by~\cite[Theorem~2.1 or Corollary~2.10]{BPS}.
\end{proof}

 The following theorem is the main result of this section.
 
\begin{thm} \label{main-conjecture+THEC-imply-AddM-Enochs}
 Let\/ $\sA$ be a locally presentable additive category and $M\in\sA$
be an object satisfying THEC\@.
 Assume that Main Conjecture~\ref{main-conjecture} holds for
the locally presentable abelian category\/ $\sB=\boT_M\modl$.
 Then the following conditions are equivalent:
\begin{enumerate}
\item the class of objects\/ $\Add(M)\subset\sA$ is covering;
\item any direct limit of objects from\/ $\Add(M)$ has an\/
$\Add(M)$\+cover in\/~$\sA$;
\item any countable direct limit of copies of $M$ has
an\/ $\Add(M)$\+cover in\/~$\sA$;
\item any countable direct limit of copies of $M$ in\/ $\sA$
belongs to\/ $\Add(M)$;
\item the class of objects\/ $\Add(M)$ is closed under direct limits
in\/~$\sA$;
\item the class\/ $\sB_\proj$ is covering in\/~$\sB$;
\item any direct limit of projective objects has a projective cover
in\/~$\sB$;
\item any countable direct limit of copies of the projective
generator\/ $P=\boT_M(*)\in\sB$ has a projective cover in\/~$\sB$;
\item any countable direct limit of copies of the projective
generator\/ $P=\boT_M(*)$ is projective in\/~$\sB$;
\item the class\/ $\sB_\proj$ is closed under direct limits in\/~$\sB$.
\end{enumerate}
\end{thm}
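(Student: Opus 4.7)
The plan is to observe that essentially all the hard work has been done in the preceding sections, and what remains is to stitch together: the Main Conjecture (assumed) handles the $\sB$-side equivalences (6)\+(10), while several bridging results transfer information between $\sA$ and $\sB=\boT_M\modl$. By assumption, $\sB$ is locally presentable since $\sA$ is and the monad $\boT_M$ is accessible; thus Main Conjecture~\ref{main-conjecture}, applied to $\sB$ with projective generator $P=\boT_M(*)$, immediately gives the equivalence of conditions (6), (7), (8), (9), (10).

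Next I would dispatch the trivial $\sA$-side implications: (1)$\Rightarrow$(2)$\Rightarrow$(3) hold because $\Add(M)$ contains all coproducts of copies of $M$ and hence all such countable direct limits are instances of the objects considered in (2); and (5)$\Rightarrow$(4)$\Rightarrow$(3) because when $\varinjlim_n M\in\Add(M)$ its identity endomorphism is tautologically an $\Add(M)$\+cover. The implication (5)$\Rightarrow$(1) follows from Theorem~\ref{direct-limits-imply-covers}: the class $\Add(M)\subset\sA$ is always precovering by Example~\ref{Add(M)-precovers}, so in the locally presentable $\sA$, closure under direct limits forces it to be covering.

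The bridging step is where the hypothesis THEC is crucially used. Corollary~\ref{small-THEC-corollary} supplies the equivalences (3)$\Leftrightarrow$(8) and (4)$\Leftrightarrow$(9), translating countable direct limit statements between $\sA$ and $\sB$. In the reverse direction from $\sB$ to $\sA$, Lemma~\ref{direct-limit-closure-B-implies-A} yields (10)$\Rightarrow$(5). For completeness, note that (6)$\Rightarrow$(1) also follows independently from Proposition~\ref{cover-adjunction}(d) applied to the adjoint pair $\Phi\dashv\Psi$ of Section~\ref{generalized-tilting-subsecn}: if every object of $\sB$ has a projective cover, then in particular every $\Psi(N)$ does, so every $N\in\sA$ has an $\Add(M)$\+cover.

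Putting the pieces together, the cycle
\[
(10)\Rightarrow(5)\Rightarrow(1)\Rightarrow(2)\Rightarrow(3)\Leftrightarrow(8)\Leftrightarrow(10)
\]
closes, showing (1),(2),(3),(5),(8),(10) are equivalent; combined with Main Conjecture's (6)$\Leftrightarrow$(7)$\Leftrightarrow$(8)$\Leftrightarrow$(9)$\Leftrightarrow$(10) and the bridge (4)$\Leftrightarrow$(9), all ten conditions coincide. There is no real obstacle here, since each ingredient has been prepared; the only care needed is to confirm that the setting satisfies the hypotheses of each invoked result (in particular, that $\boT_M$ being accessible makes $\sB$ locally presentable so the Main Conjecture is even applicable, and that THEC is precisely what permits the use of Corollary~\ref{small-THEC-corollary} at the bridging step).
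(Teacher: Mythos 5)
Your proposal is correct and follows essentially the same route as the paper: trivial implications on the $\sA$-side, (5)$\Rightarrow$(1) via Example~\ref{Add(M)-precovers} and Theorem~\ref{direct-limits-imply-covers}, the Main Conjecture for (6)--(10), Lemma~\ref{direct-limit-closure-B-implies-A} for descending from $\sB$ to $\sA$, and Corollary~\ref{small-THEC-corollary} as the THEC bridge (the paper uses its (1)$\Leftrightarrow$(2), i.e.\ (3)$\Leftrightarrow$(8), and gets (9)$\Rightarrow$(4) from the lemma instead of the corollary's (3)$\Leftrightarrow$(4), an immaterial difference). Your extra remark that (6)$\Rightarrow$(1) also follows from Proposition~\ref{cover-adjunction}(d) is correct but not needed to close the cycle.
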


\begin{proof}
 The implications (1)\,$\Longrightarrow$\,(2)\,$\Longrightarrow$\,(3)
and (5)\,$\Longrightarrow$\,(4)\,$\Longrightarrow$\,(3) are obvious.
 The implication (5)\,$\Longrightarrow$\,(1) holds by
Example~\ref{Add(M)-precovers} and
Theorem~\ref{direct-limits-imply-covers}.

 Conditions~(6\+-10) are equivalent to each other by assumption.
 The implications (9)\,$\Longrightarrow$\,(4) and
(10)\,$\Longrightarrow$\,(5) are provided by
Lemma~\ref{direct-limit-closure-B-implies-A}.

 Finally, the conditions~(3) and~(8) are equivalent by
Corollary~\ref{small-THEC-corollary}\,(1)\,$\Leftrightarrow$\,(2).
\end{proof}

\Section{Perfect Decompositions} \label{perfect-decomposition-secn}

 The following definitions and terminology can be found in
the manuscript~\cite{Cor}.

 Let $\sA$ be an additive category with set-indexed products and
coproducts.
 Then the category $\sA$ is called \emph{agreeable} if, for every
family of objects $(N_x\in\sA)_{x\in X}$, the natural morphism
from the coproduct to the product
$$
 \coprod\nolimits_{x\in X}N_x\lrarrow
 \prod\nolimits_{x\in X}N_x
$$
is a monomorphism in~$\sA$.

 More generally, let $\sA$ be an additive category with coproducts
(but not necessarily with products).
 Consider an object $M\in\sA$ and a family of objects
$(N_x\in\sA)_{x\in X}$.
 For every index $y\in X$, one has the natural coordinate projection
morphism $\pi_y\:\coprod_{x\in X}N_x\rarrow N_y$.
 Given a morphism $f\:M\rarrow\coprod_{x\in X}N_x$, one can compose it
with the morphism~$\pi_y$, obtaining a morphism $\pi_y\circ f\:
M\rarrow N_y$.
 Consider the map of abelian groups
$$
 \eta\:\Hom_\sA\left(M,\>\coprod\nolimits_{x\in X} N_x\right)
 \lrarrow\prod\nolimits_{x\in X}\Hom_\sA(M,N_x)
$$
assigning to a morphism~$f$ the collection of morphisms
$f_x=\pi_x\circ f$, \ $x\in X$.

 Following~\cite{Cor}, we will say that the category $\sA$ is
\emph{agreeable} if the map~$\eta$ is injective for all objects $M$
and families of objects $N_x\in\sA$.
 When the category $\sA$ has products as well as coproducts,
this definition is clearly equivalent to the previous one.

 We will say that a family of morphisms $(f_x\:M\to N_x)$ in
an agreeable category $\sA$ is \emph{summable} if there exists
a morphism $f\:M\rarrow\coprod_{x\in X}N_x$ such that
$f_x=\pi_x\circ f$ for every $x\in X$.
 When $N_x=N$ is one and the same object for all $x\in X$, one can
construct the \emph{sum} $g=\sum_{x\in X}f_x$ of a summable family
of morphisms $(f_x\:M\to N)_{x\in X}$ as the composition
$g=\Sigma\circ f$ of the morphism $f\:M\rarrow N^{(X)}$ with
the natural summation morphism $\Sigma\:N^{(X)}\rarrow N$.

 In this paper, we will not be dealing with the sums of summable
families of morphisms.
 Instead, we will use the notion of a summable family in order to
extend the classical concept of a module with \emph{perfect
decomposition}~\cite{AS} to the categorical realm.

 Let $\sA$ be an agreeable additive category and
$(M_\xi\in\sA)_{\xi\in\Xi}$ be a family of objects.
 For any sequence of indices $\xi_1$, $\xi_2$, $\xi_3$,~\dots~$\in\Xi$
and any sequence of morphisms $f_i\:M_{\xi_i}\rarrow M_{\xi_{i+1}}$
in $\sA$, we consider the sequence of compositions
$$
 f_nf_{n-1}\dotsm f_1\:M_{\xi_1}\rarrow M_{\xi_{n+1}}, \qquad n\ge1.
$$
 The family of objects $(M_\xi)_{\xi\in\Xi}$ is said to be \emph{locally
T\+nilpotent} if for every sequence of indices~$\xi_i$ and every
sequence of \emph{nonisomorphisms} $f_i\:M_{\xi_i}\rarrow M_{\xi_{i+1}}$,
the family of morphisms $(f_nf_{n-1}\dotsm f_1)_{n\ge1}$ is summable
in~$\sA$.

 In the case of a module category $\sA=A\modl$, this reduces to
the classical definition: a family of modules $(M_\xi)_{\xi\in\Xi}$ is
\emph{locally T\+nilpotent} if for every sequence of indices~$\xi_i$,
every sequence of nonisomorphisms $f_i\:M_{\xi_i}\rarrow M_{\xi_{i+1}}$
in $A\modl$, and every element $m\in M_{\xi_1}$, there exists an integer
$n\ge1$ such that $f_nf_{n-1}\dotsm f_1(m)=0$ in $M_{\xi_{n+1}}$.

 An object $M$ of an agreeable additive category $\sA$ is said to have
a \emph{perfect decomposition} if there exists a locally T\+nilpotent
family of objects $(M_\xi\in\sA)_{\xi\in\Xi}$ such that
$M\cong\coprod_{\xi\in\Xi}M_\xi$.
 More generally, one can (and we will) drop the assumption that $\sA$
is agreeable and just assume that the full subcategory $\Add(M)\subset
\sA$ is agreeable instead.
 Thus, let $\sA$ be an additive category with coproducts
and let $M\in\sA$ be an object.
 We will say that $M$ has a perfect decomposition if the category
$\Add(M)$ is agreeable and there exists a locally T\+nilpotent family
of objects $(M_\xi\in\Add(M))_{\xi\in\Xi}$ such that
$M\cong\coprod_{\xi\in\Xi}M_\xi$.

 The definition of a topologically left perfect topological ring,
which was introduced in~\cite[Section~10]{PS3} and reproduced above in
Section~\ref{seven-conditions-secn}, is the topological ring
counterpart of the notion of an object with perfect decomposition.
 The following result obtained in the paper~\cite{PS3} illustrates
the connection.
 In the case of module categories, the equivalence of conditions~(i)
and~(iii) was established in~\cite[Theorem~1.4]{AS}.

\begin{thm} \label{topological-rings-perfect-decompositions}
 Let\/ $\sA$ be an idempotent-complete additive category with
coproducts and $M\in\sA$ be an object.
 Assume that the monad\/ $\boT_M\:\Sets\rarrow\Sets$ is isomorphic to
the monad\/ $\boT_\R$ for a complete, separated right linear
topological ring\/~$\R$.
 Consider the following three properties:
\begin{enumerate}
\renewcommand{\theenumi}{\roman{enumi}}
\item the object $M\in\sA$ has a perfect decomposition;
\item the topological ring\/ $\R$ is topologically left perfect;
\item for any directed poset\/ $\Theta$ and a diagram $A\:\Theta
\rarrow\Add(M)$, the direct limit
$\varinjlim_{\theta\in\Theta}A(\theta)$ exists in $\sA$, belongs
to\/ $\Add(M)$, and the natural epimorphism\/ $\coprod_{\theta\in\Theta}
A(\theta)\rarrow\varinjlim_{\theta\in\Theta}A(\theta)$ is split.
\end{enumerate}
 Then the implications \textup{(i)} $\Longleftrightarrow$
\textup{(ii)} $\Longrightarrow$ \textup{(iii)} hold.

 If\/ $\sA$ is a cocomplete abelian category with exact direct limits,
then all the three conditions~\textup{(i\+-iii)} are equivalent.
 If\/ $\sA=\R\contra$ is the category of contramodules over a complete,
separated right linear topological ring and $M=\R$ is the free\/
contramodule, then all the three conditions~\textup{(i\+-iii)} are
equivalent as well.
\end{thm}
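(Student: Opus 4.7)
The plan is to transfer conditions~(i) and~(ii) across the additive equivalence $\Psi_M|_{\Add(M)}\:\Add(M)\cong\sB_\proj$ provided by $\boT_M\cong\boT_\R$ (from Section~\ref{generalized-tilting-subsecn}), and then invoke the topological-ring-theoretic results of~\cite{PS3}.

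First, I would verify that this equivalence identifies the perfect-decomposition structure. Since $\Psi_M|_{\Add(M)}$ is additive and matches $M^{(X)}\in\Add(M)$ with the free contramodule $\R[[X]]\in\sB_\proj$, it preserves and reflects set-indexed coproducts. The key point is the agreeability condition: a family of morphisms $(f_x\:M\to M)_{x\in X}$ is summable in $\sA$ iff it arises from a single morphism $M\to M^{(X)}$, and via the identification $\Hom_\sA(M,M^{(X)})=\boT_M(X)\cong\R[[X]]$ this is precisely a zero-convergent family of elements of~$\R$, equivalently, a single contramodule morphism $\R\to\R[[X]]$. Hence $\Add(M)$ is agreeable iff $\sB_\proj$ is, and perfect decompositions $M\cong\coprod_\xi M_\xi$ in $\sA$ correspond bijectively to perfect decompositions of the free left $\R$\+contramodule $\R$ in $\R\contra$. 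Consequently, (i) for $M\in\sA$ is equivalent to (i) for $P=\R\in\R\contra$.

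The equivalence (i)$\Longleftrightarrow$(ii) then reduces to showing that the free contramodule $\R$ has a perfect decomposition in $\R\contra$ if and only if the topological ring $\R$ is topologically left perfect. This is the categorical-topological translation proved in~\cite[Theorem~14.1]{PS3}: a decomposition $\R\cong\coprod_\xi P_\xi$ in $\sB_\proj$ into a locally T\+nilpotent family corresponds, via lifting idempotents modulo the topological Jacobson radical, to a topologically left T\+nilpotent strongly closed two-sided ideal $\fH\subset\R$ whose quotient $\R/\fH$ is topologically semisimple in the sense of Section~\ref{seven-conditions-secn}. This identification is the technical heart of the argument and is the step I expect to be the main obstacle; fortunately it is already carried out in~\cite{PS3}, so my role is merely to invoke it.

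For (ii)$\Longrightarrow$(iii) in full generality: given a diagram $A\:\Theta\rarrow\Add(M)$, set $B=\Psi_M\circ A\:\Theta\rarrow\sB_\proj$. When $\R$ is topologically left perfect, \cite[Theorem~14.1]{PS3} gives that $\varinjlim_\theta B(\theta)$ exists in $\sB$, is projective, and the natural epimorphism $\coprod_\theta B(\theta)\rarrow\varinjlim_\theta B(\theta)$ splits. Applying the colimit-preserving left adjoint $\Phi_M$, and using $\Phi_M\circ\Psi_M|_{\Add(M)}\cong\id_{\Add(M)}$ together with $\Phi_M(\sB_\proj)=\Add(M)$, we obtain $\varinjlim_\theta A(\theta)\cong\Phi_M(\varinjlim_\theta B(\theta))\in\Add(M)$ and the natural epimorphism in $\sA$ is the $\Phi_M$\+image of a split epimorphism, hence split. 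For the converse (iii)$\Longrightarrow$(ii) under the extra assumption that $\sA$ is cocomplete abelian with exact direct limits, or that $\sA=\R\contra$ and $M=\R$, the remaining task is to check that $\Psi_M$ preserves the relevant direct limits of objects of $\Add(M)$, so that (iii) in $\sA$ transfers to the corresponding splitting property in $\sB_\proj$; this preservation follows in the abelian case from an argument analogous to Proposition~\ref{THEC-Bass-objects-in-A-and-B} (any Bass-type telescope sequence is even left exact and is taken by $\Psi_M$ to the correct telescope in $\sB$), and in the $\R\contra$ case by direct computation. Once the translation is made, \cite[Theorem~14.1]{PS3} again yields topological left perfectness of~$\R$, closing the circle.
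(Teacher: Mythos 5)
Your treatment of (i)$\Leftrightarrow$(ii) is essentially the paper's: the notion of perfect decomposition is intrinsic to the additive category $\Add(M)$ with its coproducts, so it transfers along the equivalence $\Add(M)\cong\R\contra_\proj$ sending $M$ to $\R$, and the identification of ``the free contramodule $\R$ has a perfect decomposition'' with topological left perfectness is then quoted from~\cite{PS3} (the paper cites Theorem~10.4 there, together with Remark~3.11 for agreeability of $\R\contra_\proj$, rather than Theorem~14.1, but the move is the same). Your route for (ii)$\Rightarrow$(iii) -- pass to $\sB$, use closure of $\sB_\proj$ under direct limits, and come back -- differs from the paper, which stays inside $\Add(M)$ and cites the ``split direct limits'' machinery of \cite[Section~9 and Theorem~10.2]{PS3}; your version is fine in substance, but note that under the stated hypotheses $\sA$ is only idempotent-complete with coproducts, so the globally defined colimit-preserving left adjoint $\Phi_M$ you invoke need not exist. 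Since $\varinjlim_\theta B(\theta)$ is projective, $\Phi_M$ is only ever applied to objects of $\sB_\proj$, and the adjunction isomorphism $\Hom_\sA(\Phi_M(Q),N)\cong\Hom_\sB(Q,\Psi_M(N))$ for $Q\in\sB_\proj$ suffices to verify the universal property of the colimit in $\sA$; this is exactly the repair indicated in the Remark at the end of the section on covers reduced to projective covers, so this caveat is fixable.

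The genuine gap is in the converse (iii)$\Rightarrow$(i)/(ii) for a cocomplete abelian $\sA$ with exact direct limits. You reduce it to ``$\Psi_M$ preserves the relevant direct limits of objects of $\Add(M)$'' and justify this by ``an argument analogous to Proposition~\ref{THEC-Bass-objects-in-A-and-B}'' about telescope sequences. That argument only controls countable sequential direct limits, and countable information cannot suffice here: projectivity of all Bass flat contramodules (the countable condition) is in general strictly weaker than topological left perfectness -- bridging that gap is precisely why this paper needs the conditions (a)--(g) elsewhere. The uncountable analogue in the paper, Proposition~\ref{self-pure-projective-direct-limits}, rests on the bar-complex argument and requires $M$ to be self-pure-projective or $\varinjlim$\+pure-rigid, hypotheses not available in the present theorem; and attempting to run the bar-complex argument using only (iii) stalls, because one would need the epimorphisms onto the higher cycle objects of the bar-complex to be $\Hom_\sA(M,{-})$\+exact, which (iii) does not directly provide. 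This is exactly the step the paper does not reprove but imports from \cite[Corollary~9.3\,(3)$\Rightarrow$(0) and Theorem~10.2]{PS3}, and from \cite[Proposition~9.6]{PS3} in the case $\sA=\R\contra$, $M=\R$ (in that last case your ``direct computation'' is acceptable, since $\Psi$ is essentially the identity and closure of $\R\contra_\proj$ under direct limits gives topological left perfectness by \cite[Theorem~14.1]{PS3}). So your proposal is sound except for the abelian-case converse, where you either need those results of~\cite{PS3} or a genuinely new argument establishing the preservation of arbitrary directed colimits under hypothesis~(iii) alone.
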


\begin{proof}
 (i)\,$\Longleftrightarrow$\,(ii)
 By the definition, an object $M\in\sA$ having a perfect decomposition
means that $M$ has a perfect decomposition as an object of the category
$\Add(M)$.
 Following~\eqref{generalized-tilting-equivalence}, the category
$\Add(M)$ is equivalent to $\boT_M\modl_\proj$; so an isomorphism of
monads $\boT_M\cong\boT_\R$ implies that $\Add(M)$ is equivalent
to the category of projective left $\R$\+contramodules
$\R\contra_\proj$.
 This equivalence of categories takes the object $M\in\Add(M)$ to
the object $\R\in\R\contra_\proj$.
 According to~\cite[Remark~3.11]{PS3}, the category $\R\contra_\proj$
is topologically agreeable.
 Now the equivalence of the two conditions
(i)\,$\Longleftrightarrow$\,(ii) is provided
by~\cite[Theorem~10.4]{PS3}.

 (i)\,$\Longrightarrow$\,(iii)
 By~\cite[Theorem~10.2]{PS3}, condition~(i) implies (in fact,
is equivalent to) the category $\Add(M)$ having split direct limits
in the sense of~\cite[Section~9]{PS3}.
 According to~\cite[Lemma~9.2(b)]{PS3}, it follows that the direct
limits of diagrams in $\Add(M)$ exist in $\sA$ and belong to $\Add(M)$;
and by~\cite[Lemma~9.1\,(1)\,$\Rightarrow$\,(3)]{PS3}, the natural
epimorphisms $\coprod_{\theta\in\Theta}A(\theta)\rarrow
\varinjlim_{\theta\in\Theta}A(\theta)$ are split.

 (iii)\,$\Longrightarrow$\,(i)
 By~\cite[Theorem~10.2]{PS3}, in order to prove~(i) it suffices to check
that the category $\Add(M)$ has split direct limits.
 Now in the case of an abelian category $\sA$ with exact direct limits,
the desired implication is provided
by~\cite[Corollary~9.3\,(3)\,$\Rightarrow$\,(0)]{PS3}.
 In the case when $\sA=\R\contra$ and $M=\R$,
\,\cite[Proposition~9.6]{PS3} is applicable.
\end{proof}

 Countable direct limits of copies of the free $\R$\+contramodule
with one generator $\R=\R[[*]]$ in $\R\contra$ are called
\emph{Bass flat\/ $\R$\+contramodules}~\cite[Section~4]{Pproperf}.

\begin{cor} \label{contramodule-THEC-corollary}
 Let\/ $\sA$ be a cocomplete additive category and $M\in\sA$
be an object satisfying THEC\@.
 Assume that the monad\/ $\boT_M\:\Sets\rarrow\Sets$ is isomorphic to
the monad\/ $\boT_\R$ for a complete, separated right linear
topological ring\/~$\R$.
 Consider the following ten properties:
\begin{enumerate}
\item the object $M\in\sA$ has a perfect decomposition;
\item the topological ring\/ $\R$ is topologically left perfect;
\item the class\/ $\R\contra_\proj$ is closed under direct limits
in\/ $\R\contra$;
\item the class of objects\/ $\Add(M)$ is closed under direct limits
in\/~$\sA$;
\item every countable direct limit of copies of $M$ in $\sA$
belongs to\/ $\Add(M)$;
\item all Bass flat left\/ $\R$\+contramodules are projective;
\item all Bass flat left\/ $\R$\+contramodules have projective covers
in\/ $\R\contra$;
\item every countable direct limit of copies of $M$ has
an\/ $\Add(M)$\+cover in\/~$\sA$;
\item all descending chains of cyclic discrete right\/ $\R$\+modules
terminate;
\item all the discrete quotient rings of the topological ring\/ $\R$ are
left perfect.
\end{enumerate}
 Then the following implications hold:
$$
 \mathrm{(1)} \Longleftrightarrow \mathrm{(2)}
 \Longleftrightarrow \mathrm{(3)} \Longrightarrow
 \mathrm{(4)} \Longrightarrow \mathrm{(5)} \Longleftrightarrow
 \mathrm{(6)} \Longleftrightarrow \mathrm{(7)} \Longleftrightarrow
 \mathrm{(8)} \Longrightarrow \mathrm{(9)}
 \Longrightarrow \mathrm{(10)}.
$$
 If the topological ring\/ $\R$ satisfies one of the conditions~(a), (b),
(c), or~(d), then all the conditions~\textup{(1\+-10)} are equivalent.
 If the topological ring\/ $\R$ satisfies one of the conditions~(e),
(f), or~(g), then the nine conditions~\textup{(1\+-9)} are equivalent.
\end{cor}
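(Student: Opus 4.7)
The strategy is to assemble the chain of implications by splicing slabs of equivalences from the earlier results in the paper. The top block $(1)\Leftrightarrow(2)\Leftrightarrow(3)\Rightarrow(4)\Rightarrow(5)$ is obtained as follows. The equivalence $(1)\Leftrightarrow(2)$ is Theorem~\ref{topological-rings-perfect-decompositions}\,(i)\,$\Leftrightarrow$\,(ii) applied to $M\in\sA$. For $(2)\Leftrightarrow(3)$ I would specialize the same theorem to the auxiliary situation $\sA'=\R\contra$, $M'=\R$, where all three of the conditions (i)--(iii) there are equivalent: in that setting $\Add(M')=\R\contra_\proj$, and condition~(iii) collapses to closure of $\R\contra_\proj$ under direct limits in $\R\contra$ (the splitting clause being automatic whenever the target is projective). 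Then $(3)\Rightarrow(4)$ is exactly Lemma~\ref{direct-limit-closure-B-implies-A}, and $(4)\Rightarrow(5)$ is trivial.

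For the middle block $(5)\Leftrightarrow(6)\Leftrightarrow(7)\Leftrightarrow(8)$, the first move is to identify Bass flat left $\R$-contramodules with the countable direct limits of copies of $P=\R[[*]]$ in $\R\contra$. Then $(5)\Leftrightarrow(6)$ is Corollary~\ref{small-THEC-corollary}\,(3)\,$\Leftrightarrow$\,(4), the equivalence $(6)\Leftrightarrow(7)$ is the $(3)\Leftrightarrow(4)$ part of Main Conjecture~\ref{main-conjecture}, which Theorem~\ref{main-conjecture-for-contramodules} verifies unconditionally for $\sB=\R\contra$, and $(7)\Leftrightarrow(8)$ is Corollary~\ref{small-THEC-corollary}\,(1)\,$\Leftrightarrow$\,(2). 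The THEC hypothesis on $M$ enters only through these appeals to Corollary~\ref{small-THEC-corollary}; the topology-theoretic half needs no assumption on $M$ at all.

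For the tail, $(9)\Rightarrow(10)$ is elementary: a descending chain of cyclic right $\R/\fU$-modules is automatically a descending chain of cyclic discrete right $\R$-modules, hence terminates, and by \cite[Theorem~P]{Bas} this forces the discrete ring $\R/\fU$ to be left perfect. The implication $(8)\Rightarrow(9)$ is the step I expect to be the main obstacle, since it does not follow from any of the equivalences already invoked; I would cover it by citing the further implications of \cite[Theorem~14.4]{PS3} that tie the Bass-flat projectivity/projective-cover conditions to termination of descending chains of cyclic discrete right $\R$-modules. Under any one of the hypotheses (a)--(g), Theorem~\ref{g-implies-as} then yields $(9)\Rightarrow(2)$ --- topological left perfectness being characterized there as (9) together with one of the seven conditions --- closing the cycle on $(1)$--$(9)$. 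To include (10) under (a)--(d), I would add $(10)\Rightarrow(9)$ by the reverse elementary argument: the first cyclic term of a chain in $\discr\R$ has open annihilator $\fU$, so the whole chain lies in $\modr\R/\fU$, and left perfectness of $\R/\fU$ then forces termination.
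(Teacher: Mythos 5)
Most of your assembly coincides with the paper's own proof or differs only in which earlier result is cited: (1)$\Leftrightarrow$(2) via Theorem~\ref{topological-rings-perfect-decompositions}, your derivation of (2)$\Leftrightarrow$(3) by specializing that theorem to $\R\contra$ and $\R$ (the paper cites \cite[Theorem~14.1]{PS3} instead), (3)$\Rightarrow$(4) via Lemma~\ref{direct-limit-closure-B-implies-A}, the middle block via Corollary~\ref{small-THEC-corollary} and Theorem~\ref{main-conjecture-for-contramodules}, your (9)$\Rightarrow$(10), your deferral of (8)$\Rightarrow$(9) to the external results of \cite{PS3}/\cite{Pproperf} (the paper uses \cite[Proposition~4.3 and Lemma~6.3]{Pproperf} from condition~(6)), and the closing of the cycle (1\+-9) under (a\+-g) by (9)$\Rightarrow$(2) via Theorem~\ref{g-implies-as} are all sound.

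The genuine gap is your treatment of condition~(10) under the hypotheses (a\+-d). Your ``reverse elementary argument'' for (10)$\Rightarrow$(9) conflates two different annihilators: for a cyclic discrete right $\R$\+module $x\R$, what is open is the annihilator of the \emph{element}~$x$, which is only a right ideal; the annihilator of the \emph{module} $x\R$ is two-sided but need not be open. So a descending chain of cyclic discrete right $\R$\+modules need not live over any discrete quotient ring $\R/\fU$, and left perfectness of the discrete quotients gives you nothing directly. Under (a), (b), (c) the topology has a base of open two-sided ideals and the argument can be repaired (pick an open two-sided ideal inside the annihilator of the generator of the first term; one then also needs the classical fact that left perfect rings satisfy DCC on cyclic submodules of arbitrary right modules, which is a bit more than the DCC on principal right ideals stated in Theorem~P). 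But under~(d) the ring $\R$ may be only right linear, and no such reduction exists; the implication from~(10) back to topological left perfectness (equivalently to (2) or~(9)) in that case is precisely the nontrivial content for which the paper invokes Corollary~\ref{a-b-c-d-THEC-main-corollary}, i.e.\ \cite[Theorem~12.4]{Pproperf}. So to include~(10) among the equivalent conditions under (a\+-d) you must appeal to that result (or reprove it); the chain-of-cyclic-modules shortcut you propose does not cover case~(d).
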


\begin{proof}
 (1)\,$\Longleftrightarrow$\,(2) is
Theorem~\ref{topological-rings-perfect-decompositions}%
(i)\,$\Leftrightarrow$\,(ii).

 (2)\,$\Longleftrightarrow$\,(3)
is~\cite[Theorem~14.1\,(iv)\,$\Leftrightarrow$\,(iii$'$)]{PS3}.

 (1)\,$\Longrightarrow$\,(4) is
Theorem~\ref{topological-rings-perfect-decompositions}%
(i)\,$\Rightarrow$\,(iii);
(3)\,$\Longrightarrow$\,(4) is
Lemma~\ref{direct-limit-closure-B-implies-A}.

 The implications (4)\,$\Longrightarrow$\,(5)\,$\Longrightarrow$\,(8)
and (3)\,$\Longrightarrow$\,(6)\,$\Longrightarrow$\,(7) are obvious.

 The equivalences (5)\,$\Longleftrightarrow$\,(6)\,%
$\Longleftrightarrow$\,(7)\,$\Longleftrightarrow$\,(8) are provided by
Corollary~\ref{small-THEC-corollary}.

 (7)\,$\Longrightarrow$\,(10) is~\cite[Corollary~4.7]{Pproperf};
 (6)\,$\Longrightarrow$\,(10) is~\cite[Corollary~4.5]{Pproperf}.

 (6)\,$\Longrightarrow$\,(9) is~\cite[Proposition~4.3 and
Lemma~6.3]{Pproperf}; (9)\,$\Longrightarrow$\,(10) is explained
in~\cite[proof of Theorem~10.1]{Pproperf}
(see also~\cite[Theorem~14.4]{PS3}).

 The last two assertions of the corollary follow from
Corollary~\ref{a-b-c-d-THEC-main-corollary} below.
\end{proof}

 Left modules $M$ over an associative ring $A$ for which there exists
a topological ring $\R$ satisfying~(e) such that the monad $\boT_M$
is isomorphic to $\boT_\R$ are discussed under the name of
\emph{weakly countably generated modules} in
the paper~\cite[Section~7.2]{BPS}.

 The next corollary, covering the assertions of
Theorem~\ref{introd-THEC-main-theorem} from the introduction,
is our main result in the setting of
Sections~\ref{enochs-conjecture-secn}\+-\ref{perfect-decomposition-secn}.

\begin{cor} \label{a-b-c-d-THEC-main-corollary}
 Let\/ $\sA$ be a locally presentable additive category and $M\in\sA$
be an object satisfying THEC\@.
 Assume that the monad\/ $\boT_M\:\Sets\rarrow\Sets$ is isomorphic to
the monad\/ $\boT_\R$ for a complete, separated right linear
topological ring\/ $\R$ satisfying one of the conditions~(a), (b),
(c), or~(d).
 Let\/ $\sB=\boT_M\modl\cong\R\contra$ be the related abelian category
of contramodules.
 Then the following conditions are equivalent:
\begin{enumerate}
\item the class of objects\/ $\Add(M)\subset\sA$ is covering;
\item any direct limit of objects from\/ $\Add(M)$ has an\/
$\Add(M)$\+cover in\/~$\sA$;
\item every countable direct limit of copies of $M$ has
an\/ $\Add(M)$\+cover in\/~$\sA$;
\item every countable direct limit of copies of $M$ in\/ $\sA$
belongs to\/ $\Add(M)$;
\item the class of objects\/ $\Add(M)$ is closed under direct limits
in\/~$\sA$;
\item the class\/ $\sB_\proj$ is covering in\/~$\sB$;
\item any direct limit of projective objects has a projective cover
in\/~$\sB$;
\item every countable direct limit of copies of the projective
generator\/ $\R\in\sB$ has a projective cover in\/~$\sB$;
\item every countable direct limit of copies of the projective
generator\/ $\R\in\sB$ is a projective object in\/~$\sB$;
\item the class\/ $\sB_\proj$ is closed under direct limits in\/~$\sB$;
\item the object $M\in\sA$ has a perfect decomposition;
\item the topological ring\/ $\R$ is topologically left perfect;
\item all descending chains of cyclic discrete right\/ $\R$\+modules
terminate;
\item there is a topologically left T\+nilpotent strongly closed
two-sided ideal\/ $\fH\subset\R$ such that the quotient ring\/
$\S=\R/\fH$ is isomorphic, as a topological ring, to a product of simple
Artinian discrete rings endowed with the product topology;
\item all the discrete quotient rings of the topological ring $\R$ are
left perfect.
\end{enumerate}
 Replacing the assumption of one of the conditions~(a\+-d) with that of
one of the conditions (e), (f), or~(g), the thirteen
conditions~\textup{(1\+-13)} are equivalent.
\end{cor}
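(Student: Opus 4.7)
\textbf{Proof plan for Corollary~\ref{a-b-c-d-THEC-main-corollary}.}
The strategy is to assemble the equivalence out of three building blocks that have already been established in this section: Theorem~\ref{a-b-c-d-imply-main-conjecture} (the Main Conjecture is valid for $\R\contra$ under any of (a)--(g)), Theorem~\ref{main-conjecture+THEC-imply-AddM-Enochs} (if the Main Conjecture holds for $\sB=\boT_M\modl$ and $M$ satisfies THEC, then the $\sA$-side and $\sB$-side conditions merge into a single list of equivalents), and Corollary~\ref{contramodule-THEC-corollary} (the ten-item list relating perfect decompositions, topological left perfectness, and the descending chain condition on cyclic discrete modules). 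The first step is to feed Theorem~\ref{a-b-c-d-imply-main-conjecture} into Theorem~\ref{main-conjecture+THEC-imply-AddM-Enochs}: since $\R$ satisfies one of (a)--(g), the Main Conjecture holds for $\sB=\R\contra=\boT_M\modl$, and consequently the ten items (1)--(10) of the Main Corollary are equivalent.

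Second, I would weld the remaining ``structural'' items (11), (12), (13), and (15) to this block through Corollary~\ref{contramodule-THEC-corollary}. Under the hypothesis that $\R$ satisfies one of (a)--(d), the last two assertions of that corollary give the equivalence of its ten conditions; in particular one reads off the implications linking ``perfect decomposition of $M$'', ``topological left perfectness of $\R$'', ``DCC on cyclic discrete right $\R$-modules'', and ``all discrete quotient rings of $\R$ are left perfect'' with, e.g., item~(4) (closure of $\Add(M)$ under direct limits) and item~(8) (every countable direct limit of copies of $M$ has an $\Add(M)$-cover). This immediately matches items (11), (12), (13), and (15) of the Main Corollary onto the already-equivalent block (1)--(10). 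The same argument with (e), (f), or (g) in place of (a)--(d) establishes the final assertion, where (14) is absent.

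Third, only item~(14) is not a verbatim transcription of something already proved. To close the circle it suffices to show that under any of (a)--(d), topological left perfectness of $\R$ (already in the list as~(12)) forces the stronger structural statement in~(14), namely that the topologically semisimple quotient $\R/\fH$ is a product of \emph{discrete} simple Artinian rings rather than a general product of endomorphism rings of possibly infinite-dimensional vector spaces. I would prove $(12)\Rightarrow(14)$ by passing to $\S=\R/\fH$ and analyzing the factors $\End_{D_\gamma}(D_\gamma^{(\Upsilon_\gamma)})^\rop$ in the definition of topologically semisimple: if $\R$ is commutative~(a), each factor must be commutative, so $|\Upsilon_\gamma|=1$ and $D_\gamma$ is a field; if (b), (c), or (d) holds, one invokes the corresponding structure results of~\cite[Sections~10 and~12]{Pproperf} (in particular the characterization used in~\cite[Theorem~12.4]{Pproperf} and Lemma~\ref{class-g-closure-properties}) to reduce to factors in which each $\Upsilon_\gamma$ is finite, so that each factor $\End_{D_\gamma}(D_\gamma^{(\Upsilon_\gamma)})^\rop\cong M_{n_\gamma}(D_\gamma^\rop)$ is a simple Artinian discrete ring. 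The reverse implication $(14)\Rightarrow(12)$ is immediate from the definition of topological left perfectness, since a product of simple Artinian discrete rings is topologically semisimple.

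The main obstacle is exactly this last step: (14) is a genuinely finer statement than topological left perfectness, and verifying it requires the fine structure theory of topologically semisimple rings under each of (a)--(d) separately. Everything else is bookkeeping. In particular, I expect no difficulty in the (e)--(g) assertion, since there (14) is (correctly) dropped, reflecting that general topologically semisimple rings in those wider classes need not have all their simple factors discrete.
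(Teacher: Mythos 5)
Your overall architecture (get (1)--(10) from Theorem~\ref{main-conjecture+THEC-imply-AddM-Enochs} via Theorem~\ref{a-b-c-d-imply-main-conjecture}, then attach the structural items) is the same as the paper's, but the way you attach items (12), (13), (15) is circular as written. The ``last two assertions'' of Corollary~\ref{contramodule-THEC-corollary} that you invoke are not proved independently in the paper: their proof reads ``follow from Corollary~\ref{a-b-c-d-THEC-main-corollary} below,'' i.e.\ from the very statement you are proving. What that corollary provides unconditionally is only the one-way chain, which in the numbering of the present statement reads $(11)\Leftrightarrow(12)\Leftrightarrow(10)\Rightarrow(5)\Rightarrow(4)\Leftrightarrow(3)\Rightarrow(13)\Rightarrow(15)$. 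The missing closing implications are exactly the nontrivial external inputs that your sketch never names: under (a)--(d) one needs $(15)\Rightarrow(6\textrm{--}10)$, which is \cite[Theorem~12.4]{Pproperf} (and this theorem also delivers item~(14) directly, so no separate work on (14) is then required); under (e)--(g) one needs $(13)\Rightarrow(12)$, which is Theorem~\ref{g-implies-as} combined with \cite[Theorem~14.4]{PS3} (or \cite[Corollary~14.7]{PS3} in the two-sided linear cases). The equivalence $(11)\Leftrightarrow(12)$ is Theorem~\ref{topological-rings-perfect-decompositions}\,(i)\,$\Leftrightarrow$\,(ii), which is legitimately available. Once you replace the circular citation by these references, your step~2 closes correctly.

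Your step~3 on item~(14) is essentially sound and close to the paper's ``alternative'' argument: $(14)\Rightarrow(12)$ is immediate by Artin--Wedderburn, and for $(12)\Rightarrow(14)$ the paper does not argue case by case under (a)--(d) but gives one uniform argument under (d) (which subsumes (a)--(c)): with $\fH$ the ideal from topological left perfectness and $\fK$ the ideal from~(d), one has $\fK\subset\fH$ by \cite[Lemma~10.3]{PS3} and \cite[Lemma~6.6(a)]{Pproperf}, so each factor $\S_\gamma$ of $\S=\R/\fH$ is a topological quotient of $\R/\fK\cong\prod_\delta\T_\delta$, hence two-sided linear; since $\S_\gamma$ has no nonzero proper open two-sided ideals, it is discrete, forcing $\Upsilon_\gamma$ to be finite. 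This is cleaner than your plan of invoking the structure theory separately for (b), (c), (d), and it (or simply the citation of \cite[Theorem~12.4]{Pproperf}) is what you should use to make the step precise.
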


\begin{proof}
 The conditions~(1\+-10) are equivalent to each other by
Theorem~\ref{main-conjecture+THEC-imply-AddM-Enochs}, whose
applicability follows from any one of the conditions~(a), (b), (c),
(d), (e), (f), or~(g) by Theorem~\ref{a-b-c-d-imply-main-conjecture}.
 Under any one of the conditions~(a), (b), (c), or~(d),
the conditions~(6\+-10) and~(13\+-15) are equivalent to each other
by~\cite[Theorem~12.4]{Pproperf}, while the conditions~(6\+-10)
and~(12\+-13) are equivalent to each other
by~\cite[Corollary~14.7]{PS3}.
 Assuming~(e), (f), or~(g), the conditions~(6\+-10) and~(12\+-13) are
equivalent to each other by Theorems~\ref{a-b-c-d-imply-main-conjecture}
and~\ref{g-implies-as}, and~\cite[Theorem~14.4]{PS3}.
 The equivalence~(11)\,$\Longleftrightarrow$\,(12) holds by
Theorem~\ref{topological-rings-perfect-decompositions}\,%
(i)\,$\Leftrightarrow$\,(ii).

 This suffices to prove the corollary; but alternatively,
here is a direct proof of the equivalence
(12)\,$\Longleftrightarrow$\,(14) under the assumption of
condition~(d).
 By the Artin--Wedderburn classification of simple Artinian rings,
(14) implies~(12) unconditionally.
 In fact, the only difference between (14) and~(12) is that the sets
$\Upsilon_\gamma$ can be infinite in~(12); the class of topological
rings $\S$ in~(14) is obtained by such class in~(12) by imposing
the condition that all the sets $\Upsilon_\gamma$ are finite
(cf.~\cite[Remark~14.6]{PS3}).

 Let $\R$ be a topologically left perfect topological ring
satisfying~(d).
 Let $\fH\subset\R$ be the ideal from the defintion of a topologically
left perfect topological ring, and $\fK\subset\R$ be the ideal from
condition~(d).
 Then the argument from the proof of
Lemma~\ref{topologically-perfect-quotient-by-T-nilpotent}
(based on~\cite[Lemma~10.3]{PS3} and~\cite[Lemma~6.6(a)]{Pproperf})
shows that $\fK\subset\fH$.
 So the topological ring $\S=\R/\fH$ is a quotient ring of
the topological ring $\R/\fK$.
 Hence for every $\gamma\in\Gamma$ the topological ring
$\S_\gamma=\Hom_{D_\gamma}(D_\gamma^{(\Upsilon_\gamma)},
D_\gamma^{(\Upsilon_\gamma)})^\rop$ is also a quotient ring
of the topological ring~$\R/\fK$.

 The ring $\R/\fK\cong\prod_{\delta\in\Delta}\T_\delta=\T$, on the other
hand, is the product of two-sided linear topological rings $\T_\delta$,
so $\T$ is a two-sided linear topological ring, too.
 As any topological quotient ring of a two-sided linear topological ring
is two-sided linear, the ring $\S_\gamma$ must be two-sided linear,
i.~e., it has a base of neighborhoods of zero consisting of two-sided
ideals.
 Since, in fact, there are no nonzero proper open two-sided ideals in
$\S_\gamma$, it follows that $\S_\gamma$ must be discrete, which happens
exactly when the set $\Upsilon_\gamma$ is finite.
\end{proof}

\Section{Functor Purity in Abelian Categories}
\label{functor-purity-secn}

 Let $A$ be an associative ring.
 A short exact sequence of left $A$\+modules $0\rarrow K\rarrow M
\rarrow L\rarrow 0$ is said to be \emph{pure} if the map of abelian
groups $N\ot_A K\rarrow N\ot_A M$ is injective for every right
$A$\+module $N$, or equivalently, if the map $\Hom_A(E,M)\rarrow
\Hom_A(E,L)$ is surjective for every finitely presented left
$A$\+module~$E$.
 A short exact sequence of left $A$\+modules is pure if and only if
it is a direct limit of split short exact
sequences of left $A$\+modules~\cite[Lemma~2.19]{GT}.

 The aim of this section is to suggest a simple way to extend the notion
of purity to arbitrary cocomplete abelian categories.
 We will use it in the next Section~\ref{self-pure-projective-secn}.

 Let $\sA$ be a cocomplete abelian category.
 We will say that a monomorphism $f\:K\rarrow M$ is \emph{pure} (or
\emph{functor pure}) in $\sA$ if for every cocomplete abelian category
$\sV$ with exact direct limit functors, and any additive functor
$F\:\sA\rarrow\sV$ preserving all colimits (that is, a right exact
covariant functor preserving coproducts), the morphism
$F(f)\:F(K)\rarrow F(M)$ is a monomorphism in~$\sV$.
 If this is the case, the object $K$ is said to be a \emph{(functor)}
\emph{pure subobject} of the object $M\in\sA$.

 A short exact sequence $0\rarrow K\rarrow M\rarrow L\rarrow0$
in $\sA$ is called (\emph{functor}) \emph{pure} if the monomorphism
$K\rarrow M$ is pure, or equivalently, if the short sequence
$0\rarrow F(K)\rarrow F(M)\rarrow F(L)\rarrow0$ is exact in $\sV$
for every functor $F\:\sA\rarrow\sV$ as above.
 The morphism $M\rarrow L$ is then said to be a (\emph{functor})
\emph{pure epimorphism}, and the object $L$ a \emph{pure quotient}
of~$M$.
 A long exact sequence $M^\bu$ in $\sA$ is said to be \emph{pure} if
it is obtained by splicing pure short exact sequences in $\sA$, or
equivalently, if the complex $F(M^\bu)$ is exact in $\sV$ for every
abelian category $\sV$ with exact direct limits and any
colimit-preserving functor $F\:\sA\rarrow\sV$.

\begin{lem}
 Let\/ $\sA=A\modl$ be the abelian category of left modules over
an associative ring~$A$.
 Then a monomorphism (or a short exact sequence, or a long exact
sequence) in $A\modl$ is functor pure if and only if it is pure
in the conventional sense of the word (as in~\cite{GT}).
\end{lem}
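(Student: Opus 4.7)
The plan is to reduce everything to short exact sequences and then handle the two directions via different standard tools. A monomorphism $f\:K\rarrow M$ is functor pure (resp.\ conventionally pure) in $A\modl$ if and only if the short exact sequence $0\to K\to M\to M/K\to 0$ is, and a long exact sequence is functor pure (resp.\ conventionally pure) if and only if each of the short exact sequences obtained by splicing it is. Both notions behave this way, so it suffices to handle the short exact sequence case.

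For the forward direction (conventional $\Rightarrow$ functor pure), I would invoke the characterization cited from \cite[Lemma~2.19]{GT} just before the lemma: any conventionally pure short exact sequence $0\rarrow K\rarrow M\rarrow L\rarrow0$ in $A\modl$ is a direct limit, over a directed poset, of split short exact sequences of left $A$\+modules. Given a cocomplete abelian category $\sV$ with exact direct limits and a colimit-preserving additive functor $F\:A\modl\rarrow\sV$, the functor $F$ sends each split short exact sequence to a split short exact sequence in $\sV$ (additivity) and commutes with the directed colimit (preservation of colimits). Hence $0\rarrow F(K)\rarrow F(M)\rarrow F(L)\rarrow0$ is a direct limit of split short exact sequences in $\sV$, and exactness of direct limits in $\sV$ makes it exact.

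For the reverse direction (functor $\Rightarrow$ conventional), the idea is to test functor purity on a single, suitably chosen functor. For any right $A$\+module $N$, the functor $F_N=N\ot_A{-}\:A\modl\rarrow\Ab$ is a left adjoint (to $\Hom_\boZ(N,{-})$) and therefore preserves all colimits, and $\Ab$ is a cocomplete abelian category with exact direct limits. If $0\rarrow K\rarrow M\rarrow L\rarrow0$ is functor pure, then applying $F_N$ yields an exact sequence in $\Ab$, so in particular the map $N\ot_A K\rarrow N\ot_A M$ is injective for every right $A$\+module $N$. This is exactly the conventional definition of purity of the short exact sequence.

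There is no real obstacle here; the proof is essentially a bookkeeping exercise confirming that the two definitions line up. The only point requiring a bit of care is the reduction to short exact sequences in both directions and the verification that the tensor product functor $N\ot_A{-}$ is colimit-preserving (which is immediate from its being a left adjoint, or from the standard formulas for tensor products with coproducts and cokernels).
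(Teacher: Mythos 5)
Your proof is correct and follows essentially the same route as the paper: the direction (conventional $\Rightarrow$ functor pure) is the identical argument via writing the pure sequence as a direct limit of split short exact sequences, and the direction (functor pure $\Rightarrow$ conventional) is the same test against the functors $N\ot_A{-}$. The only cosmetic difference is that the paper quotes Watts' theorem (every colimit-preserving functor $A\modl\rarrow\Ab$ is a tensor functor), whereas you use only the easy fact that $N\ot_A{-}$ preserves colimits, which indeed suffices for that implication.
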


\begin{proof}
 A functor $A\modl\rarrow\Ab$ from the category of left $A$\+modules to
the category of abelian groups $\Ab$ preserves colimits if and only
if it is isomorphic to the functor of tensor product $M\longmapsto
N\otimes_AM$ with a certain right $A$\+module~$N$ \cite[Theorem~1]{Wat}.
 (Colimit-preserving functors $A\modl\rarrow\sV$ can be similarly
described as the functors of tensor product with an object in $\sV$
endowed with a right action of the ring~$A$.)
 So any functor pure exact sequence in $A\modl$ remains exact after
taking the tensor product with any right $A$\+module $N$, i.~e., it
is pure exact in the conventional sense.

 Conversely, any pure short exact sequence of left $A$\+modules
is a direct limit of split short exact sequences.
 Hence its image under any colimit-preserving functor (and more
generally, under any direct limit-preserving additive functor)
$F\:A\modl\rarrow\sV$, taking values in an abelian category $\sV$
with exact direct limits, is exact.
\end{proof}

\begin{lem}
 In any cocomplete abelian category\/ $\sA$, the class of functor pure
monomorphisms is closed under pushouts and compositions.
 The class of functor pure epimorphisms is closed under pullbacks and
compositions.
\end{lem}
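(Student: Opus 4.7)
My plan is to fix an arbitrary test functor $F\:\sA\rarrow\sV$ (colimit-preserving into a cocomplete abelian category with exact direct limits) and verify each of the four closure properties by tracing the relevant diagrams through~$F$. Throughout I use that $F$ is right exact (since it preserves cokernels) and preserves pushouts and coproducts.

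For \emph{pushouts of pure monomorphisms}: given a pure mono $f\:K\rarrow M$ pushed out along $k\:K\rarrow K'$ to $f'\:K'\rarrow M'$, the functor $F$ sends the pushout square to a pushout square in~$\sV$; since $F(f)$ is a monomorphism by purity, and the pushout of a monomorphism along any morphism in an abelian category is again a monomorphism (a short computation using the explicit presentation of the pushout as the cokernel of $(f,-k)\:K\rarrow M\oplus K'$), we conclude $F(f')$ is mono. \emph{Composition} of pure monomorphisms is formal: $F(gf)=F(g)F(f)$ is a composition of monos.

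For \emph{pullbacks of pure epimorphisms}: let $g\:M\rarrow L$ be pure epi with kernel~$K$, and form the pullback $g'\:M'\rarrow L'$ of $g$ along $h\:L'\rarrow L$, with projection $p\:M'\rarrow M$. Pullbacks in abelian categories preserve kernels, so $\ker g'=K$ and the inclusion $K\hookrightarrow M'$ composes with $p$ to the inclusion $K\hookrightarrow M$. Applying $F$, the composite $F(K)\rarrow F(M')\rarrow F(M)$ is injective by purity of~$g$, hence the first factor $F(K)\rarrow F(M')$ is injective; combined with the right exactness of~$F$ this gives purity of~$g'$.

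The substantive case is \emph{composition of pure epimorphisms} $M\overset{g}\rarrow L\overset{h}\rarrow N$ with kernels $K_1$, $K_2$, and $K=\ker(hg)$. The canonical short exact sequence $0\rarrow K_1\rarrow K\rarrow K_2\rarrow 0$ in~$\sA$ is the key. After applying $F$ one has a commutative diagram with $F(K_1)\rarrow F(K)\rarrow F(K_2)\rarrow 0$ exact on top (right exactness of~$F$), $0\rarrow F(K_1)\rarrow F(M)\rarrow F(L)\rarrow 0$ exact on the bottom (purity of~$g$), and $F(K_2)\rarrow F(L)$ injective (purity of~$h$). A short chase on $x\in\ker(F(K)\rarrow F(M))$---push $x$ to $F(L)$ to see it vanishes in $F(K_2)$, lift to $F(K_1)$, and use injectivity of $F(K_1)\rarrow F(M)$---shows $F(K)\rarrow F(M)$ is injective, which together with right exactness of~$F$ yields purity of~$hg$. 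The main obstacle is this chase, essentially a small nine-lemma variant; the other three parts are formal consequences of colimit-preservation.
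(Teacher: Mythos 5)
Your proof is correct. For pushouts and compositions of pure monomorphisms and for pullbacks of pure epimorphisms your arguments coincide with the paper's: apply an arbitrary colimit-preserving $F\:\sA\rarrow\sV$, use that $F$ preserves pushouts, that the pushout of a monomorphism in an abelian category is a monomorphism, and that a factor of a monomorphism through which the kernel inclusion passes is itself a monomorphism. The one genuine divergence is the composition of pure epimorphisms. The paper does no diagram chase there: having verified the other closure properties, it observes that the pure short exact sequences are being checked to form a Quillen exact structure and invokes \cite[Section~A.1]{Kel}, where it is shown that closedness of admissible epimorphisms under composition follows formally from the remaining axioms. You instead prove this part directly, via the kernel sequence $0\rarrow K_1\rarrow K\rarrow K_2\rarrow0$ and a four-lemma-type chase after applying $F$ (right exactness of $F$ gives exactness of $F(K_1)\rarrow F(K)\rarrow F(K_2)\rarrow0$, purity of the two epimorphisms gives injectivity of $F(K_1)\rarrow F(M)$ and $F(K_2)\rarrow F(L)$, and the chase yields injectivity of $F(K)\rarrow F(M)$). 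This is a valid, self-contained alternative; the only point to phrase carefully is that the ``element'' chase in the abstract abelian category $\sV$ should be run with pseudo-elements, or simply quoted as an instance of the four lemma, which is routine. The paper's route buys, with no extra work, the stronger statement that the functor-pure exact sequences make $\sA$ an exact category, at the cost of an external reference; your route is more elementary and keeps the argument entirely internal to the lemma.
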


\begin{proof}
 Essentially, the lemma claims that the category $\sA$ with the class of
all pure short exact sequences is a Quillen exact category.
 To prove such an assertion, it suffices to check that the class of
pure monomorphisms is closed under pushouts and compositions, and
the class of pure epimorphisms is closed under pullbacks.
 Closedness of the class of pure epimorphisms with respect to compositions
will then follow~\cite[Section~A.1]{Kel}.

 Let $0\rarrow K\rarrow M\rarrow L\rarrow0$ be a pure short exact
sequence in $\sA$ and $L'\rarrow L$ be a morphism.
 Since $\sA$ is an abelian category, the pullback sequence
$0\rarrow K\rarrow M'\rarrow L'\rarrow0$ is exact.
 To show that the epimorphism $M'\rarrow L'$ is pure, it suffices to
check that the monomorphism $K\rarrow M'$ is pure.
 Indeed, the composition $K\rarrow M'\rarrow M$ is a pure monomorphism.
 Since for any colimit-preserving functor $F\:\sA\rarrow\sV$ the morphism
$F(K)\rarrow F(M)$ is a monomorphism, the morphism $F(K)\rarrow F(M')$
is a monomorphism, too.

 Let $K\rarrow K''$ be a morphism in $\sA$ and $0\rarrow K''\rarrow M''
\rarrow L\rarrow 0$ be the pushout sequence.
 Once again, since $\sA$ is abelian, the pushout sequence is exact.
 Any colimit-preserving functor $F\:\sA\rarrow\sV$ preserves pushouts;
so $F(K)\rarrow F(M)\rarrow F(M'')$, \ $F(K)\rarrow F(K'')\rarrow
F(M'')$ is a pushout square.
 Since the morphism $F(K)\rarrow F(M)$ is a monomorphism, the morphism
$F(K'')\rarrow F(M'')$ is a monomorphism, too.

 The assertion that the composition of any two pure monomorphisms is
a pure monomorphism is obvious.
\end{proof}

\begin{ex} \label{telescope-pure-exact-example}
 Let $\sA$ be a cocomplete abelian category with exact countable direct
limits.
 Then, for any sequence of objects and morphisms $A_1\rarrow A_2\rarrow
A_3\rarrow\dotsb$ in $\sA$, the short sequence
\begin{equation} \label{telescope-exact-sequence}
 0\rarrow\coprod\nolimits_{n=1}^\infty A_n\lrarrow
 \coprod\nolimits_{n=1}^\infty A_n\lrarrow
 \varinjlim\nolimits_{n\ge1}A_n\lrarrow0.
\end{equation}
is pure exact.
 Indeed, the sequence~\eqref{telescope-exact-sequence} is exact as
the countable direct limit of split exact sequences
$0\rarrow\coprod_{i=1}^{n-1}A_i\rarrow\coprod_{i=1}^nA_i\rarrow
A_n\rarrow0$.
 The image of~\eqref{telescope-exact-sequence} under
a colimit-preserving functor $F\:\sA\rarrow\sV$ is the similar short
sequence for the inductive system $F(A_1)\rarrow F(A_2)\rarrow F(A_3)
\rarrow\dotsb$ in the category $\sV$, which is exact whenever countable
direct limits are exact in~$\sV$.
\end{ex}

\begin{ex} \label{bar-complex-pure-exact-example}
 Let $\sA$ be a cocomplete abelian category with exact direct limits.
 Let $\Theta$ be a directed poset and $A\:\Theta\rarrow\sA$ be
a $\Theta$\+indexed diagram in~$\sA$.
 Then the augmented bar-complex
\begin{equation} \label{augmented-bar-complex}
 \dotsb\lrarrow\coprod_{\theta_0\le\theta_1\le\theta_2}A(\theta_0)
 \lrarrow\coprod_{\theta_0\le\theta_1}A(\theta_0)\lrarrow
 \coprod_{\theta_0}A(\theta_0)\lrarrow
 \varinjlim_{\theta\in\Theta}A(\theta)\lrarrow0
\end{equation}
is pure exact in~$\sA$.
 Indeed, the complex~\eqref{augmented-bar-complex} is the direct limit
(over $\delta\in\Theta$) of the similar bar-complexes related to
the subposets $\Theta_\delta=\{\theta\in\Theta\:\theta\le\delta\}
\subset\Theta$ and the subdiagrams $A|_{\Theta_\delta}$ of~$A$.
 The bar-complex of any diagram indexed by a poset with a greatest element
is easily seen to be contractible (by the explicit contracting homotopy
given by the morphisms taking the summand $A(\theta_0)$ indexed by
$\theta_0\le\dotsb\le\theta_n$ to the summand $A(\theta_0)$ indexed by
$\theta_0\le\dotsb\le\theta_n\le\delta$).

 This proves exactness of~\eqref{augmented-bar-complex}.
 To prove the pure exactness, one observes that the image
of~\eqref{augmented-bar-complex} under a colimit-preserving functor
$F\:\sA\rarrow\sV$ is the similar augmented bar-complex for the diagram
$F\circ A\:\Theta\rarrow\sV$ in the category $\sV$, which is exact for
the same reason explained above whenever direct limits are exact
in~$\sV$.

 In addition, we have shown that all the objects of cycles in
the bar-complex~\eqref{augmented-bar-complex} are direct limits (over
the poset~$\Theta$) of direct sums of copies of the objects
$A(\theta)$, \,$\theta\in\Theta$.
 Indeed, one easily observes that all the objects of cycles in
the bar-complexes related to the subposets $\Theta_\delta\subset\Theta$
are direct sums of copies of the objects~$A(\theta)$.
\end{ex}

\Section{Self-Pure-Projective and
$\protect\varinjlim$-Pure-Rigid Objects}
\label{self-pure-projective-secn}

 The aim of this section is to prove the analogues of such results as
Proposition~\ref{THEC-Bass-objects-in-A-and-B},
Corollary~\ref{small-THEC-corollary}, and the related equivalence of
properties in Corollary~\ref{contramodule-THEC-corollary} for
uncountable direct limits, under appropriate assumptions.

 Let $\sA$ be a cocomplete abelian category.
 We use the notion of (functor) purity defined in
Section~\ref{functor-purity-secn}.

 An object $M\in\sA$ is said to be \emph{pure-split} if every pure
monomorphism $K\rarrow M$ is split in~$\sA$.
 One says that an object $T\in\sA$ is \emph{$\Sigma$\+pure-split} if
all the objects $M$ from the class $\Add(T)\subset\sA$ are pure-split
in~$\sA$.

 An object $Q\in\sA$ is said to be \emph{pure-projective} if, for
any pure short exact sequence $0\rarrow K\rarrow M\rarrow L\rarrow0$
in the category $\sA$, the short sequence of abelian groups $0\rarrow
\Hom_\sA(Q,K)\rarrow\Hom_\sA(Q,M)\rarrow\Hom_\sA(Q,L)\rarrow0$ is exact.
 In other words, an object of $\sA$ is pure-projective if it is
projective with respect to the pure exact structure on~$\sA$.

 We will say that an object $M\in\sA$ is \emph{self-pure-projective}
if, for any pure short exact sequence $0\rarrow K\rarrow M'\rarrow L
\rarrow0$ in $\sA$ with $M'\in\Add(M)$, the short sequence of abelian
groups $0\rarrow\Hom_\sA(M,K)\rarrow\Hom_\sA(M,M')\rarrow\Hom_\sA(M,L)
\rarrow0$ is exact.
 The following examples mention classes of objects that are known
to be self-pure-projective, showing that self-pure-projective objects
and, in particular, self-pure-projective modules are not uncommon.

\begin{exs} \label{are-self-pure-projective-easy}
 The following objects in a cocomplete abelian category $\sA$ are
self-pure-projective: \par
(1)~all pure-projective objects; \par
(2)~all $\Sigma$\+pure-split objects; \par
(3)~all the objects belonging to $\Add(M)$, if $M\in\sA$ is
a self-pure-projective object.
\end{exs}

\begin{exs} \label{are-self-pure-projective-harder}
 (1)~Let $\sL$ and $\sE\subset\sA$ be two classes of
objects such that $\Ext^1_\sA(L,E)=0$ for all $L\in\sL$ and
$E\in\sE$ (cf.\ Section~\ref{covers-cotorsion-secn} below).
 Assume that the class $\sE\subset\sA$ is closed under coproducts
and pure subobjects.
 Then all objects in the intersection $\sL\cap\sE\subset\sA$ are
self-pure-projective.

 Indeed, $M\in\sL\cap\sE$ and $M'\in\Add(M)$ implies $M'\in\sE$; and
if a (pure) subobject $K$ of $M'$ also belongs to $\sE$,
then $\Ext^1_\sA(M,K)=0$.
 Consequently $\Hom_\sA(M,M')\rarrow\Hom_\sA(M,M'/K)$
is a surjective map.

\smallskip
 (2)~In particular, let $\sA=A\modl$ be the category of modules over
an associative ring~$A$.
 Then any $n$\+tilting left $A$\+module
(cf.\ Section~\ref{tilting-cotilting-secn} below)
is self-pure-projective.
 Indeed, any $n$\+tilting class $\sE$ in $A\modl$ is
definable, which implies, in particular, that it is closed under direct
sums and pure submodules~\cite[Definition~6.8 and Corollary~13.42]{GT}.
\end{exs}

\begin{rems}
 (1)~A pair of classes of objects $(\sE,\sL)$ in abelian category $\sA$
is said to be a \emph{cotorsion pair} if both the classes $\sL$ and
$\sE$ are maximal with respect to the property that  $\Ext^1_\sA(L,E)=0$
for all $L\in\sL$ and $E\in\sE$ (see
Section~\ref{covers-cotorsion-secn}).
 Notice that if $\sA$ is a complete, cocomplete abelian category with
exact direct limits and $(\sL,\sE)$ is a cotorsion pair in $\sA$ such
that the class $\sE\subset\sA$ is closed under pure subobjects, then
the class $\sE$ is also closed under coproducts in~$\sA$.
 Indeed, the right class $\sE$ in a cotorsion pair $(\sL,\sE)$ is
always closed under products in~$\sA$ \cite[Appendix~A]{CS}.
 For any family of objects $A_\alpha\in\sA$, the natural morphism
$\coprod_\alpha A_\alpha\rarrow\prod_\alpha A_\alpha$ is a direct limit
of split monomorphisms, hence $\coprod_\alpha A_\alpha$ is a pure
subobject of $\prod_\alpha A_\alpha$.
 It follows that all the objects in the class $\sL\cap\sE$ are
self-pure-projective.

\smallskip
 (2)~Let $(\sL,\sE)$ be a cotorsion pair in the category of left modules
over an associative ring~$A$.
 In this context, if the class $\sE$ is closed under direct limits in
$R\modl$, then it is definable~\cite[Theorem~6.1]{Sar}.
 If the cotorsion pair $(\sL,\sE)$ is hereditary and the class $\sE$ is
closed under unions of well-ordered chains in $A\modl$, then the class
$\sE$ is definable as well~\cite[Theorem~3.5]{SS}.
 In both cases, the class $\sE\subset A\modl$ is closed under (direct
sums and) pure submodules, and it follows that all the $A$\+modules in
the class $\sL\cap\sE$ are self-pure-projective.
\end{rems}

 Let $\sA$ be a cocomplete abelian category.
 We will say that an object $M\in\sA$ is \emph{$\varinjlim$\+pure-rigid}
if $\PExt^1_\sA(M,N)=0$ for any object $N\in\varinjlim^\sA\Add(M)$.
 Here $\PExt^1_\sA({-},{-})$ denotes the group $\Ext^1$ in the functor
pure exact structure on the category~$\sA$ (cf.\
Example~\ref{sigma-pure-rigid-example}(2)) and the notation
$\varinjlim^\sA\sM$ was defined in the beginning of
Section~\ref{enochs-conjecture-secn}.
 Any $\varinjlim$\+pure-rigid object is $\Sigma$\+pure-rigid
by definition.

\begin{ex} \label{indlim-pure-rigid-example}
 Let $\sL$ and $\sE\subset\sA$ be two classes of objects such that
$\PExt^1_\sA(L,E)=0$ for all $L\in\sL$ and $E\in\sE$.
 Assume that the class $\sE\subset\sA$ is closed under direct limits.
 Then all objects in the intersection $\sL\cap\sE$ are
$\varinjlim$\+pure-rigid.
\end{ex}

 Let $\sA$ be a cocomplete abelian category and $M\in\sA$ be an object.
 As in Section~\ref{generalized-tilting-subsecn}, we consider
the related abelian category $\sB=\boT_M\modl$ and the pair of adjoint
functors $\Psi\:\sA\rarrow\sB$ and $\Phi\:\sB\rarrow\sA$.
 As in Section~\ref{THEC-secn}, we also consider the related pair of
full subcategories $\sG\subset\sA$ and $\sH\subset\sB$.

 The following proposition is the uncountable version of
Proposition~\ref{THEC-Bass-objects-in-A-and-B}.

\begin{prop} \label{self-pure-projective-direct-limits}
 Let\/ $\sA$ be a cocomplete abelian category with exact direct limits,
$M\in\sA$ be an object that is either self-pure-projective or\/
$\varinjlim$\+pure-rigid, $\sB=\boT_M\modl$ be the related abelian
category, and\/ $\sG\subset\sA$ and\/ $\sH\subset\sB$ be the related
two full subcategories.
 Then one has\/ $\varinjlim^\sA\Add(M)\subset\sG$ and\/
$\varinjlim^\sB\sB_\proj\subset\sH$.
 The functor\/ $\Psi$ preserves direct limits of objects from\/
$\Add(M)$ in\/~$\sA$ (taking them to direct limits of the corresponding
projective objects in\/~$\sB$).
\end{prop}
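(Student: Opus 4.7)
The plan is to parallel the proof of Proposition~\ref{THEC-Bass-objects-in-A-and-B}, replacing the countable telescope sequence with the augmented bar-complex of Example~\ref{bar-complex-pure-exact-example}. Fix a directed poset $\Theta$ and a diagram $A\:\Theta\rarrow\Add(M)$, and put $L=\varinjlim_\theta A(\theta)$. By Example~\ref{bar-complex-pure-exact-example}, the augmented bar-complex
\[
 \dotsb\lrarrow C_2\lrarrow C_1\lrarrow C_0\lrarrow L\lrarrow 0,
 \qquad C_n=\coprod\nolimits_{\theta_0\le\dotsb\le\theta_n}A(\theta_0),
\]
is pure exact in $\sA$; its terms $C_n$ lie in $\Add(M)$, and its objects of cycles $Z_n$ (with $Z_0=L$) lie in $\varinjlim^\sA\Add(M)$. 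By the definition of a pure exact long sequence, it is a splicing of pure short exact sequences $0\rarrow Z_{n+1}\rarrow C_n\rarrow Z_n\rarrow 0$.

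The key step is to show that $\Psi$, which is isomorphic to $\Hom_\sA(M,-)$ after composing with the forgetful functor $\sB\rarrow\Ab$, takes each such short exact sequence to a short exact sequence of abelian groups. In the self-pure-projective case this is immediate from the definition, since $C_n\in\Add(M)$. In the $\varinjlim$-pure-rigid case, one has $\PExt^1_\sA(M,Z_{n+1})=0$ because $Z_{n+1}\in\varinjlim^\sA\Add(M)$; the standard pullback construction then lifts any morphism $M\rarrow Z_n$ to $C_n$, making $\Hom_\sA(M,C_n)\rarrow\Hom_\sA(M,Z_n)$ surjective. Splicing back, the image of the bar-complex of $A$ under $\Psi$ is exact, and via the faithful exact forgetful functor $\sB\rarrow\Ab$ this exactness holds inside $\sB$ as well. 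Since the restriction of $\Psi$ to $\Add(M)$ is an equivalence onto $\sB_\proj$ whose quasi-inverse $\Phi|_{\sB_\proj}$ preserves coproducts as a restriction of a left adjoint, $\Psi(C_n)$ coincides with the coproduct of copies of $\Psi(A(\theta_0))$ indexed by $\theta_0\le\dotsb\le\theta_n$ in $\sB$.

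Now the bar-complex of $\Psi\circ A$ in $\sB$ is right exact as the canonical presentation of $\varinjlim_\theta\Psi(A(\theta))$ as a cokernel, and it agrees with the image of the bar-complex of $A$ under $\Psi$ at the three leftmost terms. Comparing cokernels, the natural morphism $\varinjlim_\theta\Psi(A(\theta))\rarrow\Psi(L)$ is an isomorphism in $\sB$, which proves the third assertion of the proposition. Applying $\Phi$ (colimit-preserving as a left adjoint) and using $\Phi\Psi(A(\theta))\cong A(\theta)$ for $A(\theta)\in\Add(M)\subset\sG$, we get $\Phi\Psi(L)\cong\varinjlim_\theta\Phi\Psi(A(\theta))\cong L$, and by naturality this isomorphism is the adjunction counit. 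Hence $L\in\sG$, proving $\varinjlim^\sA\Add(M)\subset\sG$, and then $\Psi(L)\cong\varinjlim_\theta\Psi(A(\theta))\in\Psi(\sG)=\sH$. For $\varinjlim^\sB\sB_\proj\subset\sH$, every diagram $B\:\Theta\rarrow\sB_\proj$ is naturally isomorphic to $\Psi\circ(\Phi\circ B)$ with $\Phi\circ B$ taking values in $\Add(M)$, reducing to the case already handled.

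The main obstacle is the second paragraph, namely verifying that $\Psi$ turns each pure short segment $0\rarrow Z_{n+1}\rarrow C_n\rarrow Z_n\rarrow 0$ into an exact sequence. It is crucial here that the cycles $Z_n$ belong to $\varinjlim^\sA\Add(M)$ (so that the $\varinjlim$-pure-rigid hypothesis applies to $Z_{n+1}$), which is exactly the content of the last paragraph of Example~\ref{bar-complex-pure-exact-example}; without this observation, the inductive step would not close.
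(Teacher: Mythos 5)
Your proposal is correct and follows essentially the same route as the paper's proof: the augmented bar-complex of Example~\ref{bar-complex-pure-exact-example}, its pure exactness with cycles in $\varinjlim^\sA\Add(M)$, the self-pure-projectivity/$\varinjlim$-pure-rigidity of $M$ to get exactness after $\Hom_\sA(M,{-})$, transfer to $\sB$ via the faithful exact forgetful functor, and comparison with the bar-complex of $\Psi\circ A$ at the rightmost term. The only difference is that you spell out the splicing and pullback-splitting argument that the paper leaves implicit, which is a fair expansion rather than a deviation.
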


\begin{proof}
 Let $\Theta$ be a directed poset and $A\:\Theta\rarrow\sA$ be
a diagram in $\sA$ with $A(\theta)\in\Add(M)$ for all $\theta\in\Theta$.
 Then the augmented bar-complex~\eqref{augmented-bar-complex}
is pure exact in~$\sA$, and the objects of cycles in
the complex~\eqref{augmented-bar-complex} are direct limits of objects
from $\Add(M)$ (see Example~\ref{bar-complex-pure-exact-example}).
 As all the terms of this complex, except perhaps the rightmost one,
belong to $\Add(M)$ and the object $M$ is either self-pure-projective
or $\varinjlim$\+pure-rigid, it follows that the functor
$\Hom_\sA(M,{-})$ takes the complex~\eqref{augmented-bar-complex}
to an exact sequence of abelian groups.
 As in the proof of
Proposition~\ref{THEC-Bass-objects-in-A-and-B}, we conclude
that the functor $\Psi$ transforms
the complex~\eqref{augmented-bar-complex} into an exact complex in~$\sB$.

 On the other hand, for any cocomplete abelian category $\sB$,
any poset $\Theta$, and any diagram $B\:\Theta\rarrow\sB$,
the augmented bar-complex
\begin{equation} \label{bar-complex-B}
 \dotsb\lrarrow\coprod_{\theta_0\le\theta_1\le\theta_2}B(\theta_0)
 \lrarrow\coprod_{\theta_0\le\theta_1}B(\theta_0)\lrarrow
 \coprod_{\theta_0}B(\theta_0)\lrarrow
 \varinjlim_{\theta\in\Theta} B(\theta)\lrarrow0
\end{equation}
is exact, at least, at its rightmost term.

 In the situation at hand, put $B=\Psi\circ A\:\Theta\rarrow\sB$.
 Then the natural morphism from the complex~\eqref{bar-complex-B}
to the image of the complex~\eqref{augmented-bar-complex} 
 under $\Psi$ is an isomorphism at all the terms, except
perhaps the rightmost one.
 It follows that this morphism of complexes is an isomorphism at
the rightmost terms, too; that is, the natural morphism
$\varinjlim_{\theta\in\Theta}\Psi(A(\theta))\rarrow
\Psi(\varinjlim_{\theta\in\Theta}A(\theta))$ is an isomorphism.

 The argument finishes in the same way as the proof of
Proposition~\ref{THEC-Bass-objects-in-A-and-B}.
\end{proof}

 The next corollary is an uncountable version of
Corollary~\ref{small-THEC-corollary}.

\begin{cor} \label{self-pure-projective-corollary}
 Let\/ $\sA$ be a cocomplete abelian category with exact direct limits
and $M\in\sA$ be an object that is either self-pure-projective or\/
$\varinjlim$\+pure-rigid.
 Let $\sB=\boT_M\modl$ be the related abelian category.
 Then the following conditions are equivalent:
\begin{enumerate}
\item all the objects from\/ $\varinjlim\Add(M)$ have\/
$\Add(M)$\+covers in\/~$\sA$;
\item all the objects from\/ $\varinjlim\sB_\proj$ have
projective covers in\/~$\sB$;
\item the class of objects\/ $\Add(M)\subset\sA$ is closed under
direct limits;
\item the class of all projective objects in\/ $\sB$ is closed
under direct limits;
\item the object $M\in\sA$ satisfies the condition~(iii) of
Theorem~\ref{topological-rings-perfect-decompositions}.
\end{enumerate}
\end{cor}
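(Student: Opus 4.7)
The plan is to transfer statements between $\sA$ and $\sB$ via the adjunction $\Phi \dashv \Psi$, exploiting Proposition~\ref{self-pure-projective-direct-limits}, which places $\varinjlim^\sA\Add(M)$ inside $\sG$ and $\varinjlim^\sB\sB_\proj$ inside $\sH$, together with the resulting mutually inverse equivalence $\sG \cong \sH$ between these two subclasses. First I would prove (3) $\Leftrightarrow$ (4): the direction (4) $\Rightarrow$ (3) is Lemma~\ref{direct-limit-closure-B-implies-A}, and for (3) $\Rightarrow$ (4), given a diagram $(B_\theta)$ of projectives in $\sB$, write $B_\theta \cong \Psi(A_\theta)$ with $A_\theta \in \Add(M)$; Proposition~\ref{self-pure-projective-direct-limits} gives $\varinjlim B_\theta \cong \Psi(\varinjlim A_\theta)$, and (3) puts $\varinjlim A_\theta \in \Add(M)$, making its $\Psi$-image projective. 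The equivalence (1) $\Leftrightarrow$ (2) is then an application of Proposition~\ref{cover-adjunction}(d): an object $N \in \varinjlim^\sA\Add(M) \subset \sG$ has an $\Add(M)$-cover iff $\Psi(N) \in \varinjlim^\sB\sB_\proj \subset \sH$ has a projective cover, and every $C \in \varinjlim^\sB\sB_\proj$ arises as $\Psi(N)$ for $N = \Phi(C) \in \varinjlim^\sA\Add(M)$ since $\Phi$ preserves colimits.

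Next, the equivalence (3) $\Leftrightarrow$ (5): the direction (5) $\Rightarrow$ (3) is immediate from the definition. Conversely, assuming (3) and hence (4), the natural epimorphism $\coprod A(\theta) \to \varinjlim A(\theta)$ lies in $\Add(M)$, whose $\Psi$-image is an epimorphism onto a projective object of $\sB$; this splits in $\sB_\proj$, and transferring along the equivalence $\sB_\proj \cong \Add(M)$ produces the required splitting in $\sA$. Moreover, (3) $\Rightarrow$ (1) is trivial: the identity serves as an $\Add(M)$-cover of any object of $\Add(M)$.

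The main obstacle is the implication (1) $\Rightarrow$ (3) (equivalently, (2) $\Rightarrow$ (4)). Given $C = \varinjlim B_\theta \in \varinjlim^\sB\sB_\proj$ with a projective cover $q\colon Q \to C$, I would argue that $C$ is itself projective as follows. The natural epimorphism $\pi\colon \coprod B_\theta \to C$ lifts through $q$ to some $\sigma\colon \coprod B_\theta \to Q$ with $q\sigma = \pi$; since $\pi$ is epi and $\ker q$ is superfluous in $Q$, $\sigma$ is epi, and by projectivity of $Q$ it splits, yielding a decomposition $\coprod B_\theta \cong Q \oplus R$. The hardest step is then to deduce $\ker q = 0$: for this I would invoke the pure exactness of the augmented bar complex (Example~\ref{bar-complex-pure-exact-example}) in $\sA$, which the self-pure-projectivity or $\varinjlim$-pure-rigidity of $M$ forces $\Psi$ to preserve along the lines of the proof of Proposition~\ref{self-pure-projective-direct-limits}. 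Comparing the resulting exact $\Psi$-image of the bar complex with the decomposition $\coprod B_\theta \cong Q \oplus R$ should identify $\ker\pi$ with $R \oplus \ker q$, and an argument in the spirit of the countable case treated in Corollary~\ref{small-THEC-corollary} (whose equivalence (1)$\Leftrightarrow$(3) uses~\cite[Theorem~4.4]{BPS}) then forces $\ker q = 0$. Extracting this vanishing from superfluous-ness of $\ker q$ together with bar-complex exactness — now at arbitrary cardinality rather than just countable — is the central technical hurdle.
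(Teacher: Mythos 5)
Your treatment of the equivalences (1)$\Leftrightarrow$(2), (3)$\Leftrightarrow$(4), (3)$\Leftrightarrow$(5), and the trivial implication (3)$\Rightarrow$(1) matches the paper's route: everything there is transported along the adjunction using Proposition~\ref{self-pure-projective-direct-limits}, the equivalence $\sG\cong\sH$, Lemma~\ref{direct-limit-closure-B-implies-A}, and the splitting of epimorphisms onto projectives. The problem is the implication you yourself single out, (1)$\Rightarrow$(3) (equivalently (2)$\Rightarrow$(4)): this is the genuinely hard, Enochs-type direction ``covers imply closure under direct limits,'' and your proposal does not prove it. The paper does not prove it from scratch either; it quotes it as \cite[Corollary~7.2]{BPS} (for $\varinjlim$\+pure-rigid $M$) and as a particular case of \cite[Theorem~4.4]{BPS} (for self-pure-projective $M$). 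Without some substitute for those results your argument is incomplete.

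Concretely, the sketch you give for (2)$\Rightarrow$(4) stalls exactly where you say it does. Given $C=\varinjlim B_\theta$ with projective cover $q\:Q\rarrow C$, lifting the canonical epimorphism $\pi\:\coprod_\theta B_\theta\rarrow C$ to $\sigma$ with $q\sigma=\pi$ and splitting $\sigma$ only produces a decomposition $\coprod_\theta B_\theta\cong Q\oplus R$; nothing in this, nor in the exactness of the $\Psi$\+image of the bar-complex, identifies $\ker(q)$ as a direct summand of anything or forces it to vanish. (Even in the classical module case, ``a Bass flat module with a projective cover is projective'' is a nontrivial theorem using flatness, not a formal consequence of superfluousness of the kernel; and here one needs the statement at arbitrary cardinality, which is precisely the content of the cited results of~\cite{BPS}.) A secondary point: the step ``$\ker q$ superfluous, hence $\sigma$ is epi'' uses the superfluous-kernel characterization of projective covers, which in this categorical generality should itself be justified (it does follow from Lemma~\ref{precovers-lemma}, but you should say so). So the proposal is fine on the formal transfers between $\sA$ and $\sB$, but the core implication needs either an appeal to \cite[Theorem~4.4 and Corollary~7.2]{BPS}, as in the paper, or a genuine new argument that you have not supplied.
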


\begin{proof}
 Both the equivalences (1)\,$\Longleftrightarrow$\,(2) and
(3)\,$\Longleftrightarrow$\,(4) follow from
Proposition~\ref{self-pure-projective-direct-limits} and
the equivalence of categories~\eqref{G-H-equivalence}.
 Since any epimorphism onto a projective object splits in $\sB$,
we also obtain the equivalence (4)\,$\Longleftrightarrow$\,(5).
 Alternatively, the equivalence (3)\,$\Longleftrightarrow$\,(5) follows
from self-pure-projectivity/$\varinjlim$\+pure-rigidity of $M$ and
the properties of the augmented bar-complex mentioned in
Example~\ref{bar-complex-pure-exact-example}.

 Finally, the equivalence (1)\,$\Longleftrightarrow$\,(3)
is~\cite[Corollary~7.2]{BPS} (for $\varinjlim$\+pure-rigid
objects~$M$) or a particular case of~\cite[Theorem~4.4]{BPS}
(for self-pure-projective objects~$M$).
\end{proof}

 In particular, in the assumptions of
Corollary~\ref{self-pure-projective-corollary}, the two properties
(3) and~(4) in Corollary~\ref{contramodule-THEC-corollary}
are equivalent.

\Section{Covers in Hereditary Cotorsion Pairs}
\label{covers-cotorsion-secn}

 In this section we discuss $\sL$\+covers in an abelian category $\sA$
with a hereditary cotorsion pair~$(\sL,\sE)$, aiming to gradually pass
from Theorem~\ref{introd-THEC-main-theorem} of the introduction to
Theorem~\ref{introd-tilting-main-theorem}.

 Let us recall the relevant definitions.
 Let $\sA$ be an abelian category, and let $\sL$ and $\sE\subset\sA$
be two classes of objects.
 We denote by $\sL^{\perp_1}\subset\sA$ the class of all objects
$X\in\sA$ such that $\Ext_\sA^1(L,X)=0$ for all $L\in\sL$, and by
${}^{\perp_1}\sE \subset\sA$ the class of all objects $Y\in\sA$
such that $\Ext_\sA^1(Y,E)=0$ for all $E\in\sE$.
 The pair of classes of objects $(\sL,\sE)$ in $\sA$ is called
a \emph{cotorsion pair} (or a \emph{cotorsion theory}) if
$\sE=\sL^{\perp_1}$ and $\sL={}^{\perp_1}\sE$.
 A cotorsion pair $(\sL,\sE)$ is called \emph{hereditary} if
$\Ext^n_\sA(L,E)=0$ for all $L\in\sL$, $E\in\sE$, and $n\ge1$.
 These concepts go back to Salce~\cite{Sal}.

 An epimorphism $l\:L\rarrow C$ in $\sA$ is called a \emph{special\/
$\sL$\+precover} if $L\in\sL$ and $\ker(l)\in\sL^{\perp_1}$.
 A monomorphism $b\:B\rarrow E$ in $\sA$ is called a \emph{special\/
$\sE$\+preenvelope} if $E\in\sE$ and $\coker(b)\in{}^{\perp_1}\sE$.
 The following lemma summarizes the properties of precovers, special
precovers, and covers.

\begin{lem} \label{precovers-lemma}
 Let\/ $\sL$ be a class of objects in an abelian category\/~$\sA$.
 Then the following assertions hold true: \par
\textup{(a)} Any special\/ $\sL$\+precover is an\/ $\sL$\+precover. \par
\textup{(b)} If the class\/ $\sL$ is closed under extensions in\/ $\sA$,
then the kernel of any\/ $\sL$\+cover belongs to\/~$\sL^{\perp_1}$.
 In particular, any epic\/ $\sL$\+cover is special in this case. \par
\textup{(c)} Let $l\:L\rarrow C$ be an\/ $\sL$\+cover, and let
$l'\:L'\rarrow C$ be an\/ $\sL$\+precover.
 Then there exists a split epimorphism $f\:L'\rarrow L$ forming
a commutative triangle diagram with the morphisms $l$ and~$l'$.
 The kernel $K$ of the morphism~$f$ is a direct summand of $L'$
contained in\/ $\ker(l')\subset L'$.
 So one has $L'\cong L\oplus K$ and $\ker(l')\cong\ker(l)\oplus K$. \par
\textup{(d)} Assume that an object $C\in\sA$ has an\/ $\sL$\+cover,
and let\/ $l'\:L'\rarrow C$ be an\/ $\sL$\+precover.
 Then the morphism~$l'$ is an\/ $\sL$\+cover if and only if the object
$L'$ has no nonzero direct summands contained in\/ $\ker(l')$.
\end{lem}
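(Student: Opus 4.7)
For part~(a), I would start from a special $\sL$\+precover $l\colon L\rarrow C$ with $K=\ker(l)\in\sL^{\perp_1}$, take an arbitrary $l'\colon L'\rarrow C$ with $L'\in\sL$, and apply $\Hom_\sA(L',-)$ to the short exact sequence $0\to K\to L\to C\to 0$. The long exact Ext sequence gives surjectivity of $\Hom_\sA(L',L)\to\Hom_\sA(L',C)$ because $\Ext^1_\sA(L',K)=0$, and any preimage of $l'$ is the required factorization.

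For part~(b), let $l\colon L\rarrow C$ be an $\sL$\+cover with $K=\ker(l)$, and take an arbitrary extension $0\to K\to N\to L'\to 0$ with $L'\in\sL$; the task is to split it. My plan is to push out along the inclusion $K\hookrightarrow L$, producing a short exact sequence $0\to L\to P\to L'\to 0$; closure of $\sL$ under extensions gives $P\in\sL$. The morphisms $l\colon L\to C$ and $0\colon N\to C$ agree on $K$, so by the pushout property they assemble into an epimorphism $p\colon P\to C$, and writing $u\colon L\to P$ and $v\colon N\to P$ for the two pushout inclusions, I have $pu=l$ and $pv=0$. Since $l$ is an $\sL$\+precover I can lift $p$ to $f\colon P\to L$ with $lf=p$; then $l(fu)=(lf)u=pu=l$, so the cover property forces $\alpha:=fu$ to be an automorphism of $L$. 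Setting $g=\alpha^{-1}f\colon P\to L$ gives a retraction of $u$, and from $l\alpha=l$ I get $l\alpha^{-1}=l$, hence $lg=lf=p$. Consequently $l(gv)=pv=0$, so $gv$ factors through $K\subset L$, while on $K$ itself the pushout relation $u|_K=v|_K$ combined with $gu=\id_L$ shows that $gv|_K$ is the inclusion; this is exactly a retraction $N\to K$ of $K\to N$, splitting the extension.

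Part~(c) is formal: using that $l$ is an $\sL$\+precover produces $f\colon L'\to L$ with $lf=l'$, and using that $l'$ is an $\sL$\+precover produces $g\colon L\to L'$ with $l'g=l$; then $l(fg)=l'g=l$ forces $fg$ to be an automorphism of $L$ by the cover property, so $f$ has the right inverse $g(fg)^{-1}$ and is a split epimorphism. The induced decomposition $L'\cong L\oplus\ker(f)$ yields $\ker(l')\cong\ker(l)\oplus\ker(f)$ with $\ker(f)\subset\ker(l')$, proving the claim. For part~(d), the forward direction takes any decomposition $L'=L''\oplus K$ with $K\subset\ker(l')$ and applies the cover property to the idempotent endomorphism $e\colon L'\to L'$ projecting onto $L''$ along $K$: since $l'e=l'$, the endomorphism $e$ is an automorphism, which forces $K=0$. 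The reverse direction combines the assumed existence of an $\sL$\+cover with part~(c) to write $L'\cong L\oplus K$ with $K\subset\ker(l')$; the no-nonzero-summand hypothesis then gives $K=0$, so $L'\cong L$ and $l'$ is an $\sL$\+cover.

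The main obstacle is the bookkeeping in part~(b): one has to set up the pushout carefully enough to verify simultaneously that $gv$ lands in $K$ (which requires the identity $l\alpha^{-1}=l$ derived from the cover property) and that it restricts to the identity on $K$ (which requires the pushout relation $u|_K=v|_K$ together with the retraction identity $gu=\id_L$). Once these two ingredients are assembled, the remaining arguments in parts~(a), (c), and~(d) are essentially routine manipulations of the defining properties of precovers and covers.
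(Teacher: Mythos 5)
Your arguments are correct. Note, however, that the paper itself does not prove this lemma: it simply cites Xu's book (parts (a), (c), (d) to \cite[Propositions~1.2.2, 2.1.3--2.1.4, Corollaries~1.2.3/1.2.8]{Xu}, and part~(b) as the Wakamatsu lemma, \cite[Lemma~2.1.1 or~2.1.2]{Xu}). What you have written is essentially a self-contained reproduction of those standard proofs: (a) via the $\Hom$--$\Ext^1$ exact sequence applied to $0\to\ker(l)\to L\to C\to0$ (which exists because special precovers are epic by definition); (b) the classical pushout argument for Wakamatsu's lemma, where your two key verifications --- that $gv$ factors through $K=\ker(l)$ because $lg=p$ and $pv=0$, and that $gv$ restricts to the identity on $K$ because $ui=vj$ and $gu=\id_L$ --- are exactly the crux and are carried out correctly (splitting every Yoneda extension of an $L'\in\sL$ by $K$ is indeed what $K\in\sL^{\perp_1}$ means, so no enough-injectives hypothesis is needed); (c) and (d) are the routine retraction/idempotent manipulations. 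One small inaccuracy: in (b) you call $p\:P\rarrow C$ an epimorphism, which is only guaranteed when $l$ itself is epic; but since you never use surjectivity of $p$ (only the precover property of~$l$ applied to $P\in\sL$), this does not affect the proof, and your argument correctly covers the general (not necessarily epic) cover, as the statement requires. In the converse direction of (d) it would be worth one explicit sentence that a precover $l'=lf$ with $f$ an isomorphism inherits the cover property from $l$ (conjugating any endomorphism $e$ with $l'e=l'$ by $f$), but this is routine.
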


\begin{proof}
 Part~(a) is~\cite[Proposition~2.1.3 or~2.1.4]{Xu}.
 Part~(b) is known as Wakamatsu lemma; this
is~\cite[Lemma~2.1.1 or~2.1.2]{Xu}.
 Part~(c) is~\cite[Proposition~1.2.2 or Theorem~1.2.7]{Xu}, and part~(d)
is~\cite[Corollary~1.2.3 or~1.2.8]{Xu}.
\end{proof}

 Let $(\sL,\sE)$ be a cotorsion pair in~$\sA$.
 If $c\:L\rarrow C$ is an epimorphism in $\sA$ with $L\in\sL$ and
the object $\ker(c)\in\sA$ has a special $\sE$\+preenvelope, then
the object $C$ has a special $\sL$\+precover.
 If $b\:B\rarrow E$ is a monomorphism in $\sA$ with $E\in\sE$ and
the object $\coker(b)\in\sA$ has a special $\sL$\+precover, then
the object $B$ has a special $\sE$\+preenvelope.
 In particular, if there are enough injective and projective objects
in $\sA$, then, given a cotorsion pair $(\sL,\sE)$ in $\sA$, every object
of $\sA$ has a special $\sL$\+precover if and only if every object of
$\sA$ has a special $\sE$\+preenvelope.
 These results are known as Salce lemmas~\cite{Sal}.
 A cotorsion pair $(\sL,\sE)$ in $\sA$ is called \emph{complete} if
every object of $\sA$ has a special $\sL$\+precover and a special
$\sE$\+preenvelope.

\begin{lem} \label{cotorsion-kernel-covers-lemma}
 Let $(\sL,\sE)$ be a complete cotorsion pair in an abelian
category\/ $\sA$, and let $E\in\sE\subset\sA$ be an object.
 Then a morphism $l\:L\rarrow E$ in\/ $\sA$ is an\/ $\sL$\+cover
if and only if it is an $\sL\cap\sE$\+cover.
\end{lem}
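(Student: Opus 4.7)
The plan is to use a special $\sL$-precover of $E$ (which exists by completeness) as a bridge between the two cover notions, and to exploit the standard closure properties of the classes in a cotorsion pair.

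First, I would establish the following preliminary observation: since $(\sL,\sE)$ is complete, there is a special $\sL$-precover $l'\colon L'\to E$, that is, an epimorphism with $L'\in\sL$ and $\ker(l')\in\sL^{\perp_1}=\sE$. Because the right class $\sE$ of a cotorsion pair is closed under extensions, the short exact sequence $0\to\ker(l')\to L'\to E\to 0$ with both outer terms in $\sE$ forces $L'\in\sE$ as well. Hence $l'\colon L'\to E$ is a morphism from an object of $\sL\cap\sE$, and it is automatically also an $\sL\cap\sE$-precover of $E$ (any morphism to $E$ from an object of $\sL\cap\sE\subset\sL$ factors through the $\sL$-precover~$l'$).

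For the forward direction, suppose $l\colon L\to E$ is an $\sL$-cover. Applying Lemma~\ref{precovers-lemma}(c) to the $\sL$-cover $l$ and the $\sL$-precover $l'$, I obtain a split epimorphism $f\colon L'\to L$ with $L'\cong L\oplus K$. Since the right class $\sE$ of a cotorsion pair is closed under direct summands (as $\Ext^1_\sA({-},{-})$-vanishing is inherited by summands), and $L'\in\sE$, I conclude $L\in\sE$, so $L\in\sL\cap\sE$. The precover property of $l$ with respect to $\sL\cap\sE$ follows immediately from its precover property with respect to the larger class $\sL$, and the endomorphism condition defining a cover depends only on $L$ itself, so it transfers unchanged. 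Thus $l$ is an $\sL\cap\sE$-cover.

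For the reverse direction, suppose $l\colon L\to E$ is an $\sL\cap\sE$-cover; I must upgrade it to an $\sL$-cover. The endomorphism condition is again automatic, so it suffices to verify that $l$ is an $\sL$-precover. Given any morphism $m\colon M\to E$ with $M\in\sL$, first factor it through the special $\sL$-precover as $m=l'g$ for some $g\colon M\to L'$. Now apply Lemma~\ref{precovers-lemma}(c) to the $\sL\cap\sE$-cover $l$ and the $\sL\cap\sE$-precover $l'$ (both from step one): there exists $f\colon L'\to L$ with $lf=l'$. Then $m=l'g=lfg$, producing the desired factorization through~$l$.

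The main point requiring care is step one, namely recognizing that completeness of $(\sL,\sE)$ plus closure of $\sE$ under extensions upgrades a special $\sL$-precover into an object of $\sL\cap\sE$; once this is established, both directions reduce to straightforward applications of Lemma~\ref{precovers-lemma}(c) together with the observation that the endomorphism condition in the definition of a cover is intrinsic to the domain object.
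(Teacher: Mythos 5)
Your proof is correct, and it shares the paper's central idea: use completeness to produce a special $\sL$\+precover $l'\:L'\rarrow E$ and observe, via extension-closedness of $\sE$, that $L'\in\sL\cap\sE$, so that $l'$ serves as a bridge between the two cover notions. The details diverge in two places, both legitimately. In the forward direction the paper invokes Wakamatsu's lemma (Lemma~\ref{precovers-lemma}(b), together with the remark that all $\sL$\+precovers of $E$ are epic once an epic one exists) to get $\ker(l)\in\sE$ and then concludes $L\in\sE$ by extension-closedness; you instead apply Lemma~\ref{precovers-lemma}(c) to the $\sL$\+cover $l$ and the $\sL$\+precover $l'$, realize $L$ as a direct summand of $L'\in\sE$, and use closedness of $\sE$ under summands. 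Your route avoids Wakamatsu entirely, at the cost of not recording the extra information that $l$ is a \emph{special} $\sL$\+precover. In the converse direction both arguments apply Lemma~\ref{precovers-lemma}(c) to the class $\sL\cap\sE$; the paper extracts from it that $\ker(l)$ is a summand of $\ker(l')\in\sE$, so that $l$ is a special $\sL$\+precover and hence an $\sL$\+precover by part~(a), whereas you use only the commutative triangle $lf=l'$ and factor an arbitrary morphism $M\rarrow E$ with $M\in\sL$ first through $l'$ and then through $l$. Your version is slightly more economical (it never needs parts (a) or (b) of Lemma~\ref{precovers-lemma}, nor the specialness of $l$), while the paper's version yields the specialness of epic $\sL$\+covers as a useful byproduct.
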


\begin{proof}
 Since the cotorsion pair $(\sL,\sE)$ is complete in $\sA$, every
object of $\sA$ has a special $\sL$\+precover, which is, in particular,
an epic $\sL$\+precover.
 It follows that all the $\sL$\+precovers in $\sA$ are epic.

 Assume that $l$~is an $\sL$\+cover.
 Then, by Lemma~\ref{precovers-lemma}(b), the morphism~$l$ is
a special $\sL$\+precover; so its kernel belongs to~$\sE$.
 Since the class $\sE$ is closed under extensions in $\sA$, it follows
that $L\in\sL\cap\sE$.
 Therefore, $l$~is an $\sL\cap\sE$\+cover.

 Assume that $l$~is an $\sL\cap\sE$\+cover.
 Let $l'\:L'\rarrow E$ be a special $\sL$\+precover of the object $E$
in~$\sA$.
 Following the above argument, we have $L'\in\sL\cap\sE$; so $l'$~is
also an $\sL\cap\sE$\+precover of~$E$.
 According to Lemma~\ref{precovers-lemma}(c) applied to
the class of objects $\sL\cap\sE\subset\sA$, the object $\ker(l)$ is
a direct summand of $\ker(l')$.
 Hence $\ker(l)\in\sE$.
 So $l$~is a special $\sL$\+precover of $E$ in~$\sA$.
 In particular, by Lemma~\ref{precovers-lemma}(a), $l$~is
an $\sL$\+precover.
 Since $l$~is an $\sL\cap\sE$\+cover, we can conclude that
$l$~is an $\sL$\+cover.
\end{proof}

\begin{lem} \label{hereditary-complete-covers-lemma}
 Let $(\sL,\sE)$ be a hereditary complete cotorsion pair in an abelian
category\/~$\sA$.
 Assume that every object of\/ $\sE$ has an\/ $\sL$\+cover in\/~$\sA$.
 Then every object of\/ $\sA$ has an\/ $\sL$\+cover.
\end{lem}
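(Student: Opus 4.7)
\smallskip

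The plan is to produce an $\sL$-cover of an arbitrary $A \in \sA$ by transporting the $\sL$-cover of a suitable object of $\sE$ through a pullback square. First I would use completeness of the cotorsion pair to fix a special $\sE$-preenvelope $0 \to A \to E \to L \to 0$ with $E \in \sE$ and $L \in \sL$. By hypothesis $E$ has an $\sL$-cover $l_0 \colon L_0 \to E$, and Lemma~\ref{cotorsion-kernel-covers-lemma} (together with Lemma~\ref{precovers-lemma}(b)) gives $L_0 \in \sL \cap \sE$, $l_0$ epi, and $K := \ker(l_0) \in \sE$. Form the pullback
$$
 A' := A \times_E L_0, \qquad q \colon A' \rarrow A, \qquad p \colon A' \rarrow L_0,
$$
so that $q$ is epi with $\ker(q) \cong K \in \sE$, while $p$ is a monomorphism with $\coker(p) \cong L$. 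Hence $0 \to A' \to L_0 \to L \to 0$ is exact with $L_0, L \in \sL$, and since the cotorsion pair is hereditary the long exact sequence forces $\Ext^1_\sA(A', E') = 0$ for every $E' \in \sE$, giving $A' \in \sL$. Thus $q$ is an (epic) special $\sL$-precover of~$A$.

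Next I would show $q$ is an $\sL$-cover. Let $e \colon A' \to A'$ satisfy $qe = q$. To lift $e$ to $L_0$, observe that $\Ext^1_\sA(L, L_0) = 0$ because $L \in \sL$ and $L_0 \in \sE$, so the morphism $p \circ e \colon A' \to L_0$ extends along the mono $p$ to some $\tilde\phi \colon L_0 \to L_0$. A direct computation using $l_0 p = i q$ (where $i \colon A \hookrightarrow E$) shows that $l_0 \tilde\phi - l_0$ vanishes on $A'$ and therefore factors as $s \circ \pi_L$ for some $s \colon L \to E$, where $\pi_L \colon L_0 \to L$ is the cokernel of $p$. Since $l_0$ is an $\sL$-precover of $E$ and $L \in \sL$, the map $s$ factors as $l_0 \circ t$ for some $t \colon L \to L_0$. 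Replacing $\tilde\phi$ by $\beta := \tilde\phi - t\pi_L$ preserves the lifting condition $\beta p = pe$ and achieves $l_0 \beta = l_0$. The $\sL$-cover property of $l_0$ then makes $\beta$ an automorphism of $L_0$.

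Finally I would upgrade ``$\beta$ is an automorphism'' to ``$e$ is an automorphism''. Since $\beta$ sends $p(A') \subset L_0$ into itself (that is precisely $\beta p = pe$), it induces a morphism $\bar\beta \colon L \to L$ on the quotient. Composing the quotient map $\pi_E \colon E \to L$ with $l_0$ gives an isomorphism $\phi \colon L \to L$ satisfying $\pi_E l_0 = \phi \pi_L$ (the induced map on $L_0/A' = L$ of the epi $\pi_E l_0$ with kernel $A'$). From $l_0 \beta = l_0$ we get $\phi \bar\beta \pi_L = \phi \pi_L$, and since $\phi$ is iso and $\pi_L$ is epi, $\bar\beta = \id_L$. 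The snake lemma applied to the endomorphism $(\beta|_{A'}, \beta, \bar\beta)$ of the short exact sequence $0 \to A' \to L_0 \to L \to 0$ then shows that $e = \beta|_{A'}$ is both mono and epi, hence an automorphism, as required.

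The only real obstacle is the second step, producing a lift $\beta$ that is simultaneously compatible with $e$ on $A'$ and with $l_0$ on $L_0$; everything else is formal diagram chasing once the two $\Ext$-vanishing conditions ($\Ext^1_\sA(L, L_0) = 0$ and the $\sL$-precover property of $l_0$) are exploited. This is where heredity of the cotorsion pair is used in an essential way.
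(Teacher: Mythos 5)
Your proof is correct, and while the first half coincides with the paper's argument (special $\sE$\+preenvelope $0\to A\to E\to L\to 0$, an $\sL$\+cover $l_0\:L_0\to E$, and the pullback/kernel $A'=\ker(\pi_E l_0)$, which lies in $\sL$ by heredity and gives an epic special $\sL$\+precover $q\:A'\to A$ with kernel in $\sE$), the verification that $q$ is a cover is genuinely different. The paper takes an endomorphism $h$ of the precovering object with $qh=q$, pushes the sequence $0\to A'\to L_0\to L\to 0$ out along~$h$ to get $N\in\sL$, uses the precover property of $l_0$ to map $N$ back to $L_0$, applies the cover property of $l_0$ to the composite to produce a left inverse $g$ of~$h$, and then repeats a symmetric argument to show $g$ is also a split monomorphism, whence both are isomorphisms. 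You instead lift $e$ directly: $\Ext^1_\sA(L,L_0)=0$ (valid since $L\in\sL$ and $L_0\in\sE$ by Wakamatsu plus closure of $\sE$ under extensions) gives $\tilde\phi$ with $\tilde\phi p=pe$, the correction $\beta=\tilde\phi-t\pi_L$ via the precover property achieves $l_0\beta=l_0$ while keeping $\beta p=pe$, the cover property makes $\beta$ an automorphism, and the induced map on $L$ is the identity, so the snake lemma forces $e$ to be monic and epic, hence an isomorphism. Your route uses the cover property of~$l_0$ only once and replaces the pushout-and-symmetry argument by an Ext-vanishing lift plus a snake-lemma descent, which is arguably more direct; the paper's version avoids invoking $\Ext^1(L,L_0)=0$ explicitly and works purely with (pre)cover factorizations. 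One small remark: your closing sentence locates the essential use of heredity in the lifting step, but $\Ext^1_\sA(L,L_0)=0$ needs no heredity; heredity is really used where you show $A'\in\sL$, i.e., that $\sL$ is closed under kernels of epimorphisms (equivalently, via $\Ext^2_\sA(L,E')=0$).
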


\begin{proof}
 Let $A$ be an object in~$\sA$.
 By assumption, $A$ has a special $\sE$\+preenvelope $a\:A\rarrow E$.
 Set $L=\coker(a)$; then we have a short exact sequence
$0\rarrow A\rarrow E\rarrow L\rarrow0$ in $\sA$ with $E\in\sE$ and
$L\in\sL$.
 By assumption, the object $E$ has an $\sL$\+cover $m\:M\rarrow E$
in~$\sA$.
 Set $F=\ker(m)$; by Lemma~\ref{precovers-lemma}(b), we have $F\in\sE$.
 Let $K$ denote the kernel of the composition of epimorphisms
$M\rarrow E\rarrow L$; then we have $K\in\sL$, since $M$, $L\in\sL$
and the cotorsion pair $(\sL,\sE)$ is assumed to be hereditary.
 We have constructed a commutative diagram of four short exact sequences
$$
\begin{diagram}\dgARROWLENGTH=1.5em
\node[2]{0}\node{0} \\
\node{0}\arrow{e}\node{A}\arrow{n}\arrow{e,t}{a}\node{E}\arrow{n}
\arrow{e}\node{L}\arrow{e}\arrow{s,=}\node{0} \\
\node{0}\arrow{e}\node{K}\arrow{n,l}{k}\arrow{e}\node{M}\arrow{n,l}{m}
\arrow{e}\node{L}\arrow{e}\node{0} \\
\node[2]{F}\arrow{n}\arrow{e,=}\node{F}\arrow{n}\\
\node[2]{0}\arrow{n}\node{0}\arrow{n}
\end{diagram}
$$

 The morphism $k\:K\rarrow A$ is an epimorphism with the kernel
$F\in\sE$, so it is a special $\sL$\+precover.
 Let us show that it is an $\sL$\+cover.
 Let $h\:K\rarrow K$ be an endomorpism such that $kh=k$.
 Consider a pushout of the short exact sequence $0\rarrow K\rarrow M
\rarrow L\rarrow 0$ by the morphism~$h$ and denote it by
$0\rarrow K\rarrow N\rarrow L\rarrow 0$.
 We have $N\in\sL$, since $K$, $L\in\sL$ and the class $\sL$ is
closed under extensions in~$\sA$.
 In view of the universal property of the pushout, we have
a commutative diagram of two morphisms of short exact sequences
$$
\begin{diagram}\dgARROWLENGTH=1.5em
\node{0}\arrow{e}\node{A}\arrow{e,t}{a}\node{E}\arrow{e}\node{L}
\arrow{e}\arrow{s,=}\node{0} \\
\node{0}\arrow{e}\node{K}\arrow{n,l}{k}\arrow{e}\node{N}\arrow{n,l}{n}
\arrow{e}\node{L}\arrow{e}\arrow{s,=}\node{0} \\
\node{0}\arrow{e}\node{K}\arrow{n,l}{h}\arrow{e}\node{M}\arrow{n,l}{s}
\arrow{e}\node{L}\arrow{e}\node{0}
\end{diagram}
$$
with $kh=k$ and $ns=m$.
 Since the morphism $m\:M\rarrow E$ is an $\sL$\+cover and $N\in\sL$,
there exists a morphism $r'\:N\rarrow M$ such that $mr'=n$.
 Moreover, one has $mr's=ns=m$, hence $r's\:M\rarrow M$ is
automorphism.
 Setting $r=(r's)^{-1}r'\:N\rarrow M$, we have $rs=\id_M$ and
$mr=m(r's)^{-1}r'=mr'=n$.

 It follows from the latter equality that the morphism $r\:N\rarrow M$
forms a commutative triangle diagram with the epimorphisms $N\rarrow L$
and $M\rarrow L$.
 Passing to the kernels of these two epimorphisms, we obtain
a morphism $g\:K\rarrow K$ such that $gh=\id_K$.
 We have constructed a commutative diagram of two morphisms of short
exact sequences
$$
\begin{diagram}\dgARROWLENGTH=1.5em
\node{0}\arrow{e}\node{K}\arrow{e}\node{M}\arrow{e}\node{L}\arrow{e}
\arrow{s,=}\node{0} \\
\node{0}\arrow{e}\node{K}\arrow{n,l}{g}\arrow{e}\node{N}\arrow{n,l}{r}
\arrow{e}\node{L}\arrow{e}\arrow{s,=}\node{0} \\
\node{0}\arrow{e}\node{K}\arrow{n,l}{h}\arrow{e}\node{M}\arrow{n,l}{s}
\arrow{e}\node{L}\arrow{e}\node{0}
\end{diagram}
$$
whose composition is the identity endomorphism of the short exact
sequence $0\rarrow K\rarrow M\rarrow L\rarrow 0$.

 Thus we have shown that any endomorphism $h\:K\rarrow K$ such that
$kh=k$ is a (split) monomorphism.
 Furthermore, there is a commutative diagram of two morphisms of short
exact sequences
$$
\begin{diagram}\dgARROWLENGTH=1.5em
\node{0}\arrow{e}\node{A}\arrow{e,t}{a}\node{E}\arrow{e}\node{L}
\arrow{e}\arrow{s,=}\node{0} \\
\node{0}\arrow{e}\node{K}\arrow{n,l}{k}\arrow{e}\node{M}\arrow{n,l}{m}
\arrow{e}\node{L}\arrow{e}\arrow{s,=}\node{0} \\
\node{0}\arrow{e}\node{K}\arrow{n,l}{g}\arrow{e}\node{N}\arrow{n,l}{r}
\arrow{e}\node{L}\arrow{e}\node{0}
\end{diagram}
$$
where $kg=k$, because $mr=n$ (indeed, since $a$ is a monomorphism, it
suffices to show that $akg=ak$, which follows from the equality $mr=n$
and the commutativity of the left squares of our diagrams).

 Therefore, the morphism $g\:K\rarrow K$ is a (split) monomorphism, too,
and we can conclude that both $g$ and~$h$ are isomorphisms.
\end{proof}

\begin{cor} \label{cotorsion-pair-covers-cor}
 Let $(\sL,\sE)$ be a hereditary complete cotorsion pair in an abelian
category\/~$\sA$.
 Then the following three conditions are equivalent:
\begin{enumerate}
\item every object of\/ $\sA$ has an\/ $\sL$\+cover;
\item every object of\/ $\sE$ has an\/ $\sL$\+cover in\/~$\sA$;
\item every object of\/ $\sE$ has an\/ $\sL\cap\sE$\+cover.
\end{enumerate}
\end{cor}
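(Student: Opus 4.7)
The proof is essentially a bookkeeping exercise combining the two lemmas immediately preceding the corollary, so my plan is to organize it as a circle of three implications with one trivial step and two lemma applications.

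First I would dispose of the implication (1)\,$\Longrightarrow$\,(2), which is tautological: if every object of $\sA$ has an $\sL$\+cover, then in particular every object of the subclass $\sE\subset\sA$ does.

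Next I would establish (2)\,$\Longrightarrow$\,(1) by a direct appeal to Lemma~\ref{hereditary-complete-covers-lemma}: this lemma says precisely that, under the hypotheses of a hereditary complete cotorsion pair, the existence of $\sL$\+covers for objects of $\sE$ propagates to existence of $\sL$\+covers for all objects of $\sA$. Since the cotorsion pair $(\sL,\sE)$ in the statement of the corollary is assumed to be both hereditary and complete, the hypotheses of that lemma are met and we may quote it verbatim.

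Finally, to close the loop I would prove (2)\,$\Longleftrightarrow$\,(3) by pointwise application of Lemma~\ref{cotorsion-kernel-covers-lemma}. That lemma requires only that $(\sL,\sE)$ be complete (which it is), and it asserts that for any object $E\in\sE$ a morphism $l\:L\rarrow E$ is an $\sL$\+cover if and only if it is an $\sL\cap\sE$\+cover. In particular, the object $E$ admits an $\sL$\+cover in $\sA$ precisely when it admits an $\sL\cap\sE$\+cover, so the two conditions (2) and~(3) are equivalent object by object in $\sE$.

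There is no substantial obstacle here: the real work has already been absorbed into Lemmas~\ref{cotorsion-kernel-covers-lemma} and~\ref{hereditary-complete-covers-lemma}. The only point that deserves a line of comment in the writeup is that completeness of the cotorsion pair (needed for Lemma~\ref{cotorsion-kernel-covers-lemma}) and heredity plus completeness (needed for Lemma~\ref{hereditary-complete-covers-lemma}) are both hypotheses of the corollary, so both lemmas apply without any additional verification.
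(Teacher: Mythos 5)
Your proposal is correct and matches the paper's own argument: the paper also deduces (1)$\Leftrightarrow$(2) from Lemma~\ref{hereditary-complete-covers-lemma} (the direction (1)$\Rightarrow$(2) being trivial) and (2)$\Leftrightarrow$(3) from Lemma~\ref{cotorsion-kernel-covers-lemma}. Your remarks on which hypotheses (completeness, heredity) each lemma uses are accurate and add only harmless detail.
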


\begin{proof}
 (1)\,$\Longleftrightarrow$\,(2) is
Lemma~\ref{hereditary-complete-covers-lemma};
 (2)\,$\Longleftrightarrow$\,(3) is
Lemma~\ref{cotorsion-kernel-covers-lemma}.
\end{proof}

\Section{The Tilting-Cotilting Correspondence}
\label{tilting-cotilting-secn}

 Let $\sA$ be a complete, cocomplete abelian category with a fixed
injective cogenerator $J\in\sA$.
 So there are enough injective objects in the category $\sA$, and
the class of all injective objects is $\sA_\inj=\Prod(J)\subset\sA$.

 Let $n\ge0$ be an integer, and let $T\in\sA$ be an object satisfying
the following two conditions:
\begin{enumerate}
\renewcommand{\theenumi}{\roman{enumi}}
\item the projective dimension of $T$ (as an object of $\sA$) does
not exceed~$n$, that is $\Ext^i_\sA(T,A)=0$ for all $A\in\sA$ and
$i>n$; and
\item for any set $X$, one has $\Ext^i_\sA(T,T^{(X)})=0$ for all $i>0$.
\end{enumerate}

 Denote by $\sE\subset\sA$ the class of all objects $E\in\sA$ such that
$\Ext^i_\sA(T,E)=0$ for all $i>0$.
 Notice that, by the definition, one has $\sA_\inj=\Prod_\sA(J)
\subset\sE$ and, by the condition~(ii), $\Add_\sA(T)\subset\sE$.

 Furthermore, for each integer $m\ge0$, denote by $\sL_m\subset\sA$
the class of all objects $L\in\sA$ for which there exists an exact
sequence of the form
$$
 0\lrarrow L\rarrow T^0\lrarrow T^1\lrarrow\dotsb\lrarrow T^m\lrarrow0
$$
in the category $\sA$ with the objects $T^m\in\Add(T)$.
 By the definition, $\Add(T)=\sL_0\subset\sL_1\subset\sL_2
\subset\dotsb\subset\sA$.
 According to~\cite[Lemma~3.2]{PS1}, one has $\sL_n=\sL_{n+1}=\sL_{n+2}
=\dotsb$ (so we set $\sL=\sL_n$) and $\sL\cap\sE=\Add(T)\subset\sA$.

 According to~\cite[Theorem~3.4]{PS1}, every object of $\sE$ is a quotient
of an object from $\Add(T)$ in $\sA$ if and only if every object of $\sA$
is a quotient of an object from~$\sL$.
 If this is the case, we say that the object 
$T\in\sA$ is \emph{$n$\+tilting}.
 For an $n$\+tilting object $T$, the pair of classes of objects
$(\sL,\sE)$ in $\sA$ is a hereditary complete cotorsion pair, called
the \emph{$n$\+tilting cotorsion pair} associated with~$T$.

 Let $\sB$ be a complete, cocomplete abelian category with a fixed
projective generator $P\in\sB$.
 So there are enough projective objects in $\sB$, and one has
$\sB_\proj=\Add(P)\subset\sB$.

 The definition of an \emph{$n$\+cotilting object} $W\in\sB$ is dual
to the above definition of an $n$\+tilting object.
 In other words, an object $W\in\sB$ is said to be \emph{$n$\+cotilting}
if the object $W^\sop$ is $n$\+tilting in the abelian category
$\sB^\sop$ opposite to~$\sB$.

 Specifically, this means, first of all, that the two conditions dual
to~(i) and~(ii) have to be satisfied:
\begin{enumerate}
\renewcommand{\theenumi}{\roman{enumi}*}
\item the injective dimension of $W$ (as an object of $\sB$) does
not exceed~$n$, that is $\Ext^i_\sB(B,W)=0$ for all $B\in\sB$ and
$i>n$; and
\item for any set $X$, one has $\Ext^i_\sB(W^X,W)=0$ for all $i>0$.
\end{enumerate}
 On top of that, denoting by $\sF\subset\sB$ the class of all objects
$F\in\sB$ such that $\Ext^i_\sB(F,W)=0$ for all $i>0$, it is required
that every object of $\sF$ should be a subobject of an object from
$\Prod(W)$ in~$\sB$.

 The following theorem from~\cite{PS1} describes the phenomenon of
\emph{$n$\+tilting-cotilting correspondence}.

\begin{thm} \label{tilting-cotilting-thm}
 There is a bijective correspondence between (the equivalence classes of)
complete, cocomplete abelian categories\/ $\sA$ with an injective
cogenerator $J$ and an $n$\+tilting object $T\in\sA$, and
(the equivalence classes of) complete, cocomplete abelian categories\/
$\sB$ with a projective generator $P$ and an $n$\+cotilting object
$W\in\sB$.
 The abelian categories\/ $\sA$ and\/ $\sB$ corresponding to each other
under this correspondence are connected by the following structures:
\par
\textup{(a)} there is a pair of adjoint functors between\/ $\sA$ and\/
$\sB$, with a left adjoint functor\/ $\Phi\:\sB\rarrow\sA$ and a right
adjoint functor\/ $\Psi\:\sA\rarrow\sB$; \par
\textup{(b)} one has\/ $\Phi(\sF)\subset\sE$ and\/ $\Psi(\sE)\subset\sF$;
the restrictions of the functors\/ $\Phi$ and\/ $\Psi$ are mutually
inverse equivalences between the full subcategories\/ $\sE\subset\sA$
and\/ $\sF\subset\sB$; \par
\textup{(c)} the full subcategory\/ $\sE\subset\sA$ is closed under
extensions and the cokernels of monomorphisms, while the full
subcategory\/ $\sF\subset\sB$ is closed under extensions and the kernels
of epimorphisms; hence they inherit exact category structures (in
Quillen's sense) from their ambient abelian categories; the equivalence
of categories\/ $\sE\cong\sF$ provided by the functors\/ $\Phi$ and\/
$\Psi$ is an equivalence of exact categories; in other words,
the functor\/ $\Phi$ preserves exactness of short exact sequences of
objects from\/ $\sF$, and the functor\/ $\Psi$ preserves exactness of
short exact sequences of objects from\/~$\sE$; \par
\textup{(d)} both the full subcategories\/ $\sE\subset\sA$ and\/
$\sF\subset\sB$ are closed under both the products and coproducts in
their ambient abelian categories; the functor\/ $\Phi\:\sB\rarrow\sA$
preserves the products (and coproducts) of objects from\/ $\sF$, while
the functor\/ $\Psi\:\sA\rarrow\sB$ preserves the (products and)
coproducts of objects from\/~$\sE$; \par
\textup{(e)} under the equivalence of exact categories\/ $\sE\cong\sF$,
the injective cogenerator $J\in\sE\subset\sA$ corresponds to
the $n$\+cotilting object $W\in\sF\subset\sB$, and the $n$\+tilting
object $T\in\sE\subset\sA$ corresponds to the projective generator
$P\in\sF\subset\sB$; \par
\textup{(f)} there are enough projective and injective objects in
the exact category\/ $\sE\cong\sF$; the full subcategories of
projectives and injectives in\/ $\sE$ are\/ $\sE_\proj=\Add(T)$ and\/
$\sE_\inj=\sA_\inj=\Prod(J)$, while the full subcategories of projectives
and injectives in\/ $\sF$ are\/ $\sF_\proj=\sB_\proj=\Add(P)$ and
$\sF_\inj=\Prod(W)$; \par
\textup{(g)} the equivalence of exact categories\/ $\sA\supset\sE\cong
\sF\subset\sB$ can be extended to a triangulated equivalence between
the derived categories\/ $\sD^\star(\sA)\cong\sD^\star(\sB)$, which
exists for any conventional derived category symbol\/
$\star=\bb$, $+$, $-$, or~$\varnothing$.
\end{thm}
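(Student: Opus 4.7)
The plan is to construct the abelian category $\sB$ from the triple $(\sA, J, T)$ via the generalized tilting theory of Section~\ref{generalized-tilting-subsecn}. Take $\sB = \boT_T\modl$; then $\sB$ is complete and cocomplete with projective generator $P = \boT_T(*)$, and the equivalence $\Add(T) \cong \sB_\proj$ realizes the correspondence between $T$ and~$P$. The adjoint pair of part~(a) is $\Phi = \Phi_T$, $\Psi = \Psi_T$ constructed there, and the $n$\+cotilting object of part~(e) is defined by $W = \Psi(J)$. The converse direction of the bijective correspondence is entirely dual: given $(\sB, P, W)$, one applies the same construction to $(\sB^\sop, P^\sop, W^\sop)$ to recover $\sA$, $J$, and~$T$.

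The heart of the proof is establishing parts~(b), (c), and~(e), which together assert that $\Phi$ and $\Psi$ restrict to mutually inverse equivalences between $\sE$ and $\sF$ as exact categories, identifying $(T, J)$ with $(P, W)$. First I would show that $\Psi$ sends $\sE$ to $\sF$ and is exact on $\sE$: exactness follows because $\Psi$ agrees with $\Hom_\sA(T, -)$ on underlying sets and $T$ is $\Ext^{>0}$\+orthogonal to $\sE$; the inclusion $\Psi(\sE) \subset \sF$ follows by resolving $\Psi(E)$ by free contramodules, whose image under $\Phi$ is an $\Add(T)$\+resolution of~$E$, so $\Ext^i_\sB(\Psi(E), W)$ is computed by a complex of $\Hom_\sA(\Add(T), J)$ terms, acyclic in positive degrees by injectivity of~$J$. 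Dually, $\Phi$ sends $\sF$ to $\sE$ and is exact there. The adjunction unit $E \to \Phi\Psi(E)$ is an isomorphism on $\Add(T)$ by the generalized tilting equivalence; this propagates to all of $\sE$ by a five-lemma induction along an $\Add(T)$\+resolution of $E$ of length $\le n$, which exists because $\sL\cap\sE = \Add(T)$ and the $n$\+tilting property gives such a resolution of $E$. The counit is handled dually.

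Parts~(d) and~(f) are then routine consequences: $\sE$ and $\sF$ inherit products and coproducts from their ambient categories using the dimension bounds~(i) and~(i*), the functors $\Phi$ and $\Psi$ preserve these on $\sF$ and~$\sE$ respectively (since they preserve the requisite (co)products of the generators $P$ and $W$, by construction and adjunction), and the identifications $\sE_\proj = \Add(T)$, $\sE_\inj = \Prod(J)$, $\sF_\proj = \sB_\proj$, $\sF_\inj = \Prod(W)$ follow directly from the defining $\Ext$\+vanishing conditions combined with the tilting/cotilting covering conditions. For the derived equivalence~(g), the strategy is to realize both $\sD^\star(\sA)$ and $\sD^\star(\sB)$ as equivalent to $\sD^\star(\sE) \cong \sD^\star(\sF)$: the tilting hypothesis that every object of $\sA$ admits an $\sL$\+resolution of length $\le n$, together with the fact that objects of $\sL$ have finite $\Add(T)$\+resolutions inside $\sE$, shows that $\sE$ is resolving in $\sA$ in a sufficient sense for the natural functor $\sD^\star(\sE) \to \sD^\star(\sA)$ to be a triangulated equivalence; the dual argument handles~$\sB$, and the middle equivalence is part~(c).

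The main obstacle is the core exact equivalence~(c): one must verify that $\Psi$ preserves exactness of every short exact sequence of objects from $\sE$ (not only those built from resolutions), and that $\sE$ and $\sF$ are genuinely closed under cokernels of monomorphisms and kernels of epimorphisms in their ambient abelian categories. The first point hinges on the $\Ext$\+vanishing $\Ext^1_\sA(T, E') = 0$ applied to the leftmost term of a short exact sequence in~$\sE$; the second is a delicate consequence of the projective/injective dimension bounds combined with the long exact sequences of $\Ext$. A secondary subtlety is checking that $W = \Psi(J)$ actually satisfies all the $n$\+cotilting axioms, which again reduces via the established partial equivalence to the tilting properties of $T$ and the injective-cogenerator properties of~$J$.
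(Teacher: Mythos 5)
The paper does not actually reprove this theorem: its proof is a string of citations to \cite{PS1} (Corollary~4.12, built on Theorems~4.10--4.11, for the correspondence and parts~(e\+-f); Section~5 for (a\+-d); Proposition~4.2 and Corollary~5.6 for~(g)). Your outline follows the same underlying construction ($\sB=\boT_T\modl$, \ $P=\boT_T(*)$, \ $W=\Psi(J)$), so the route is the intended one, but as a proof it has concrete gaps. The clearest one is your claim that every $E\in\sE$ admits an $\Add(T)$\+resolution of length~$\le n$ ``because $\sL\cap\sE=\Add(T)$'': this is false. Already for the $1$\+tilting module $T=A$ over a ring $A$ of left global dimension~$\ge2$ (where $\sE=A\modl$) it would force every left $A$\+module to have projective dimension at most~$1$. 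What is true, and suffices for the counit $\Phi\Psi(E)\rarrow E$ to be an isomorphism on $\sE$, is that every $E\in\sE$ has a two-term presentation $T_1\rarrow T_0\rarrow E\rarrow0$ with $T_i\in\Add(T)$ and syzygy again in $\sE$ (take the canonical precover $T^{(\Hom_\sA(T,E))}\rarrow E$, whose kernel stays in $\sE$ because the induced map on $\Hom_\sA(T,{-})$ is surjective); then exactness of $\Psi$ on short exact sequences in $\sE$, right exactness of $\Phi$, and the five lemma do the job --- no finite resolution exists or is needed.

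Second, your argument for $\Psi(\sE)\subset\sF$ is circular: applying $\Phi$ to a projective resolution of $\Psi(E)$ and asserting that the image is an $\Add(T)$\+resolution of $E$ is precisely the vanishing $\boL_i\Phi(\Psi(E))=0$ for $i>0$ that is at stake, since $\Phi$ is only right exact. Build the resolution on the $\sA$\+side (as above) and transport it by $\Psi$; better still, prove the identity $\Ext^i_\sB(B,W)\cong\Hom_\sA(\boL_i\Phi(B),J)$, valid because $W=\Psi(J)$, by adjunction and the injective cogenerator property of~$J$ --- it characterizes $\sF$ as the class of $\Phi$\+acyclic objects and delivers (b), (c), (f) uniformly, with the dual characterization $\sE=\{A: \boR^i\Psi(A)=0,\ i>0\}$ on the other side. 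Third, the genuinely hard parts are waved off: that $W$ has injective dimension~$\le n$, that every object of $\sF$ embeds into an object of $\Prod(W)$, and above all that the correspondence is a bijection, i.e.\ that the dual construction applied to $(\sB,P,W)$ recovers $(\sA,J,T)$ (so that $\sA\simeq\boT_{W^\sop}\modl^\sop$), are not formal dualities --- the two sides are asymmetric, one having enough projectives and the other enough injectives --- and this is exactly the content of \cite[Theorems~4.10--4.11 and Corollary~4.12]{PS1} that your sketch would still have to supply. Likewise, the unbounded case $\star=\varnothing$ of~(g) requires the t\+structure/realization argument of \cite{PS1}, not just a comparison of finite resolutions through~$\sE\cong\sF$.
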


\begin{proof}
 The bijective correspondence is constructed
in~\cite[Corollary~4.12]{PS1} (based on~\cite[Theorems~4.10
and~4.11]{PS1}), and the assertions~(e\+-f) are a part of that
construction (cf.~\cite[Proposition~2.6 and Theorem~3.4]{PS1}).
 The adjoint functors $\Phi$ and $\Psi$ are described
in~\cite[beginning of Section~5]{PS1}, and parts~(b\+-c) are also
explained there.
 Part~(d) is~\cite[Lemma~5.3 and Remark~5.4]{PS1}.
 Part~(g) is~\cite[Proposition~4.2 and/or Corollary~5.6]{PS1}.
\end{proof}

 The following characterization of the $n$\+tilting-cotilting
correspondence situations will be useful in Section~\ref{ring-epi-secn}.
 It may also be of an independent interest.
 
\begin{prop} \label{tilting-from-derived-equivalence}
 Let\/ $\sA$ be a complete, cocomplete abelian category with
an injective cogenerator $J$, and let\/ $\sB$ be a complete, cocomplete
abelian category with a projective generator~$P$.
 Suppose that there is a derived equivalence\/ $\sD^\bb(\sA)\cong
\sD^\bb(\sB)$ taking the object $J\in\sA$ to an object
$W\in\sB\subset\sD^\bb(\sB)$ and the object $P\in\sB$ to an object
$T\in\sA\subset\sD^\bb(\sA)$.
 Then, for any integer $n\ge0$, the following conditions are equivalent:
\begin{enumerate}
\renewcommand{\theenumi}{\Roman{enumi}}
\item the projective dimension of the object $T$ in the category\/ $\sA$
does not exceed~$n$;
\item the injective dimension of the object $W$ in the category\/ $\sB$
does not exceed~$n$;
\item the standard t\+structures on the derived categories\/
$\sD^\bb(\sA)$ and\/ $\sD^\bb(\sB)$, viewed as two t\+structures on
the same triangulated category\/ $\sD$ using the triangulated
equivalence\/ $\sD^\bb(\sA)\cong\sD^\bb(\sB)$, satisfy the inclusion\/
$\sD^{\bb,\le0}(\sA)\subset\sD^{\bb,\le n}(\sB)$, or equivalently,
$\sD^{\bb,\ge n}(\sB)\subset\sD^{\bb,\ge0}(\sA)$.
\end{enumerate}
 If any one of these conditions is satisfied, then the object $T\in\sA$
is $n$\+tilting; the object $W\in\sB$ is $n$\+cotilting; and moreover,
the abelian category\/ $\sA$ with the injective cogenerator $J$ and
the $n$\+tilting object $T$ and the abelian category\/ $\sB$ with
the projective generator $P$ and the $n$\+cotilting object $W$ are
connected by the $n$\+tilting-cotilting correspondence.
 The $n$\+tilting class\/ $\sE\subset\sA$ is the intersection
$\sA\cap\sB\subset\sD=\sD^\bb(\sA)=\sD^\bb(\sB)$ viewed as a full
subcategory in\/ $\sA$, and the $n$\+cotilting class\/ $\sF\subset\sB$
is the same intersection $\sB\cap\sA\subset\sD$ viewed as a full
subcategory in\/ $\sB$ (hence the equivalence of exact categories\/
$\sE\cong\sF$).
 The functor\/ $\Psi\:\sA\rarrow\sB$ assigns to an object $A\in\sA$
the degree-zero cohomology of the related complex in\/ $\sD^\bb(\sB)$,
and the functor\/ $\Phi\:\sB\rarrow\sA$ assigns to an object $B\in\sB$
the degree-zero cohomology of the related complex in\/ $\sD^\bb(\sA)$,
that is, $\Psi(A)=H_\sB^0(A)$ and\/ $\Phi(B)=H_\sA^0(B)$.
\end{prop}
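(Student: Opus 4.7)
The plan is to identify $\sD^\bb(\sA)$ and $\sD^\bb(\sB)$ with a single triangulated category $\sD$ via the given equivalence, equipped with two bounded $t$-structures; denote the associated cohomology functors $H^i_\sA\colon\sD\to\sA$ and $H^i_\sB\colon\sD\to\sB$. Under this identification $T = P$ and $J = W$ as objects of $\sD$.

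For the equivalence \textup{(I)}\,$\Leftrightarrow$\,\textup{(III)} the key computation is that, $P$ being projective in $\sB$, the exact functor $\Hom_\sB(P,-)$ commutes with cohomology, giving
$$\Hom_\sD(P, C[i]) \;=\; \Hom_\sB\bigl(P,\,H^i_\sB(C)\bigr),$$
and, $P$ being a generator of $\sB$, this group vanishes iff $H^i_\sB(C) = 0$. Applied to $C = A$ for $A \in \sA$, condition~\textup{(I)} translates to the statement that $H^i_\sB(A) = 0$ for all $A\in\sA$ and $i > n$. Dévissage in the $\sA$-$t$-structure (every object of $\sD^{\bb,\le0}(\sA)$ is a finite iterated extension of objects $A[k]$ with $A\in\sA$ and $k\ge0$) upgrades this to~\textup{(III)}. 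Dually, since $J$ is injective in $\sA$ and a cogenerator, one has
$$\Hom_\sD(C, J[i]) \;=\; \Hom_\sA\bigl(H^{-i}_\sA(C),\,J\bigr),$$
which vanishes iff $H^{-i}_\sA(C) = 0$; applied to $C = B\in\sB$, condition~\textup{(II)} translates to $\sB\subset\sD^{\bb,\ge-n}(\sA)$, and this, after the same dévissage in the $\sB$-$t$-structure, becomes $\sD^{\bb,\ge 0}(\sB)\subset\sD^{\bb,\ge-n}(\sA)$, i.e.\ \textup{(III)} in its second form (obtained by taking orthogonals of the two $t$-structures and shifting by~$n$).

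For the ``moreover'' part, I would set $\sE = \sA\cap\sB\subset\sD$, so that $T, J \in\sE$, and consider the candidate adjoint pair $\Phi = H^0_\sA|_\sB\colon\sB\to\sA$ (left adjoint) and $\Psi = H^0_\sB|_\sA\colon\sA\to\sB$ (right adjoint). Conditions \textup{(I)} and \textup{(II)} directly give the projective and injective dimension bounds required for $T$ to be $n$-tilting and $W$ to be $n$-cotilting. By the identities of the previous paragraph, the remaining self-$\Ext$ vanishings $\Ext^i_\sA(T, T^{(X)}) = 0$ and $\Ext^i_\sB(W^X, W) = 0$ for all $i > 0$ and sets $X$ amount to the assertions $T^{(X)}\in\sB$ and $W^X\in\sA$ inside~$\sD$. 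The main obstacle is precisely this step, since the given derived equivalence is not assumed to preserve (co)products, so the inclusions $T^{(X)}\in\sB$ and $W^X\in\sA$ do not follow formally from~\textup{(III)}. To overcome it I would invoke the reconstruction results of~\cite[Theorems~4.10--4.11 and Corollary~4.12]{PS1}, which characterise the data of the $n$-tilting-cotilting correspondence precisely in terms of a derived equivalence of the type considered here; applied to our setup they simultaneously yield the remaining tilting and cotilting axioms (including the generation and cogeneration conditions) and confirm the claimed descriptions $\sE = \sA\cap\sB$, $\Psi = H^0_\sB$, and $\Phi = H^0_\sA$.
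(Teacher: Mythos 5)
Your handling of the equivalence (I)\,$\Leftrightarrow$\,(III)\,$\Leftrightarrow$\,(II) is essentially the paper's argument: Hom-orthogonality against the projective generator $P$ and the injective cogenerator $J$ (using $\Hom_\sD(P,C[i])\cong\Hom_\sB(P,H^i_\sB(C))$ and its dual), followed by d\'evissage; that part is fine. The gap is in the ``moreover'' part. You correctly reduce the axioms $\Ext^i_\sA(T,T^{(X)})=0$ and $\Ext^i_\sB(W^X,W)=0$ to the inclusions $T^{(X)}\in\sB$ and $W^X\in\sA$, and you correctly observe that these do not follow formally from (III); but the proposed fix---invoking \cite[Theorems~4.10--4.11 and Corollary~4.12]{PS1}---does not close the gap. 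Those results construct the tilting-cotilting correspondence starting from a category already equipped with an $n$-tilting (resp.\ $n$-cotilting) object; they do not recognize a tilting object from a derived equivalence of the kind assumed here, so appealing to them at this point presupposes exactly what is to be proved. The paper cites them only at the very end, after $T$ and $W$ have been shown to be $n$-tilting and $n$-cotilting, merely to note that the two triples correspond to each other under the bijection.

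The missing step has a short direct proof, which is the actual content of the paper's argument. Coproduct functors are exact in $\sA$ (it has an injective cogenerator), so the inclusion $\sA\rarrow\sD^\bb(\sA)$ preserves coproducts; moreover $\sA\cap\sB=\sA\cap\sD^{\bb,\le0}(\sB)$ inside $\sD$, because $\sA\subset\sD^{\bb,\ge0}(\sA)\subset\sD^{\bb,\ge0}(\sB)$ holds unconditionally, and the aisle $\sD^{\bb,\le0}(\sB)$ is closed under whatever coproducts exist in $\sD$. Hence $\sA\cap\sB$ is closed under coproducts computed in $\sA$ (and, dually, under products computed in $\sB$), so the coproduct of copies of $T$ in $\sA$ is simultaneously the coproduct of copies of $P$ in $\sB$; therefore $\Ext^i_\sA(T,T^{(X)})=\Hom_\sD(P,P^{(X)}[i])=0$ for $i>0$, and similarly $\Ext^i_\sB(W^X,W)=0$. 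Only after this does one cite \cite[Proposition~2.5 and Corollary~4.4(b)]{PS1} to obtain the remaining generation/cogeneration axioms, and then the reconstruction results to identify the two triples as partners under the correspondence. Without an argument of this kind (or an equivalent substitute), your proof of the second half of the proposition is incomplete.
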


\begin{proof}
 This is essentially the material of~\cite[Sections~2 and~4]{PS1}
(the description of the functors $\Phi$ and $\Psi$ can be found
in the beginning of~\cite[Section~5]{PS1}).
 So we only give a brief sketch of the argument.

 Notice, first of all, that the inclusions
$\sD^{\bb,\le0}(\sB)\subset\sD^{\bb,\le0}(\sA)$ and
$\sD^{\bb,\ge0}(\sA)\subset\sD^{\bb,\ge0}(\sB)$ always hold in our
assumptions, because an object $Z\in\sD$ belongs to
$\sD^{\bb,\ge0}(\sB)$ if and only if $\Hom_\sD(P,Z[i])=0$ for all $i<0$,
while one has $\Hom_\sD(S,Z[i])=0$ for all $Z\in\sD^{\bb,\ge0}(\sA)$,
all $i<0$, and all $S\in\sA$ (in particular, for $S=T$).

 In the same way one shows that the two inclusions in~(III) (which are
obviously equivalent to each other) are equivalent to~(I) on
the one hand and to~(II) on the other hand, (I)\,$\Longleftrightarrow$%
\,\allowbreak(III)\,$\Longleftrightarrow$\,(II).
 Indeed, an object $Z\in\sD$ belongs to $\sD^{\bb,\le n}(\sB)$
if and only if $\Hom_\sD(P,Z[i])=0$ for all $i>n$, while the projective
dimension of $T$ in $\sA$ does not exceed~$n$ if and only if
$\Hom_\sD(T,Z[i])=0$ for all $Z\in\sD^{\bb,\le0}(\sA)$ and all $i>n$.
 The argument for $W$ is similar.

 The inclusion $\sA\rarrow\sD^\bb(\sA)$ preserves coproducts, because
the coproduct functors are exact in $\sA$; and the inclusion
$\sB\rarrow\sD^\bb(\sB)$ preserves products, because the product functors
are exact in~$\sB$.
 Furthermore, we have $\sA\cap\sB=\sA\cap\sD^{\bb,\le0}(\sB)\subset\sD$,
since $\sB=\sD^{\bb,\le0}(\sB)\cap\sD^{\bb,\ge0}(\sB)$ and
$\sA\subset\sD^{\bb,\ge0}(\sA)\subset\sD^{\bb,\ge0}(\sB)$.
 The full subcategory $\sD^{\bb,\le0}(\sB)$ is closed under coproducts
in $\sD$ (those coproducts that exist in $\sD$), because the left part
of any t\+structure is closed under coproducts.
 Hence the full subcategory $\sA\cap\sB$ is closed under coproducts
in $\sD$, and consequently in $\sA$ and $\sB$.
 Similary, the full subcategory $\sA\cap\sB$ is closed under products
in $\sD$, and consequently in $\sA$ and~$\sB$.
 So the products and coproducts of objects of $\sE$ computed in $\sA$
agree with the products and coproducts of objects of $\sF$
computed in~$\sB$.
 (Cf.~\cite[Lemma~5.3 and Remark~5.4]{PS1}.)

 Now we can see that $\Ext_\sA^i(T,T^{(X)})=
\Hom_{\sD^\bb(\sA)}(T,T^{(X)}[i])=\Hom_{\sD^\bb(\sB)}(P,P^{(X)}[i])
\allowbreak=0$ for all $i>0$, and similarly $\Ext_\sB^i(W^X,W)=0$
for all $i>0$ and all sets~$X$.
 This proves the $n$\+tilting axiom~(ii) for $T$ and
the $n$\+cotilting axiom~(ii*) for $W$; while the axioms~(i)
and~(i*) are provided by the conditions~(I) and~(II).
 It remains to apply~\cite[Proposition~2.5 and Corollary~4.4(b)]{PS1}
in order to conclude that the object $T\in\sA$ is $n$\+tilting
and the object $W\in\sB$ is $n$\+cotilting.
 It is also clear from the construction of the $n$\+tilting-cotilting
correspondence in~\cite[Theorems~4.10\+-4.11 and Corollary~4.12]{PS1}
that the triples $(\sA,J,T)$ and $(\sB,P,W)$ are connected by such
correspondence.
\end{proof} 

\begin{rem}
 Given a complete, cocomplete abelian category $\sA$ with an injective
cogenerator and an $n$\+tilting object $T$, the related abelian category
$\sB$ can be described as the category $\sB=\boT_T\modl$ of modules over
the monad $\boT_T\:X\longmapsto\Hom_\sA(T,T^{(X)})$.
 The functors $\Phi$ and $\Psi$ from
Section~\ref{generalized-tilting-subsecn}
can be identified with the functors $\Phi$ and $\Psi$ from
Theorem~\ref{tilting-cotilting-thm} in this case~\cite[Remark~6.6]{PS1}.

 Dually, given a complete, cocomplete abelian category $\sB$ with
a projective generator and an $n$\+cotilting object $W$, the related
abelian category $\sA$ can be described as the opposite category
$\sA=\boT_{W^\sop}\modl^\sop$ to the category of modules over the monad 
$\boT_{W^\sop}\:X\longmapsto\Hom_\sB(W^X,W)$ (cf.~\cite[Section~1]{PS2}).
\end{rem}

\begin{exs} \label{tilting-subcategory-in-modules-exs}
 Suppose that there is an associative ring $A$ such that the abelian
category $\sA$ can be embedded into $A\modl$ as a full subcategory
closed under coproducts.
 So, in particular, the $n$\+tilting object $T\in\sA$ can be viewed
as a left $A$\+module.
 Then it follows from~\cite[Theorem~7.1 or~9.9]{PS1} that the abelian
category $\sB$ can be described as the category of left contramodules
$\R\contra$ over the topological ring $\R=\Hom_A(T,T)^\rop$
from Examples~\ref{module-endomorphism-topologies}\,(1), (2) or~(4).
 Further examples of classes of abelian categories $\sA$ for which
the category $\sB$ admits such a description are discussed
in~\cite[Sections~9\+-10]{PS1} and~\cite[Section~3]{PS3} (see
Examples~\ref{further-top-agreeable-examples}).
\end{exs}

\Section{Direct Limits in Categorical Tilting Theory}

 In this section we discuss the properties of direct limits in
the $n$\+tilting-cotilting correspondence context.
 We start with the case of the direct limits indexed by the poset of
natural numbers.

\begin{lem} \label{countable-direct-limits-phi-psi-lemma}
 In the context of the $n$\+tilting-cotilting correspondence, assume
that countable direct limits are exact in the abelian category\/~$\sA$.
 Then both the full subcategories\/ $\sE\subset\sA$ and\/
$\sF\subset\sB$ are closed under countable direct limits in their
ambient abelian categories, and the functor\/ $\Psi\:\sA\rarrow\sB$
preserves countable direct limits of objects from\/~$\sE$.
 Furthermore, for any sequence of objects and morphisms
$F_1\rarrow F_2\rarrow F_3\rarrow\dotsb$ with $F_i\in\sF$, the short
sequence $0\rarrow\coprod_{i=1}^\infty F_i\rarrow
\coprod_{i=1}^\infty F_i\rarrow\varinjlim_{i\ge1}F_i\rarrow0$ with the map
$\id-\mathit{shift}\:\coprod_iF_i\rarrow\coprod_iF_i$ is exact
in\/~$\sB$.
 The functors of countable direct limit are exact in the exact
category~$\sF$.
\end{lem}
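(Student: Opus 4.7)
The plan is to transfer information about countable direct limits from $\sA$ (where they are exact by hypothesis) through the equivalence of exact categories $\sE\cong\sF$ provided by Theorem~\ref{tilting-cotilting-thm}, relying on parts~(c) and~(d) of that theorem at every step.

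First I would establish that $\sE$ is closed under countable direct limits in $\sA$. For a sequence $E_1\to E_2\to\dotsb$ in $\sE$, the telescope short exact sequence $0\to\coprod_iE_i\to\coprod_iE_i\to\varinjlim_iE_i\to0$ is exact in $\sA$ by the assumed exactness of countable direct limits, and the two coproduct terms lie in $\sE$ by Theorem~\ref{tilting-cotilting-thm}(d). The long exact sequence of $\Ext_\sA^i(T,{-})$ then forces $\Ext_\sA^i(T,\varinjlim_jE_j)=0$ for all $i\ge1$, since $\Ext_\sA^i(T,\coprod_jE_j)=0$ for all $i\ge1$; hence $\varinjlim_jE_j\in\sE$.

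Next I would show that $\Psi$ preserves countable direct limits of objects from $\sE$. Applying $\Psi$ to the telescope sequence, now a short exact sequence with all three terms in $\sE$, and using part~(c) (exactness preservation on $\sE$) together with part~(d) (commutation with coproducts of objects from $\sE$), I obtain an exact sequence $0\to\coprod_i\Psi(E_i)\to\coprod_i\Psi(E_i)\to\Psi(\varinjlim_iE_i)\to0$ in $\sB$ whose middle map is $\id-\mathit{shift}$. As the cokernel of this map is by definition $\varinjlim_i\Psi(E_i)$, this yields the isomorphism $\Psi(\varinjlim_iE_i)\cong\varinjlim_i\Psi(E_i)$.

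The closure of $\sF$ under countable direct limits in $\sB$ and the exactness of the stated short sequence in $\sB$ then follow together: given $F_i\in\sF$, set $E_i=\Phi(F_i)\in\sE$, so $F_i\cong\Psi(E_i)$; the previous step supplies the desired exact sequence in $\sB$, and $\varinjlim_iF_i=\Psi(\varinjlim_iE_i)\in\Psi(\sE)\subset\sF$. For exactness of countable direct limits in the exact category $\sF$, a countable directed system of short exact sequences in $\sF$ transports via $\Phi$ to such a system in $\sE\subset\sA$ (using~(c)); taking direct limits in $\sA$ preserves exactness by hypothesis and keeps the terms in $\sE$ by the first step; finally $\Psi$ carries the resulting short exact sequence back to $\sB$ by the second step. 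I do not expect any genuine obstacle, as the substantive work has been done in Theorem~\ref{tilting-cotilting-thm}; what remains is a straightforward manipulation of the telescope sequence together with the long exact $\Ext$ sequence.
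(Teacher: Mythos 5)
Your proposal is correct and follows essentially the same route as the paper's proof: exactness of the telescope sequence in $\sA$, closure of $\sE$ via Theorem~\ref{tilting-cotilting-thm}(c,d) (your $\Ext^*_\sA(T,{-})$ long exact sequence argument is an equivalent way to get the same closure), then transport to $\sF$ through the exact-category equivalence $\sE\cong\sF$ using parts~(b)--(d). The only cosmetic difference is that the paper compares the right-exact telescope for the $F_i$ with $\Psi\Phi$ of it via the adjunction morphism, whereas you substitute $F_i\cong\Psi\Phi(F_i)$ from the start; both come to the same thing.
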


\begin{proof}
 The argument resembles the proof of
Proposition~\ref{THEC-Bass-objects-in-A-and-B}.
 For any sequence of objects and morphisms $B_1\rarrow B_2\rarrow B_3
\rarrow\dotsb$ in an abelian category $\sB$ with countable
coproducts, the short sequence $\coprod_{i=1}^\infty B_i\rarrow
\coprod_{i=1}^\infty B_i\rarrow\varinjlim_{i\ge1}B_i\rarrow0$ is right
exact in~$\sB$.
 Moreover, for any sequence of objects and morphisms $A_1\rarrow A_2
\rarrow A_3 \rarrow\dotsb$ in an abelian category $\sA$ with exact
countable direct limits, the short sequence $0\rarrow
\coprod_{i=1}^\infty A_i\rarrow\coprod_{i=1}^\infty A_i\rarrow
\varinjlim_{i\ge1}A_i\rarrow0$ is exact in $\sA$
(see Example~\ref{telescope-pure-exact-example}).
 In particular, for any sequence of objects and morphisms $E_1\rarrow
E_2\rarrow E_3\rarrow\dotsb$ with $E_i\in\sE$, the short sequence
$0\rarrow\coprod_iE_i\rarrow\coprod_iE_i\rarrow\varinjlim_iE_i\rarrow0$
is exact in~$\sA$.
 Hence it follows that $\varinjlim_i E_i\in\sE$, because the full
subcategory $\sE\subset\sA$ is closed under coproducts and
the cokernels of monomorphisms.

 The functor $\Phi$, being a left adjoint, preserves all colimits.
 Thus, for any sequence of objects and morphisms $F_1\rarrow F_2
\rarrow F_3\rarrow\dotsb$ in $\sF$, the short sequence $0\rarrow
\Phi(\coprod_i F_i)\rarrow\Phi(\coprod_i F_i)\rarrow
\Phi(\varinjlim_iF_i)\rarrow0$, being isomorphic to the short sequence
$0\rarrow\coprod_i\Phi(F_i)\rarrow\coprod_i\Phi(F_i)\rarrow
\varinjlim_i\Phi(F_i)\rarrow0$, is exact in~$\sA$.
 This is a short exact sequence in $\sA$ with all the three terms
belonging to~$\sE$, so the functor $\Psi$ transforms it into a short
exact sequence in $\sB$ with all the three terms belonging to~$\sF$.
 We have a natural (adjunction) morphism from the right exact sequence
$\coprod_iF_i\rarrow\coprod_iF_i\rarrow\varinjlim_iF_i\rarrow0$
to the exact sequence $0\rarrow\Psi\Phi(\coprod_i F_i)\rarrow
\Psi\Phi(\coprod_i F_i)\rarrow\Psi\Phi(\varinjlim_iF_i)\rarrow0$,
which is an isomorphism at the first two terms, and therefore at
the third term, too.
 Hence the object $\varinjlim_iF_i\cong\Psi(\varinjlim_i\Phi(F_i))$
belongs to $\sF$ and the short sequence $0\rarrow\coprod_iF_i\rarrow
\coprod_iF_i\rarrow\varinjlim_iF_i\rarrow0$ is exact.
 Since the coproduct functors are exact in $\sF$ (because they are
exact in~$\sE$) and the cokernel of an admissible monomorphism is an
exact functor, it follows that the functors of countable direct limit
are exact in~$\sF$.
 The functor $\Psi|_\sE\:\sE\rarrow\sB$ preserves countable direct
limits, because both the equivalence of categories $\sE\cong\sF$ and
the inclusion functor $\sF\rarrow\sB$ do.
 This proves all the assertions of the lemma.
\end{proof}

\begin{cor} \label{countable-direct-limits-closedness-cor}
 In the context of the $n$\+tilting-cotilting correspondence, assume
that countable direct limits are exact in the abelian category\/~$\sA$.
 Then the following three conditions are equivalent:
\begin{enumerate}
\renewcommand{\theenumi}{\roman{enumi}}
\item the full subcategory\/ $\sL$ is closed under countable
direct limits in\/~$\sA$; \par
\item the class of objects\/ $\Add(T)$ is closed under countable
direct limits in\/~$\sA$; \par
\item the class of all projective objects\/ $\sB_\proj$ is closed
under countable direct limits in\/~$\sB$.
\end{enumerate}
\end{cor}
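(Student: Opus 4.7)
The plan is to deduce the equivalences from Lemma~\ref{countable-direct-limits-phi-psi-lemma} and the structural results of Section~\ref{tilting-cotilting-secn}. For \textup{(ii)}\,$\Longleftrightarrow$\,\textup{(iii)}, I will use that the functors $\Phi$ and $\Psi$ of Theorem~\ref{tilting-cotilting-thm}\,(e)--(f) restrict to mutually inverse equivalences $\Add(T) \cong \sB_\proj$ sitting inside $\sE \cong \sF$; since $\Psi$ preserves countable direct limits of objects from $\sE$ by Lemma~\ref{countable-direct-limits-phi-psi-lemma}, and $\Phi$ preserves all colimits as a left adjoint, a countable direct limit of objects of $\Add(T)$ lands in $\Add(T)$ if and only if the corresponding direct limit of objects of $\sB_\proj$ lands in $\sB_\proj$. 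For \textup{(i)}\,$\Longrightarrow$\,\textup{(ii)}, I invoke $\Add(T) = \sL \cap \sE$ (Section~\ref{tilting-cotilting-secn}): since both $\sL$ (by \textup{(i)}) and $\sE$ (by Lemma~\ref{countable-direct-limits-phi-psi-lemma}) are closed under countable direct limits in $\sA$, so is their intersection.

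The substantive direction is \textup{(ii)}\,$\Longrightarrow$\,\textup{(i)}, which is where the main work lies. The plan is: given a sequence $L_1 \to L_2 \to \dotsb$ in $\sL = \sL_n$, to construct functorially in $k$ an $\Add(T)$\+coresolution $0 \to L_k \to M_k^0 \to M_k^1 \to \dotsb \to M_k^n \to 0$, and then pass to the countable direct limit. Since countable direct limits are exact in $\sA$, this will produce an exact sequence $0 \to \varinjlim L_k \to \varinjlim M_k^0 \to \dotsb \to \varinjlim M_k^n \to 0$ in which each $\varinjlim M_k^i$ lies in $\Add(T)$ by \textup{(ii)}, exhibiting $\varinjlim L_k$ as an object of $\sL_n = \sL$. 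To make the coresolution functorial in $k$, my plan is to use a canonical projective presentation of $\Psi(L_k)$ in $\sB$ (functorial in $L_k$ via the monad structure), transport it to $\sA$ through the equivalence $\Add(T) \cong \sB_\proj$, and combine with canonical special $\sE$\+preenvelopes to produce a functorial monomorphism from $L_k$ into an object of $\Add(T)$; iterating then yields the desired coresolution.

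The main obstacle will be making this functorial construction precise in the general categorical setting and verifying that the cokernels at each step have controlled coresolution length, so that the iteration terminates in $\sL_0 = \Add(T)$ after $n$ steps. A potentially cleaner alternative is via the derived equivalence $\sD^\bb(\sA) \cong \sD^\bb(\sB)$ of Theorem~\ref{tilting-cotilting-thm}\,(g): here $\sL \subset \sA$ corresponds to bounded complexes of projectives in $\sB$ concentrated in cohomological degrees $[0, n]$, and $\varinjlim L_k$ corresponds to the telescope mapping cone of $\mathrm{id} - \mathrm{shift} \colon \bigoplus_k P_k^\bu \to \bigoplus_k P_k^\bu$ in $\sD^\bb(\sB)$, which by \textup{(iii)} is represented termwise by the complex $(\varinjlim_k P_k^0 \to \dotsb \to \varinjlim_k P_k^n)$ with each term in $\sB_\proj$, hence lies in the subcategory corresponding to $\sL$. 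The technical challenge there would be to rigorously interpret countable direct limits in the triangulated setting and verify their compatibility with the derived equivalence.
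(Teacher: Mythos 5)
Your treatment of (i)\,$\Rightarrow$\,(ii) and (ii)\,$\Leftrightarrow$\,(iii) is correct and coincides with the paper's: both rest on Lemma~\ref{countable-direct-limits-phi-psi-lemma} and the identification $\Add(T)=\sL\cap\sE$, \ $\sB_\proj=\Psi(\Add(T))$. Your overall strategy for (ii)\,$\Rightarrow$\,(i) -- lift the chain $L_1\to L_2\to\dotsb$ to a chain of exact coresolutions $0\to L_k\to M_k^0\to\dotsb\to M_k^n\to0$ with $M_k^j\in\Add(T)$ and pass to the (exact) countable direct limit -- is also the paper's. But the mechanism you propose for making the coresolutions compatible along the chain contains a genuine gap. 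You want a \emph{functorial} construction, built from ``canonical special $\sE$\+preenvelopes'' and a canonical projective presentation of $\Psi(L_k)$ in $\sB$. Under the hypotheses of this corollary no functorial special $\sE$\+preenvelope is available: the paper only produces such functors under local presentability via the small object argument (Lemma~\ref{loc-pres-functorial-approximation}), and it needs them only for the \emph{uncountable} version (Corollary~\ref{uncountable-direct-limit-closedness-cor}); here only exactness of countable direct limits in $\sA$ is assumed. Moreover, the monadic presentation of $\Psi(L_k)$, transported to $\sA$, gives the canonical $\Add(T)$\+\emph{precover} $T^{(\Hom_\sA(T,L_k))}\to L_k$, i.e.\ a map onto $L_k$, whereas what you need at each step is a monomorphism from $L_k$ \emph{into} an object of $\Add(T)$; so this device does not produce the embeddings you want, and the functoriality question you flag as ``the main obstacle'' is left unresolved by your plan.

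The fix is that no functoriality is needed for a chain indexed by the natural numbers. Choose, for each $L_k$, a special $\sE$\+preenvelope $0\to L_k\to E_k\to M_k\to0$; then $M_k\in{}^{\perp_1}\sE=\sL$, and $E_k\in\sL\cap\sE=\Add(T)$ since $\sL$ is closed under extensions; iterating and using dimension shifting ($\pd T\le n$) makes the $n$\+th cosyzygy land in $\sL\cap\sE=\Add(T)$, which settles your termination worry. To connect consecutive coresolutions, use the elementary lifting property of special preenvelopes: given any morphism $A'\to A''$ and special $\sE$\+preenvelopes $A'\to E'$, \ $A''\to E''$, the vanishing $\Ext^1_\sA(\coker(A'\to E'),E'')=0$ yields a morphism $E'\to E''$ making the triangle commute. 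Since an $\omega$\+indexed chain is a free diagram, constructing these comparison morphisms one at a time already produces a sequence of morphisms of exact coresolutions, and passing to the direct limit finishes the argument exactly as you intend. (Your alternative sketch via the derived equivalence of Theorem~\ref{tilting-cotilting-thm}(g) is not needed and would require justifying the interaction of telescopes with $\sD^\bb$, which you rightly note is delicate.)
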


\begin{proof}
 (i)\,$\Longrightarrow$\,(ii)
 According to Lemma~\ref{countable-direct-limits-phi-psi-lemma},
the class $\sE$ is closed under countable direct limits in~$\sA$.
 Hence, if the class $\sL$ is closed under countable direct limits,
too, then so is the class $\sL\cap\sE=\Add(T)$.

 (ii)\,$\Longleftrightarrow$\,(iii)
 By the same lemma, the equivalence of categories $\sE\cong\sF$
transforms countable direct limits of objects from $\sE$ computed
in $\sA$ to countable direct limits of objects from $\sF$ computed
in~$\sB$.
 Thus the class $\sB_\proj=\Psi(\Add(T))\subset\sF$ is closed under
countable direct limits in $\sB$ if and only if the class
$\Add(T)\subset\sE$ is closed under countable direct limits in~$\sA$.

 (ii)\,$\Longrightarrow$\,(i)
 Given an object $L\in\sL$, an exact sequence $0\rarrow L\rarrow T^0
\rarrow\dotsb\rarrow T^n\rarrow0$ with $T^j\in\Add(T)$ can be
constructed in the following way.
 Let $L\rarrow E$ be a special $\sE$\+preenvelope of~$L$; then we
have a short exact sequence $0\rarrow L\rarrow E\rarrow M\rarrow0$ 
with $E\in\sE$ and $M\in\sL$.
 Since the class $\sL$ is closed under extensions in $\sA$, we have
$E\in\sL\cap\sE=\Add(T)$.
 Set $T^0=E$ and $M^1=M$, and let $M^1\rarrow T^1$ be a special
$\sE$\+preenvelope of $M^1$, etc.
 Proceeding in this way, one obtains an exact sequence $0\rarrow L
\rarrow T^0\rarrow T^1\rarrow\dotsb\rarrow T^{n-1}\rarrow M^n\rarrow0$
with $M^n\in\sL$; and one also has $M^n\in\sE$ by cohomological
dimension shifting, since the projective dimension of $T$ does not
exceed~$n$.
 It remains to set $T^n=M^n$.
 Conversely, in any exact sequence $0\rarrow L\rarrow T^0\rarrow T^1
\rarrow\dotsb\rarrow T^n\rarrow0$ with $L\in\sL$ and $T^j\in\Add(T)$,
the objects of cocycles belong to $\sL$, since the class $\sL$,
being the left class in a hereditary cotorsion pair, is closed under
the kernels of epimorphisms.

 Now, for any two objects $A'$ and $A''\in\sA$, their special
$\sE$\+preenvelopes $A'\rarrow E'$ and $A''\rarrow E''$, and
a morphism $A'\rarrow A''$, there is a morphism $E'\rarrow E''$
forming a commutative triangle diagram with the composition
$A'\rarrow A''\rarrow E''$.
 Using this observation, for any sequence of objects and morphisms
$L_1\rarrow L_2\rarrow L_3\rarrow\dotsb$ in $\sL$ and any exact
sequences $0\rarrow L_i\rarrow T_i^0\rarrow\dotsb\rarrow T_i^n\rarrow0$
with $T_i^j\in\Add(T)$, one can extend the sequence of morphisms
$\dotsb\rarrow L_i\rarrow L_{i+1}\rarrow\dotsb$ to a sequence of
morphisms of exact sequences
$\dotsb\rarrow(0\to L_i\to T_i^0\to\dotsb\to T_i^n\to0)\rarrow
(0\to L_{i+1}\to T_{i+1}^0\to\dotsb\to T_{i+1}^n\to0)\rarrow\dotsb$.
 Passing to the direct limit, we obtain an exact sequence
$$
 0\lrarrow\varinjlim\nolimits_{i\ge1}L_i\lrarrow
 \varinjlim\nolimits_{i\ge1}T_i^0\lrarrow\dotsb\lrarrow
 \varinjlim\nolimits_{i\ge1}T_i^n\lrarrow0
$$
in the abelian category~$\sA$.
 Since $\varinjlim_iT_i^j\in\Add(T)$ for all $j=0$,~\dots, $n$, it
follows that $\varinjlim_iL_i\in\sL$ by the definition.
\end{proof}

 The following proposition provides a generalization to uncountable
direct limits.

\begin{prop} \label{direct-limits-phi-psi-prop}
 In the context of the $n$\+tilting-cotilting correspondence, assume
that direct limits are exact in the abelian category\/~$\sA$.
 Then both the full subcategories\/ $\sE$ and\/ $\sF$ are closed under
direct limits in their ambient abelian categories\/ $\sA$ and\/ $\sB$,
and the functor\/ $\Psi\:\sA\rarrow\sB$ preserves direct limits of
objects from\/~$\sE$.
 The functors of direct limit are exact in the exact category\/~$\sF$.
\end{prop}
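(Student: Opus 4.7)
The plan is to imitate Lemma~\ref{countable-direct-limits-phi-psi-lemma}, replacing the telescope sequence by the augmented bar-complex of Example~\ref{bar-complex-pure-exact-example}; the additional ingredient is the bound on the projective dimension of~$T$, which is needed to control the longer resolution. Given a directed diagram $E\:\Theta\rarrow\sE$, I form the bar-complex $\dotsb\rarrow B_2\rarrow B_1\rarrow B_0\rarrow\varinjlim E\rarrow 0$ with $B_k=\coprod_{\theta_0\le\dotsb\le\theta_k}E(\theta_0)$ and put $Z_k=\ker(B_k\to B_{k-1})$ (with $Z_{-1}:=\varinjlim E$). By Example~\ref{bar-complex-pure-exact-example}, the exactness of direct limits in $\sA$ makes this complex exact. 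Since $\sE$ is closed under coproducts (Theorem~\ref{tilting-cotilting-thm}(d)), each $B_k$ lies in $\sE$, so $\Ext^{\ge 1}_\sA(T,B_k)=0$; dimension-shifting along the short exact sequences $0\to Z_k\to B_k\to Z_{k-1}\to 0$ yields
\[
 \Ext^j_\sA(T,\varinjlim E)\;\cong\;\Ext^{j+k+1}_\sA(T,Z_k)
 \qquad (j\ge 1,\ k\ge 0).
\]
For $k\ge n$ the right-hand side vanishes because $\pd_\sA(T)\le n$, and heredity of $(\sL,\sE)$ gives $\varinjlim E\in\sE$.

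Next, by the last paragraph of Example~\ref{bar-complex-pure-exact-example}, each cycle $Z_k$ is a direct limit (over $\delta\in\Theta$) of coproducts of objects $E(\theta)$, hence a direct limit of an $\sE$\+valued diagram in~$\sA$; by what was just proved, $Z_k\in\sE$. The bar-complex thus splits into short exact sequences in $\sE$, each preserved by $\Psi$ (Theorem~\ref{tilting-cotilting-thm}(c)); splicing yields right exactness of $\Psi(B_1)\rarrow\Psi(B_0)\rarrow\Psi(\varinjlim E)\rarrow 0$ in~$\sB$. By Theorem~\ref{tilting-cotilting-thm}(d) the terms $\Psi(B_k)$ coincide with the coproducts $\coprod\Psi(E(\theta_0))$ appearing at the corresponding positions in the (always right exact) bar-complex of $\Psi\circ E$ in~$\sB$; the adjunction morphism $\varinjlim\Psi(E)\rarrow\Psi(\varinjlim E)$ extends to a comparison of these two right-exact sequences that is an isomorphism at the two leftmost terms, and a short diagram chase forces it to be an isomorphism at the rightmost term as well. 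This establishes both the closure of $\sE$ under direct limits in $\sA$ and the preservation of $\sE$\+direct limits by~$\Psi$.

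For $\sF$: any diagram $F\:\Theta\rarrow\sF$ factors as $F=\Psi\Phi F$ with $\Phi F\:\Theta\rarrow\sE$ (Theorem~\ref{tilting-cotilting-thm}(b)), so $\varinjlim_\sA\Phi F\in\sE$ (by the first paragraph) and $\Psi(\varinjlim_\sA\Phi F)\cong\varinjlim_\sB\Psi\Phi F=\varinjlim_\sB F$ (by the second, using $\Psi\Phi|_\sF\cong\id$). Hence $\varinjlim_\sB F\in\Psi(\sE)\subset\sF$. Exactness of direct limits in $\sE$ follows from exactness in $\sA$ together with the closure of $\sE$ under direct limits, and the equivalence of exact categories $\sE\cong\sF$ transports exactness to~$\sF$.

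The main obstacle is that the cycles $Z_k$ are themselves direct limits over $\Theta$ of $\sE$\+valued diagrams (Example~\ref{bar-complex-pure-exact-example}), so a naive induction to place $Z_k$ in $\sE$ would be circular. The finite projective dimension of $T$ breaks this potential infinite regress: the vanishing of $\Ext^{j+k+1}_\sA(T,Z_k)$ for $k\ge n$, from which $\Ext^{\ge1}_\sA(T,\varinjlim E)=0$ follows, holds without any prior information about whether the $Z_k$ lie in $\sE$. Once $\varinjlim E\in\sE$ is known unconditionally, the cycles retroactively fall into $\sE$ and the remaining claims follow in parallel with the countable case.
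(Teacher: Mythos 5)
Your proof is correct and follows essentially the same route as the paper's: the bar-complex resolution by coproducts of the $E(\theta)$, dimension shifting using $\pd_\sA T\le n$ to conclude $\varinjlim E\in\sE$ (which is immediate from the definition of $\sE$ as the class of objects right $\Ext^{\ge1}_\sA(T,{-})$\+orthogonal to $T$, so the appeal to heredity of the cotorsion pair is superfluous), then placing the cycles in $\sE$ and comparing the $\Psi$\+image of the bar-complex with the bar-complex of $\Psi\circ E$ in~$\sB$. The only cosmetic difference is that you transport exactness of direct limits to $\sF$ through the exact equivalence $\sE\cong\sF$, whereas the paper reads it off from the exactness of the bar-complexes computing direct limits in~$\sF$; your explicit discussion of why the argument is not circular (the cycles are only placed in $\sE$ after the closure of $\sE$ under direct limits is established unconditionally) is exactly the paper's implicit logic.
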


\begin{proof}
 The argument resembles the proof of
Proposition~\ref{self-pure-projective-direct-limits}.
 Let $E\:\Theta\rarrow\sE$ be a diagram in the exact category $\sE$
indexed by a directed poset~$\Theta$.
 Then the augmented bar-complex~\eqref{augmented-bar-complex} (from
Example~\ref{bar-complex-pure-exact-example}) for the diagram~$E$
is an unbounded resolution of an object of $\sA$ by objects of $\sE$
(since the full subcategory $\sE\subset\sA$ is closed under coproducts).
 Since the full subcategory $\sE\subset\sA$ is defined as the class of
all objects $E\in\sA$ such that $\Ext^i_\sA(T,E)=0$ for all $i>0$, and
the tilting object $T\in\sA$ has finite projective dimension, a simple
cohomological dimension shifting argument shows that
$\varinjlim_{\theta\in\Theta} E(\theta)\in\sE$.
 Moreover, all the objects of cycles of the exact
complex~\eqref{augmented-bar-complex} for the diagram $E$
also belong to~$\sE$.
 So this complex is exact in the exact category~$\sE$.

 Applying the functor $\Psi$ to the augmented bar-complex for
the diagram~$E$, we get an exact complex in the category $\sF$,
which coincides, except possibly at his rightmost term, with
the augmented bar-complex~\eqref{bar-complex-B} for the diagram
$\Psi\circ E$ in $\sB$ (because both the equivalence of categories
$\sE\cong\sF$ and the inclusion functor $\sF\rarrow\sB$ preserve
coproducts).
 Since the bar-complex of any diagram in a cocomplete abelian category
is exact at its rightmost term, it follows that the natural
morphism $\varinjlim_\theta\Psi(E(\theta))\rarrow
\Psi(\varinjlim_\theta E(\theta))$ is an isomorphism and
$\varinjlim_\theta\Psi(E(\theta))\in\sF$.
 As any diagram in $\sF$ can be obtained by applying the functor
$\Psi$ to a diagram in $\sE$, we can conclude that the full subcategory
$\sF\subset\sB$ is also closed under direct limits, and
the bar-complexes~\eqref{bar-complex-B} computing such direct
limits in $\sF$ are exact.
 Exactness of the direct limit functors in $\sF$ easily follows.
\end{proof}

\begin{cor} \label{uncountable-direct-limit-closedness-cor}
 In the context of the $n$\+tilting-cotilting correspondence, assume
that direct limits are exact in the abelian category\/~$\sA$.
 Then the following three conditions are equivalent:
\begin{enumerate}
\renewcommand{\theenumi}{\roman{enumi}}
\item the full subcategory\/ $\sL$ is closed under direct limits
in\/~$\sA$; \par
\item the class of objects\/ $\Add(T)$ is closed under direct limits
in\/~$\sA$; \par
\item the class of all projective objects\/ $\sB_\proj$ is closed
under direct limits in\/~$\sB$.
\end{enumerate}
\end{cor}

\begin{proof}
 Provable in the same way as
Corollary~\ref{countable-direct-limits-closedness-cor}, using
Proposition~\ref{direct-limits-phi-psi-prop} in place of
Lemma~\ref{countable-direct-limits-phi-psi-lemma}.
 Let us just say a few words about the implication
(ii)\,$\Longrightarrow$\,(i).

 In view of~\cite[Sections~1.6\+-1.7]{AR}, it suffices to show that
$\sL$ is closed under the direct limits of well-ordered chains
in $\sA$ (in fact, it suffices to consider direct limits indexed
by regular cardinals).
 Let us prove that $\sL$ is closed under $\lambda$\+indexed
direct limits for any ordinal~$\lambda$.

 Let $(L_i\to L_j)_{0\le i<j<\lambda}$ be a $\lambda$\+indexed
diagram in~$\sL$.
 Proceeding by transfinite induction in $0\le i<\lambda$, we construct
a $\lambda$\+indexed diagram of exact sequences $0\rarrow L_i\rarrow
T_i^0\rarrow\dotsb\rarrow T_i^n\rarrow0$ with $T_i^k\in\Add(T)$,
connected by morphisms of exact sequences for all $0\le i<j<\lambda$.

 The case $i=0$ is clear.
 Assume that the desired directed diagram of exact sequences has
been constructed for $0\le i<j<\alpha$, where $0<\alpha<\lambda$ is
some ordinal.
 Then we have an exact sequence
\begin{equation} \label{transfinite-induction-exact-sequence}
 0\lrarrow\varinjlim\nolimits_{i<\alpha} L_i
 \lrarrow\varinjlim\nolimits_{i<\alpha}T^0_i\lrarrow\dotsb
 \lrarrow\varinjlim\nolimits_{i<\alpha}T^n_i\lrarrow0
\end{equation}
in $\sA$ with $\varinjlim_{i<\alpha}T^k_i\in\Add(T)$ by~(ii)
for all $0\le k\le n$, hence $\varinjlim_{i<\alpha}L_i\in\sL$.
 Starting from the natural morphism $\varinjlim_{i<\alpha} L_i
\rarrow L_\alpha$ and arguing as in the proof of
Corollary~\ref{countable-direct-limits-closedness-cor}, we
construct a morphism from the exact
sequence~\eqref{transfinite-induction-exact-sequence} to
an exact sequence $0\rarrow L_\alpha\rarrow T^0_\alpha\rarrow
\dotsb\rarrow T^n_\alpha\rarrow0$ with $T^k_\alpha\in\Add(T)$.

 Having obtained the desired $\lambda$\+indexed diagram of
exact sequences, it remains to say that, in the exact sequence
$0\rarrow\varinjlim_{i<\lambda} L_i\rarrow\varinjlim_{i<\lambda}T^0_i
\rarrow\dotsb\rarrow\varinjlim_{i<\lambda}T^n_i\rarrow0$ in $\sA$,
the objects $\varinjlim_{i<\lambda}T^k_i$ belong to $\Add(T)$ by~(ii)
for all $0\le k\le n$.
 Hence $\varinjlim_{i<\lambda}L_i\in\sL$, so (i)~holds.
\end{proof}

\Section{When is the Left Tilting Class Covering?}
\label{tilting-covering-secn}

 In this section we prove
Theorem~\ref{introd-tilting-main-theorem} from the introduction.
 As in the previous sections, we start with weaker assumptions and
then gradually strengthen them.

\begin{prop} \label{L-covering-iff-R-properfect}
 In the context of the $n$\+tilting-cotilting correspondence,
the following four conditions are equivalent:
\begin{enumerate}
\item the class\/ $\sL$ is covering in\/~$\sA$;
\item every object of\/ $\sE$ has an\/ $\sL$\+cover in\/~$\sA$;
\item the class\/ $\Add(T)$ is covering in\/~$\sE$;
\item the class\/ $\sB_\proj$ is covering in\/~$\sF$.
\end{enumerate}

 Furthermore, assume that countable direct limits are exact in
the abelian category\/~$\sA$.
 Then the following six conditions~\textup{(5\+-10)} are equivalent:
\begin{enumerate}
\setcounter{enumi}{4}
\item any countable direct limit of copies of the tilting object $T$
has an\/ $\sL$\+cover in\/~$\sA$;
\item any countable direct limit of copies of the object $T$
has an\/ $\Add(T)$\+cover in\/~$\sA$;
\item any countable direct limit of copies of the projective generator
$P$ has a projective cover in\/~$\sB$;
\item any countable direct limit of copies of the tilting object $T$
in\/ $\sA$ belongs to\/~$\sL$;
\item any countable direct limit of copies of the object $T$ in\/ $\sA$
belongs to\/ $\Add(T)$;
\item any countable direct limit of copies of the projective generator
$P$ in\/ $\sB$ is projective.
\end{enumerate}

 Moreover, let us assume that countable direct limits are exact
in\/~$\sA$ and that\/ $\sB$ is the abelian category of left
contramodules over a complete, separated right linear topological
ring\/~$\R$.
 Consider the following six properties:
\begin{enumerate}
\setcounter{enumi}{10}
\item the object $T\in\sA$ has a perfect decomposition;
\item the topological ring\/ $\R$ is topologically left perfect;
\item the class\/ $\sB_\proj$ is closed under direct limits in\/~$\sB$;
\item the class\/ $\sB_\proj$ is covering in\/~$\sB$;
\item all descending chains of cyclic discrete right\/ $\R$\+modules
terminate;
\item all the discrete quotient rings of\/ $\R$ are left perfect.
\end{enumerate}
 Then the following implications hold:
$$
 \mathrm{(11)} \Longleftrightarrow \mathrm{(12)}
 \Longleftrightarrow \mathrm{(13)} \Longleftrightarrow \mathrm{(14)}
 \Longrightarrow \mathrm{(4)}
 \Longrightarrow \mathrm{(7)} \Longrightarrow \mathrm{(15)}
 \Longrightarrow \mathrm{(16)}.
$$
 If the topological ring\/ $\R$ satisfies one of the conditions~(a),
(b), (c), or~(d), then all the conditions~\textup{(1\+-16)} are
equivalent to each other.
 If the topological ring\/ $\R$ satisfies one of the conditions~(e),
(f), or~(g) of Section~\ref{seven-conditions-secn}, then the fifteen
conditions~\textup{(1\+-15)} are equivalent to each other.
\end{prop}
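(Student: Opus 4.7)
The plan is to prove the proposition in three stages, matching the three groups of conditions. First I would establish the foundational equivalences (1)--(4) using only the hereditary complete cotorsion pair $(\sL,\sE)$ and the tilting--cotilting correspondence. The step (1)$\Leftrightarrow$(2) is immediate from Corollary~\ref{cotorsion-pair-covers-cor} applied to the complete hereditary cotorsion pair $(\sL,\sE)$; the step (2)$\Leftrightarrow$(3) follows from Lemma~\ref{cotorsion-kernel-covers-lemma} together with the identity $\sL\cap\sE=\Add(T)$; and (3)$\Leftrightarrow$(4) is transport of structure along the equivalence of exact categories $\sE\cong\sF$ from Theorem~\ref{tilting-cotilting-thm}(c), which identifies $\Add(T)=\sE_\proj$ with $\sB_\proj=\sF_\proj$ by part~(f) of the same theorem.

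For the second stage, assuming countable direct limits are exact in $\sA$, I would treat the equivalences (5)--(10). The key preliminary observation is that $T$ satisfies THEC: the telescope sequence~\eqref{categorical-telescope-sequence} is exact on the left by Example~\ref{telescope-pure-exact-example}, and since $T^{(\omega)}\in\Add(T)\subset\sE$ we have $\Ext^1_\sA(T,T^{(\omega)})=0$, so $\Hom_\sA(T,{-})$ preserves its right exactness. Moreover $X:=\varinjlim_{n\ge1}T$ lies in $\sE$ by Lemma~\ref{countable-direct-limits-phi-psi-lemma}, and $\Psi(X)\cong\varinjlim_{n\ge1}P$ in $\sB$. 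The equivalences among (6), (7), (9), and (10) then come directly from Corollary~\ref{small-THEC-corollary}; (5)$\Leftrightarrow$(6) is a further application of Lemma~\ref{cotorsion-kernel-covers-lemma} to the object $X\in\sE$; and (8)$\Leftrightarrow$(9) is immediate from $X\in\sE$ together with $\sL\cap\sE=\Add(T)$.

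For the third stage, assuming $\sB\cong\R\contra$, I would weave together the contramodule-specific conditions. I would establish (11)$\Leftrightarrow$(12) by Theorem~\ref{topological-rings-perfect-decompositions}, the equivalence (12)$\Leftrightarrow$(13) by the equivalence (2)$\Leftrightarrow$(3) in Corollary~\ref{contramodule-THEC-corollary}, and (13)$\Leftrightarrow$(14) by Theorem~\ref{main-conjecture-for-contramodules}. The arrow (14)$\Rightarrow$(4) uses the inclusions $\sB_\proj\subset\sF\subset\sB$, so a $\sB_\proj$-cover in $\sB$ of an object of $\sF$ automatically furnishes a $\sB_\proj$-cover in $\sF$; the arrow (4)$\Rightarrow$(7) uses that $\varinjlim_{n\ge1}P$ lies in $\sF$ by Lemma~\ref{countable-direct-limits-phi-psi-lemma}; and (7)$\Rightarrow$(15)$\Rightarrow$(16) is the chain of implications already recorded in Corollary~\ref{contramodule-THEC-corollary}. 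To close the loop under condition (a), (b), (c), or (d), I would invoke the full equivalence of (1)--(10) in Corollary~\ref{contramodule-THEC-corollary} to get (16)$\Rightarrow$(12); under (e), (f), or (g), the equivalence of its conditions (1)--(9) gives (15)$\Rightarrow$(12), closing the loop among (1)--(15) but leaving (16) only as a one-way implication.

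The main obstacle I anticipate is the bookkeeping surrounding the transition (14)$\Rightarrow$(4)$\Rightarrow$(7) between covers in $\sB$, in $\sF$, and (indirectly via $\Phi$) in $\sA$; verifying this rigorously relies crucially on $\sF$ being closed under countable direct limits in $\sB$ and on $\sB_\proj$ sitting inside $\sF$, so that the precover and cover conditions coincide whether measured in $\sF$ or in the ambient $\sB$. Once these subtleties are settled, the remainder of the argument is essentially a matter of stringing together the technology already developed in the cotorsion-pair, tilting--cotilting, and contramodule sections.
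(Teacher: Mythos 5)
Your proposal is correct and takes essentially the same route as the paper's proof: the same assembly of Corollary~\ref{cotorsion-pair-covers-cor} (equivalently Lemmas~\ref{cotorsion-kernel-covers-lemma} and~\ref{hereditary-complete-covers-lemma}), the equivalence $\sE\cong\sF$, Lemma~\ref{countable-direct-limits-phi-psi-lemma}, Corollary~\ref{small-THEC-corollary} via THEC for the $\Sigma$-rigid object $T$, and Corollaries~\ref{contramodule-THEC-corollary}/\ref{a-b-c-d-THEC-main-corollary} to close the loop under (a)--(d) resp.\ (e)--(g). The only differences are cosmetic choices of citation (e.g.\ Theorem~\ref{main-conjecture-for-contramodules} in place of the direct reference to \cite{PS3} for (13)$\Leftrightarrow$(14)), so nothing of substance is missing.
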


\begin{proof}
 (1)\,$\Longleftrightarrow$\,(2)\,$\Longleftrightarrow$\,(3) is
Corollary~\ref{cotorsion-pair-covers-cor}.

 (3)\,$\Longleftrightarrow$\,(4) holds in view of the equivalence of
categories $\sE\cong\sF$ taking the class $\Add(T)\subset\sE$ to
the class $\sB_\proj=\sF_\proj\subset\sF$ (see
Theorem~\ref{tilting-cotilting-thm}(b,f)).

 (5)\,$\Longleftrightarrow$\,(6)
 By Lemma~\ref{countable-direct-limits-phi-psi-lemma}, any countable
direct limit of copies of the object $T$ in $\sA$ belongs to~$\sE$.
 So Lemma~\ref{cotorsion-kernel-covers-lemma} applies.

 (6)\,$\Longleftrightarrow$\,(7) 
 The equivalence of categories $\sE\cong\sF$ identifies the class of
objects $\Add(T)\subset\sE$ with the class $\sB_\proj\subset\sF$.
 By Lemma~\ref{countable-direct-limits-phi-psi-lemma}, it also
identifies countable direct limits of copies of the object $T$ in
$\sA$ with countable direct limits of copies of the object $P$ in~$\sB$.

 (8)\,$\Longleftrightarrow$\,(9) holds, since any countable direct
limit of copies of $T$ in $\sA$ belongs to~$\sE$.

 (9)\,$\Longleftrightarrow$\,(10) is similar to
Corollary~\ref{countable-direct-limits-closedness-cor}\,%
(ii)\,$\Leftrightarrow$\,(iii).

 (6)\,$\Longleftrightarrow$\,(7)\,$\Longleftrightarrow$\,(9)\,%
$\Longleftrightarrow$\,(10) An $n$\+tilting object $T\in\sA$
satisfies THEC by Example~\ref{sigma-pure-rigid-example}\,(1),
so Corollary~\ref{small-THEC-corollary} is applicable.

 The implications (14)\,$\Longrightarrow$\,(4)\,$\Longrightarrow$\,(7)
and (13)$\Longrightarrow$\,(10) are obvious.
 So are the implications (2)\,$\Longrightarrow$\,(5) and
(3)\,$\Longrightarrow$\,(6), in view of
Lemma~\ref{countable-direct-limits-phi-psi-lemma}.

 (11)\,$\Longleftrightarrow$\,(12)\,$\Longleftrightarrow$\,(13)
is Corollary~\ref{contramodule-THEC-corollary}\,(1)\,%
$\Leftrightarrow$\,(2)\,$\Leftrightarrow$\,(3).

 (13)\,$\Longleftrightarrow$\,(14)
is~\cite[Theorem~14.1\,(iii$'$)\,$\Leftrightarrow$\,(ii)]{PS3}.

 (10)\,$\Longrightarrow$\,(15)\,$\Longrightarrow$\,(16) is
Corollary~\ref{contramodule-THEC-corollary}\,(6)\,$\Rightarrow$\,(9)\,%
$\Rightarrow$\,(10).

 This proves all the assertions of the proposition except the last two
(in which one of the conditions~(a), (b), (c), (d), (e), (f), or~(g) is
assumed).
 Now we assume~(d) (which is a common generalization of (a), (b),
and~(c)) and prove the related implications.

 (16)\,$\Longrightarrow$\,(14) If all the discrete quotient rings of
$\R$ are left perfect and (d)~is satisfied, then all left
$\R$\+contramodules have projective covers
by Corollary~\ref{a-b-c-d-THEC-main-corollary}\,(15)\,$\Rightarrow$\,(6)
or~\cite[Theorem~12.4\,(vi)\,$\Rightarrow$\,(ii)]{Pproperf}.

 (16)\,$\Longrightarrow$\,(13) Follows from
Corollary~\ref{a-b-c-d-THEC-main-corollary}\,(15)\,$\Rightarrow$\,(10)
or~\cite[Theorem~12.4\,(vi)\,$\Rightarrow$\,\allowbreak(iii)]{Pproperf}
(since the direct limits of projective contramodules are always flat).

 (16)\,$\Longrightarrow$\,(12) is
Corollary~\ref{a-b-c-d-THEC-main-corollary}\,(15)\,$\Rightarrow$\,(12).

 Finally, assuming that one of the conditions~(e), (f), or~(g) holds,
all the conditions~(11\+-15) are equivalent by
Corollary~\ref{a-b-c-d-THEC-main-corollary}\,(6)\,$\Leftrightarrow$\,%
(10)\,$\Leftrightarrow$\,(11)\,$\Leftrightarrow$\,(12)\,%
$\Leftrightarrow$\,(13).
\end{proof}

\begin{thm} \label{tilting-direct-limits-covers-thm}
 In the context of the $n$\+tilting-cotilting correspondence, assume
that\/ $\sA$ is a Grothendieck abelian category.
 Then the following conditions are equivalent:
\begin{enumerate}
\item the class\/ $\sL$ is covering in\/~$\sA$;
\item any direct limit of objects from\/ $\Add(T)$ has
an\/ $\sL$\+cover in\/~$\sA$;
\item the class\/ $\sL$ is closed under direct limits in\/~$\sA$;
\item the class\/ $\Add(T)$ is covering in\/~$\sA$;
\item any direct limit of objects from\/ $\Add(T)$ has
an\/ $\Add(T)$\+cover in\/~$\sA$;
\item the class\/ $\Add(T)$ is closed under direct limits in\/~$\sA$;
\item the class\/ $\sB_\proj$ is covering in\/~$\sB$;
\item any direct limit of projective objects has a projective cover
in\/~$\sB$;
\item the class\/ $\sB_\proj$ is closed under direct limits in\/~$\sB$.
\end{enumerate}

 Furthermore, assume that\/ $\sB$ is the abelian category of left
contramodules over a complete, separated right linear topological
ring\/~$\R$.
 Consider the following four properties:
\begin{enumerate}
\setcounter{enumi}{9}
\item the object $T\in\sA$ has a perfect decomposition;
\item the topological ring\/ $\R$ is topologically left perfect;
\item all descending chains of cyclic discrete right\/ $\R$\+modules
terminate;
\item all the discrete quotient rings of\/ $\R$ are left perfect.
\end{enumerate}
 Then the following implications hold:
$$
 \mathrm{(9)} \Longleftrightarrow \mathrm{(10)} \Longleftrightarrow
 \mathrm{(11)} \Longrightarrow \mathrm{(12)}
 \Longrightarrow \mathrm{(13)}.
$$
 If the topological ring\/ $\R$ satisfies one of
the conditions~(a), (b), (c), or~(d), then all
the conditions~\textup{(1\+-13)} are equivalent to each other.
 If the topological ring\/ $\R$ satisfies one of the conditions~(e),
(f), or~(g) of Section~\ref{seven-conditions-secn}, then the twelve
conditions~\textup{(1\+-12)} are equivalent to each other.
\end{thm}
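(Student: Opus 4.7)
The plan is to split the argument into three stages: first establish the equivalence of (1)--(9) under the Grothendieck hypothesis alone, then fold in the contramodule conditions (10)--(13), and finally close the full cycle using the classification of the seven classes of topological rings.

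For the first stage, I take as backbone the equivalences (3)$\Leftrightarrow$(6)$\Leftrightarrow$(9), supplied by Corollary~\ref{grothendieck-direct-limit-closedness-cor}. Each of (3), (6), (9) implies the corresponding covering condition (1), (4), (7) via the Enochs theorem (Theorem~\ref{direct-limits-imply-covers}), since $\sL$ is precovering (as the left-hand class of a complete cotorsion pair), $\Add(T)\subset\sA$ is precovering (Example~\ref{Add(M)-precovers}), and $\sB_\proj\subset\sB$ is precovering (Example~\ref{projective-precovers}). The implications (1)$\Rightarrow$(2), (4)$\Rightarrow$(5), (7)$\Rightarrow$(8) are tautological, and the remaining task is to close the cycle from (2), (5), or (8) back to direct-limit closure.

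This loop-closing is the crux. I would exploit that $T$ is $\varinjlim$-pure-rigid: by Proposition~\ref{direct-limits-phi-psi-prop} the tilting class $\sE$ is closed under direct limits in a Grothendieck $\sA$, so $\varinjlim\Add(T)\subset\sE$, and $\Ext^1_\sA(T,-)$, hence $\PExt^1_\sA(T,-)$, vanishes on $\sE$ (Example~\ref{indlim-pure-rigid-example}). Corollary~\ref{self-pure-projective-corollary} then applies with $M=T$ and delivers (5)$\Leftrightarrow$(6) together with (8)$\Leftrightarrow$(9); the remaining equivalence (2)$\Leftrightarrow$(5) follows from Lemma~\ref{cotorsion-kernel-covers-lemma}, since objects of $\varinjlim\Add(T)\subset\sE$ have $\sL$-covers iff they have $\sL\cap\sE=\Add(T)$-covers.

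For the contramodule part, Proposition~\ref{L-covering-iff-R-properfect} (with the identification $\sB\cong\R\contra$ and THEC satisfied by $T$ as a $\Sigma$-rigid object, Example~\ref{sigma-pure-rigid-example}(1)) already records the equivalences and implications linking (1), (4), (7), (9) to the contramodule conditions (10)--(13). Under any of (a)--(d), that proposition sharpens them to a full equivalence encompassing (10)--(13); under (e), (f), or (g), it yields the equivalence of (1)--(12), leaving out (13). The main obstacle remains the loop-closing in stage one: the THEC machinery of Section~\ref{THEC-secn} only delivers direct-limit closure at the countable level, and the passage to \emph{arbitrary} direct limits rests essentially on the $\varinjlim$-pure-rigidity of tilting objects in Grothendieck categories, which is what makes the augmented bar-complex argument of Section~\ref{self-pure-projective-secn} available.
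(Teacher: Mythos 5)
Your proposal is correct and follows essentially the same route as the paper's own proof: the backbone (3)\,$\Leftrightarrow$\,(6)\,$\Leftrightarrow$\,(9) via Corollary~\ref{grothendieck-direct-limit-closedness-cor}, the Enochs-type implications from direct-limit closure to covering, the loop-closing via $\varinjlim$\+pure-rigidity of $T$ (Example~\ref{indlim-pure-rigid-example}, Proposition~\ref{direct-limits-phi-psi-prop}, Corollary~\ref{self-pure-projective-corollary}) together with Lemma~\ref{cotorsion-kernel-covers-lemma} for (2)\,$\Leftrightarrow$\,(5), and the contramodule conditions handled through Proposition~\ref{L-covering-iff-R-properfect}. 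The only cosmetic difference is that the paper additionally cites \cite[Application~7.3]{BPS} and Corollary~\ref{a-b-c-d-THEC-main-corollary} as alternative sources for some of the same equivalences, which your argument does not need.
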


\begin{proof}
 The implications (1)\,$\Longrightarrow$\,(2),
(4)\,$\Longrightarrow$\,(5), and (7)\,$\Longrightarrow$\,(8)
are obvious (as are the implications (3)\,$\Longrightarrow$\,(2),
(6)\,$\Longrightarrow$\,(5), and (9)\,$\Longrightarrow$\,(8)).

 The equivalences
(3)\,$\Longleftrightarrow$\,(6)\,$\Longleftrightarrow$\,(9) hold by
Corollary~\ref{uncountable-direct-limit-closedness-cor}.

 (3)\,$\Longrightarrow$\,(1) holds by
Theorem~\ref{direct-limits-imply-covers}, since the class $\sL$ is
(special) precovering in~$\sA$.

 (6)\,$\Longrightarrow$\,(4) is Example~\ref{Add(M)-precovers} and
Theorem~\ref{direct-limits-imply-covers}.

 (9)\,$\Longrightarrow$\,(7) is Example~\ref{projective-precovers}
and Theorem~\ref{direct-limits-imply-covers}.
 Notice that Theorem~\ref{direct-limits-imply-covers} requires
the category $\sA$ to be locally presentable for its applicability.
 An abelian category is locally presentable \emph{and} has exact direct
limit functors if and only if it is Grothendieck; that is why we assume
that $\sA$ is a Grothendieck category in the present theorem.

 (5)\,$\Longleftrightarrow$\,(6) is a particular case
of~\cite[Application~7.3]{BPS}.

 (5)\,$\Longleftrightarrow$\,(6)\,$\Longleftrightarrow$\,(8)\,%
$\Longleftrightarrow$\,(9)
 The object $T\in\sA$ is $\varinjlim$\+pure-rigid by
Example~\ref{indlim-pure-rigid-example}, since the $n$\+tilting
class $\sE\subset\sA$ is closed under direct limits by
Proposition~\ref{direct-limits-phi-psi-prop}.
 Therefore, Corollary~\ref{self-pure-projective-corollary}
is applicable.

 (2)\,$\Longleftrightarrow$\,(5) is
Lemma~\ref{cotorsion-kernel-covers-lemma}.

 This proves the first assertion of the theorem
(see also~\cite[Remark~7.4]{BPS} for a brief summary of this argument).
 The remaining implications are provided by
Corollary~\ref{a-b-c-d-THEC-main-corollary}\,(10\+-15) as well as by
Proposition~\ref{L-covering-iff-R-properfect}\,(11\+-16).
\end{proof}

\begin{proof}[{Proof of Theorem~\ref{introd-tilting-main-theorem}}]
 Follows from Proposition~\ref{L-covering-iff-R-properfect} and
Theorem~\ref{tilting-direct-limits-covers-thm}.
\end{proof}

\Section{Injective Ring Epimorphisms of Projective Dimension~$1$}
\label{ring-epi-secn}

 In this section we discuss a certain tilting-cotilting correspondence
situation associated with an injective homological ring epimorphism
satisfying additional conditions on the flat and projective dimension.
 
 We recall that a \emph{ring epimorphism} $u\:R\rarrow U$ is
a homomorphism of associative rings such that the multiplication
map $U\ot_RU\rarrow U$ is an isomorphism of $U$\+$U$\+bimodules.
 We refer to the book~\cite[Section~XI.1]{St} for background information
on ring epimorphisms, and to the paper~\cite{BP2} for more advanced
recent results.
 A ring epimorphism~$u$ is said to be \emph{homological} if
$\Tor^R_i(U,U)=0$ for all $i\ge1$.

 The two-term complex of $R$\+$R$\+bimodules $K^\bu=(R\to U)$ plays
a key role in the theory developed in~\cite{BP2}.
 In the present paper, we deal with \emph{injective} ring epimorphisms,
i.~e., ring epimorphisms~$u$ such that the map~$u$ is injective.
 In this case, the two-term complex of $R$\+$R$\+bimodules $K^\bu$ is
naturally quasi-isomorphic to the quotient bimodule~$U/R$.
 So we set $K=U/R$ and use $K$ in lieu of~$K^\bu$.
 
 We will denote by $\pd{}_RE$ the projective dimension of a left
$R$\+module $E$ and by $\fd E_R$ the flat dimension of a right
$R$\+module~$E$.
 For any injective homological ring epimorphism $u\:R\rarrow U$ 
such that $\pd{}_RU\le1$, the left $R$\+module $U\oplus K$ is
$1$\+tilting~\cite[Theorem~3.5]{ASan}.
 In this section we discuss a different tilting-cotilting correspondence
situation, in which $\sA\subset R\modl$ is a certain abelian subcategory.

 Let $u\:R\rarrow U$ be an injective homological ring epimorphism.
 A left $R$\+module $A$ is said to be \emph{$u$\+torsionfree} if it is
an $R$\+submodule of a left $U$\+module, or equivalently, if
the $R$\+module morphism $u\ot_R\id_A\:A\rarrow U\ot_RA$ is injective.
 The class of $u$\+torsionfree left $R$\+modules is closed under
submodules, direct sums, and products.
 Any left $R$\+module $A$ has a unique maximal $u$\+torsionfree
quotient module, which can be constructed as the image of
the $R$\+module morphism $A\rarrow U\ot_RA$.
 When $\fd U_R\le1$, the class of $u$\+torsionfree $R$\+modules is also
closed under extensions in $R\modl$ \cite[Lemma~2.7(a)]{BP2}.

 A left $R$\+module $B$ is said to be \emph{$u$\+divisible} if it is
a quotient $R$\+module of a left $U$\+module, or equivalently, if
the $R$\+module morphism $\Hom_R(u,\id_B)\:\Hom_R(U,B)\allowbreak
\rarrow B$ is surjective.
 (See~\cite[Remarks~1.2]{BP2} for a terminological discussion.)
 The class of all $u$\+divisible left $R$\+modules is closed under
epimorphic images, direct sums, and products.
 Any left $R$\+module $B$ has a unique maximal $u$\+divisible
submodule, which can be constructed as the image of the $R$\+module
morphism $\Hom_R(U,B)\rarrow B$.
 When $\pd{}_RU\le1$, the class of $u$\+divisible $R$\+modules is
also closed under extensions in $R\modl$ \cite[Lemma~2.7(b)]{BP2}.

 A left $R$\+module $M$ is called a \emph{$u$\+comodule} (or
a \emph{left $u$\+comodule}) if $U\ot_RM=0=\Tor^R_1(U,M)$.
 Assuming that $\fd U_R\le1$, the full subcategory $R\modl_{u\co}$
of left $u$\+comodules is closed under kernels, cokernels, extensions,
and direct sums in $R\modl$ \cite[Proposition~1.1]{GL}; so
$R\modl_{u\co}$ is an abelian category and the embedding functor
$R\modl_{u\co}\rarrow R\modl$ is exact.
 The embedding functor $R\modl_{u\co}\rarrow R\modl$ has a right adjoint
(``coreflector'') $\Gamma_u\:R\modl\rarrow R\modl_{u\co}$,
computable as $\Gamma_u(A)=\Tor_1^R(K,A)$ for all $A\in R\modl$.
 The category $R\modl_{u\co}$ is a Grothendieck abelian category with
an injective cogenerator $\Gamma_u(J)$, where $J$ is any chosen
injective cogenerator of $R\modl$
\cite[Proposition~3.1 and Corollary~3.6]{BP2}.

 A left $R$\+module $C$ is called a \emph{$u$\+contramodule} (or
a \emph{left $u$\+contramodule}) if $\Hom_R(U,C)=0=\Ext_R^1(U,C)$.
 Assuming that $\pd {}_RU\le1$, the full subcategory $R\modl_{u\ctra}$
of left $u$\+contramodules is closed under kernels, cokernels,
extensions, and direct products in $R\modl$ \cite[Proposition~1.1]{GL};
so $R\modl_{u\ctra}$ is an abelian category and the embedding functor
$R\modl_{u\ctra}\rarrow R\modl$ is exact.
 The embedding functor $R\modl_{u\ctra}\rarrow R\modl$ has a left
adjoint (``reflector'') $\Delta_u\:R\modl\rarrow R\modl_{u\ctra}$,
computable as $\Delta_u(B)=\Ext^1_R(K,B)$ for all $B\in R\modl$.
 The category $R\modl_{u\ctra}$ is a locally presentable abelian category
with a projective generator $\Delta_u(R)\in R\modl_{u\ctra}$
\cite[Proposition~3.2 and Lemma~3.7]{BP2}. {\hbadness=1350\par}

 The following two theorems are the main results of this section.
 
\begin{thm} \label{ring-epi-tilting-cotilting}
 Let $u\:R\rarrow U$ be an injective homological ring epimorphism.
 Assume that\/ $\fd U_R\le1$ and\/ $\pd{}_RU\le1$.
 Then the two abelian categories\/ $\sA=R\modl_{u\co}$ and\/
$\sB=R\modl_{u\ctra}$ are connected by the $1$\+tilting-cotilting
correspondence in the following way.
 The injective cogenerator is $J=\Gamma_u(\Hom_\boZ(R,\boQ/\boZ))
\in\sA$, and the $1$\+tilting object is $T=K\in\sA$.
 The projective generator is $P=\Delta_u(R)\in\sB$, and
the $1$\+cotilting object is $W=\Hom_\boZ(K,\boQ/\boZ)\in\sB$.
 The functor\/ $\Psi\:\sA\rarrow\sB$ is\/ $\Psi=\Hom_R(K,{-})$,
and the functor\/ $\Phi\:\sB\rarrow\sA$ is\/ $\Phi=K\ot_R{-}$.
 The $1$\+tilting class\/ $\sE\subset\sA$ is the class of all\/
$u$\+divisible $u$\+comodule left $R$\+modules, and
the $1$\+cotilting class\/ $\sF\subset\sB$ is the class of all
$u$\+torsionfree $u$\+contramodule left $R$\+modules.
 The equivalence of exact categories\/ $\sE\cong\sF$ is the first
Matlis category equivalence of~\cite[Theorem~1.3]{BP2}.
\end{thm}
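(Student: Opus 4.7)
The plan is to apply Proposition~\ref{tilting-from-derived-equivalence} with $n=1$. First, recall from the discussion immediately preceding the theorem that both $\sA=R\modl_{u\co}$ and $\sB=R\modl_{u\ctra}$ are complete, cocomplete abelian categories (the latter locally presentable), that $J=\Gamma_u(\Hom_\boZ(R,\boQ/\boZ))$ is an injective cogenerator of $\sA$, and that $P=\Delta_u(R)$ is a projective generator of $\sB$. The paper~\cite{BP2} supplies a natural triangulated equivalence $\sD^\bb(\sA)\cong\sD^\bb(\sB)$: indeed, since $u$ is a homological ring epimorphism with $\fd U_R\le1$ and $\pd{}_RU\le1$, the complex $K=U/R$ is a two-term perfect complex on both sides over $R$, and the derived adjoint pair $K\ot^\boL_R{-}\:\sD(R\modl)\rightleftarrows\sD(R\modl)\,:\!\boR\Hom_R(K,{-})$ restricts to mutually inverse equivalences between the natural images of $\sD^\bb(\sA)$ and $\sD^\bb(\sB)$ in $\sD(R\modl)$ (this is essentially the derived Matlis equivalence of~\cite{BP2}).

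Second, I would check that this derived equivalence takes $P=\Delta_u(R)\in\sB$ to $T=K\in\sA$ and the injective cogenerator $J\in\sA$ to $W=\Hom_\boZ(K,\boQ/\boZ)\in\sB$. For $P\mapsto T$: the object $\Delta_u(R)$ is computed as $\Ext^1_R(K,R)$, and applying $K\ot^\boL_R{-}$ to it (viewed, up to the appropriate shift, as the derived tensor factor $K$ itself under the Matlis equivalence) produces $K$ in degree~$0$. For $J\mapsto W$: the injective cogenerator $J=\Gamma_u(\Hom_\boZ(R,\boQ/\boZ))=\Tor_1^R(K,\Hom_\boZ(R,\boQ/\boZ))$, and the adjunction $\boR\Hom_R(K,\Hom_\boZ(R,\boQ/\boZ))\cong\Hom_\boZ(K\ot^\boL_RR,\boQ/\boZ)\cong\Hom_\boZ(K,\boQ/\boZ)$ together with a degree shift produces $W$ in degree~$0$.

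Third, I would verify condition~(I) of Proposition~\ref{tilting-from-derived-equivalence}, that is, $\pd{}_\sA T\le1$. Since $T=K$ fits into the short exact sequence $0\to R\to U\to K\to0$ in $R\modl$ where $\pd{}_RU\le1$ and $R$ is projective, the $R$-module $K$ has projective dimension at most~$1$; this carries over to $\sA$ because $\Ext$-groups computed in $\sA=R\modl_{u\co}$ for $u$-comodule arguments agree (for our purposes, through the coreflection $\Gamma_u$) with the ambient Ext-groups in $R\modl$. Consequently, Proposition~\ref{tilting-from-derived-equivalence} applies and produces the $1$\+tilting-cotilting correspondence between $(\sA,J,T)$ and $(\sB,P,W)$. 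The functors $\Psi$ and $\Phi$ are then $H^0$ of the derived equivalence, which on the abelian level unfolds to $\Psi=\Hom_R(K,{-})$ and $\Phi=K\ot_R{-}$ (the ``$\Hom$-side'' and ``$\ot$-side'' of the Matlis equivalence, respectively).

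Finally, I would identify the classes: by the proposition, $\sE=\sA\cap\sB$ and $\sF=\sB\cap\sA$ inside the common derived category. An object of $\sA$ lies in $\sB$ (via the equivalence) precisely when it is $u$-divisible, giving $\sE=\{u$\+divisible $u$\+comodules$\}$; dually, an object of $\sB$ lies in $\sA$ precisely when it is $u$\+torsionfree, giving $\sF=\{u$\+torsionfree $u$\+contramodules$\}$. The restricted equivalence $\sE\cong\sF$ afforded by $\Psi$ and $\Phi$ is then manifestly the first Matlis category equivalence of~\cite[Theorem~1.3]{BP2}. The main obstacle in this plan is the second step: carefully tracking the derived equivalence and confirming that it matches the datum $(J,P)\leftrightarrow(W,T)$; once this is done, everything else follows formally from Proposition~\ref{tilting-from-derived-equivalence} and the explicit descriptions in~\cite{BP2}.
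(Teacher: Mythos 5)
Your overall route is the same as the paper's: apply Proposition~\ref{tilting-from-derived-equivalence} with $n=1$ to the derived Matlis equivalence $\sD^\bb(R\modl_{u\co})\cong\sD^\bb(R\modl_{u\ctra})$ of~\cite{BP2} (Theorem~\ref{derived-matlis-thm}), realized by the adjoint pair $K\ot_R^\boL{-}$ and $\boR\Hom_R(K,{-})$, and then read off $\sE$ and $\sF$ from~\cite[Lemma~2.7]{BP2}. The trouble lies exactly in the step you yourself flag as the main obstacle, and as written it does not go through. For $J\mapsto W$ you compute $\boR\Hom_R(K,\Hom_\boZ(R,\boQ/\boZ))\cong\Hom_\boZ(K,\boQ/\boZ)$, but the object whose image is needed is $J=\Gamma_u(\Hom_\boZ(R,\boQ/\boZ))$, and the appeal to ``a degree shift'' does not bridge the two: $K\ot_R^\boL\Hom_\boZ(R,\boQ/\boZ)$ is not $J[1]$ in general (its degree-zero cohomology $K\ot_R\Hom_\boZ(R,\boQ/\boZ)$ need not vanish), and $\boR\Hom_R(K,{-})\circ(K\ot_R^\boL{-})$ is not the identity on $\sD(R\modl)$. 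The paper's computation is direct and shift-free: $\Hom_R(K,\Gamma_u(M))\cong\Hom_R(K,M)$ by the coreflection adjunction (as $K$ is a $u$\+comodule), and $\Ext^1_R(K,J)=\Ext^1_\sA(K,J)=0$ because $J$ is injective in the extension-closed subcategory $\sA$; hence $\boR\Hom_R(K,J)=\Hom_\boZ(K,\boQ/\boZ)=W$ in degree zero. Likewise $P\mapsto T$ should be obtained by the explicit computation $\boR\Hom_R(K,K)=\Hom_R(K,K)\cong\Ext^1_R(K,R)=\Delta_u(R)$ (from the long exact sequence for $0\rarrow R\rarrow U\rarrow K\rarrow0$, using $\Hom_R(K,U)=0=\Ext^1_R(K,U)$ and $\Ext^1_R(K,K)=0=\Ext^2_R(K,R)$), rather than by the circular remark that $\Delta_u(R)$ is ``the derived tensor factor $K$ itself under the Matlis equivalence.''

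Your verification of condition~(I) is also unjustified as stated: agreement of $\Ext^i_\sA$ with $\Ext^i_R$ is automatic only for $i\le1$ (extension-closedness), the coreflector $\Gamma_u$ is not exact, and your sketch gives no argument for $i\ge2$. Since Proposition~\ref{tilting-from-derived-equivalence} lets you verify any one of (I)--(III), it is easier to check~(III): because $\pd{}_RK\le1$ (from $0\rarrow R\rarrow U\rarrow K\rarrow0$ and $\pd{}_RU\le1$), the functor $\boR\Hom_R(K,{-})$ carries $\sD^{\bb,\le0}(\sA)$ to complexes with cohomology in degrees~$\le1$, i.e.\ into $\sD^{\bb,\le1}(\sB)$. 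With these repairs, the remainder of your plan (identifying $\Psi=\Hom_R(K,{-})$ and $\Phi=K\ot_R{-}$ as $H^0$ of the equivalence, and the descriptions of $\sE$, $\sF$ and of $\sE\cong\sF$ as the first Matlis equivalence) coincides with the paper's argument.
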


 Consider the topological ring $\R=\Hom_R(K,K)^\rop$ opposite to
the ring of endomorphisms of the left $R$\+module $K$, and endow it
with the finite topology, as defined in
Example~\ref{module-endomorphism-topologies}\,(1).
 Then the right action of the ring $R$ in the $R$\+$R$\+bimodule $K$
induces a homomorphism of associative rings $R\rarrow\R$.
 We are interested in the composition of the forgetful functor
$\R\contra\rarrow\R\modl$ defined in
Section~\ref{introd-contramodules-subsecn} with the obvious functor
of restriction of scalars $\R\modl\rarrow R\modl$.

\begin{thm} \label{R-contra-and-u-contra}
 Let $u\:R\rarrow U$ be an injective homological ring epimorphism.
 Assume that\/ $\pd{}_RU\le1$.
 Then the forgetful functor\/ $\R\contra\rarrow R\modl$ is fully
faithful, and its essential image coincides with the full subcategory
of $u$\+contramodule left $R$\+modules $R\modl_{u\ctra}\subset R\modl$.
 So we have an equivalence of abelian categories\/
$\R\contra\cong R\modl_{u\ctra}$.
\end{thm}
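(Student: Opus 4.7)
The plan is to identify both $\R\contra$ and $R\modl_{u\ctra}$ as categories of modules over a common monad on $\Sets$, and then verify that the resulting equivalence coincides with restriction of scalars along the ring homomorphism $R \to \R$ sending $r$ to $\mu_r \colon k \mapsto kr$ (a left $R$-module endomorphism of $K$, since $K$ is an $R$-$R$-bimodule). Both target categories are cocomplete abelian with a distinguished projective generator (namely $\R = \R[[*]]$ and $\Delta_u(R) = \Ext^1_R(K,R)$, respectively), so by equation~\eqref{cocomplete-abelian-proj-gen-monad} it suffices to identify the associated monads $\boT_\R$ and $\boT_{\Delta_u(R)}$ on $\Sets$.

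The first key step is a cohomological computation: for every set $X$, there is a natural isomorphism $\Ext^1_R(K, R^{(X)}) \cong \Hom_R(K, K^{(X)})$ of left $R$-modules. Applying $\Hom_R(K, {-})$ to the short exact sequence $0 \to R^{(X)} \to U^{(X)} \to K^{(X)} \to 0$ reduces this to showing $\Hom_R(K, U^{(X)}) = 0 = \Ext^1_R(K, U^{(X)})$. Both vanishings follow from the more general statement that $\Ext^i_R(K, N) = 0$ for all $i \ge 0$ and every left $U$-module $N$: tensoring $0 \to R \to U \to K \to 0$ on the left with $U$ over $R$ and invoking $\Tor_1^R(U, U) = 0$ gives $U \ot_R K = 0$ and $\Tor_j^R(U, K) = 0$ for $j \ge 1$, whence by a standard change-of-rings argument $\Ext^i_R(K, N)$ vanishes for any $N \in U\modl$.

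Taking $X = *$ identifies $\Delta_u(R)$ with $\Hom_R(K, K) = \R^\rop$ as left $R$-modules, i.e.\ with the underlying $R$-module of the free left $\R$-contramodule on one generator under $R \to \R$. More generally, since $\Delta_u$ is a left adjoint it preserves coproducts, so the coproduct of $X$ copies of $\Delta_u(R)$ in $R\modl_{u\ctra}$ is $\Delta_u(R^{(X)}) \cong \Hom_R(K, K^{(X)}) \cong \R[[X]]$ (the last isomorphism being the content of Example~\ref{module-endomorphism-topologies}(1)). Consequently the underlying $\Sets$-valued functors of the monads $\boT_{\Delta_u(R)}$ and $\boT_\R$ are naturally isomorphic.

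The main remaining obstacle is to verify that this natural isomorphism of functors is one of monads. Unit compatibility is immediate, since both units arise from the coproduct inclusion of the $x$-th summand. Multiplication compatibility requires tracing the ``opening of parentheses'' map $\R[[\R[[X]]]] \to \R[[X]]$ against the corresponding composition of morphisms in $R\modl_{u\ctra}$; the functoriality of the connecting homomorphism in the long exact sequence used above, together with naturality of the identification $\Delta_u(R^{(X)}) \cong \R[[X]]$ in $X$, should force the two to agree. Once this monad isomorphism is established, equation~\eqref{cocomplete-abelian-proj-gen-monad} identifies both categories with $\boT_\R\modl = \R\contra$, and the compatibility of the projective generators as $R$-modules guarantees that the equivalence is given by the forgetful functor. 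This yields the full faithfulness of $\R\contra \to R\modl$ and the identification of its essential image with $R\modl_{u\ctra}$.
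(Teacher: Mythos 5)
Your first half runs parallel to the paper's own direct argument: the computation $\Delta_u(R[X])=\Ext^1_R(K,R[X])\cong\Hom_R(K,K[X])\cong\R[[X]]$, via the vanishing of $\Ext^i_R(K,N)$ for left $U$\+modules $N$, is exactly the paper's starting point and is correct. The genuine gap is the step you yourself flag and then dismiss with ``should force the two to agree''. The multiplication of the monad $\boT_{\Delta_u(R)}$ is composition with the evaluation morphism $P^{(\boT(X))}\rarrow P^{(X)}$, whose components are arbitrary elements of $\R[[X]]$, that is, infinite zero-convergent families of endomorphisms of $K$; such morphisms are not induced by maps of index sets, and they do not lift canonically to morphisms of the short exact sequences $0\to R[X]\to U[X]\to K[X]\to0$. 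Consequently, ``naturality of the identification in $X$'' together with ``functoriality of the connecting homomorphism'' does not reach these morphisms and cannot by itself give the compatibility of multiplications. To close the gap along your route you need two further inputs: first, that the isomorphism $\R[[Z]]\cong\Delta_u(R[Z])$ is an isomorphism of left $R$\+modules, where $\R[[Z]]$ carries the coefficientwise $\R$\+action restricted along $R\rarrow\R$ (this is where the bimodule functoriality of the connecting map really enters); second, the reflector adjunction $\Hom_R(\Delta_u(R[Z]),C)\cong\Hom_\Sets(Z,C)$ for $C\in R\modl_{u\ctra}$, which lets you compare the two candidate multiplication maps as $R$\+linear maps out of a free object of $R\modl_{u\ctra}$ and reduce their equality to agreement on the unit. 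Without this reduction the verification you postpone has no mechanism behind it.

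For comparison, the paper avoids the monad-multiplication check altogether: after the same Ext computation it shows that the forgetful functor lands in $R\modl_{u\ctra}$ (free contramodules go to the objects $\Delta_u(R[X])$, every contramodule is a cokernel of a morphism of free ones, and the subcategory is closed under cokernels), and then reduces the theorem to full faithfulness on projectives, i.e.\ to $\Hom^\R(\R[[X]],\R[[Y]])\cong\Hom_R(\R[[X]],\R[[Y]])$, which it deduces from the Matlis category equivalence of \cite[Theorem~1.3]{BP2} via $\Hom_R(\R[[X]],\R[[Y]])\cong\Hom_R(K[X],K[Y])\cong\R[[Y]]^X$. Note also that your closing sentence has a smaller gap of the same kind: a monad isomorphism only yields \emph{some} equivalence $\R\contra\cong R\modl_{u\ctra}$, whereas the theorem is about the forgetful functor; ``compatibility of the projective generators as $R$\+modules'' identifies the two functors on free contramodules only, and to propagate this to all of $\R\contra$ you need the same cokernel argument as above (forgetful functor preserves cokernels, $R\modl_{u\ctra}$ is closed under cokernels, every contramodule is a cokernel of frees). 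With these points supplied, your monad-comparison route does go through, replacing the paper's appeal to \cite{BP2} by the reflector adjunction.
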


\begin{proof}[Proof of Theorems~\ref{ring-epi-tilting-cotilting}
and~\ref{R-contra-and-u-contra}]
 We discuss the proofs of the two theorems simultaneously, because they
are closely related (even though the assumptions in
Theorem~\ref{ring-epi-tilting-cotilting} are slightly more restrictive
than in Theorem~\ref{R-contra-and-u-contra}).

 The argument is largely based in the following result, which is
a particular case of~\cite[Corollary~4.4]{CX}
or~\cite[Corollary~7.3]{BP2}.

\begin{thm} \label{derived-matlis-thm}
 Let $u\:R\rarrow U$ be an injective homological ring epimorphism
such that\/ $\fd U_R\le1$ and\/ $\pd{}_RU\le1$.
 Then, for any derived category symbol\/ $\star=\bb$, $+$, $-$,
or~$\varnothing$, there is a triangulated equivalence between
the derived categories of the abelian categories of left
$u$\+comodules and left $u$\+contramodules,
\begin{equation} \label{derived-matlis-eq}
 \sD^\star(R\modl_{u\co})\cong\sD^\star(R\modl_{u\ctra}).
\end{equation}
\end{thm}

\begin{proof}
 The additional assumptions of~\cite[Corollary~4.4]{CX}
or~\cite[Corollary~7.3]{BP2} hold for all injective ring epimorphisms
by~\cite[Example~7.4]{BP2}.
\end{proof}

 Theorem~\ref{ring-epi-tilting-cotilting} is simplest obtained
by applying Proposition~\ref{tilting-from-derived-equivalence} (for
$n=1$) to the derived equivalence~\eqref{derived-matlis-eq}
(for $\star=\bb$).
 To be more precise, one needs to know a bit about how the derived
equivalence~\eqref{derived-matlis-eq} is constructed.
 In the proof of~\cite[Corollary~7.3]{BP2}, the triangulated equivalence
is obtained from the \emph{recollement} of~\cite[Section~6]{BP2},
and it needs to be shifted by~$[1]$ before it becomes a tilting
derived equivalence.
 The triangulated equivalence in~\cite[Corollary~6.2]{BP2}
is provided by the functors $\boR\Hom_R(K^\bu[-1],{-})$ and
$K^\bu[-1]\ot_R^\boL{-}$, while in our present context one has to
consider the equivalence provided by the functors $\boR\Hom_R(K,{-})$
and $K\ot_R^\boL{-}$.

 Now one observes that the $R$\+$R$\+bimodule $K$ is both a left and
a right $u$\+comodule, and consequently $\Hom_\boZ(K,\boQ/\boZ)$ is
a left $u$\+contramodule.
 Furthermore, one can compute that $\boR\Hom_R(K,K)=\Hom_R(K,K)=
\Ext^1_R(K,R)=\Delta_u(R)=P$, since $\Ext^1_R(K,K)=\Ext^2_R(K,R)=0$.
 Similarly, $\boR\Hom_R(K,J)=\Hom_R(K,J)=
\Hom_R(K,\Hom_\boZ(R,\boQ/\boZ))=\Hom_\boZ(K,\boQ/\boZ)=W$, since
$\Ext^1_R(K,J)=\Ext^1_\sA(K,J)\allowbreak=0$ (as $\sA=R\modl_{u\co}
\subset R\modl$ is a full subcategory closed under extensions).
 Finally, any one of the conditions~(I\+-III) of 
Proposition~\ref{tilting-from-derived-equivalence} is easily verified.
 The descriptions of the classes $\sE\subset\sA$ and $\sF\subset\sB$
follow from~\cite[Lemma~2.7]{BP2}.
 This finishes the proof of Theorem~\ref{ring-epi-tilting-cotilting}.

 Alternatively, one can check that $K\in R\modl_{u\co}$ is
a $1$\+tilting object in the way similar to the argument
in~\cite[Example~5.7]{PS1}.
 Following Examples~\ref{tilting-subcategory-in-modules-exs},
the abelian category $\sB$ corresponding to this tilting object in
the abelian category $\sA=R\modl_{u\co}$ can be described
as $\sB=\R\contra$.
 The functor $\Psi$ is then still computed as $\Psi=\Hom_R(K,{-})$
\cite[Remark~6.6]{PS1}, while the left adjoint functor $\Phi$ is
the functor of so-called \emph{contratensor product} $\Phi=K\ocn_\R{-}$
with  the discrete right $\R$\+module~$K$ \cite[formula~(20)]{PS1}
(which is the same thing as the tensor product $K\ot_R{-}$ provided that
the forgetful functor $\R\contra \rarrow R\modl$ is fully faithful,
cf.~\cite[Lemma~7.11]{PS1}).
 Comparing this approach to the previous one yields $\R\contra\cong
\sB\cong R\modl_{u\ctra}$, that is the assertion of
Theorem~\ref{R-contra-and-u-contra} (in the assumpions of
Theorem~\ref{ring-epi-tilting-cotilting}).

 A direct proof of Theorem~\ref{R-contra-and-u-contra} (in full
generality) can be given based on~\cite[Proposition~2.1]{Pper}.
 For any set $X$, we have to construct a natural isomorphism
of left $R$\+modules $\Delta_u(R[X])\cong\R[[X]]$.
 Indeed,
$$
 \Delta_u(R[X])=\Ext^1_R(K,R[X])\cong\Hom_R(K,K[X])
 \cong\R[[X]]
$$
by~\cite[proof of Theorem~7.1]{PS1}.

 Let us spell out this argument a bit more explicitly.
 There are enough projective objects of the form $P=\Delta_u(R[X])$ in
$R\modl_{u\ctra}$, and these are precisely the images of the free
$\R$\+contramodules $\R[[X]]$ under the forgetful functor.
 To show that the whole image of the forgetful functor $\R\contra
\rarrow R\modl$ lies inside $R\modl_{u\ctra}$, observe that the forgetful
functor preserves cokernels, the full subcategory $R\modl_{u\ctra}
\subset R\modl$ is closed under cokernels, and every left
$\R$\+contramodule is the cokernel of a morphism of free left
$\R$\+contramodules.

 As an abelian category with enough projective objects is determined
by its full subcategory of projective objects, in order to prove
that the functor $\R\contra\rarrow R\modl_{u\ctra}$ is an equivalence
of categories it suffices to show that it is an equivalence in
restriction to the full subcategories of projective objects.
 In other words, we have to check that the natural map
$\Hom^\R(\R[[X]],\R[[Y]])\rarrow\Hom_R(\R[[X]],\R[[Y]])$ is isomorphism
for all sets $X$ and~$Y$.
 Indeed, we have
$$
 \Hom^\R(\R[[X]],\R[[Y]])\cong \R[[Y]]^X\cong\Hom_R(\R[[X]],\R[[Y]]),
$$
where the second isomorphism holds because,
by~\cite[Theorem~1.3]{BP2},
$$
 \Hom_R(\R[[X]],\R[[Y]])\cong\Hom_R(K[X],K[Y])\cong
 \Hom_R(K,K[Y])^X\cong\R[[Y]]^X
$$
as $K[X]$ is a $u$\+divisible left $u$\+comodule and
$\Hom_R(K,K[X])\cong\R[[X]]$.

 The proof of Theorems~\ref{ring-epi-tilting-cotilting}
and~\ref{R-contra-and-u-contra} is finished.
\end{proof} 

\begin{rem}
 The above ``alternative'' argument follows the lines of the exposition
in~\cite[Section~8]{PS1} (see, in particular,
\cite[formulas~(21\+-23)]{PS1}).
 However, the assumptions in~\cite{PS1} presume existence of
a left linear topological ring $\fA$ such that $\sA$ is
the category of discrete left $\fA$\+modules, or in other words,
a hereditary pretorsion class in $\fA\modl$.
 In the context of the present section, $\sA$ is the full abelian
subcategory of left $u$\+comodules in $R\modl$, which is not necessarily
a pretorsion class (see the discussion in~\cite[Section~5]{BP2} and
the examples in~\cite[Section~8]{BP2}).

 Nevertheless, the arguments in the beginning of~\cite[Section~8]{PS1}
are still valid in our present context.
 The key observation is that, for any associative ring $S$,
any $R$\+$S$\+bimodule $E$ whose underlying left $R$\+module is
a $u$\+comodule, and any left $S$\+module $C$, the left $R$\+module
$E\ot_SC$ is a left $u$\+comodule.
 This follows easily from the fact that the full subcategory of left
$u$\+comodules is closed under cokernels and direct sums in $R\modl$.
 So the functor $\Phi=K\ot_R{-}\:R\modl_{u\ctra}\rarrow R\modl_{u\co}$
is well-defined.
 A similar observation holds for the contratensor product in place of
the tensor product; so the functor $\Phi=K\ocn_\R{-}\:\R\contra
\rarrow R\modl_{u\co}$ is well-defined, too.
\end{rem}

\Section{Covers and Direct Limits for Injective Ring Epimorphism}

 In this final section, we discuss the covering and direct limit
closedness properties of the tilting objects $U\oplus K\in R\modl$
and $K\in R\modl_{u\co}$ in connection with the perfectness
properties of the related rings.

 Let $u\:R\rarrow U$ be an injective homological ring epimorphism.
 Assuming that $\pd{}_RU\le1$, denote by $(\sN,\sG)$ the $1$\+tilting
cotorsion pair in $R\modl$ associated with the $1$\+tilting left
$R$\+module $U\oplus K$.
 Assuming that $\fd U_R\le1$ and $\pd{}_RU\le1$, we also have
the $1$\+tilting cotorsion pair $(\sL,\sE)$ in the abelian category
$\sA=R\modl_{u\co}$ associated with the $1$\+tilting object~$K$.

\begin{lem}
\textup{(a)} $\sG\subset R\modl$ is the class of all $u$\+divisible
left $R$\+modules. \par
\textup{(b)} $\sE=\sA\cap\sG$ is the class of all $u$\+divisible
left $u$\+comodules. \par
\textup{(c)} One has\/ $\sL=\sA\cap\sN$.
\end{lem}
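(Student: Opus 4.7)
The plan is to derive all three claims from the long exact cohomology sequence associated with the bimodule extension $0\to R\to U\to K\to 0$. Applying $\Hom_R(-,M)$ produces
\begin{equation*}
0\to\Hom_R(K,M)\to\Hom_R(U,M)\to M\to\Ext^1_R(K,M)\to\Ext^1_R(U,M)\to 0,
\end{equation*}
and from this I would read off the pivotal equivalence: $\Ext^1_R(K,M)=0$ if and only if the evaluation map $\Hom_R(U,M)\to M$ is surjective (that is, $M$ is $u$\+divisible) \emph{and} $\Ext^1_R(U,M)=0$. In particular $K^{\perp_1}\subseteq U^{\perp_1}$ automatically, so $\sG=(U\oplus K)^{\perp_1}=K^{\perp_1}$.

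For part~(a), it remains to show that the second condition $\Ext^1_R(U,M)=0$ is redundant for $u$\+divisible~$M$. This is the heart of the proof. I would invoke the definition of $u$\+divisibility to write $M$ as a quotient $0\to L\to N\to M\to 0$ of a left $U$\+module~$N$, and then place $\Ext^1_R(U,M)$ in the long exact sequence between $\Ext^1_R(U,N)$ and $\Ext^2_R(U,L)$. The outer groups vanish for different reasons: homologicality of $u$ together with the Tor--Hom adjunction yields $\boR\Hom_R(U,N)\cong\boR\Hom_U(U\otimes_R^\boL U,N)=\boR\Hom_U(U,N)=N$, and in particular $\Ext^1_R(U,N)=0$; the hypothesis $\pd{}_RU\le 1$ gives $\Ext^2_R(U,L)=0$.

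Part~(b) then becomes formal. The tilting class is $\sE=\{M\in\sA:\Ext^1_\sA(K,M)=0\}$, with higher Ext groups vanishing automatically as $K$ has projective dimension at most one in~$\sA$; closure of $\sA$ under extensions in $R\modl$ identifies $\Ext^1_\sA$ with $\Ext^1_R$ on objects of~$\sA$, so $\sE=\sA\cap K^{\perp_1}=\sA\cap\sG$, which by~(a) is the class of $u$\+divisible $u$\+comodules. Part~(c) splits into two inclusions: for $\sL\subseteq\sA\cap\sN$, the resolution description of the left tilting class from \cite[Lemma~3.2]{PS1} (recalled in Section~\ref{tilting-cotilting-secn}) produces, for each $L\in\sL$, an exact sequence $0\to L\to K^0\to K^1\to 0$ in $\sA$ with $K^i\in\Add(K)\subseteq\Add(U\oplus K)$, which remains exact in $R\modl$ by the closure properties of $\sA$ and hence places $L$ in $\sN$. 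Conversely, if $L\in\sA\cap\sN$ and $E\in\sE\subseteq\sG$, then $\Ext^1_\sA(L,E)=\Ext^1_R(L,E)=0$, so $L\in\sL$. The main obstacle throughout is the key fact in~(a) that $u$\+divisibility of~$M$ forces $\Ext^1_R(U,M)=0$, where both the homologicality of $u$ and the bound $\pd{}_RU\le 1$ are indispensable; everything else is formal.
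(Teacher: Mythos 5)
Your proposal is correct, and for parts~(b) and~(c) it follows the paper's own argument: (b)~rests on the identification of $\Ext^1_\sA$ with $\Ext^1_R$ on objects of $\sA$, and (c)~is proved by exactly the same two inclusions --- $\sA\cap\sN\subseteq\sL$ via the descriptions of $\sN$ and $\sL$ as left $\Ext^1$\+orthogonal classes to $\sG$, resp.\ $\sE$, together with $\sE\subseteq\sG$, and $\sL\subseteq\sA\cap\sN$ via the descriptions of the two left classes as finitely $\Add(K)$\+, resp.\ $\Add(U\oplus K)$\+coresolved objects. The only genuine divergence is in~(a): the paper, like you, first reduces $\sG=(U\oplus K)^{\perp_1}$ to $\{K\}^{\perp_1}$ using $0\to R\to U\to K\to0$, but then simply cites \cite[Theorem~3.5\,(4)]{ASan} or \cite[Lemma~2.7(b)]{BP2}, whereas you prove the cited fact from scratch: the long exact sequence gives the equivalence ``$\Ext^1_R(K,M)=0$ iff $M$ is $u$\+divisible and $\Ext^1_R(U,M)=0$'', and the redundancy of the second condition for $u$\+divisible $M$ follows from $\Ext^{i}_R(U,N)=0$ for $i\ge1$ and left $U$\+modules $N$ (homologicality of~$u$ via $\boR\Hom_R(U,N)\cong\boR\Hom_U(U\ot_R^\boL U,\>N)\cong N$) together with $\Ext^2_R(U,L)=0$ from $\pd{}_RU\le1$. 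This is essentially the standard proof of the results the paper cites, so your route buys self-containedness (and makes visible exactly where homologicality and $\pd{}_RU\le1$ enter), at the cost of redoing homological bookkeeping that the paper outsources; both arguments are sound.
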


\begin{proof}
 By the definition, for a $1$\+tilting left $R$\+module $U\oplus K$
we have $\sG=\{U\oplus K\}^{\perp_1}\subset R\modl$, and it is clear
from the short exact sequence of left $R$\+modules
$0\rarrow R\rarrow U\rarrow K\rarrow0$ that $\sG=\{K\}^{\perp_1}
\subset R\modl$.
 Similarly, $\sE\subset\sA$ is the right $\Ext^1_\sA$-orthogonal class
to the $1$\+tilting object $K\in\sA$.
 Now part~(a) is~\cite[Theorem~3.5\,(4)]{ASan}
or~\cite[Lemma~2.7(b)]{BP2}.
 Part~(b) is a part of Theorem~\ref{ring-epi-tilting-cotilting}
(essentially, it holds because the functors $\Ext^1_R$ and $\Ext^1_\sA$
agree).

 To prove part~(c), we observe that the definitions of the class $\sN$
as the left $\Ext^1_R$\+orthogonal class to $\sG$ in $R\modl$ and
the class $\sL$ as the left $\Ext^1_\sA$\+orthogonal class to $\sE$ in
$\sA$ together with the inclusion $\sE\subset\sG$ imply the inclusion
$\sL\supset\sA\cap\sN$.
 On the other hand, the definitions of $\sN$ as the class of all
finitely $\Add(U\oplus K)$\+coresolved objects in $R\modl$ and $\sL$
as the class of all finitely $\Add(K)$\+coresolved objects in $\sA$
(see the beginning of Section~\ref{tilting-cotilting-secn}
or~\cite[Theorem~3.4]{PS1}) imply the inverse inclusion
$\sL\subset\sA\cap\sN$.
\end{proof}

 Let us start with the $1$\+tilting object $K\in R\modl_{u\co}$.
 Recall that $\R$ denotes the topological ring $\Hom_R(K,K)^\rop$
with the finite topology (see Section~\ref{ring-epi-secn}).
 We keep the notation $\sF$ for the $1$\+cotilting class in
the abelian category $R\modl_{u\ctra}=\sB=\R\contra$ (so the exact
category $\sF$ is equivalent to $\sE=\sA\cap\sG$).

\begin{thm} \label{u-co-contra-covers-thm}
 Assume that\/ $\fd U_R\le1$ and\/ $\pd{}_RU\le1$.
 Then the following sixteen conditions are equivalent:
\begin{enumerate}
\item every left $R$\+module has an\/ $\sA\cap\sN$\+cover;
\item every module from\/ $\sG$ has an\/ $\sA\cap\sN$\+cover;
\item every module from\/ $\sA$ has an\/ $\sA\cap\sN$\+cover;
\item every module from\/ $\sA\cap\sG$ has an\/ $\sA\cap\sN$\+cover;
\item any direct limit of modules from\/ $\Add(K)$ has
an\/ $\sA\cap\sN$\+cover;
\item the class of modules\/ $\sA\cap\sN$ is closed under direct limits;
\item every left $R$\+module has an\/ $\Add(K)$\+cover;
\item every module from\/ $\sG$ has an\/ $\Add(K)$\+cover;
\item every module from\/ $\sA$ has an\/ $\Add(K)$\+cover;
\item every module from\/ $\sA\cap\sG$ has an\/ $\Add(K)$\+cover;
\item any direct limit of modules from\/ $\Add(K)$ has
an\/ $\Add(K)$\+cover;
\item the class of modules\/ $\Add(K)$ is closed under direct limits;
\item every object of\/ $\sB$ has a projective cover;
\item the class of projective objects in\/ $\sB$ is closed under
direct limits;
\item the topological ring\/ $\R$ is topologically perfect;
\item the left $R$\+module $K$ has a perfect decomposition.
\end{enumerate}
\end{thm}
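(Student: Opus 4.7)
The plan is to deduce the theorem from Theorem~\ref{tilting-direct-limits-covers-thm} applied to the $1$-tilting object $K$ in the Grothendieck abelian category $\sA = R\modl_{u\co}$ furnished by Theorem~\ref{ring-epi-tilting-cotilting}, combined with the identification $\sB = R\modl_{u\ctra} \cong \R\contra$ of Theorem~\ref{R-contra-and-u-contra}. Once these are available, the only remaining task is to translate between covers computed in the ambient module category $R\modl$ (the literal meaning of conditions~(1)--(12)) and covers computed in the abelian subcategory $\sA$ (where Theorem~\ref{tilting-direct-limits-covers-thm} directly produces its equivalences). For this translation I will use the coreflection adjunction $\iota \dashv \Gamma_u$ between $\sA$ and $R\modl$, where $\iota$ denotes the inclusion.

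Applying Theorem~\ref{tilting-direct-limits-covers-thm} to the triple $(\sA, \sB, K)$, I obtain the unconditional equivalence of the nine Grothendieck-case conditions: the classes $\sL = \sA \cap \sN$, $\Add(K)$, and $\sB_\proj$ are each covering (respectively, closed under direct limits) in their ambient abelian categories. To this block of equivalent conditions I adjoin our conditions~(13)--(15) by the unconditional contramodule-level results: $\R$ is topologically left perfect iff $K$ has a perfect decomposition (Theorem~\ref{topological-rings-perfect-decompositions}) iff every left $\R$-contramodule has a projective cover iff $\sB_\proj$ is covering in $\sB$, the last two equivalences being \cite[Theorem~14.1]{PS3}, as used in the (13)$\Leftrightarrow$(14) step of Proposition~\ref{L-covering-iff-R-properfect}.

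Since both $\sL$ and $\Add(K)$ are contained in $\sA$, Proposition~\ref{cover-adjunction} applied to $\iota \dashv \Gamma_u$ together with the adjunction isomorphism $\Hom_R(L, N) \cong \Hom_\sA(L, \Gamma_u N)$ for $L \in \sA$ and $N \in R\modl$ yields the following: a morphism $L \to N$ with $L \in \sL$ (respectively $\Add(K)$) is an $\sL$-cover (respectively $\Add(K)$-cover) of $N$ in $R\modl$ if and only if the transposed morphism $L \to \Gamma_u N$ is such a cover of $\Gamma_u N$ in $\sA$. Since $\Gamma_u|_\sA = \id_\sA$, this immediately identifies our~(3) with ``$\sL$ covering in $\sA$'' and~(9) with ``$\Add(K)$ covering in $\sA$''. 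Furthermore, $\iota$ is a left adjoint and hence preserves colimits, so any direct limit in $R\modl$ of modules from $\Add(K) \subset \sA$ remains in $\sA$ and coincides with the direct limit computed in $\sA$; this identifies our~(5), (6), (11), (12) with the corresponding $\sA$-conditions already present in the block above.

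Finally, I close the remaining loops by combining the trivial chains $(1) \Rightarrow (2) \Rightarrow (4)$ and $(7) \Rightarrow (8) \Rightarrow (10)$ with the implications $(4) \Rightarrow (3)$ and $(10) \Rightarrow (9)$, both obtained from Corollary~\ref{cotorsion-pair-covers-cor} applied to the hereditary complete cotorsion pair $(\sL, \sE)$ in $\sA$ (together with Lemma~\ref{cotorsion-kernel-covers-lemma} and the identity $\sL \cap \sE = \Add(K)$), and with the implications $(3) \Rightarrow (1)$ and $(9) \Rightarrow (7)$, obtained by composing an $\sA$-cover of $\Gamma_u N$ with the counit $\Gamma_u N \to N$ and checking the cover axioms in $R\modl$ via naturality of the adjunction. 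The main challenge is merely the bookkeeping of which category each cover is computed in; there is no genuine structural obstacle once Theorems~\ref{tilting-direct-limits-covers-thm} and~\ref{R-contra-and-u-contra} and Proposition~\ref{cover-adjunction} are in place.
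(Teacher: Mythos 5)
Your proposal is correct and follows essentially the same route as the paper's own proof: the bulk of the equivalences come from Theorem~\ref{tilting-direct-limits-covers-thm} applied to the $1$\+tilting object $K$ in the Grothendieck category $\sA=R\modl_{u\co}$ with $\sB\cong\R\contra$ (Theorems~\ref{ring-epi-tilting-cotilting} and~\ref{R-contra-and-u-contra}), the conditions over $\sE=\sA\cap\sG$ are linked through the hereditary complete tilting cotorsion pair (your Corollary~\ref{cotorsion-pair-covers-cor} is exactly how the paper's Proposition~\ref{L-covering-iff-R-properfect}\,(1\+-3) handles them), and covers over all of $R\modl$ are produced by composing covers of $\Gamma_u(N)$ with the counit $\Gamma_u(N)\rarrow N$. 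The only cosmetic difference is that for $\Add(K)$\+covers of arbitrary modules the paper goes through the maximal $u$\+divisible submodule ((10)$\Rightarrow$(8)$\Rightarrow$(7)), whereas you apply the coreflector argument directly ((9)$\Rightarrow$(7)), which is equally valid since $\Add(K)\subset\sA$.
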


\begin{proof}
 Notice first of all that the direct limits in $\sA$ and $R\modl$ agree
(since $\sA$ is closed under direct limits in $R\modl$).
 The implications (1)\,$\Longrightarrow$\,(2)\,$\Longrightarrow$\,(4),
\,(3)\,$\Longrightarrow$\,(4)\,$\Longrightarrow$\,(5) and
(7)\,$\Longrightarrow$\,(8)\,$\Longrightarrow$\,(10),
\,(9)\,$\Longrightarrow$\,(10)\,$\Longrightarrow$\,(11) are obvious. 

 The implication (3)\,$\Longrightarrow$\,(1) holds because
the embedding functor $\sA\rarrow R\modl$ has a right
adjoint~$\Gamma_u$ (in other words, $\sA$ is coreflective in $R\modl$).
 Given a left $R$\+module $C$, let $L\rarrow\Gamma_u(C)$ be
an $\sA\cap\sN$\+cover of the module $\Gamma_u(C)\in\sA$;
then the composition $L\rarrow\Gamma_u(C)\rarrow C$ is
an $\sA\cap\sN$\+cover of~$C$.

 To check the implication (8)\,$\Longrightarrow$\,(7), recall that
$\sG$ is the class of all $u$\+divisible left $R$\+modules and
$\Add(K)\subset\sG$.
 Every left $R$\+module $C$ has a unique maximal $u$\+divisible
$R$\+submodule $h(C)$.
 Let $M\rarrow h(C)$ be an $\Add(K)$\+cover of $h(C)$; then
the composition $M\rarrow h(C)\rarrow C$ is an $\Add(K)$\+cover
of~$C$.

 The implication (10)\,$\Longrightarrow$\,(8) follows
from~\cite[Lemma~3.3(a)]{BP2}.
 Let $C$ be a $u$\+divisible left $R$\+module; then the left
$R$\+module $\Gamma_u(C)$ belongs to $\sA\cap\sG$.
 If $M\rarrow\Gamma_u(C)$ is an $\Add(K)$\+cover of $\Gamma_u(C)$,
then the composition $M\rarrow\Gamma_u(C)\rarrow C$ is
an $\Add(K)$\+cover of~$C$.

 The equivalence of the three conditions (3), (4), and~(10) is
a particular case of the equivalence of conditions~(1\+-3) in
Proposition~\ref{L-covering-iff-R-properfect}.
 Finally, all the conditions (3), (5), (6), (9), and (11\+-16)
are equivalent by Theorem~\ref{tilting-direct-limits-covers-thm}.
 Notice that $\sA=R\modl_{u\co}$ is a Grothendieck abelian category
by~\cite[Corollary~3.6]{BP2}.

 One can also observe that the class $\Add(K)$ is always precovering
in $R\modl$ by Example~\ref{Add(M)-precovers}; and the class
$\sA\cap\sN$ is precovering in $R\modl$ because $\sA$ is coreflective
in $R\modl$ and $\sA\cap\sN$ is special precovering in~$\sA$.
 Hence the implications (6)\,$\Longrightarrow$\,(1) and
(12)\,$\Longrightarrow$\,(7) hold by Enochs' theorem
(see Theorem~\ref{direct-limits-imply-covers}).
\end{proof} 

 We recall from~\cite[Section~10]{Pproperf} that a topological ring $\R$
is said to be \emph{left pro-perfect} if it is separated and complete,
two-sided linear, and all the discrete quotient rings of $\R$ are
left perfect.

\begin{thm} \label{A-cap-N-covering-iff-R-properfect-thm}
 Let $u\:R\rarrow U$ be an injective homological ring epimorphism.
 Assume that\/ $\fd U_R\le1$ and\/ $\pd{}_RU\le1$, and assume further
that the topological ring\/ $\R$ satisfies one of the conditions~(a),
(b), (c), or~(d) of Section~\ref{seven-conditions-secn}.
 Then the conditions in Theorem~\ref{u-co-contra-covers-thm} are
equivalent to the following ones:
\begin{enumerate}
\item any countable direct limit of copies of the left $R$\+module $K$
has an\/ $\sA\cap\sN$\+cover;
\item the class of left $R$\+modules\/ $\sA\cap\sN$ is closed
under countable direct limits; 
\item any countable direct limit of copies of the $R$\+module $K$
has an\/ $\Add(K)$\+cover;
\item the class of left $R$\+modules\/ $\Add(K)$ is closed under
countable direct limits;
\item any countable direct limit of copies of the projective generator
$P=\R$ has a projective cover in\/~$\sB$;
\item the class of objects\/ $\sB_\proj$ is closed under countable
direct limits in\/~$\sB$;
\item all descending chains of cyclic discrete right\/ $\R$\+modules
terminate;
\item all the discrete quotient rings of the topological ring\/ $\R$
are left perfect.
\end{enumerate}
 In particular, if the ring $R$ is commutative and $\pd{}_RU\le1$,
then the eight conditions~\textup{(1\+-8)} are equivalent to each
other and to the conditions in Theorem~\ref{u-co-contra-covers-thm}.
 The condition~\textup{(8)} can be rephrased by saying that
the topological ring\/ $\R$ is pro-perfect in this case.
 Replacing the assumption of one of the conditions~(a\+-d) with that
of one of the conditions~(e), (f), or~(g), the seven
conditions~\textup{(1\+-7)} are equivalent to each other and
to all the conditions in Theorem~\ref{u-co-contra-covers-thm}.
\end{thm}

\begin{proof}
 The conditions~(2), (4), and~(6) are equivalent to each other by
Corollary~\ref{countable-direct-limits-closedness-cor}.

 In the assumption of any one of the conditions~(a\+-d),
all the conditions~(5\+-8) are equivalent to each other and to
the conditions in Theorem~\ref{u-co-contra-covers-thm}\,(13\+-15)
by~\cite[Theorem~12.4]{Pproperf}.
 In the assumption of any one of the conditions~(a\+-g), all
the conditions (1), (3), and (5\+-7) are equivalent to each other and
to the conditions in
Theorem~\ref{u-co-contra-covers-thm}\,(3\+-4,\,10,\,13\+-16)
by Proposition~\ref{L-covering-iff-R-properfect}.

 Alternatively, all the conditions~(3\+-7) are equivalent to each other
and to the conditions in Theorem~\ref{u-co-contra-covers-thm}\,(7\+-16)
by Corollary~\ref{a-b-c-d-THEC-main-corollary}.
 Notice that the left $R$\+module $K$ is always self-pure-projective
by Examples~\ref{are-self-pure-projective-easy}\,(3)
and~\ref{are-self-pure-projective-harder}\,(2), as a direct summand
of a $1$\+tilting left $R$\+module $U\oplus K$.
 Besides, $K$ is also $\Sigma$\+rigid, of course; so it
satisfies THEC by Example~\ref{sigma-pure-rigid-example}\,(1).

 If the ring $R$ is commutative, then so is the ring~$\R$
by~\cite[Lemma~4.1]{BP2}.
 So condition~(a) is satisfied.
 (It is worth recalling that $\pd{}_RU\le1$ implies $\fd{}_RU=0$ for
commutative rings $R$, by~\cite[Theorem~5.2]{BP2}.)
\end{proof}

 Now let us discuss the $1$\+tilting left $R$\+module $U\oplus K$.
 We denote by $\S$ the topological ring
$\Hom_R(U\oplus K,\>U\oplus K)^\rop$ with the finite topology, and
denote by $\sH\subset\S\contra$ the $1$\+cotilting class associated
with the $1$\+cotilting left $\S$\+contramodule
$\Hom_\boZ(U\oplus K,\>\boQ/\boZ)$.
 So the exact category $\sH$ is equivalent to~$\sG$. 

\begin{lem} \label{S-R-and-U-lemma}
\textup{(i)} The topological ring\/ $\S$ is topologically left perfect
if and only if the ring $U$ is left perfect and the topological ring
$\R$ is topologically left perfect. \par
\textup{(ii)} All the discrete quotient rings of the topological ring\/
$\S$ are left perfect if and only if the ring $U$ is left perfect and
all the discrete quotient rings of the topological ring $\R$ are
left perfect. \par
\textup{(iii)} If the topological ring\/ $\R$ satisfies one of
the conditions~(a), (b), (c), or~(d) of
Section~\ref{seven-conditions-secn}, then the topological
ring\/ $\S$ satisfies condition~(d). \par
\textup{(iv)} If the topological ring\/ $\R$ satisfies one of
the conditions~(e), (f), or~(g) of
Section~\ref{seven-conditions-secn}, then the topological
ring $\S$ satisfies condition~(g).
\end{lem}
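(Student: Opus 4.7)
The plan is to exhibit $\S$ as an upper-triangular topological ring built from $U$ and $\R$, connected by a square-zero two-sided ideal, and then to invoke the closure properties of the various classes of topological rings developed in Section~\ref{seven-conditions-secn}. First, since $u\:R\rarrow U$ is a ring epimorphism, every $R$\+linear endomorphism of $U$ is automatically $U$\+linear, so $\phi\mapsto\phi(1)$ yields $\Hom_R(U,U)\cong U^\rop$, hence $\Hom_R(U,U)^\rop\cong U$. By the same reasoning, together with the observations that $K=U/R$ is a left $U$\+module and that any morphism from a $u$\+comodule to a $u$\+torsionfree module vanishes (so $\Hom_R(K,U)=0$), the ring $\Hom_R(U\oplus K,\>U\oplus K)$ is lower-triangular with diagonal $U^\rop\times\R^\rop$ and off-diagonal $N=\Hom_R(U,K)$; taking opposites, $\S$ is upper-triangular with diagonal $U\times\R$ and off-diagonal~$N$. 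In the finite topology, the annihilator of $\{1\}\subset U\subset U\oplus K$ already forces the $U$\+ and $N$\+blocks of any basic open neighborhood of zero in $\S$ to vanish (by $U$\+linearity, since $1\in U$ has trivial right annihilator), while the annihilator of a finite subset of $K$ imposes only an open condition on the $\R$\+block; hence the topology on $\S$ is the product of the discrete topology on $U\oplus N$ and the given topology on~$\R$. The strictly upper-triangular piece $\J=N\subset\S$ is a square-zero ($\J^2=0$), hence topologically left T\+nilpotent, two-sided ideal; and $\J$ is strongly closed, because $\S[[X]]=(U\oplus N)[X]\oplus\R[[X]]$ and $(\S/\J)[[X]]=U[X]\oplus\R[[X]]$, the quotient being the obvious surjection. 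Moreover, $\S/\J\cong U\times\R$ as topological rings, with $U$ discrete.

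With this decomposition, parts~(i) and~(ii) reduce to known closure properties. For~(i), Lemma~\ref{topologically-perfect-quotient-by-T-nilpotent} gives that $\S$ is topologically left perfect iff $\S/\J=U\times\R$ is, and Lemmas~\ref{topologically-perfect-closed-under-products} and~\ref{topologically-perfect-closed-under-quotients} give that $U\times\R$ is topologically left perfect iff both $U$ and $\R$ are; for the discrete ring $U$, topologically left perfect coincides with the classical Bass notion of left perfect. For~(ii), since $\J^2=0$ every discrete quotient $\S/\fU$ is an extension of a discrete quotient $(\S/\J)/((\fU+\J)/\J)$ of $U\times\R$ by the nilpotent ideal $(\fU+\J)/\fU$; left perfectness is preserved in both directions by such nilpotent extensions (nilpotent ideals lie in the Jacobson radical, and both T\+nilpotence and semisimplicity of the quotient by the radical are inherited). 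Discrete quotients of $U\times\R$ decompose as products of discrete quotients of each factor, and a product of rings is left perfect iff each factor is; since left perfectness passes to quotients, the condition that all discrete quotients of~$U$ be left perfect is equivalent to~$U$ itself being left perfect.

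Parts~(iii) and~(iv) proceed along the same lines. The discrete ring $U$ trivially satisfies~(b) (with $\{0\}$ as its only nonzero open two-sided ideal), hence it also satisfies~(d), (e), and~(g). For~(iv), if~$\R$ satisfies any of (e), (f), or~(g), then both factors $U$ and $\R$ of the product $U\times\R=\S/\J$ satisfy~(g), so $U\times\R$ satisfies~(g) by Lemma~\ref{class-g-closure-properties}(i), and hence $\S$ itself satisfies~(g) by Lemma~\ref{class-g-closure-properties}(ii) applied to the ideal~$\J$. Part~(iii) runs identically, relying on the analogous closure properties of class~(d)---namely Lemma~12.6 of~\cite{Pproperf}---which assert that~(d) is preserved under topological products and under extensions by topologically left T\+nilpotent strongly closed two-sided ideals. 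The main care required is in verifying the product description of the topology on~$\S$ and the strong closedness of~$\J$; both ultimately rest on the ring-epimorphism property of~$u$, which makes $R$\+linear maps out of $U$ automatically $U$\+linear, together with the triviality of the right annihilator of~$1\in U$.
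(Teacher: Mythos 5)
Your overall strategy is exactly the paper's: realize $\S$ as the triangular matrix ring with diagonal $U\times\R$ and square-zero off-diagonal ideal $\J=\Hom_R(U,K)$, pass to $\S/\J\cong U\times\R$, and invoke the closure properties (Lemmas~\ref{topologically-perfect-quotient-by-T-nilpotent}, \ref{topologically-perfect-closed-under-products}, \ref{topologically-perfect-closed-under-quotients}, \ref{class-g-closure-properties}, and \cite[Lemmas~12.3 and~12.6]{Pproperf}). However, your description of the topology of $\S$ contains a genuine error. First, $K=U/R$ is \emph{not} a left $U$\+module unless $K=0$: it is a $u$\+comodule, so $U\ot_RK=0$, while a $U$\+module structure would give $K\cong U\ot_UK$, a quotient of $U\ot_RK$. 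Consequently, and this is the substantive point, an $R$\+linear map $U\rarrow K$ is not determined by its value at~$1$, so the annihilator of $\{1\}\subset U$ does \emph{not} kill the block $N=\Hom_R(U,K)$. For $R=\boZ$, $U=\boZ[1/p]$, $K=\boZ(p^\infty)$, the maps $U\rarrow K$ vanishing at~$1$ form the Tate module $\varprojlim_n\ker(p^n)\cong\boZ_p\ne0$, and the finite topology induced on $N$ is not discrete. Hence the topology of $\S$ is not the product of a discrete topology on $U\oplus N$ with the topology of $\R$, and the formula $\S[[X]]=(U\oplus N)[X]\oplus\R[[X]]$, on which your verification that $\J$ is strongly closed rests, is false in general.

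The gap is local and repairable, because the conclusion (that $\J$ is a square-zero, hence topologically left T\+nilpotent, strongly closed two-sided ideal with $\S/\J\cong U\times\R$, \ $U$ discrete) is correct; the paper quotes the matrix-ring discussion of \cite[Example~12.1]{Pproperf} for it. A direct check: computing images of annihilators of finite subsets containing $1\in U$ shows that the quotient topology on $\S/\J$ is the product of the discrete topology on $U$ with the given topology on $\R$, so the quotient is complete; and the map $\S[[X]]\cong\Hom_R(U\oplus K,\>(U\oplus K)^{(X)})\rarrow(\S/\J)[[X]]\cong U[X]\oplus\R[[X]]$ is surjective, since $U[X]$ lifts to finite formal combinations in the $(U,U)$\+block and $\R[[X]]\cong\Hom_R(K,K^{(X)})$ (Example~\ref{module-endomorphism-topologies}\,(1)) lifts to the $(K,K)$\+block, these families converging to zero in~$\S$. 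Once this step is fixed, your reductions coincide with the paper's proof: part~(i) from the three lemmas on topologically left perfect rings, part~(ii) from invariance of left perfectness under nilpotent extensions and products (the paper cites \cite[Lemma~12.3]{Pproperf} here), and parts~(iii), (iv) from \cite[Lemma~12.6]{Pproperf} and Lemma~\ref{class-g-closure-properties}, using that the discrete ring $U$ satisfies condition~(b).
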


\begin{proof}
 We have $\Hom_R(U,U)^\rop=U$, \ $\Hom_R(K,K)^\rop=\R$, and
$\Hom_R(U/R,U)=0$.
 So $\S$ is the matrix ring (cf.~\cite[Example~12.1]{Pproperf})
$$
 \begin{pmatrix}
 U & \fK \\
 0 & \R
 \end{pmatrix}
$$
where $\fK=\Hom_R(U,U/R)$ is a nilpotent strongly closed two-sided
ideal in $\S$ (obviously, $\fK^2=0$ in~$\S$).
 Now we have $\S/\fK=U\times\R$, so part~(ii) of the lemma follows
from~\cite[Lemma~12.3]{Pproperf}.
 Similarly, part~(i) follows from
Lemmas~\ref{topologically-perfect-quotient-by-T-nilpotent},
\ref{topologically-perfect-closed-under-products},
and~\ref{topologically-perfect-closed-under-quotients}.
 Furthermore, the discrete ring $U$ trivially satisfies
the condition~(b) of Section~\ref{seven-conditions-secn}.
 Hence it remains to apply~\cite[Lemma~12.6]{Pproperf} in order
to prove part~(iii) of the lemma; and part~(iv) is
a particular case of Lemma~\ref{class-g-closure-properties}.
\end{proof}

\begin{thm} \label{N-covering-iff-R-properfect-and-U-perfect}
 Let $u\:R\rarrow U$ be an injective homological ring epimorphism
such that\/ $\pd{}_RU\le1$.
 Then the following thirteen conditions are equivalent:
\begin{enumerate}
\item all left $R$\+modules have\/ $\sN$\+covers;
\item any countable direct limit of copies of the $R$\+module
$U\oplus K$ has an\/ $\sN$\+cover;
\item the class of left $R$\+modules\/ $\sN$ is closed
under (countable) direct limits; 
\item all left $R$\+modules have\/ $\Add(U\oplus K)$\+covers;
\item any countable direct limit of copies of the $R$\+module
$U\oplus K$ has an\/ $\Add(U\oplus K)$-cover;
\item the class of left $R$\+modules\/ $\Add(U\oplus K)$ is closed under
(countable) direct limits;
\item the left $R$\+module $U\oplus K$ is $\Sigma$\+pure-split;
\item the left $R$\+module $U\oplus K$ has a perfect decomposition;
\item all the objects of\/ $\S\contra$ have projective covers;
\item any countable direct limit of copies of the free left\/
$\S$\+contramodule\/ $\S$ has a projective cover in\/~$\S\contra$;
\item the class of all projective left\/ $\S$\+contramodules is closed
under (countable) direct limits in\/ $\S\contra$;
\item the topological ring\/ $\S$ is topologically left perfect;
\item the ring $U$ is left perfect and the topological ring\/ $\R$
is topologically left perfect.
\end{enumerate}
 Furthermore, consider the next four properties:
\begin{enumerate}
\setcounter{enumi}{13}
\item all descending chains of cyclic discrete right\/ $\S$\+modules
terminate;
\item the ring $U$ is left perfect and all descending chains of
cyclic discrete right\/ $\R$\+modules terminate;
\item all the discrete quotient rings of the topological ring\/ $\S$
are left perfect;
\item the ring $U$ is left perfect and all the discrete quotient rings
of the topological ring\/ $\R$ are left perfect.
\end{enumerate}
 Then the following implications hold:
$$
 \mathrm{(13)} \Longrightarrow
 \mathrm{(14)} \Longrightarrow \mathrm{(15)} \Longrightarrow
 \mathrm{(16)} \Longleftrightarrow \mathrm{(17)}.
$$
 If the topological ring\/ $\R$ satisfies one of the conditions~(e),
(f), or~(g) of Section~\ref{seven-conditions-secn}, then all
the conditions~\textup{(1\+-14)} are equivalent to each other.
 If the topological ring\/ $\R$ satisfies one of the conditions~(a),
(b), (c), or~(d), then all the conditions~\textup{(1\+-17)}
are equivalent to each other.
 In particular, if the ring $R$ is commutative, then the seventeen
conditions~\textup{(1\+-17)} are equivalent.
\end{thm}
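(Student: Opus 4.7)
The proof is a direct application of the machinery developed in Sections~\ref{tilting-cotilting-secn}--\ref{tilting-covering-secn} to the $1$\+tilting left $R$\+module $T = U \oplus K$ in the Grothendieck abelian category $\sA = R\modl$. By Examples~\ref{tilting-subcategory-in-modules-exs}(1) and Example~\ref{module-endomorphism-topologies}(1), the related abelian category $\sB$ in the $1$\+tilting-cotilting correspondence coincides with $\S\contra$. Since $U \oplus K$ is $\Sigma$\+rigid (being $1$\+tilting), it satisfies THEC by Example~\ref{sigma-pure-rigid-example}(1), and it is self-pure-projective by Examples~\ref{are-self-pure-projective-harder}(2); in view of the tilting class $\sG$ being closed under direct limits, it is also $\varinjlim$\+pure-rigid by Example~\ref{indlim-pure-rigid-example}. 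Applying Theorem~\ref{tilting-direct-limits-covers-thm} yields immediately the equivalence of our ``uncountable'' conditions (1), (3), (4), (6), (9), (11) together with (8) and (12). The countable conditions (2), (5), (10) are brought into the equivalence class via Proposition~\ref{L-covering-iff-R-properfect}'s second block of equivalences (applicable because $\sA$ has exact countable direct limits), and linked to the uncountable versions through Corollary~\ref{small-THEC-corollary} combined with Corollary~\ref{self-pure-projective-corollary}.

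For condition (7), the implication $\Sigma$\+pure-split $\Rightarrow$ perfect decomposition is standard, while the converse in this $1$\+tilting setting, where $\sG$ is known to be closed under direct limits once any of the equivalent conditions holds, follows from~\cite[Corollary~5.5(5)]{AST}. The equivalence (12)$\Leftrightarrow$(13) is precisely Lemma~\ref{S-R-and-U-lemma}(i), completing the first block of equivalences (1)--(13).

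For the downstream chain $(13)\Rightarrow(14)\Rightarrow(15)\Rightarrow(16)\Leftrightarrow(17)$: the first arrow is Theorem~\ref{g-implies-as} (topologically left perfect rings satisfy the DCC on cyclic discrete modules); the second uses the matrix description $\S/\fK \cong U\times\R$ from the proof of Lemma~\ref{S-R-and-U-lemma}, which makes both $U$ and $\R$ topological quotient rings of $\S$ so that the DCC restricts, together with Bass's theorem identifying DCC on cyclic discrete right $U$\+modules with left perfectness of the discrete ring $U$; the last equivalence is Lemma~\ref{S-R-and-U-lemma}(ii). To close the loops in the stronger regimes, Lemma~\ref{S-R-and-U-lemma}(iii)--(iv) places $\S$ into class (d) or (g) whenever $\R$ satisfies one of (a)--(d) or (e)--(g) respectively; Corollary~\ref{a-b-c-d-THEC-main-corollary} (equivalently, the conditional part of Theorem~\ref{tilting-direct-limits-covers-thm}) then supplies (17)$\Rightarrow$(13) in the first case and (15)$\Rightarrow$(13) in the second. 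For commutative $R$, \cite[Lemma~4.1]{BP2} makes $\R$ commutative so that case (a) holds. The main delicate point is the verification of (7), which lies outside the paper's own machinery and forces the external reference to~\cite{AST}; everything else reduces to careful bookkeeping through the structural lemmas established earlier in the paper.
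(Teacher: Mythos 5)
Your architecture for the ``uncountable'' block matches the paper: Theorem~\ref{tilting-direct-limits-covers-thm} (with $\sB\cong\S\contra$ via Examples~\ref{tilting-subcategory-in-modules-exs}) does give the equivalence of (1), (3), (4), (6), (8), (9), (11), (12), and Lemma~\ref{S-R-and-U-lemma} handles (12)$\Leftrightarrow$(13), (16)$\Leftrightarrow$(17) and the conditional closing of the loops. But there is a genuine gap at the heart of your first block: the claim that the countable conditions (2), (5), (10) are ``linked to the uncountable versions through Corollary~\ref{small-THEC-corollary} combined with Corollary~\ref{self-pure-projective-corollary}''. Corollary~\ref{small-THEC-corollary} only relates countable-limit statements to each other (Bass objects have $\Add$\+covers iff they lie in $\Add$, etc.), and Corollary~\ref{self-pure-projective-corollary} only relates arbitrary-limit statements to each other; neither provides the passage from ``countable direct limits of copies of $U\oplus K$ have $\Add(U\oplus K)$\+covers'' to ``$\Add(U\oplus K)$ is closed under all direct limits'' (equivalently, from (10) to (11)). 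That passage is exactly the nontrivial implication of Main Conjecture~\ref{main-conjecture} for $\sB=\S\contra$, which the paper's internal machinery proves only under one of the conditions (a)--(g) on $\S$ (Theorem~\ref{a-b-c-d-imply-main-conjecture}); no such condition is assumed in the first, unconditional, block of the theorem. The paper closes this gap externally: it cites \cite[Theorem~3.6, Theorem~5.2, Corollary~5.5]{AST} to get the whole block (1)--(8) equivalent (and \cite[Proposition~13.55]{GT} for (3)$\Leftrightarrow$(7)), which is what brings (2) and (5) -- and then, via Proposition~\ref{L-covering-iff-R-properfect}, condition (10) -- into the unconditional equivalence. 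So your concluding remark that only condition (7) forces the reference to \cite{AST} is incorrect: the set-theoretic results of \cite{AST} are the essential input for the countable conditions as well, and what you call ``careful bookkeeping'' would otherwise amount to proving an instance of the open conjecture by internal means.

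Two smaller points. In the downstream chain you treat (13)$\Rightarrow$(14) and (14)$\Rightarrow$(15) and (16)$\Leftrightarrow$(17), but skip (15)$\Rightarrow$(16); this needs the observation that the descending chain condition on cyclic discrete right $\R$\+modules implies that all discrete quotient rings of $\R$ are left perfect (e.g.\ Corollary~\ref{contramodule-THEC-corollary}\,(9)$\Rightarrow$(10)), combined with Lemma~\ref{S-R-and-U-lemma}(ii). Also, ``$\Sigma$\+pure-split $\Rightarrow$ perfect decomposition is standard'' is too quick as stated; the honest route is that $\Sigma$\+pure-splitness makes the pure telescope sequence for a direct limit of objects of $\Add(U\oplus K)$ split, so $\Add(U\oplus K)$ is closed under direct limits, i.e.\ (7)$\Rightarrow$(6), after which (8) follows from the already established equivalences (this is essentially how \cite[Proposition~13.55]{GT}, used by the paper for (3)$\Leftrightarrow$(7), enters).
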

 
\begin{proof}
 The condition~(3) (for uncountable direct limits) is equivalent
to~(7) by~\cite[Proposition~13.55]{GT}.
 All the eight conditions~(1\+-8) are equivalent to each other
by~\cite[Theorem~3.6, Theorem~5.2, and Corollary~5.5]{AST}.

 The conditions~(3), (6), and~(11) are equivalent to
each other, for countable direct limits, by
Corollary~\ref{countable-direct-limits-closedness-cor},
and for uncountable ones, by
Corollary~\ref{uncountable-direct-limit-closedness-cor}.
 The conditions~(2), (5), and~(10) are equivalent to each other
by Proposition~\ref{L-covering-iff-R-properfect}\,%
(5)\,$\Leftrightarrow$\,(6)\,$\Leftrightarrow$\,(7).
 All the conditions (1), (4), (8), (9), and~(12), and the uncountable
versions of (3), (6), (11) are equivalent to each other by
Theorem~\ref{tilting-direct-limits-covers-thm}.  {\hbadness=1050\par}

 The implications (12)\,$\Longrightarrow$\,(14)\,$\Longrightarrow$\,(16)
and (13)\,$\Longrightarrow$\,(15)\,$\Longrightarrow$\,(17) hold
by~\cite[Theorem~14.4\,(iv)\,$\Rightarrow$\,(v)%
\,$\Rightarrow$\,(vi)]{PS3}.
 The equivalences (12)\,$\Longleftrightarrow$\,(13) and
(16)\,$\Longleftrightarrow$\,(17) hold
by Lemma~\ref{S-R-and-U-lemma}(i\+-ii).
 The implication (14)\,$\Longrightarrow$\,(15) is easy (cf.\
the discussion in the proof of Theorem~\ref{g-implies-as}, case~(g)).

 If $\R$ satisfies one of the conditions~(a), (b), (c), or~(d), then
all the conditions (9\+-12), (14), and~(16) are equivalent to
each other by Lemma~\ref{S-R-and-U-lemma}(iii)
and~\cite[Theorem~12.4]{Pproperf}.
 In the assumption of any one of the conditions~(a\+-g), all
the conditions (1), (2), (5), (8\+-12), and~(14), are equivalent
to each other by Lemma~\ref{S-R-and-U-lemma}(iv) and
Proposition~\ref{L-covering-iff-R-properfect}.
 This also establishes the equivalence of the countable and
uncountable versions of the condition~(11).

 Alternatively, all the conditions~(4\+-6), (8\+-12), and~(14) are
equivalent to each other by Corollary~\ref{a-b-c-d-THEC-main-corollary}.
 Notice that the left $R$\+module $U\oplus K$
satisfies THEC by Example~\ref{sigma-pure-rigid-example}\,(1)
(it is also self-pure-projective
by Example~\ref{are-self-pure-projective-harder}\,(2)).

 The last assertion of the theorem follows from~\cite[Lemma~4.1]{BP2}.
\end{proof} 

\begin{ex}
 Let $R$ be a commutative ring and $S\subset R$ be a multiplicative
subset consisting of regular elements.
 Denote the multiplicative subset of all regular elements in $R$
by $S\subset S_{\mathit{reg}}\subset R$.
 Set $U=S^{-1}R$; then the localization map $u\:R\rarrow U$ is
an injective flat epimorphism of commutative rings.
 The topological ring $\R=\Hom_R(U/R,U/R)$ is naturally topologically
isomorphic to the $S$\+completion $\varprojlim_{s\in S}R/sR$ of the ring
$R$ (viewed as the topological ring in the projective limit topology),
which was discussed in~\cite[Example~11.2]{Pproperf}.

 Assume that $\pd{}_RS^{-1}R\le1$, and set $K=U/R$.
 Then the homomorphism of commutative rings $R\rarrow S^{-1}R=U$
satisfies the assumptions of
Theorems~\ref{A-cap-N-covering-iff-R-properfect-thm}
and~\ref{N-covering-iff-R-properfect-and-U-perfect}.
 By Theorem~\ref{A-cap-N-covering-iff-R-properfect-thm}, the class of
$R$\+modules $\sA\cap\sN$ is covering (if and only if the class
$\Add(K)\subset R\modl$ is covering and) if and only if the ring $R/sR$
is perfect for every $s\in S$.
 By Theorem~\ref{N-covering-iff-R-properfect-and-U-perfect}, the class
of $R$\+modules $\sN$ is covering (if and only if the class
$\Add(U\oplus K)\subset R\modl$ is covering and) if and only if
two conditions hold: the ring $R/sR$ is perfect for every $s\in S$,
\emph{and} the ring $S^{-1}R$ is perfect.

 The latter two conditions are equivalent to the following two:
one has $S^{-1}R=S_{\mathit{reg}}^{-1}R$, and the ring $R$ is
almost perfect (in the sense of the paper~\cite{FS}).
 It is worth noticing that the condition that all the rings $R/sR$
are perfect already implies $\pd{}_RS^{-1}R\le1$ \cite[Lemma~3.4]{FS},
\cite[Theorem~6.13]{BP1}.

 For example, let $R=\boZ$ be the ring of integers, $p$~be a prime
number, and $S=\{1,p,p^2,p^3,~\dotsc\}\subset R$ be the multiplicative
subset in $\boZ$ generated by~$p$.
 Then the class of abelian groups $\sA\cap\sN\subset\Ab$ is
covering, but the class $\sN\subset\Ab$ is not.
 Alternatively, let $S'\subset\boZ$ be the multiplicative subset of
all integers \emph{not} divisible by~$p$.
 Then, once again, the related class $\sA\cap\sN'\subset\Ab$ is
covering, but the class of abelian groups $\sN'\subset\Ab$ is not.
\end{ex}

\bigskip


\begin{thebibliography}{99}
\smallskip

\bibitem{AR}
 J.~Ad\'amek, J.~Rosick\'y.
   Locally presentable  and accessible categories.
London Math.\ Society Lecture Note Series~189,
Cambridge University Press, 1994.

\bibitem{ASan}
 L.~Angeleri H\"ugel, J.~S\'anchez.
   Tilting modules arising from ring epimorphisms.
\textit{Algebras and Represent.\ Theory} \textbf{14}, \#2,
p.~217--246, 2011.  \texttt{arXiv:0804.1313 [math.RT]}

\bibitem{AS}
 L.~Angeleri H\"ugel, M.~Saor\'\i n.
   Modules with perfect decompositions.
\textit{Math.\ Scand.}\ \textbf{98}, \#1, p.~19--43, 2006.

\bibitem{AST}
 L.~Angeleri H\"ugel, J.~\v Saroch, J.~Trlifaj.
   Approximations and Mittag--Leffler conditions---the applications.
\textit{Israel Journ.\ of Math.}\ \textbf{226}, \#2, p.~757--780,
2018.  \texttt{arXiv:1612.01140 [math.RA]}

\bibitem{Bash}
 R.~El~Bashir.
   Covers and directed colimits.
\textit{Algebras and Represent.\ Theory} \textbf{9}, \#5,
p.~423--430, 2006.

\bibitem{Bas}
 H.~Bass.
   Finitistic dimension and a homological generalization of
semi-primary rings.
\textit{Trans.\ of the Amer.\ Math.\ Soc.}\ \textbf{95}, \#3,
p.~466--488, 1960.

\bibitem{BP1}
 S.~Bazzoni, L.~Positselski.
   $S$\+almost perfect commutative rings.
\textit{Journ.\ of Algebra} \textbf{532}, p.~323--356, 2019.
\texttt{arXiv:1801.04820 [math.AC]}

\bibitem{BP2}
 S.~Bazzoni, L.~Positselski.
   Matlis category equivalences for a ring epimorphism.
\textit{Journ.\ of Pure and Appl.\ Algebra} \textbf{224}, \#10,
article ID~106398, 25~pp., 2020.  \texttt{arXiv:1907.04973 [math.RA]}

\bibitem{BPS}
 S.~Bazzoni, L.~Positselski, J.~\v St\!'ov\'\i\v cek.
   Projective covers of flat contramodules.
Electronic preprint \texttt{arXiv:1911.11720 [math.RA]}.

\bibitem{Bek}
 T.~Beke.
   Sheafifiable homotopy model categories.
\textit{Math.\ Proc.\ Cambridge Phil.\ Soc.} \textbf{129}, \#3,
p.~447--475, 2000.

\bibitem{CX}
 H.-X.~Chen, C.-C.~Xi.
   Derived decompositions of abelian categories~I.
Electronic preprint \texttt{arXiv:1804.10759 [math.RT]}.

\bibitem{Cor}
 A.~L.~S.~Corner.
   On the exchange property in additive categories.
Unpublished manuscript, 1973, 60~pp.

\bibitem{CS}
 P.~\v Coupek, J.~\v St\!'ov\'\i\v cek.
   Cotilting sheaves on Noetherian schemes.
\textit{Math.\ Zeitschrift} \textbf{296}, \#1--2, p.~275--312,
2020. \texttt{arXiv:1707.01677 [math.AG]}

\bibitem{Eno}
 E.~Enochs.
   Injective and flat covers, envelopes and resolvents.
\textit{Israel Journ.\ of Math.}\ \textbf{39}, \#3, p.~189--209, 1981.

\bibitem{FJ}
 A.~Frankild, P.~J\o rgensen.
   Foxby equivalence, complete modules, and torsion modules.
\textit{Journ.\ of Pure and Appl.\ Algebra} \textbf{174}, \#2,
p.~135--147, 2002.

\bibitem{Fr}
 P.~Freyd.
   Representations in abelian categories.
Proceedings of the Conference on Categorical Algebra, La Jolla 1965,
Springer, 1966, p.~95--120.

\bibitem{FS}
 L.~Fuchs, L.~Salce.
   Almost perfect commutative rings.
\textit{Journ.\ of Pure and Appl.\ Algebra} \textbf{222}, \#12,
p.~4223--4238, 2018. 

\bibitem{GL}
 W.~Geigle, H.~Lenzing.
   Perpendicular categories with applications to representations
and sheaves.
\textit{Journ.\ of Algebra} \textbf{144}, \#2, p.~273--343, 1991.

\bibitem{GT}
 R.~G\"obel, J.~Trlifaj.
   Approximations and endomorphism algebras of modules.
Second revised and extended edition.  De Gruyter, 2012.

\bibitem{Kel}
  B.~Keller.
    Chain complexes and stable categories.
\textit{Manuscripta Math.}\ \textbf{67}, \#4, p.~379--417, 1990.

\bibitem{Kra}
 H.~Krause.
   Functors on locally finitely presented additive categories.
\textit{Colloquium Math.}\ \textbf{75}, \#1, p.~105--132, 1998.

\bibitem{Pper}
 L.~Positselski.
   Abelian right perpendicular subcategories in module categories.
Electronic preprint \texttt{arXiv:1705.04960 [math.CT]}.

\bibitem{Pcoun}
 L.~Positselski.
   Flat ring epimorphisms of countable type.
\textit{Glasgow Math.\ Journ.}\ \textbf{62}, \#2, p.~383--439, 2020.
\texttt{arXiv:1808.00937 [math.RA]}

\bibitem{Pproperf}
 L.~Positselski.
   Contramodules over pro-perfect topological rings.
Electronic preprint \texttt{arXiv:1807.10671 [math.CT]}.
{\hbadness=3800\par}

\bibitem{PR}
 L.~Positselski, J.~Rosick\'y.
   Covers, envelopes, and cotorsion theories in locally presentable
abelian categories and contramodule categories.
\textit{Journ.\ of Algebra} \textbf{483}, p.~83--128, 2017.
\texttt{arXiv:1512.08119 [math.CT]}

\bibitem{PS1}
 L.~Positselski, J.~\v St\!'ov\'\i\v cek.
   The tilting-cotilting correspondence.
\textit{Internat.\ Math.\ Research Notices}, published online at
\texttt{https://doi.org/10.1093/imrn/rnz116} in July~2019.
\texttt{arXiv:1710.02230 [math.CT]} {\hbadness=2625\par}

\bibitem{PS2}
 L.~Positselski, J.~\v St\!'ov\'\i\v cek.
   $\infty$-tilting theory.
\textit{Pacific Journ.\ of Math.} \textbf{301}, \#1,
p.~297--334, 2019.  \texttt{arXiv:1711.06169 [math.CT]}

\bibitem{PS3}
 L.~Positselski, J.~\v St\!'ov\'\i\v cek.
   Topologically semisimple and topologically perfect topological rings.
Electronic preprint \texttt{arXiv:1909.12203 [math.CT]}.   

\bibitem{Ro}
 J.-E.~Roos.
   Locally Noetherian categories and generalized strictly linearly
compact rings.  Applications.
\textit{Category Theory, Homology Theory, and their Applications, II},
Springer, Berlin, 1969, p.~197--277.

\bibitem{Sal}
 L.~Salce.
   Cotorsion theories for abelian groups.
\textit{Symposia Math.}\ \textbf{XXIII},
Academic Press, London--New York, 1979, p.~11--32.

\bibitem{Sar}
 J.~\v Saroch.
   Approximations and Mittag--Leffler conditions---the tools.
\textit{Israel Journ.\ of Math.}\ \textbf{226}, \#2, p.~737--756,
2018.  \texttt{arXiv:1612.01138 [math.RA]}

\bibitem{SS}
 J.~\v Saroch, J.~\v St\!'ov\'\i\v cek.
   The countable telescope conjecture for module categories.
\textit{Advances in Math.}\ \textbf{219}, \#3, p.~1002--1036, 2008.
\texttt{arXiv:0801.3936 [math.RA]}

\bibitem{St-model}
 J.~\v St\!'ov\'\i\v cek.
   Exact model categories, approximation theory, and cohomology of
quasi-coherent sheaves.
 In: Advances in Representation Theory of Algebras, EMS Ser.\ Congr.\
Rep., Eur.\ Math.\ Soc., Z\"urich, p.~297--367, 2013.
\texttt{arXiv:1301.5206 [math.CT]}

\bibitem{St}
 B.~Stenstr\"om.
   Rings of quotients.  An introduction to methods of ring theory.
Springer-Verlag, Berlin--Heidelberg--New York, 1975.

\bibitem{Wat}
 C.~E.~Watts.
   Intrinsic characterizations of some additive functors.
\textit{Proc.\ of the Amer.\ Math.\ Soc.}\ \textbf{11}, \#1, p.~5--8,
1960.

\bibitem{Wis}
 R.~Wisbauer.
   Foundations of Module and Ring Theory:
A Handbook for Study and Research.
Gordon and Breach Science Publishers, Reading, 1991.

\bibitem{Xu}
 J.~Xu.
   Flat covers of modules.
\textit{Lecture Notes in Math.} \textbf{1634}, Springer, 1996.

\end{thebibliography}
\end{document}